\documentclass[12pt, a4paper]{amsart}
\usepackage[margin=1.2in]{geometry}
\input xy
\xyoption{all}
\usepackage{amsmath}
\usepackage{amsthm}
\usepackage{stmaryrd}
\usepackage{color, soul}
\usepackage{amsfonts}
\usepackage{hhline}
\usepackage{amssymb}
\usepackage{amscd}
\numberwithin{equation}{section}
\usepackage{amsmath}

\numberwithin{equation}{section}
\newtheorem{theorem}{Theorem}[section]
\newtheorem{cor}[theorem]{Corollary}
\newtheorem{lemma}[theorem]{Lemma}
\newtheorem{prop}[theorem]{Proposition}
\newtheorem*{thkw}{Theorem KWG}

\newtheorem{defi}{Definition}[section]
\newtheorem{nota}{Notation}[section]
\theoremstyle{definition}

\theoremstyle{remark}
\newtheorem{rem}{Remark}[section]

\renewcommand\H{{\mathbb H}}
\newcommand\ba{{\mathbf a}}

\newcommand\half{\frac{1}{2}}

\newcommand\ov{\overline}

\newcommand\be{\beta}

\newcommand\g{\mathfrak g}

\newcommand\h{\mathfrak h}
\newcommand\q{\mathfrak q}

\newcommand\n{\mathfrak n}

\newcommand\D{\Delta}
\renewcommand\l{\lambda}
\newcommand\Dp{\Delta^+}

\renewcommand\d{\delta}
\renewcommand\r{\mathfrak r}
\renewcommand\t{\otimes}
\renewcommand\a{\alpha}

\renewcommand\k{\mathfrak k}

\newcommand\lb{\llbracket }
\newcommand\rb{\rrbracket }

\newcommand\nd{\noindent}
\newcommand\nat{\mathbb N}
\newcommand\ganz{\mathbb Z}

\newcommand\s{\sigma}
\renewcommand\L{\Lambda}
\newcommand\la{\lambda}

\newcommand\e{\epsilon}
\newcommand\C{\mathbb C}
\newcommand\R{\mathbb R}
\newcommand\si{\sigma}

\newcommand{\fg}{\mathfrak{g}}
\newcommand{\fh}{\mathfrak{h}}

\newcommand\p{\mathfrak p}

\newcommand{\ve}{\varepsilon}

\renewcommand{\t}{\mathfrak t}
\newcommand{\tr}{{\mathfrak t}_{\mathbb R}}
\newcommand{\PP}{\mathbb{P}}

\newcommand{\sdim}{\text{\rm sdim}}
\newcommand{\cF}{\mathcal{F}}
\newcommand{\cE}{\mathcal E}
\newcommand{\cD}{\mathcal D}
\newcommand{\cP}{\mathcal P}
\newcommand{\cS}{\mathcal S}

\newcommand{\cQ}{\mathcal Q}
\newcommand{\sn}{\operatorname{sn}}
\newcommand{\htt}{\operatorname{ht}}

\newcommand{\Lem}[1]{Lemma~\ref{#1}}
\newcommand{\Cor}[1]{Corollary~\ref{#1}}

\newenvironment{Dynkin}{\setlength{\unitlength}{1.5pt}\begin{array}{l}}{\end{array}}

\newcommand{\Dbloc}[1]{\begin{picture}(20,20)#1\end{picture}}
\newcommand{\Dcirc}{\put(10,10){\circle{4}}}
\newcommand{\Dcross}{\put(8,8){$\otimes$}}

\newcommand{\Deast}{\put(12,10){\line(1,0){8}}}

\newcommand{\Dwest}{\put(8,10){\line(-1,0){8}}}

\newcommand{\Dnortheast}{\put(20,20){\line(-1,-1){8.6}}}

\newcommand{\Dnorthwest}{\put(0,20){\line(1,-1){8.6}}}

\newcommand{\Dsoutheast}{\put(20,0){\line(-1,1){8.6}}}
\newcommand{\Dsouthwest}{\put(0,0){\line(1,1){8.6}}}

\newcommand{\Ddoubleeast}{\put(10,12){\line(1,0){10}}\put(10,8){\line(1,0){10}}}

\newcommand{\Ddoublewest}{\put(10,12){\line(-1,0){10}}\put(10,8){\line(-1,0){10}}}
\newcommand{\Dtriplewest}{\put(10,12){\line(-1,0){10}}\put(10,8){\line(-1,0){10}}\put(10,10){\line(-1,0){10}}}
\newcommand{\Dtripleeast}{\put(10,12){\line(1,0){10}}\put(10,8){\line(1,0){10}} \put(10,10){\line(1,0){10}} }

\newcommand{\Dskip}{\\ [-4.5pt]}

\newcommand{\Dleftarrow}{\hskip-5pt{\makebox(20,20)[l]{\Large$<$}}\hskip-25pt}
\newcommand{\Drightarrow}{\hskip-5pt{\makebox(20,20)[l]{\Large$>$}}\hskip-25pt}

\begin{document}
\title[Denominator identities for   Lie superalgebras]{Denominator identities for finite-dimensional Lie superalgebras and Howe duality for compact dual pairs}
\author[Gorelik, Kac, M\"oseneder, Papi]{Maria Gorelik}
\author[]{Victor~G. Kac}
\thanks{The first author is supported by the Minerva foundation with funding from the Federal German Ministry for Education and Research. The second author is partially supported by an NSF grant and by an ERC advanced grant.}
\author[]{Pierluigi M\"oseneder Frajria}
\author[]{Paolo  Papi}
\begin{abstract} We provide  formulas for the denominator and superdenominator  of a basic classical  type Lie superalgebra for  any set of positive roots. We establish a connection between certain sets of positive roots and the theory of reductive dual pairs of real Lie groups, and , as an application of these formulas, we recover the Theta correspondence for  compact dual pairs. Along the way we give an explicit description of the real forms of basic classical type Lie superalgebras.
\end{abstract}
\maketitle
\tableofcontents
\section{Introduction}
The Weyl denominator identity
\begin{equation}\label{wdi}\prod_{\a\in\Dp}(1-e^{-\a})=\sum_{w\in W}sgn(w)\,e^{w(\rho)-\rho}\end{equation}
is one of the most intriguing combinatorial identities in the character ring of a complex finite-dimensional simple Lie algebra. It admits far reaching generalizations to the Kac-Moody
setting, where it provides a proof for the Macdonald's identities (see \cite{Kac}). Its role in representation theory is well-understood, 
since the inverse of the l.h.s. of \eqref{wdi} is the character of the Verma module $M(0)$ with the highest weight $0$.\par
In the first part of this paper we provide a generalization of formula 
 \eqref{wdi} to the setting of basic classical type Lie superalgebras.
 Deepening this problem, we came across an  interesting connection with representation theory of Lie groups which is the theme of the second part of the paper.
 \par
 By a basic classical type Lie superalgebra we mean an almost simple finite-dimensional Lie superalgebra $\g=\g_0\oplus\g_1$ with a non-degenerate invariant supersymmetric bilinear form $(\cdot,\cdot)$ and $\g_0$ reductive. Choosing a Cartan subalgebra $\h$ of $\g_0$, we get the set  of roots $\D=\D_0\cup \D_1$, where $\D_i$ is the set  of roots of $\h$ in $\g_i,\,i=0,1$. Choosing a set of positive roots  $\Dp$ in $\D$,  we let $\Dp_i=\Dp\cap \D_i$.
 In trying to extend \eqref{wdi} to a Lie superalgebra $\g$, it is natural to 
 replace the l.h.s of \eqref{wdi} with the character of the Verma module  $M(0)$ over $\g$, which is the inverse of 
\begin{equation}\label{kwd}R=\frac{\prod_{\a\in\Dp_0}(1-e^{-\a})}{\prod_{\a\in\Dp_1}(1+e^{-\a})}, \end{equation}
called the {\it denominator}.
 Beyond the denominator $R$,  very important for us will be the 
 {\it superdenominator}, defined as
\begin{equation}\label{kwdd}
\check R=\frac{\prod_{\a\in\Dp_0}(1-e^{-\a})}{\prod_{\a\in\Dp_1}(1-e^{-\a})}.\end{equation}
 \vskip5pt  Generalizations of formulas for $R$ and $\check R$ to affine superalgebras and their connection with number theory and the theory of special functions are thoroughly discussed in \cite{KW}.
 The striking differences which make the super case very different from the purely even one are the following. First, it is no more true that subsets  of positive roots are conjugate under the Weyl group: to get transitivity on the sets of positive roots one has to consider Serganova's odd reflections, see \eqref{oddref}.
Moreover the restriction of the  nondegenerate invariant bilinear form $(\cdot\, ,\cdot)$ to the real span 
$V_\D$ of roots may be indefinite,  hence isotropic sets of roots appear naturally. One defines the defect $d=\text{def $\g$}$ as the dimension of a maximal isotropic subspace of $V_\D$. A subset of $\D$, consisting of linearly independent pairwise orthogonal isotropic roots is called {\it isotropic}. It is known that any maximal  isotropic subset of $\D$ consists of $d$  roots (\cite{KW}).\par
In this paper we settle completely the problem of finding an analogue of \eqref{wdi} for basic classical type Lie superalgebras, by providing an expression for the r.h.s. which incorporates 
the dependence on the set of positive roots.
The following result is known.
\begin{thkw}\label{kwt} Let $\g$ be a basic classical  type Lie superalgebra and let $\Dp$ be a set of positive roots such that there exists a maximal isotropic subset $S$ of $\Dp$,  contained in the set of simple roots  corresponding to $\Dp$.
Then 
\begin{align}\label{kw2}
e^\rho R &=\sum_{w\in W^\sharp}sgn(w)
 w\left( \frac{e^\rho}{\prod_{\be\in S}(1+e^{-\be})}\right),\\\label{kw1}
e^\rho\check R &=\sum_{w\in W^\sharp}sgn'(w)
 w\left( \frac{e^\rho}{\prod_{\be\in S}(1-e^{-\be})}\right).\end{align}
\end{thkw}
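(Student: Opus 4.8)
The plan is to prove \eqref{kw2} and \eqref{kw1} in parallel, reducing both to an explicit finite list of model cases by exploiting two symmetries shared by the two sides of each identity. The first is the Weyl group $W$ of $\g_0$: applying $w\in W$ to all the terms turns the asserted identity for $(\Dp,S)$ into the one for $(w\Dp,wS)$, with $\rho$, $R$, $\check R$ transported by $w$ and $W^\sharp$, $sgn$ transported correspondingly (the latter because $sgn$ is a class function). The second, and more delicate, is a Serganova odd reflection $r_\be$ at a simple isotropic root $\be\in S$: since $r_\be$ replaces $\Dp$ by $(\Dp\setminus\{\be\})\cup\{-\be\}$, it fixes $\Dp_0$, sends $\Dp_1\mapsto(\Dp_1\setminus\{\be\})\cup\{-\be\}$, replaces $\rho$ by $\rho+\be$, and (as $\be\perp S\setminus\{\be\}$) sends $S$ to $(S\setminus\{\be\})\cup\{-\be\}$. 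Using $1+e^{\be}=e^{\be}(1+e^{-\be})$ and $1-e^{\be}=-e^{\be}(1-e^{-\be})$ one checks directly that $e^\rho R$ is \emph{invariant} under $r_\be$ whereas $e^\rho\check R$ changes sign; and because $\prod_{\gamma\in S'}(1+e^{-\gamma})=e^{\be}\prod_{\gamma\in S}(1+e^{-\gamma})$ (respectively, with $1-e^{-\gamma}$ and an extra $-1$), the right-hand sides of \eqref{kw2} and \eqref{kw1} obey exactly the same two transformation rules. Hence it is enough to prove each identity for one representative base in each orbit under $W$ together with odd reflections at isotropic simple roots.

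I would then invoke the structure theory of these bases (as in \cite{KW}) to see that only finitely many orbits occur: for $\g=\mathfrak{gl}(m|n)$ and for $\g$ of the various $\mathfrak{osp}$ types a single ``zig--zag'' model base --- in which the isotropic simple roots $\be_1,\dots,\be_d$ constituting $S$ are interleaved with even simple roots --- represents them all, while for $D(2,1;a)$, $G(3)$ and $F(4)$ the defect is $1$, so $S$ is a single isotropic root and one may use any base containing it. For such a model base the proof has two ingredients. (i) $W^\sharp$-(anti)invariance: the right-hand side is by construction anti-invariant for the character $sgn$ (resp.\ $sgn'$) of $W^\sharp$, while on the left one checks that $e^\rho R$ (resp.\ $e^\rho\check R$) is anti-invariant --- which is not automatic, because $W^\sharp$ need not preserve $\Dp_1$; the defining feature of $W^\sharp$ is exactly that the discrepancy produced when $w(\Dp_1)\ne\Dp_1$ is cancelled by the compensating shift of $e^\rho$. (ii) Matching: expanding the right-hand side as $\sum_{w\in W^\sharp,\;k_\be\ge 0}sgn(w)\,(-1)^{\sum_\be k_\be}\,e^{w(\rho-\sum_\be k_\be\be)}$ and comparing with $e^\rho R=e^\rho+(\text{lower order terms})$ reduces each identity to a combinatorial identity attached to the model base. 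For the $\mathfrak{gl}$ and $\mathfrak{osp}$ zig--zag bases this is a partial-fraction/telescoping statement for $\prod_i(1+e^{-\be_i})^{-1}$, whose archetype --- for $\mathfrak{gl}(2|2)$, with $t_{ij}:=e^{-(\epsilon_i-\delta_j)}$ and the relation $t_{11}t_{22}=t_{12}t_{21}$ --- is
\[
\frac{1}{(1+t_{11})(1+t_{22})}-\frac{1}{(1+t_{12})(1+t_{21})}=\frac{t_{12}+t_{21}-t_{11}-t_{22}}{\prod_{i,j}(1+t_{ij})},
\]
the right-hand side being precisely $R$ in the relevant base; for the exceptional superalgebras it is a finite check.

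The hardest part, I expect, will be this matching step for the classical series. Since $\rho$ is in general singular for $W^\sharp$ --- indeed $W^\sharp$ is typically a proper subgroup of the Weyl group of $\g_0$ with nontrivial stabilizer of $\rho$ --- the naive principle ``a $W^\sharp$-anti-invariant element is determined by its $e^\rho$-coefficient'' does not close the argument, and one must analyze how $\mathrm{Stab}_{W^\sharp}(\rho)$ permutes the roots of $S$ and thereby organizes the partial-fraction identity, and then carry this out uniformly in $m,n$ (for $\mathfrak{gl}$) and in the rank (for $\mathfrak{osp}$). A second, more bookkeeping-type obstacle is to verify that odd reflections at isotropic simple roots genuinely connect all admissible bases while keeping a maximal isotropic subset among the simple roots at every intermediate step, so that the transformation rules of the first paragraph are available along the whole path. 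Finally, the passage between \eqref{kw2} and \eqref{kw1} can be partly shortcut via the parity automorphism $e^\alpha\mapsto(-1)^{p(\alpha)}e^\alpha$ of the group algebra, which sends $R$ to $\check R$ and $\prod_{\be\in S}(1+e^{-\be})$ to $\prod_{\be\in S}(1-e^{-\be})$; but since it commutes with $W^\sharp$ without affecting $sgn$, it yields \eqref{kw1} only with $sgn$ in place of $sgn'$, so in the end the two identities are best run along parallel lines.
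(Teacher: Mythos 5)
First, a point of comparison: the paper does not prove Theorem KWG at all. It is quoted as a known input, with the defect-one case attributed to the representation-theoretic argument of \cite{KW} and the general case to the combinatorial proof of \cite{Gor}; the whole of Section \ref{df} (arc diagrams, interval reflections, Lemma \ref{lem1}) is built \emph{on top of} this theorem to prove the more general Theorem \ref{princ}. So there is no internal proof to measure your argument against, and it has to stand on its own.

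Judged on its own, your first paragraph is correct and well used: $e^\rho R$ is $W_\g$-anti-invariant for $sgn$ and invariant under an odd reflection $r_\be$ with $\be\in S$, $e^\rho\check R$ is anti-invariant for $sgn'$ and changes sign (this is \eqref{erho}), and both right-hand sides transform identically; your $gl(2,2)$ archetype is also correct (the relation $t_{11}t_{22}=t_{12}t_{21}$ makes the numerator factor as $(1-e^{-(\e_1-\e_2)})(1-e^{-(\d_1-\d_2)})$, which is indeed $R$ times the denominator), and your parity-twist link between \eqref{kw2} and \eqref{kw1} is exactly the paper's Lemma \ref{ded}, including the mechanism that converts $sgn'$ into $sgn$. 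But the proposal has a genuine gap at its center: step (ii), the matching for the model base uniformly in the rank and the defect, is not carried out, and it is the entire content of the theorem. As you yourself observe, $\rho$ is $W^\sharp$-singular (it is orthogonal to $S$), so skew-invariance plus the coefficient of $e^\rho$ does not pin down either side, and no inductive or telescoping mechanism is actually exhibited beyond $gl(2,2)$; the fact that precisely this step stayed open between \cite{KW} and \cite{Gor} indicates how much is being deferred to "a combinatorial identity attached to the model base". Two secondary gaps would also need to be filled: the connectivity of all admissible pairs $(\Dp,S)$ under $W_\g$ together with odd reflections at roots of $S$ is asserted, not proved (it is true, but requires an argument in the spirit of Lemma \ref{esad}, restricted to simple arc diagrams); and in the $h^\vee=0$ cases ($gl(n,n)$, $D(n+1,n)$, $D(2,1,\a)$) the group $W^\sharp$ is defined by a separate convention, so the $W^\sharp$-skewness of the right-hand side and the choice of model base must be checked case by case there.
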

\noindent In the above statement $\rho=\rho_0-\rho_1,$ where $\rho_i=\frac{1}{2}\sum_{\a\in\Dp_i}\a$, $W^\sharp$ is a subgroup of the Weyl group $W_\g$ of $\g$  defined in \eqref{wsharp} and $sgn'$ is defined in \eqref{sgn'}. This result has been stated by Kac and Wakimoto in \cite{KW}, and fully proved  for $d=1$ by using representation theoretical methods. A  combinatorial proof for arbitrary $d$  has been recently  obtained by Gorelik \cite{Gor}. \par
Theorem KWG will be of fundamental importance for our generalization, which is as follows: given any subset of positive roots $\Dp$, we want to express $e^{\rho}R$ by a formula like the r.h.s. of \eqref{kw2}, in which at the numerator appears the $\rho$ corresponding to $\Dp$ 
and at the denominator a suitable maximal isotropic subset $S$ of $\Dp$, so that   the exponents  at the denominator  are  
  linear combinations with non-positive integer coefficients of simple roots: see \eqref{princf}, \eqref{princff}. We also provide a formula in which the denominator is exactly as in the r.h.s. of
  of \eqref{kw2}, but we have to perform a correction on $\rho$, depending on $S$: see \eqref{mermaid}.
  \vskip5pt
 Let us discuss more in detail these formulas.
First we  construct a certain class $\cS$ of maximal isotropic subsets  by the following procedure. 
\begin{defi}\label{cp} Denote by $\cS(\Dp)$ the collection of maximal isotropic subsets of $\Dp$ of the form $S=S_1\cup\ldots\cup S_t$ of $\Dp$, where $S_1$  is  a non empty  isotropic subset of the set  of simple roots, and, inductively, $S_i$ is  such a subset in the set of indecomposable roots  of $S_{i-1}^\perp\setminus S_{i-1}$, where 
$S_{i-1}^\perp=\{\a\in\Dp\mid (\a,\beta)=0\, \forall\,\beta\in S_{i-1}\}$.
\end{defi}
Denote by $Q,\,Q_0$ the lattices  spanned over $\ganz$ by all  roots and even roots, respectively, and let, for $S\in\cS(\Dp)$ and $\gamma\in  S$

\begin{align}\label{<} 
&\varepsilon(\eta)=\begin{cases}
1\quad&\text{if $\eta\in Q_0$,}\\
-1\quad&\text{if $\eta\in Q\setminus Q_0$,}\end{cases}\\
&\gamma^\leq=\{\beta\in S\mid\beta\leq \gamma\},
\ 
\gamma^<=\{\beta\in S\mid\beta< \gamma\},\\\label{parentesi}
&\llbracket \gamma\rrbracket =\sum_{\beta\in \gamma^{\leq}}\varepsilon(\gamma-\beta)\beta,\quad \rrbracket \gamma\llbracket =\sum_{\beta\in \gamma^<}\ \varepsilon(\gamma-\beta)\beta,\\
& sgn(\gamma)=(-1)^{|\gamma^{\leq}|+1},\end{align}
where, as usual, $\beta\leq \gamma$ if $\gamma-\beta$ is a sum of positive roots or zero and
$|X|$ stands for the cardinality of the set $X$.

\begin{theorem}\label{princ} Let $\g=\g_0\oplus\g_1$ be a basic classical type Lie superalgebra having  defect $d$, where $\g=A(d-1,d-1)$ is replaced by  $gl(d,d)$. Let $\Dp$ be  any set of positive roots. For any $S\in\mathcal S(\Dp)$  we have
 \begin{align}\label{princf} C\cdot e^\rho R&=
\sum_{w\in W_\g}sgn(w)
w\left(\frac{e^\rho}
{
\prod_{\gamma\in S}(1+sgn(\gamma)e^{-\lb\gamma\rb})}\right),\\\label{princff}
C\cdot e^\rho \check R&=
\sum_{w\in W_\g}sgn'(w)
w\left(\frac{e^\rho}
{
\prod_{\gamma\in S}(1-e^{-\llbracket \gamma\rrbracket })}\right),
\nd
\end{align}
where 
\begin{equation}\label{costante}
C=\frac{C_\g}{\prod\limits_{\gamma\in S} \frac{ht(\gamma)+1}{2}},
\end{equation}
and $C_\g=|W_\g/W^\sharp|$.
Moreover, there exists $S^{nice}\in \mathcal S(\Dp)$ such that   $\lb\gamma\rb\in Q^+=\ganz_+\Dp$ for any $\gamma\in S^{nice}$.\end{theorem}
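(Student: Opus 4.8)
The plan is to deduce both identities from Theorem KWG. The difficulty Theorem KWG leaves open is that a general $\Dp$ need not contain a maximal isotropic subset among its simple roots; but every set of positive roots is connected by a chain of Serganova odd reflections to one that does, and under an odd reflection at an isotropic simple root $e^\rho R$ is unchanged while $e^\rho\check R$ merely changes sign, the Weyl group $W_\g$ being unaffected. This suggests transporting $\Dp$ to a favourable set of positive roots, applying Theorem KWG there, and translating the answer back.

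Concretely: given $\Dp$ and $S\in\cS(\Dp)$, one first produces a chain of odd reflections $r_{\beta_1},\dots,r_{\beta_N}$ carrying $\Dp$ to a set of positive roots $\Dp^\star$ possessing a maximal isotropic subset $S^\star$ inside its simple roots. The layered shape $S=S_1\cup\cdots\cup S_t$ of Definition~\ref{cp} is exactly what makes this possible, layer by layer: $S_1$ already lies in the simple roots of $\Dp$, and, having treated the first $i-1$ layers, one simplifies $S_i$ by reflections carried out inside the orthogonal subsystem $\{\alpha\in\D:(\alpha,\gamma)=0\ \forall\gamma\in S_1\cup\cdots\cup S_{i-1}\}$ — which leaves the earlier layers untouched — this being possible precisely because, by construction, $S_i$ consists of roots indecomposable in $S_{i-1}^\perp\setminus S_{i-1}$, i.e.\ of simple roots of that subsystem. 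Applying Theorem KWG at $\Dp^\star$ gives $e^{\rho^\star}R^\star=\sum_{w\in W^\sharp}sgn(w)\,w\bigl(e^{\rho^\star}/\prod_{\delta\in S^\star}(1+e^{-\delta})\bigr)$; since $e^{\rho^\star}R^\star=e^\rho R$ and the summand is $W^\sharp$-invariant up to the correct sign, enlarging $W^\sharp$ to $W_\g$ multiplies the right-hand side by $C_\g=|W_\g/W^\sharp|$.

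It remains to rewrite this $\Dp^\star$-expression in the coordinates of $\Dp$. One has $\rho^\star=\rho+\beta_1+\cdots+\beta_N$, and the assertion of the theorem is that the cumulative effect of these shifts together with the passage from $S^\star$ back to $S$ is captured, factor by factor, by the substitution $\gamma\mapsto(\lb\gamma\rb,sgn(\gamma))$ of \eqref{parentesi}: the coefficient $\varepsilon(\gamma-\beta)$ attached to $\beta\in\gamma^{<}$ is precisely the sign ($+1$ if $\gamma-\beta\in Q_0$, $-1$ otherwise) with which the intervening factor enters $R$, the sign of the whole $\gamma$-factor being $(-1)^{|\gamma^{\leq}|+1}=sgn(\gamma)$, while the factors $\tfrac{\htt(\gamma)+1}{2}$ in $C$ measure the multiplicity with which a single elementary factor $1+e^{-\delta}$, $\delta\in S^\star$, is reproduced after averaging over $W_\g$. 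This yields \eqref{princf}. The superdenominator identity \eqref{princff} follows from the same computation carried out for $e^\rho\check R$: each odd reflection now contributes an extra sign, absorbed by replacing $sgn$ with $sgn'$, and since $\check R$ assigns even and odd roots the same sign the free sign $sgn(\gamma)$ disappears, leaving $1-e^{-\lb\gamma\rb}$. Finally, for the existence of $S^{nice}$ one runs the construction of Definition~\ref{cp} greedily ``from below'', taking at each layer the $\leq$-minimal indecomposable isotropic roots available; then for every $\gamma\in S^{nice}$ the set $\gamma^{<}$ consists only of roots treated at earlier layers and the simplifying reflections add only positive roots of $\Dp$, so that $\lb\gamma\rb=\gamma+\rrbracket \gamma\llbracket\in Q^+$.

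The genuinely hard part is the bookkeeping in the last paragraph: first, that an arbitrary $S\in\cS(\Dp)$ really can be simplified by layer-preserving odd reflections — here the indecomposability clause of Definition~\ref{cp} is essential, and one must also verify that the reflections preserve pairwise orthogonality, isotropy, and linear independence of $S$ — and second, that the accumulated $\rho$-shifts together with the change from $S^\star$ to $S$ telescope \emph{exactly} into the recipe $\gamma\mapsto\lb\gamma\rb$ with the signs $\varepsilon(\gamma-\beta)$, pinning down the constant $C$ and in particular the height factors $\prod_{\gamma\in S}\tfrac{\htt(\gamma)+1}{2}$. In practice I would organize this as an induction on the defect, the inductive step peeling off the first layer $S_1$ and passing to the basic classical type superalgebra with root system $\{\alpha\in\D:(\alpha,\gamma)=0\ \forall\gamma\in S_1\}\setminus(\pm S_1)$ — working with $gl(d,d)$ in place of $A(d-1,d-1)$ as in the statement — for which a Kac--Wakimoto-type reduction identity for $e^\rho R$ supplies the inductive step.
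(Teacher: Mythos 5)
Your overall strategy---transport $(\Dp,S)$ by a chain of odd reflections to a pair $(\Dp^\star,S^\star)$ with $S^\star$ contained in the simple roots, apply Theorem KWG there, and translate back---breaks down at its first step, and the failure is not a matter of bookkeeping. Odd reflections only flip the sign of a single odd root, so they leave $\Dp_0$ (hence the relative order of the $\e_i$'s among themselves and of the $\d_j$'s among themselves) unchanged; even after allowing an auxiliary $W_\g$-translation, the set $S$ can in general \emph{not} be carried into a set of simple roots. The smallest counterexample is already in the paper's Figure 6: for $gl(2,2)$ with $\Pi=\{\e_1-\d_1,\d_1-\e_2,\e_2-\d_2\}$ and $S=\{\d_1-\e_2,\,\e_1-\d_2\}\in\cS(\Dp)$, no total order on $\{\e_1,\e_2,\d_1,\d_2\}$ reachable from this one by odd reflections (or by odd reflections combined with $W_\g$) makes both $\pm(\d_1-\e_2)$ and $\pm(\e_1-\d_2)$ simple, because that would force $\e_1$ adjacent to $\d_2$ \emph{and} $\d_1$ adjacent to $\e_2$, which is incompatible with any linear order on four symbols up to the allowed moves. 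Your ``layer-by-layer'' simplification inside the orthogonal subsystem $S_{i-1}^\perp$ does not rescue this: being simple (indecomposable) in $S_{i-1}^\perp\setminus S_{i-1}$ is much weaker than being simple in $\Dp$, and Theorem KWG needs the latter.

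What is genuinely needed, and what the paper supplies, is a second kind of move that changes $S$ while keeping $\Dp$ (hence $\rho$ and $e^\rho\check R$) fixed---the interval reflections $r_{[v,w]}$ on arc diagrams---together with a proof that the sum $\check{\cF}_{W_\g}(\cP(X))$ is invariant under them (Corollary \ref{cor1}). That invariance rests on the nontrivial $gl(k,k)$ identity of Lemma \ref{lem1}, proved via the relations $\sum_{j}(-1)^j\binom{s}{j}x_j=0$, and it is precisely this identity that produces both the modified exponents $\lb\gamma\rb$ and the height factors $\prod_{\gamma\in S}\tfrac{\htt(\gamma)+1}{2}$ in the constant $C$; they do not arise from telescoping $\rho$-shifts, since interval reflections involve no $\rho$-shift at all. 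Your reading of the roles of the odd reflections (sign change of $e^\rho\check R$ and of the summand), of Lemma \ref{ded} (deducing \eqref{princf} from \eqref{princff} by a sign twist), and of the greedy construction of $S^{nice}$ is consistent with the paper, but without an analogue of Lemma \ref{lem1} and the interval-reflection invariance the reduction to Theorem KWG does not go through; likewise, the ``induction on the defect'' you propose as a fallback would require a reduction identity from $\g$ to the root subsystem orthogonal to $S_1$ that is essentially equivalent in difficulty to the theorem itself.
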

The explicit values of $C_\g$ are the following.
\begin{equation}\label{valoricg}
C_\g=\begin{cases}d!\quad&\text{if $\g=A(m-1,n-1)$},\\
2^dd!\quad&\text{if $\g=B(m,n)$},\\
2^dd!\quad&\text{if $\g=D(m,n),\, m> n$},\\
2^{d-1}d!\quad&\text{if $\g=D(m,n),\, n\ge m$},\\
1\quad&\text{if $\g=C(n)$},\\
2\quad&\text{if $\g=D(2,1,\a),\,F(4),\,G(3)$.}
\end{cases}
\end{equation}
If $\text{def}(\g)=1$, and $S\in \cS(\Dp)$, then $S$ consists of a single simple root. Hence we are in the hypothesis of Theorem KWG, in which case \eqref{princf},  \eqref{princff} hold with $C=C_\g$, and these formulas coincide with \eqref{kw2}, \eqref{kw1}.
 Therefore we have to deal only with Lie superalgebras of type $gl(m,n),\,B(m,n),$ $D(m,n)$,  which  have defect $d=\min\{m,n\}.$  We   also treat the case $A(d-1,d-1)$, see
 Subsection \ref{comments}.\par

In these cases we introduce a combinatorial encoding of the elements of $\cS(\Dp)$ using the notion of  an arc diagram (see Definition \ref{ad}).   To any arc diagram $X$ we associate a maximal isotropic set $S(X)$, and in Proposition \ref{cad} we show that this is a bijection between all arc diagrams and $\cS(\Dp)$.

We introduce two types of operations on arc diagrams: odd reflections $r_v$ and interval reflections $r_{[v,w]}$. They  have the following features:
\begin{enumerate}
\item for any  arc diagram $X$ there exists a finite sequence of odd and interval reflections which change $X$ into an arc diagram $X'$ such that $S(X')$ consists of simple roots
(cf. Lemma \ref{esad});
\item we are able to relate $\sum_{w\in W}sgn(w) w\left(\frac{e^{\rho}}{\prod_{\gamma\in S(X)}(1-e^{-\llbracket \gamma\rrbracket})}\right)$ to the similar
sums where the product in the denominator of the l.h.s ranges over $r_v X$ and over $r_{[v,w]}X$ (cf. \eqref{intervalreflection}).
\end{enumerate}
The above properties allow to prove formula   \eqref{princff} starting from Theorem KWG applied to $S(X')$. In Lemma \ref{ded} we show that formula \eqref{princf} can be derived from \eqref{princff}. We also single out a special set of arc diagrams (see Definition \ref{sad}) which have the property described in the last sentence of  Theorem \ref{princ}.
\par
Further, we  provide a different generalization of \eqref{kw1}, in which only positive roots appear in the denominator, namely
we have
\begin{prop}\label{mm} Let $S\in\cS(\Dp)$. Then
\begin{align}\label{exmm}
e^{\rho}R &=\sum_{w\in W^\#}
sgn(w)w \left(\frac{
 (-1)^{\sum_{\gamma\in S} |\gamma^<|}
 e^{\rho+\sum_{\gamma\in S}\rrbracket \gamma\llbracket }
}{\prod_{\beta\in S} (1+e^{-\beta})}\right),
\\\label{mermaid}
 e^{\rho} \check R &=\sum_{w\in W^{\#}}sgn'(w) w \left(\frac{e^{\rho+\sum_{\gamma\in S}\rrbracket \gamma\llbracket }}{\prod_{\beta\in S} (1-e^{-\beta})}\right).
\end{align}
\end{prop}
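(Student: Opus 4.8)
The plan is to derive Proposition~\ref{mm} from Theorem~\ref{princ} by a change of variables that trades the ``bracketed'' exponents $\lb\gamma\rb$ for the bare roots $\beta\in S$. The key observation is that the two denominators are related by a unipotent-type transformation: since $S\in\cS(\Dp)$, the roots in $S$ can be ordered compatibly with $\leq$, say $\gamma_1<\cdots<\gamma_d$ is a linear refinement of the partial order, and then by \eqref{parentesi} each $\lb\gamma_i\rb$ is $\gamma_i$ plus a $\ganz$-linear combination of $\gamma_1,\dots,\gamma_{i-1}$ (with signs $\varepsilon(\gamma_i-\beta)$). Thus the substitution sending $e^{-\gamma_i}$ to $e^{-\lb\gamma_i\rb}$ is triangular, hence invertible, on the sublattice $\ganz S$. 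The idea is to apply to both sides of \eqref{princff} the automorphism of the group algebra induced by the inverse of this triangular substitution, after first passing from $W_\g$ back down to $W^\#$.

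First I would recall (from Theorem KWG and the definitions around \eqref{wsharp}) that $W_\g=W^\# \cdot (\text{coset reps})$ with $|W_\g/W^\#|=C_\g$, and that the summand $w\left(e^\rho/\prod_{\gamma\in S}(1-e^{-\lb\gamma\rb})\right)$ in \eqref{princff} is already $W^\#$-quasi-invariant up to the sign $sgn'$; the factor $C$ in \eqref{costante} is precisely $C_\g$ divided by $\prod_\gamma \tfrac{ht(\gamma)+1}{2}$. So the first step is bookkeeping: rewrite \eqref{princff} as a sum over $W^\#$ only, absorbing $C_\g$, so that one is comparing $\prod_\gamma\tfrac{ht(\gamma)+1}{2}\cdot e^\rho\check R$ with $\sum_{w\in W^\#}sgn'(w)\,w(\cdots)$. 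Next, expand $1/\prod_{\gamma\in S}(1-e^{-\lb\gamma\rb})$ as a power series $\sum_{\mathbf n\in\ganz_{\ge0}^d} e^{-\sum_i n_i\lb\gamma_i\rb}$ and, using the triangular relation $\sum_i n_i\lb\gamma_i\rb = \sum_j m_j\gamma_j$ with $\mathbf m = A\mathbf n$ for a unipotent integer matrix $A$, reindex by $\mathbf m$; the point is that $A$ is a bijection $\ganz_{\ge0}^d\to\ganz_{\ge0}^d$ — here one must check nonnegativity is preserved, which follows because $\gamma_i-\beta$ being a sum of positive roots forces the relevant $\varepsilon$-weighted contributions to land in $\ganz_{\ge 0}S$ after re-summation; this is the one genuinely combinatorial lemma. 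Equivalently, one identifies $\sum_{\mathbf n}e^{-\sum n_i\lb\gamma_i\rb}$ with $\prod_j (1-e^{-\gamma_j})^{-1}$ up to the shift coming from the strict part $\rrbracket\gamma\llbracket$, since $\lb\gamma\rb=\rrbracket\gamma\llbracket+\varepsilon(0)\gamma=\rrbracket\gamma\llbracket+\gamma$. Carrying the accumulated shift $\sum_{\gamma\in S}\rrbracket\gamma\llbracket$ out of the sum and into the $e^\rho$ gives exactly the numerator $e^{\rho+\sum_\gamma\rrbracket\gamma\llbracket}$ of \eqref{mermaid}, and the extra combinatorial factor $\prod_\gamma\tfrac{ht(\gamma)+1}{2}$ that appeared on the left cancels against the Jacobian-type multiplicity of the reindexing. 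This yields \eqref{mermaid}; then \eqref{exmm} is obtained from \eqref{mermaid} by the same device used in Lemma~\ref{ded} to pass from $\check R$ to $R$ (replacing $1-e^{-\beta}$ by $1+e^{-\beta}$ and tracking the sign $sgn'\rightsquigarrow sgn$, which produces the global factor $(-1)^{\sum_{\gamma\in S}|\gamma^<|}$ from the parities $\varepsilon(\gamma-\beta)$ of the roots in each $\gamma^<$).

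The main obstacle I anticipate is the exact accounting of the scalar factors: showing that the reindexing $\mathbf n\mapsto\mathbf m=A\mathbf n$ is genuinely a bijection of $\ganz_{\ge0}^d$ (not merely of $\ganz^d$) and that its ``volume'' contribution is precisely $\prod_{\gamma\in S}\tfrac{ht(\gamma)+1}{2}$, so that this factor cancels rather than survives. This requires understanding how $ht(\gamma)$ controls the coefficients appearing in $\lb\gamma\rb$ when $S$ is built by the inductive procedure of Definition~\ref{cp} — concretely, that the columns of $A$ are exactly the coefficient vectors expressing $\lb\gamma\rb$ in the basis $S$, and that the associated lattice-point count over the positive orthant is a product over $\gamma$. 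Everything else — the descent from $W_\g$ to $W^\#$, the power-series manipulation, and the $R\leftrightarrow\check R$ sign transfer — is routine given Theorem~\ref{princ} and the machinery of Lemma~\ref{ded}.
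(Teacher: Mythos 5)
Your central mechanism --- trading $\prod_{\gamma\in S}(1-e^{-\lb\gamma\rb})^{-1}$ for $e^{\sum_{\gamma}\rrbracket\gamma\llbracket}\prod_{\beta\in S}(1-e^{-\beta})^{-1}$ by reindexing geometric series through the unipotent matrix $A$ --- fails, because the two expressions are genuinely different as formal power series; they become proportional only \emph{after} antisymmetrization over the Weyl group. In the simplest nested case $\gamma_1<\gamma_2$ one has $\lb\gamma_2\rb=\gamma_2+\varepsilon(\gamma_2-\gamma_1)\gamma_1$. If $\varepsilon=+1$ (as for the diagrams used in Section \ref{Bnm}), writing $x=e^{-\gamma_1}$, $y=e^{-\gamma_2}$, the series $\sum_{\mathbf n\ge 0}x^{n_1}(xy)^{n_2}$ is supported on $\{(a,b)\mid a\ge b\ge 0\}$, whereas $x^{-1}\sum_{\mathbf m\ge 0}x^{m_1}y^{m_2}$ is supported on $\{(a,b)\mid a\ge -1,\ b\ge 0\}$: the map $\mathbf n\mapsto A\mathbf n$ is injective but far from surjective onto $\ganz_{\ge0}^d$. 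If $\varepsilon=-1$ (which does occur, e.g.\ for the left diagram of Figure 6, where $\lb\e_1-\d_2\rb=(\e_1-\d_2)-(\d_1-\e_2)$), the image is not even contained in $\ganz_{\ge0}^d$. No scalar ``Jacobian'' can repair a term-by-term mismatch of supports, so the factor $\prod_{\gamma}\frac{ht(\gamma)+1}{2}$ cannot be cancelled this way; in the paper that factor is produced by Lemma \ref{lem1}, which is an identity between the \emph{antisymmetrized} sums $\check\cF_{W_\g}$ and whose proof uses skew-invariance in an essential way (the recursion $x_j=j(x_1-x_0)+x_0$). A second, independent gap: you cannot pass from $\sum_{w\in W_\g}$ in \eqref{princff} to $C_\g\sum_{w\in W^\#}$ by ``quasi-invariance''; the individual summand $e^\rho/\prod(1-e^{-\lb\gamma\rb})$ is stabilized only by a small subgroup of $W^\#$, so this rewriting is simply false.

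The paper's actual proof of \eqref{mermaid} does not deduce it from Theorem \ref{princ} by substitution; it reruns the induction on arc diagrams. Theorem KWG gives the base case (simple diagrams, where $\rrbracket\gamma\llbracket=0$ and $\cQ(X)$ is exactly the KWG summand), odd reflections change the sign of both $e^{\rho_X}\check R_X$ and $\cQ(X)$, and the substantive step is Lemma \ref{lem3}, which shows that interval reflections preserve $\check\cF_{W^\#}(\cQ(X))$ by reducing to a $gl(k,k)$ computation and invoking Lemma \ref{lem1}. That lemma is where the real work happens and it cannot be bypassed by a change of variables in the group algebra. Your final step --- deducing \eqref{exmm} from \eqref{mermaid} by the device of Lemma \ref{ded}, with the sign $(-1)^{\sum_{\gamma\in S}|\gamma^<|}$ arising from applying $F$ to $e^{\rrbracket\gamma\llbracket}$ (each $\beta\in\gamma^<$ is odd, so contributes a factor $-1$ regardless of $\varepsilon(\gamma-\beta)$) --- is correct and is exactly the paper's concluding remark.
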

\vskip5pt 
The main application of our formulas is a conceptually uniform derivation of the Theta correspondence for compact dual pairs.  Basic classical type Lie superalgebras  
are linked to Howe theory of dual pairs through the notion of  a {\it distinguished} set of positive roots, which we introduce in Definition \ref{dis}: it requires that  the depth of the grading, which assigns value $1$ (resp. $0$) to each generator corresponding to a
positive  odd (resp. even) simple root, does not exceed $2$.
The distinguished  sets of positive roots for the basic classical type Lie superalgebras (which are  not  Lie algebras)  are classified in Subsection \ref{classifdis}.\par
To understand the relationship with Howe theory, consider the complex symplectic space $(\g_1,\langle\cdot\,,\cdot\rangle)$, where  $\langle\cdot\,,\cdot\rangle=(\cdot\, ,\cdot)_{|\g_1}$. A choice of a set of 
positive roots $\Dp$ determines a  polarization $\g_1=\g_1^+\oplus\g_1^-$, where 
$\g_1^\pm=\bigoplus\limits_{\a\in\D^\pm_1}\g_{\a}$. Hence we can consider the Weyl algebra
$W(\g_1)$ of $(\g_1,\langle\cdot\,,\cdot\rangle)$ and construct the $W(\g_1)$-module
\begin{equation}\label{met}M^{\Dp}(\g_1)=W(\g_1)/W(\g_1)\g_1^+,\end{equation}
with action by left multiplication.  The module $M^{\Dp}(\g_1)$ is also a $sp(\g_1,\langle\cdot\,,\cdot\rangle)$--module with $T\in sp(\g_1,\langle\cdot\,,\cdot\rangle)$ acting by left multiplication by
\begin{equation}\label{actionsp}
\theta(T)=-\half\sum_{i=1}^{\dim\,\g_1} T(x_i)x^i,
\end{equation}
where $\{x_i\}$ is any basis of $\g_1$ and $\{x^i\}$ is its dual basis w.r.t. $\langle\cdot\,,\cdot\rangle$, i.e. $\langle x_i,x^j\rangle=\d_{ij}$.
It is easy to check that, in $W(\g_1)$, relation
\begin{equation}\label{actionandbracket}
[\theta(T),x]=T(x)
\end{equation}
holds for any $x\in\g_1$. This implies that we have an $\h$-module isomorphism 
\begin{equation}M^{\Dp}(\g_1)\cong S(\g_1^-)\otimes \C_{-\rho_1}\end{equation}
where $S(\g_1^-)$ is  the symmetric  algebra of $\g_1^-$, and $\C_{-\rho_1}$ is the 1-dimensional $\h$-module with highest weight $-\rho_1$. Hence 
the  $\h$-character of $M^{\Dp}(\g_1)$ is given by 
\begin{equation}\label{Weylch}
ch M^{\Dp}(\g_1)=\frac{e^{-\rho_1}}{\prod_{\a\in\Dp_1}(1-e^{-\a})}.
\end{equation}
Since $ad_{|\g_1}(\g_0)\subset sp(\g_1, \langle\cdot\,,\cdot\rangle)$, we obtain an action of $\g_0$ on $M^{\Dp}(\g_1)$.
 Upon multiplication by $e^{\rho_0}\prod_{\a\in\Dp_0}(1-e^{-\a})$ the r.h.s. of \eqref{Weylch} becomes $e^\rho\check R$ and equating it to our formula we obtain the  $\g_0$-character of 
$M^{\Dp}(\g_1)$.\par 
So far we have not used the special features of distinguished sets of positive roots $\Dp$. Restricting to  distinguished sets of positive roots $\Dp$ for Lie superalgebras of type $gl(m,n),\,B(m,n),\,D(m,n)$,   we are able to build up a real form $V$ of $\g_1$ endowed with a standard symplectic basis 
$\{e_\a,f_\a\}_{\a\in\Dp_1}$ such that $$\bigoplus_{\a\in\Dp_1}\C(e_\a\pm\sqrt{-1}f_\a)=\g_1^\pm.$$ 
It turns out that 
$sp(V)\cap ad_{|\g_1}(\g_0)=\mathfrak s_1\times \mathfrak s_2,$
$\mathfrak s_i,\,i=1,2,$ being the Lie algebras of a compact dual pair in $Sp(V)$. As stated in Proposition \ref{B}, distinguished sets of positive roots turn out to correspond in this way to all compact dual pairs.
 Howe theory and our denominator formula allow to recover the explicit computation of the Theta correspondence done by Kashiwara-Vergne \cite{KV} and
  Li-Paul-Tan-Zhu \cite{DD} and related results by Enright \cite{En}.
Before discovering formula \eqref{princf},  we used this argument the other way around, to deduce the denominator identities from the knowledge of the Theta correspondence:
the relevant details are given in  \cite{KMP}.\par
In Section \ref{cinquesei} we show that relaxing the condition on the depth of the grading which defines the distinguished sets of positive roots it is possible to generalize the previous approach to  all but one  noncompact type I dual pairs 
(cf. Proposition \ref{structuredual}).
We plan to investigate in a subsequent paper if  our methods can be pushed further to obtain information on the Theta correspondence in these cases too. 
\par
In conclusion of the paper we use our superdenominator formula to confirm the validity of the
Kac-Wakimoto conjecture \cite{KW} for the natural representations of the classical Lie superalgebras.
We intend to extend this method to a large class of representations in a subsequent publication.
\par
The paper is organized as follows. In Section \ref{sec}  we collect some basic notation and definitions. In Section \ref{df}  we develop the combinatorial machinery needed to prove our superdenominator formulas: we introduce arc diagrams and study the effect of  odd  and interval reflection on arc diagrams.
We finally prove the superdenominator formula for the non-exceptional basic classical type Lie superalgebras and also prove the denominator formula. 
The final  subsection is devoted to the proof of formula \eqref{mermaid}. 
In Section \ref{5} we relate the distinguished sets of positive roots to Howe theory: after a preliminary discussion on Cartan involutions we introduce and analyze the definition of distinguished set of positive roots. Then we relate distinguished sets of positive roots to real forms: as a byproduct of our analysis we obtain a conceptual proof of the classification of  real forms of basic classical type Lie superalgebras
which is in the same spirit as the classification of simple Lie algebra involutions, given in \cite{Kac}. This classification has  been previously obtained  by Kac, Parker and Serganova,
(cf. \cite{Kacsuper}, \cite{parker}, \cite{serganova}).
Finally we provide a concise discussion about certain sets of positive roots related to noncompact dual  pairs.
 In the follwoing  four Sections we deduce from our denominator formula the explicit form of the Theta correspondence for all  compact dual pairs.
It is worthwhile to note that the pair $(O(2m),Sp(2n,\R))$ has additional complications which require to use a
version of the  character formula for disconnected compact groups due to Kostant \cite{Kostant}. In Section \ref{44} we prove the Kac-Wakimoto conjecture for the natural  representation.
\par
The ground field is $\C$ throughout the paper, unless otherwise specified.

\section{Setup}\label{sec} In this section we collect some notation and definitions which will be constantly used throughout the paper. Let $\g$ be a basic classical type  Lie 
superalgebra.  This means that $\g=\g_0\oplus\g_1$ is an almost simple 
finite-dimensional  Lie superalgebra with reductive $\g_0$   and that 
$\g$ admits  a nondegenerate invariant supersymmetric bilinear  form  $(\cdot\, ,\cdot)$. Almost simple means that $[\g,\g]$ modulo its center is simple.             Supersymmetric means that $(\g_0,\g_1)=0$ and the restriction of $(\cdot\, ,\cdot)$ to $\g_0$ (resp. $\g_1$) is symmetric 
(resp. skewsymmetric).
A complete list of the simple ones  consists of four series $A(m,n),\,B(m,n),\,
C(n),\,D(m,n)$ and three exceptional Lie superalgebras
$D(2,1,\a),\,F(4),\,G(3)$, and the non-simple ones are obtained by adding to  a simple one or to $gl(n+1,n+1)$ a central ideal. These Lie superalgebras can be constructed, starting with a Cartan matrix, like simple finite-dimensional Lie algebras, though inequivalent Cartan matrices may correspond to the same algebra (see \cite{Kacsuper}).
\par

 Choose a Cartan subalgebra $\h\subset\g_0$, and let $\D,\D_0,\D_1\subset \h^*$ be the set of
roots, even roots, odd roots, respectively. Let $W_\g\subset GL(\h^*)$ be the group generated by the reflections $s_\a$ w.r.t. {\it even}Ê roots $\a\in\D_0$.
  Choose a set of positive roots $\Dp\subset \D$ and set $\Dp_i=\D_i\cap\Dp,\,i=0,1$. Let $\Pi$ be the  set of simple roots corresponding to the choice of $\Dp$, i.e. the set of indecomposable roots in $\Dp$. 
Set also, as usual, for $i=0,1,\,$
  $\rho_i=\tfrac{1}{2}\sum_{\a\in\Dp_i}\a,\ \rho=\rho_0-\rho_1$. From time to time we will write $\rho(\Dp)$ or $\rho(\Pi)$ to emphasize the dependence of $\rho$ from the choice of the  set of positive roots.\par
Next, recall the notation skipped in the Introduction.   Let $h^\vee$ be the dual Coxeter number of $\g$, i.e. $2h^\vee$ is the eigenvalue of the Casimir operator
  of $(\g,\,(\cdot\, ,\cdot))$ in the adjoint representation. If $h^\vee\ne 0$ (which holds unless $\g$ is of type
  $A(n,n),\,D(n+1,n)$ or $D(2,1,\a)$), we let \cite{KW}
  
\begin{align}\notag
&\D_0^\sharp=\{\a\in\D_0\mid h^\vee (\a,\a)>0\},\\
\label{wsharp}
&W^\sharp= \langle s_\beta\in W_\g \mid\beta\in\D_0^\sharp\rangle,
\end{align} We refer to \cite[Remark 1.1, b)]{KW} for the definition 
  of $W^\sharp$ when $h^\vee=0$.
  Set  
\begin{equation}\label{deltabar}
\overline\D_0=\{\a\in\D_0\mid \tfrac{1}{2}\a\notin \D\},\qquad \overline\D_1=\{\a\in\D_1\mid  (\a,\a)=0\}.\end{equation}
Finally, for $w\in W_\g$, set 
\begin{equation}\label{sgn'}
sgn(w)=(-1)^{\ell(w)},\ 
sgn'(w)=(-1)^m,\end{equation} where $\ell$ is the usual length function on $W_\g$ and  $m$ is the number of reflections from $\overline\D^+_0$ occurring in an expression of $w$. 
 Note that
$$w\left(e^\rho\check R\right)=sgn'(w)\,e^\rho\check R.$$
In particular, $sgn'$ is well-defined.
Note  also that $sgn=sgn'$ in types $A$ and $D$.\par
 
 Let now recall the definition of odd reflections \cite{PS}: for an isotropic root 
 $\a\in\Pi$ we define 
\begin{equation}\label{or}r_\a(\Dp)=(\Dp\setminus\{\a\})\cup
 \{-\a\}.\end{equation}
 It is easy to prove that $r_\a(\Dp)$ is a set of positive roots for $\g$ and that if we set 
 \begin{equation}\label{oddref}r_\a(\beta)=\begin{cases}\a+\beta\quad&\text{if $(\a,\beta)\ne 0$}\\\beta\quad&\text{if $(\a,\beta)=0$, $\a\ne\be$}\\-\a\quad&\text{if $\beta=\a$}
 \end{cases}\end{equation}
 for $\beta\in\Pi$, then $r_\a(\Pi)=\{r_\a(\beta)\mid \beta\in \Pi\}$ is the corresponding set of simple roots. \par
 The importance of odd reflections lies in the fact that, up to  $W_\g$-action, any two sets of positive roots can be obtained one from the other by applying a finite sequence of odd refections.
  Note that, for an isotropic $\a$,
 \begin{equation}\label{rhoodref}\rho(r_\a(\Dp))=\rho(\Dp)+\a,\end{equation}
 from which one deduces that
\begin{equation}\label{erho}e^{\rho(\Dp)} \,\check R(\Dp)=-e^{\rho(r_\a(\Dp))} \,\check R(r_\a(\Dp))\quad\forall\a\,\in\Pi.\end{equation}
\section{Denominator formulas}\label{df}
\subsection{Arc diagrams}\label{ED}
Let $\g$ be a Lie superalgebra of type $gl(m,n), B(m,n), C(m),$ $D(m,n)$.
We first need an encoding of the  sets of positive roots.   The explicit realizations of these Lie superalgebras given in \cite{Kacsuper}, \cite[Section 4]{Kacsuperp} leads to a description of their roots in terms of functionals $\e_i,\d_i\in \h^*$.

We let $$\cE=\{\e_1,\ldots,\e_m\},\ \cD=\{\delta_1,\ldots,\d_n\}$$
if $\g$ is of type $gl(m,n), B(m,n)$ and let
$$\cE=\{\e_1,\ldots,\e_{m-1},\e_m\}\text{ or }\cE=\{\e_1,\ldots,\e_{m-1},-\e_m\}Ê,\quad \cD=\{\delta_1,\ldots,\d_n\}$$
if $\g$ is of type $C(m)$ or $D(m,n)$.
Let  
$$\mathcal B=\cE\cup\cD.$$
Then $\mathcal B$ is a
basis of $\fh^*$ (for $C(m)$ one has $n=1$).
We call two elements $v_1,v_2\in \mathcal B$
{\em elements of the same type} if $\{v_1,v_2\}\subset \cE$   or $\{v_1,v_2\}\subset\cD$
and {\em elements of different types} otherwise.\par
Recall the structure of $\D$ in terms of $\mathcal B$: in type
$gl(m,n)$ we have 
\begin{align*}
\D_0&=\{\e_i-\e_j\mid 1 \leq i\ne j\leq m\}\cup\{\d_i-\d_j\mid 1 \leq i\ne j\leq n\},\\
\D_1&=\{\pm(\e_i-\d_j)\mid 1\leq i\leq m, 1\leq j\leq n\}.
\end{align*}
As an invariant bilinear form on $gl(m,n)$ we choose the supertrace form  $(a,b)=str(ab)$, $str$ being the supertrace of a matrix in $gl(m,n)$, so that
$$(\e_i,\e_j)=\d_{ij}=-(\d_i,\d_j).$$
In the other classical types see  \eqref{B1}, \eqref{positivoD0} for the description of roots; the invariant bilinear form is the restriction of the supertrace form.\par
 Given a total order $>$ on $\mathcal B=\{\xi_1>\ldots>\xi_{m+n}\}$, we define a set of simple roots $\Pi(\mathcal B,>)$ for $\g$
as follows
\begin{equation*}\begin{tabular}{l|l}
$\g$&$\Pi(\mathcal B,>)$\\
\hline\\
$gl(m,n)$&$\{\xi_i-\xi_{i+1}\}_{i=1}^{m+n-1}$\\
$B(m,n)$&$\{\xi_i-\xi_{i+1}\}_{i=1}^{m+n-1}\cup\{\xi_{m+n}\}$\\
$C(m)$ & $\{\xi_i-\xi_{i+1}\}_{i=1}^{m}\cup\{2\xi_{m+1}\}$ if $\xi_{m+1}\in\cE$\\
& $\{\xi_i-\xi_{i+1}\}_{i=1}^{m}\cup\{\xi_{m}+\xi_{m+1}\}$ if $\xi_{m+1}\in\cD=\{\delta_1\}$\\
$D(m,n)$&$\{\xi_i-\xi_{i+1}\}_{i=1}^{m+n-1}\cup\{2\xi_{m+n}\}$ if $\xi_{m+n}\in\cD$\\
 & $\{\xi_i-\xi_{i+1}\}_{i=1}^{m+n-1}\cup\{\xi_{m+n-1}+\xi_{m+n}\}$ if $\xi_{m+n}\in\cE$.
\end{tabular}
\end{equation*}
Using Kac's description of Borel subalgebras (see \cite{Kacsuper}), it is not difficult to prove the following result.
\begin{lemma} Up to $W_\g$-equivalence, any set of positive  roots for $\g$ has $\Pi(\mathcal B,>)$ as a set of  simple roots for  some total order  $>$ on $\mathcal B$. 
\end{lemma}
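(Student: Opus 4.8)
The plan is to reduce the statement to Kac's classification of Borel subalgebras of basic classical Lie superalgebras (see \cite{Kacsuper}) by showing that the two operations connecting Borel subalgebras, namely even simple reflections $s_\a$ ($\a\in\Pi\cap\D_0$) and odd reflections $r_\a$ ($\a\in\Pi$ isotropic), act on the combinatorial data $(\mathcal B,>)$ in a transparent way. First I would recall that any set of positive roots $\Dp$ for $\g$ is, up to the action of $W_\g$ together with odd reflections, obtained from a fixed one by a sequence of odd reflections; this is the fact recorded in the excerpt right after \eqref{oddref}. Since $W_\g$-equivalence is already built into the statement, it suffices to track what happens under a single odd reflection $r_\a$, starting from the ``standard'' distinguished $\Dp$ whose simple roots are visibly of the form $\Pi(\mathcal B,>)$ for the natural ordering in which all of $\cE$ precedes all of $\cD$ (or the appropriate ordering in types $C$ and $D$, where the sign choices in $\cE$ enter).

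The key computation is the following: if $\Pi=\Pi(\mathcal B,>)$ for some total order $>$, and $\a=\xi_i-\xi_{i+1}$ is an \emph{isotropic} simple root — which by the description of $\D_1$ happens exactly when $\xi_i$ and $\xi_{i+1}$ are of different types — then one checks directly from the formula \eqref{oddref} that $r_\a(\Pi)=\Pi(\mathcal B,>')$, where $>'$ is the total order obtained from $>$ by transposing the two adjacent elements $\xi_i$ and $\xi_{i+1}$. Indeed $r_\a(\a)=-\a=\xi_{i+1}-\xi_i$, the neighbours $\xi_{i-1}-\xi_i$ and $\xi_{i+1}-\xi_{i+2}$ get replaced by $\xi_{i-1}-\xi_{i+1}$ and $\xi_i-\xi_{i+2}$ (the two roots with $(\a,\beta)\ne0$), all other simple roots being orthogonal to $\a$ and hence unchanged; and in the extremal cases $i=m+n-1$ (types $B,D$) or the $C$-case one verifies separately, using the last row of the table, that the extra simple root is transformed into the corresponding extra simple root for $>'$. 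Thus the set of subsets $\{\Pi(\mathcal B,>)\}$ as $>$ ranges over all total orders on $\mathcal B$ is stable under odd reflections at isotropic simple roots.

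Combining the two preceding paragraphs: starting from the standard $\Dp_0$ with simple roots $\Pi(\mathcal B,>_0)$ and applying an arbitrary sequence of odd reflections, one stays inside the family $\{\Pi(\mathcal B,>)\}$; since every $\Dp$ is $W_\g$-equivalent to one so obtained, every $\Dp$ is $W_\g$-equivalent to one with simple roots $\Pi(\mathcal B,>)$ for some $>$. For types $C(m)$ and $D(m,n)$ one must also note that the two allowed choices of $\cE$ (differing by the sign of $\e_m$) are themselves related by an element of $W_\g$, so no generality is lost in fixing one of them in the starting point, or one simply allows both in the conclusion as the statement does. This completes the argument modulo the elementary case-checking just indicated.

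The main obstacle is purely bookkeeping: verifying the odd-reflection computation at the ``ends'' of the Dynkin diagram in types $B$, $C$, and $D$, where the extra simple root ($\xi_{m+n}$, or $2\xi_{m+1}$, or $\xi_{m}+\xi_{m+1}$, etc.) is attached and the isotropy condition on a simple root can force the reflected root system to switch between the ``$2\xi$'' and the ``$\xi+\xi$'' normal forms in the table; one has to check that these transitions are exactly matched by the transposition of the last two elements of $\mathcal B$, keeping careful track of the sign conventions for $\e_m$ in $\cE$. Everything else — the reduction to odd reflections, the interior computation — is immediate from \eqref{oddref} and the explicit root data, so I expect the write-up to be short once these boundary cases are dispatched.
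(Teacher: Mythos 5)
The paper itself offers no proof of this lemma beyond the remark that it follows from Kac's description of Borel subalgebras, so there is no written argument to compare against; your route (transitivity of $W_\g$ plus odd reflections, together with stability of the family $\{\Pi(\mathcal B,>)\}$) is the natural and standard way to make that remark precise, and your interior computation for $r_{\xi_i-\xi_{i+1}}$ is correct. However, there is a concrete gap in the case analysis: you only examine odd reflections at isotropic simple roots of the form $\xi_i-\xi_{i+1}$, but in types $C(m)$ and $D(m,n)$ the set $\Pi(\mathcal B,>)$ contains a second isotropic simple root not of that form, namely $\xi_{m+n-1}+\xi_{m+n}$ (type $D$, when $\xi_{m+n}\in\cE$ and $\xi_{m+n-1}\in\cD$) or $\xi_m+\xi_{m+1}$ (type $C$, when $\xi_{m+1}\in\cD$), and the transitivity theorem permits reflecting at it. This reflection is not a transposition of the last two vertices: computing $r_{\d+\e}$ on $\{\dots,\xi_{m+n-2}-\d,\ \d-\e,\ \d+\e\}$ gives $\{\dots,\xi_{m+n-2}+\e,\ -\e-\d,\ 2\d\}$, i.e.\ $\Pi(\mathcal B'',>'')$ where $\mathcal B''$ contains $-\e$ in place of $\e$. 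If $\e=\e_j$ with $j<m$ this $\mathcal B''$ is not one of the two admissible bases, so the family $\{\Pi(\mathcal B,>)\}$ is \emph{not} literally stable under odd reflections, contrary to what you assert. It is only stable up to $W_\g$ (in the example, the transposition $\e_j\leftrightarrow\e_m$, which lies in $W(D_m)$, carries $\mathcal B''$ to the admissible basis containing $-\e_m$), and a similar discussion is needed in type $C$. Consequently your induction must be restated as ``every intermediate positive system is $W_\g$-equivalent to some $\Pi(\mathcal B,>)$,'' and you must then justify that interleaved Weyl elements can be pushed past subsequent odd reflections, using $w\circ r_\a\circ w^{-1}=r_{w(\a)}$; this is easy but it is precisely the step your write-up omits.

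Two smaller points. First, your parenthetical claim that the two allowed choices of $\cE$ (differing by the sign of $\e_m$) are related by an element of $W_\g$ is false in type $D(m,n)$, since the single sign change $s_{\e_m}$ does not belong to $W(D_m)$; your fallback of simply allowing both choices in the conclusion, as the statement does, is the correct reading, and indeed the computation above shows why both choices are genuinely needed. Second, note that in type $B(m,n)$ the extra simple root $\xi_{m+n}$ is either even or odd non-isotropic, so no odd reflection is ever applied to it and your analysis is complete there; the gap is confined to types $C$ and $D$, exactly the boundary cases you flagged as ``bookkeeping'' but whose resolution requires leaving the family and re-entering it via $W_\g$, not merely matching normal forms in the table.
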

It may happen in type $D(m,n)$ that two different choices of $\mathcal  B$ give rise to the same set of simple roots $\Pi$, so we make the following choice.
Let $h\in\h$ be such that $\a(h)=1$ for each $\a\in\Pi$. Choose 
$\mathcal B=\{\e_1,\ldots,\e_m\}\cup\{\d_1,\ldots,\d_n\}$ if $\e_m(h)\geq 0$, and
$\mathcal B=\{\e_1,\ldots,-\e_m\}\cup\{\d_1,\ldots,\d_n\}$ if $\e_m(h)<0$.
\vskip5pt
Instead of the Dynkin diagram,
we  encode $\Pi(\mathcal B,>)$ by the ordered sequence $\mathcal B$, which is  pictorially represented as an array of 
dots and crosses, the former corresponding to vertices in $\cE$ and the latter to vertices in $\cD$. 
\par
Examples for $gl(5,4), D(4,3)$ are given in Figures 1, 2, 3 disregarding the arcs. In the first case, we start from the total 
order $\{\e_1>\d_1>\e_2>\d_2>\d_3>\e_3>\e_4>\d_4>\e_5\}$, which corresponds to the set of simple roots
$\{\e_1-\d_1, \d_1-\e_2, \e_2-\d_2, \d_2-\d_3,\d_3-\e_3,\e_3-\e_4,\e_4-\d_5\}$. 
\par
In Figure 2, we start from the total 
order $\e_1>\d_1>\e_2>\d_2>\e_3>\d_3>\e_4$, which gives rise  to the set of simple roots
$\{\e_1-\d_1, \d_1-\e_2, \e_2-\d_2, \d_2-\e_3,\e_3-\d_3,\d_3\pm\e_4\}$, while in Figure 3 the total order is 
$\e_1>\d_1>\e_2>\d_2>\e_3>-\e_4>\d_4$, which corresponds to the set of simple roots $\{\e_1-\d_1, \d_1-\e_2, \e_2-\d_2, \d_2-\e_3,\e_3+\e_4,-\e_4\pm\d_3\}$.
\vskip5pt
For $v,w\in\mathcal B$,  if $v\geq w$, let
$[v,w]=\{u\in \mathcal B|\ v\geq u\geq w\}$. If $\mathcal B'\subset \mathcal B$, we denote by $W_{\mathcal B'}$ the
subgroup of $W_\g$ consisting of (non-signed) permutations of $\mathcal B'\cap\cE$ and of $\mathcal B'\cap\cD$, so that we have
$W_{\mathcal B'}\cong S_k\times S_l$, where $|\mathcal B'\cap\cE|=k,\ |\mathcal B'\cap\cD|=l$.
\vskip5pt
We   now introduce arc diagrams. These are combinatorial data that  encode some maximal isotropic subsets of the set of positive roots.

\begin{defi}\label{ad}
 An arc diagram is the datum consisting of the ordered sequence of vertices representing 
$\mathcal B$, and of  arcs between some of the vertices, satisfying the following properties:

(i) the vertices at the ends of each arc are of different type;

(ii) the arcs do not intersect  (including the end points);

(iii) for each arc $\stackrel{\frown}{v w}$   the interval $[v,w]$ contains
the same number of elements of  $\cE$ and of $\cD$:
$|[v,w]\cap\cE|=|[v,w]\cap\cD|$;

(iv) the number of arcs is $\min(m,n)$.
\end{defi}
\par
We also define the {\it support} of an arc diagram $X$ as
$$Supp(X)=\{v\in\mathcal B\mid v \text{ is an end of an arc in }ÊX\}.$$
If $\Dp$ is the set of positive roots corresponding to $\Pi(\mathcal B,>)$, we denote by $\mathcal A(\Dp)$ the set of arc diagrams  whose underlying set of vertices is $\mathcal B$ and the underlying total order is $>$.
\begin{figure}
  \setlength{\unitlength}{25pt}
  \begin{picture}(8,2)
 
    \put(0,0){\hbox{$\bullet$}}
    \put(0,-0.5){\hbox{$\e_1$}}
    \put(1,0){\hbox{$\times$}}
       \put(1,-0.5){\hbox{$\d_1$}}
    \put(2,0){\hbox{$\bullet$}}
       \put(2,-0.5){\hbox{$\e_2$}}
    \put(3,0){\hbox{$\times$}}
       \put(3,-0.5){\hbox{$\d_2$}}
    \put(4,0){\hbox{$\times$}}
       \put(4,-0.5){\hbox{$\d_3$}}
    \put(5,0){\hbox{$\bullet$}}
       \put(5,-0.5){\hbox{$\e_3$}}
     \put(6,0){\hbox{$\bullet$}}
        \put(6,-0.5){\hbox{$\e_4$}}
    \put(7,0){\hbox{$\times$}}
       \put(7,-0.5){\hbox{$\d_4$}}
     \put(8,0){\hbox{$\bullet$}}
     \put(8,-0.5){\hbox{$\e_5$}}  
    \qbezier(1.2,0.4)(1.7,1)(2.1,0.4)
    \qbezier(4.2,0.4)(4.7,1)(5.1,0.4)
    \qbezier(7.2,0.4)(7.7,1)(8.1,0.4)
   \qbezier(0.2,0.4)(1.65,2)(3.1,0.4)
  \end{picture}
  \vskip5pt
  \caption{An arc diagram $X$ for $gl(5,4)$.}
\end{figure}
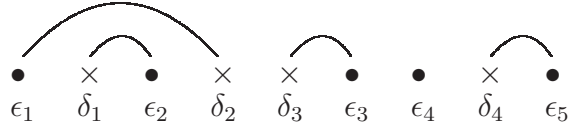

\begin{figure}
  \setlength{\unitlength}{25pt}
  \begin{picture}(6,1)
    \put(0,0){\hbox{$\bullet$}}
    \put(0,-0.5){\hbox{$\e_1$}}
    \put(1,0){\hbox{$\times$}}
       \put(1,-0.5){\hbox{$\d_1$}}
    \put(2,0){\hbox{$\bullet$}}
       \put(2,-0.5){\hbox{$\e_2$}}
    \put(3,0){\hbox{$\times$}}
       \put(3,-0.5){\hbox{$\d_2$}}
    \put(4,0){\hbox{$\bullet$}}
       \put(4,-0.5){\hbox{$\e_3$}}
    \put(5,0){\hbox{$\times$}}
       \put(5,-0.5){\hbox{$\d_3$}}
     \put(6,0){\hbox{$\bullet$}}
        \put(6,-0.5){\hbox{$\e_4$}}
    \qbezier(5.2,0.4)(5.65,1)(6.1,0.4)
    \qbezier(3.2,0.4)(3.65,1)(4.1,0.4)
   \qbezier(0.2,0.4)(0.65,1)(1.1,0.4)
  \end{picture}
  \vskip5pt
  \caption{An arc diagram for $D(4,3)$.}
\end{figure}
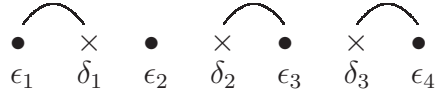
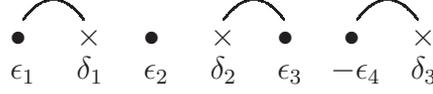
\begin{figure}
  \setlength{\unitlength}{25pt}
  \begin{picture}(6,1)
    \put(0,0){\hbox{$\bullet$}}
    \put(0,-0.5){\hbox{$\e_1$}}
    \put(1,0){\hbox{$\times$}}
       \put(1,-0.5){\hbox{$\d_1$}}
    \put(2,0){\hbox{$\bullet$}}
       \put(2,-0.5){\hbox{$\e_2$}}
    \put(3,0){\hbox{$\times$}}
       \put(3,-0.5){\hbox{$\d_2$}}
    \put(4,0){\hbox{$\bullet$}}
       \put(4,-0.5){\hbox{$\e_3$}}
    \put(5,0){\hbox{$\bullet$}}
       \put(4.8,-0.5){\hbox{$-\e_4$}}
     \put(6,0){\hbox{$\times$}}
        \put(6.0,-0.5){\hbox{$\d_3$}}
  \qbezier(3.2,0.4)(3.65,1)(4.1,0.4)
   \qbezier(0.2,0.4)(0.65,1)(1.1,0.4)
   \qbezier(5.2,0.4)(5.65,1)(6.1,0.4)
  \end{picture}
  \vskip5pt
  \caption{Another arc diagram for $D(4,3)$.}
\end{figure}

\subsubsection{The maximal isotropic set $S(X)$}
Consider an arc diagram $X$. It  encodes the following isotropic set of positive roots:
$$S(X)=\{v-w|\ v,w\in \mathcal B \text{ are connected by an arc and } v>w\}.$$
For example, the arc diagram 
in Figure 1 encodes the isotropic subset $S=\{\d_1-\e_2,\e_1-\d_2,\d_3-\e_3,\d_4-\e_5\}$. The arc diagram 
in Figure 2 encodes the  isotropic subset $S=\{\e_1-\d_1,\d_2-\e_3,\d_3-\e_4\}$, whereas
that in Figure 3 encodes the isotropic subset $S'=\{\e_1-\d_1,\d_2-\e_3,-\d_3-\e_4\}$.

We will often denote by $\Dp(X),\Dp_0(X),\Dp_1(X),\Pi(X)$ the sets of positive, positive even, positive odd, and simple roots associated to the ordering of vertices of $\mathcal B$ underlying $X$ (the arc structure of $X$ is irrelevant for that).
One readily sees that
$S(X)\subset \Delta^+(X)$ and $S(X)$ is a basis
of a maximal isotropic subspace in $\fh^*$ (since the cardinality of $S(X)$  equals the defect of $\g$). This shows that $S(X)$ is indeed a maximal set of isotropic roots.

For each $\beta\in \Delta_1$ let $$\sn \beta=(\beta,\sum_{i=1}^m\e_i)$$
(that is $\sn (\e_i\pm\delta_j)=1,\ \sn (-\e_i\pm\delta_j)=-1$).
For $\beta\in S(X)$ one has  $\sn\beta=1$ (resp., $-1$)
if the left end of the corresponding arc is in $\cE$ (resp., in $\cD$).
For each $\alpha\in S(X)$ we have (cf. \eqref{parentesi})
\begin{equation}\label{aaaaa}\llbracket \alpha\rrbracket =\sum_{\beta\in S(X), \beta\leq \alpha} \sn\alpha\cdot \sn \beta\cdot \beta,\end{equation}
namely
$$\llbracket \e_i-\delta_j\rrbracket =\sum_{\xi\in [\e_i,\delta_j]\cap\cE}\xi-
\sum_{\xi\in [\e_i,\delta_j]\cap\cD}\xi,\ \ \ \
\llbracket \delta_i-\e_j\rrbracket =\sum_{\xi\in [\delta_i,\e_j]\cap\cD}\xi-
\sum_{\xi\in [\delta_i,\e_j]\cap\cE}\xi.$$
 E.g., with reference to Figure 1, we have
$$\llbracket \e_1-\d_2\rrbracket =\e_1+\e_2-\d_1-\d_2,\quad
\llbracket \d_1-\e_2\rrbracket =\d_1-\e_2.$$

 Notice that for an arc $\stackrel{\frown}{v w}$ the element $\llbracket  v-w\rrbracket $ is $W_{[v,w]}$-invariant.
\vskip1pt
\subsubsection{Special arc diagrams}
\begin{defi}\label{sad}
We say that an arc is simple if  it  connects consecutive vertices.
We call an arc diagram {\em simple} if each arc is simple.
\par

We call an arc diagram $X$ nice
if for each $\alpha,\beta\in S(X)$ such that $\alpha<\beta$ one has $\sn \alpha=\sn \beta$
(equivalently, if for any arc,  its  left end   is of the same type
as the left ends of all arcs which are below this arc).
\end{defi}
One readily sees that simple  diagrams correspond to the case when
$S(X)$ consists of simple roots, and
 nice arc diagrams correspond to the case when $\llbracket \gamma\rrbracket =\sum_{\beta\in S(X), \beta\leq\gamma}\beta$. In particular, for a nice arc diagram $X$,
$\llbracket \gamma\rrbracket \in Q^+$ for each $\gamma\in S(X)$. We denote by $\mathcal A^{nice}(\Dp)$ the set of nice arc diagrams $X$ such that $\Dp=\Dp(X)$.

\subsubsection{Existence of arc diagrams and  of nice arc diagrams}
Fix an  order $>$ on $\mathcal B$, set $\Pi=\Pi(\mathcal B, >)$ and let $\Dp$ be the corresponding set of positive roots.
Arc diagrams can be easily constructed inductively.
We start by drawing an arc between any two consecutive  vertices $v,w$ of different types.
Then we throw away these two vertices and obtain an ordered set $\cE'\cup\cD'$,
where the cardinality of $\cE'$ (resp., of $\cD'$) is less by one than
the cardinality of $\cE$ (resp. of $\cD$). We take any arc diagram for $\cE'\cup\cD'$,
and we let $\Sigma(X')$ be the set of its arcs. Then $\Sigma(X)=\Sigma(X')\cup \{ \stackrel{\frown}{v w}\}$ is the set of arcs
of an arc diagram $X$ for $\mathcal B$.

More generally, we can start by drawing
any possible arc (i.e., the arc between two vertices $v,w$ of different types
such that $[v,w]$ has the same numbers of vertices lying in $\cE$ and in $\cD$).
Then we can construct an arc diagram for the sequence which is below the arc
(i.e., for $[v,w]\setminus\{v,w\}$)
and for the  ordered set $\mathcal B\setminus [v,w]$.
The set of arcs in the resulting arc diagram
is the union of the sets of arcs in these two diagrams and the arc $\stackrel{\frown}{v w}$.
As a result, there is an arc diagram $X$ such that  $\beta\in\Delta_1^{+}$ belongs to $S(X)$ iff
the interval defined by $\beta $ (i.e., $[v,w]$ for $\beta=v-w$) contains
the same numbers of vertices lying in $\cE$ and in $\cD$.
\par
Look now at  the above procedure from an algebraic point of view: 
removing a pair $\{u,v\}$ of vertices of different type (or more generally a set $Z=\{u_i,v_i\}_{i=1}^k$ of pairs of vertices of different type) from the ordered set $\mathcal B$  gives the ordered set corresponding  to  $S^\perp\setminus S$ where $S=\{u-v\}$ (resp, $S=\{u_i-v_i\}_{i=1}^k$). This is clearly related to the procedure explained in Definition \ref{cp}. More precisely, we have \begin{prop}\label{cad}ÊIf $X$ is an arc diagram, then $S(X)\in\cS(\Dp)$. Viceversa, any $S\in\cS(\Dp)$ is of the form $S(X)$ for some arc diagram $X$.\end{prop}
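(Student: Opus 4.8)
The plan is to prove the two directions separately, translating freely between the combinatorial language of arc diagrams and the inductive/algebraic language of Definition \ref{cp}. The bridge between the two is the observation, already recorded just before the statement, that deleting the two endpoints $\{v,w\}$ of an arc $\stackrel{\frown}{vw}$ from the ordered sequence $\mathcal B$ produces exactly the ordered sequence attached to $\{v-w\}^\perp\setminus\{v-w\}$ inside $\Dp$; more generally, deleting the endpoints of a set of pairwise non-crossing arcs gives the sequence attached to $S^\perp\setminus S$ for the corresponding isotropic set $S$. I would isolate this as a preliminary remark: if $\gamma=v-w\in S(X)$ with $v>w$, then $\gamma$ is isotropic (the endpoints are of different type by (i)), and the set of roots of $\g$ that are orthogonal to $\gamma$ and remain positive is precisely the root system attached to the ordered subsequence $\mathcal B\setminus\{v,w\}$; condition (iii) on $\gamma$ — equal numbers of $\cE$- and $\cD$-vertices in $[v,w]$ — is exactly what guarantees that this deleted subsequence is again of the right combinatorial shape, i.e. has equal defect drop. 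Condition (ii), non-crossing, is what makes this deletion process iterable: an arc that does not cross $\stackrel{\frown}{vw}$ either lies entirely inside $[v,w]$ or entirely outside it, so after deletion it is still an honest arc on the remaining sequence.

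For the first direction ($S(X)\in\cS(\Dp)$): I would argue by induction on the number of arcs, i.e. on $\min(m,n)=d$. Pick a \emph{simple} arc of $X$, that is an arc joining two consecutive vertices $v>w$; such an arc always exists among the arcs of $X$, because among all arcs one may choose an innermost one (containing no other arc), and then by (iii) applied to it together with the non-crossing condition its interval contains no other vertices — here one uses that an innermost arc's interval, having equal numbers of $\cE$ and $\cD$ vertices and containing no arc endpoints strictly inside, must in fact be empty inside. Then $\gamma_1:=v-w$ is a simple root lying in $\Pi$. Set $S_1=\{\gamma_1\}$; the remaining arcs of $X$, after deleting $v,w$, form an arc diagram $X'$ on the ordered sequence underlying $S_1^\perp\setminus S_1$, with one fewer arc, and $S(X)=S_1\cup S(X')$. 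By induction $S(X')$ is built by the inductive recipe of Definition \ref{cp} inside $S_1^\perp\setminus S_1$, so $S(X)\in\cS(\Dp)$. One should note $S_1$ need not be all of the isotropic simple roots available at the first step — Definition \ref{cp} only requires $S_1$ to be \emph{a} nonempty isotropic subset of the simple roots — so there is no mismatch; if one prefers, one can instead take $S_1$ to be the set of \emph{all} simple arcs of $X$ at once and argue with a set of pairwise non-crossing simple arcs, which is cleaner.

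For the converse, given $S\in\cS(\Dp)$ with $S=S_1\cup\dots\cup S_t$ as in Definition \ref{cp}, I would reconstruct the arcs. Each root in $S_1\subset\Pi$ is, by the explicit description of $\Pi(\mathcal B,>)$, of the form $\xi_i-\xi_{i+1}$ with $\xi_i,\xi_{i+1}$ consecutive and — since $S_1$ is isotropic, hence each of its roots is of the form $\e-\d$ or $\d-\e$ — of different type; draw the corresponding simple arcs. These are automatically pairwise non-crossing (consecutive-vertex arcs on \emph{distinct} pairs cannot cross) and satisfy (i), (iii). Delete their endpoints: the remaining ordered sequence is the one attached to $S_1^\perp\setminus S_1$, and $S_2$ is an isotropic subset of \emph{its} simple roots, so we recurse, producing arcs at each stage; crucially, an arc drawn at stage $i$ joins two vertices that were \emph{consecutive after deleting the stage-$<i$ endpoints}, hence in the original $\mathcal B$ its interval $[v,w]$ contains only stage-$<i$ endpoints, which come in $\cE$–$\cD$ pairs, so (iii) holds in $\mathcal B$; and it does not cross any earlier arc, since it lies within one of the sub-blocks cut out by the earlier arcs. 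The union of all these arcs is an arc diagram $X$ with $S(X)=S$, and it has $|S|=d$ arcs, giving (iv). Finally one checks this assignment and the one of the first part are mutually inverse, yielding the claimed bijection; since both constructions are deterministic this is immediate once both directions are in place.

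The main obstacle I anticipate is the bookkeeping around condition (iii): one must be careful that "equal numbers of $\cE$ and $\cD$ vertices in $[v,w]$" is preserved and correctly interpreted under repeated deletion of vertex pairs, and that a root of $S$ arising at stage $i$ of Definition \ref{cp} as an \emph{indecomposable} root of $S_{i-1}^\perp\setminus S_{i-1}$ really does correspond, back in $\mathcal B$, to an arc whose interval has the balance property. This is where the identification of "indecomposable roots of $S^\perp\setminus S$" with "simple roots of the deleted ordered subsequence" does the real work, and it should be checked type by type ($gl(m,n)$, $B(m,n)$, $C(m)$, $D(m,n)$) using the displayed tables for $\Pi(\mathcal B,>)$ and the root systems, with the $D(m,n)$ sign conventions on $\e_m$ requiring the extra care already flagged in the text.
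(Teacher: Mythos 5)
Your overall route is the same as the paper's: both directions reduce, via the deletion remark, to the existence of a simple arc in every arc diagram, followed by induction on the number of arcs. The one place where your justification does not hold up as written is that existence statement. You assert that an innermost arc's interval, ``having equal numbers of $\cE$ and $\cD$ vertices and containing no arc endpoints strictly inside, must in fact be empty inside'' --- but this is a non sequitur: conditions (i)--(iii) alone would allow an innermost arc whose interval contains, say, one $\cE$-vertex and one $\cD$-vertex that are endpoints of no arc. What closes the gap is axiom (iv): there are $\min(m,n)$ arcs and each has one end in $\cE$ and one in $\cD$, so every vertex of the minority type is an arc endpoint; combined with (iii) and non-crossing, this forces every vertex strictly below an arc to be an endpoint of an arc nested below it, whence the lowest arcs are simple. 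This is exactly the counting observation the paper makes (``all vertices below a given arc are ends of some arcs''), and you should state it explicitly rather than deriving emptiness from balance alone. The rest of your argument --- the stage-by-stage reconstruction of arcs from a chain $S_1\cup\dots\cup S_t$, and the verification that conditions (ii) and (iii) survive repeated deletion of endpoint pairs --- is sound and merely fleshes out what the paper leaves implicit; your closing caveat about the $D(m,n)$ sign convention on $\e_m$ is well placed, since that is the only point where ``isotropic simple root'' and ``difference of consecutive vertices'' require the paper's normalization of $\mathcal B$ to agree.
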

\begin{proof} Both assertions follow easily from the remark in the previous paragraph after it is proved that 
an arc diagram has always a simple arc. To prove this, remark that axioms  in  Definition \ref{ad}  
imply that all vertices below a given arc are ends of some arcs. Thus the ``lowest'' arcs (the ones which do not have arcs below them) are necessarily simple.
\end{proof}
Let us now explain how to construct  nice diagrams.
\begin{prop}\label{exnice} $\mathcal A^{nice}(\Dp)\ne\emptyset$.
\end{prop}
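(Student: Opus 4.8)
The plan is to construct a nice arc diagram explicitly by a greedy algorithm that always prefers to close an arc whose left end lies in $\cE$. Recall that a nice diagram is one in which, whenever two arcs are nested (say arc $\gamma$ below arc $\delta$), the left ends of $\gamma$ and $\delta$ are of the same type; equivalently, in any maximal nested chain of arcs all the left ends agree in type. So I want to build a diagram in which nesting never mixes an $\cE$-rooted arc over a $\cD$-rooted one or vice versa.

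First I would set up the induction on $|\mathcal B|=m+n$ (with the convention $d=\min(m,n)$ arcs to be drawn), the base case $\min(m,n)=0$ being the empty diagram. For the inductive step, look at the ordered sequence $\mathcal B=\{\xi_1>\dots>\xi_{m+n}\}$ and find the first position $i$ at which $\xi_i$ and $\xi_{i+1}$ are of different type — this exists since not all vertices are of the same type when $\min(m,n)\ge 1$. By construction $\xi_1,\dots,\xi_i$ are all of one type, so either $\xi_i\in\cE$ or $\xi_i\in\cD$. Draw the simple arc $\stackrel{\frown}{\xi_i\,\xi_{i+1}}$, delete these two vertices, and apply the inductive hypothesis to the remaining ordered set $\mathcal B'$ (which has one fewer $\cE$-vertex and one fewer $\cD$-vertex, hence $\min$ drops by exactly $1$) to get a nice diagram $X'$ on $\mathcal B'$. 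Let $X=X'\cup\{\stackrel{\frown}{\xi_i\,\xi_{i+1}}\}$. This $X$ satisfies axioms (i)–(iv) of Definition \ref{ad} by the general construction already described in the excerpt, so it only remains to check niceness.

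The key point to verify is that adding the simple arc $a=\stackrel{\frown}{\xi_i\,\xi_{i+1}}$ to the nice diagram $X'$ keeps niceness. Here I would use the crucial observation that the only arcs of $X$ lying \emph{below} $a$ are arcs of $X'$ with both ends in $\{\xi_1,\dots,\xi_{i-1}\}$ (an arc with one end among $\xi_1,\dots,\xi_{i-1}$ and the other end below $\xi_{i+1}$ would either cross $a$ or nest over $a$, not under it, contradicting non-crossing; and $[\,\xi_i,\xi_{i+1}]$ contains no vertex strictly between them). Now $\xi_1,\dots,\xi_{i-1}$ are all of the \emph{same} type as $\xi_i$, the left end of $a$, so every arc of $X'$ below $a$ has its left end among vertices of the type of $\xi_i$ — hence of the same type as the left end of $a$. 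Conversely, $a$ is simple, so no arc of $X$ lies below $a$ on the "other side": arcs lying below arcs of $X'$ are handled by the niceness of $X'$, and a is not below any arc whose left/right ends straddle it in a way creating a bad nesting, because any arc of $X$ containing $a$ strictly inside it is an arc of $X'$ extended... wait — here care is needed: arcs of $X'$ that sit \emph{above} $a$ in $X$. If $b$ is such an arc of $X'$, then in $X$ both $a$ and the arcs of $X'$ below $b$ are below $b$; niceness of $X'$ controls the $X'$-arcs below $b$, and one checks the left end of $a$ (of type $=$ type of $\xi_i=$ type of $\xi_1$) matches the left ends of those, since $b$'s interval must also then begin with a run of that same type. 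So the left ends of a whole nested chain through $a$ all agree.

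I expect the main obstacle to be exactly this last bookkeeping: making precise that in the glued diagram $X$, \emph{every} maximal nested chain of arcs has all left ends of one type. The clean way to organize it is to prove the stronger inductive statement: \emph{the greedy diagram $X$ has the property that for every arc $\gamma$, all arcs of $X$ weakly below $\gamma$ have left ends of the same type as the left end of $\gamma$}, which is manifestly equivalent to niceness but is the right hypothesis to push through the gluing argument, since the new simple arc $a$ is a "lowest" arc and its left end has the type of the initial run $\xi_1,\dots,\xi_i$, which dominates everything below any arc that passes over $a$. Once this invariant is maintained by the inductive step, Proposition \ref{exnice} follows, and in fact the argument produces a canonical element of $\mathcal A^{nice}(\Dp)$, which is the $S^{nice}$ promised in Theorem \ref{princ}.
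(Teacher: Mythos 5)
Your construction is correct and is essentially the paper's: both greedily join the last vertex of the initial constant-type run to the first vertex of the opposite type, and niceness follows in both cases because any later arc nesting over this simple arc must have its left end inside that initial run, hence of the same type as the new arc's left end. The only difference is bookkeeping — you peel off one simple arc per inductive step, whereas the paper processes a whole maximal block $[\e_1,\delta_s]$ at once and then recurses on the complement — and the resulting diagram is the same.
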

\begin{proof}
 Let $\e_1>\e_2>\ldots>\e_m$ and
$\delta_1>\delta_2>\ldots>\delta_n$. Assume that $\e_1>\delta_1$, so that 
our sequence $A_0=\mathcal B$ starts with $\e_1,\ldots,\e_k,\delta_1$ ($k\geq 1$).
We draw the arc $a_1=\stackrel{\frown}{\e_k\delta_1}$
and consider the sequence $A_1=\mathcal B\setminus\{\e_k,\delta_1\}$. We repeat the procedure until
the first element of the sequence $A_{s+1}$ is not in $\cE$.  Note that the right end of the
arc $a_j$ obtained in the $j$th step is $\delta_j$.
Let $U$ be the union of the ends in all  arcs $a_j$, $j=1,\ldots,s$.
Let us show that $U=[\e_1,\delta_s]$.
Indeed, by the above, for $j=1,\ldots,s$ the right end of $a_j$ is $\delta_j$
so both ends lie in $[\e_1,\delta_j]\subset [\e_1,\delta_s]$.
Therefore $U\subset [\e_1,\delta_s]$.
Moreover, $\delta_1,\ldots,\delta_s\in U$
so $([\e_1,\delta_s]\setminus U)\subset \cE$.
Since the first element in $A_{s+1}=A_0\setminus U$ lies in $\cD$, we conclude that
$[\e_1,\delta_s]=U$ as required. Thus the sequence $[\e_1,\delta_s]$ with the ends of the arcs in $U$
is a nice diagram $X$ and each element of this sequence is an end of an arc.
Now we construct a nice diagram $Y$ for the sequence $A_{s+1}$ (which is
shorter than $A_0$). The union of $X$ and $Y$ is a nice arc diagram for the sequence $A_0$.
\end{proof}

\subsubsection{Operations with the arc diagrams}\label{operation}
Consider all arc diagrams on the set $\mathcal B$ endowed by a  total order and
introduce the following operations.
\vskip5pt
{\em Odd reflections $r_{v-w}$.} Let $v,w$ be two consecutive vertices of an arc diagram $X$
connected by an arc $\stackrel{\frown}{v w}$.
 We define $r_{v-w}(X)$ to be the  arc diagram with $v,w$ switched 
(i.e., $r_{v-w}(X)=(X\backslash\{\stackrel{\frown}{v w}\})\cup\{\stackrel{\frown}{w v}\}$; the total order on $\mathcal B$ for $r_{v-w}(X)$ is obtained from the total order for
         $X$ by interchanging $v$ and $w$). Figure 4 displays $r_{\d_1-\e_2}(X)$ where $X$ is the arc diagram of Figure 1.
\begin{figure}
  \setlength{\unitlength}{25pt}
  \begin{picture}(8,2)
    \put(0,0){\hbox{$\bullet$}}
    \put(0,-0.5){\hbox{$\e_1$}}
    \put(1,0){\hbox{$\bullet$}}
       \put(1,-0.5){\hbox{$\e_2$}}
    \put(2,0){\hbox{$\times$}}
       \put(2,-0.5){\hbox{$\d_1$}}
    \put(3,0){\hbox{$\times$}}
       \put(3,-0.5){\hbox{$\d_2$}}
    \put(4,0){\hbox{$\times$}}
       \put(4,-0.5){\hbox{$\d_3$}}
    \put(5,0){\hbox{$\bullet$}}
       \put(5,-0.5){\hbox{$\e_3$}}
     \put(6,0){\hbox{$\bullet$}}
        \put(6,-0.5){\hbox{$\e_4$}}
    \put(7,0){\hbox{$\times$}}
       \put(7,-0.5){\hbox{$\e_5$}}
     \put(8,0){\hbox{$\bullet$}}
     \put(8,-0.5){\hbox{$\d_4$}}  
    \qbezier(1.2,0.4)(1.7,1)(2.1,0.4)
    \qbezier(4.2,0.4)(4.7,1)(5.1,0.4)
    \qbezier(7.2,0.4)(7.7,1)(8.1,0.4)
   \qbezier(0.2,0.4)(1.65,2)(3.1,0.4)
  \end{picture}
  \vskip5pt
  \caption{$r_{\d_1-\e_2}(X)$.}
\end{figure}         
    The odd reflection on  an arc diagram corresponds to an odd reflection with respect to a simple root lying in $S(X)$ (cf. \eqref{or}):

\begin{align*}
     \Pi\bigl(r_{v-w}(X)\bigr)&=r_{v-w}(\Pi(X)),\\
      S(r_{v-w}(X))&=S(X)\setminus\{v-w\}\cup \{w-v\}.
      \end{align*}

Notice that $\sn\alpha\cdot\alpha=\sn(-\alpha)\cdot (-\alpha)$ and thus for
$\gamma\in S(X)\cap S(r_{v-w}(X))$
the element $\llbracket \gamma\rrbracket $ defined for $X$ and for $r_{v-w}(X)$ is the same.

\vskip5pt
{\em Interval reflections $r_{[e_1,d_k]}$.} Suppose $X$ has a subsequence $e_1,d_1,e_2,d_2,\ldots, e_k,d_k\,$ $(k>1),$
where $e_1,\ldots,e_k$ are of the same type
and $d_1,\ldots, d_k$ are of another type, with the arcs
$
\stackrel{\frown}{e_1 d_k},
\stackrel{\frown}{d_1 e_2},
\stackrel{\frown}{d_2 e_3},
 \ldots,
 \stackrel{\frown}{d_{k-1} e_k}.$
We define  $r_{[e_1,d_k]}(X)$ as
the arc diagram with the same total order on $\mathcal B$, where
the above arcs are substituted by the arcs 
$
\stackrel{\frown}{e_1 d_1},
\stackrel{\frown}{e_2 d_2},
\stackrel{\frown}{e_3 d_3},
 \ldots,
 \stackrel{\frown}{e_{k} d_k}.$
 Hence
\begin{align*}
&S\bigl(r_{[e_1,d_k]}(X)\bigr)=\\&
(S(X)\setminus\{e_1-d_k,d_1-e_2,\ldots,e_k-d_k\})\cup
\{e_1-d_1, e_2-d_2,\ldots, e_k-d_k\}.
\end{align*}
Figure~5 displays $r_{[\e_1,\d_2]}(X)$ where $X$ is the arc diagram of Figure 1. Figure 6 provides further examples: the  arc diagram $X$ in the left display  is not nice, since $\llbracket \e_1-\d_2\rrbracket=
(\e_1-\d_2)-(\d_1-\e_2)$. The middle display  represents $r_{\d_1-\e_2}(X)$, which is nice, 
and the right display represents $r_{[\e_1,\d_2]}(X)$, which is simple (hence, in particular, nice).
This example also shows that both odd and interval reflections are necessary moves to change an arc  diagram into a simple one. They are also sufficient, as we show next.

\begin{figure}
  \setlength{\unitlength}{25pt}
  \begin{picture}(8,1)
    \put(0,0){\hbox{$\bullet$}}
    \put(0,-0.5){\hbox{$\e_1$}}
    \put(1,0){\hbox{$\times$}}
       \put(1,-0.5){\hbox{$\d_1$}}
    \put(2,0){\hbox{$\bullet$}}
       \put(2,-0.5){\hbox{$\e_2$}}
    \put(3,0){\hbox{$\times$}}
       \put(3,-0.5){\hbox{$\d_2$}}
    \put(4,0){\hbox{$\times$}}
       \put(4,-0.5){\hbox{$\d_3$}}
    \put(5,0){\hbox{$\bullet$}}
       \put(5,-0.5){\hbox{$\e_3$}}
     \put(6,0){\hbox{$\bullet$}}
        \put(6,-0.5){\hbox{$\e_4$}}
    \put(7,0){\hbox{$\times$}}
       \put(7,-0.5){\hbox{$\e_5$}}
     \put(8,0){\hbox{$\bullet$}}
     \put(8,-0.5){\hbox{$\d_4$}}  
    \qbezier(2.2,0.4)(2.7,1)(3.1,0.4)
    \qbezier(4.2,0.4)(4.7,1)(5.1,0.4)
    \qbezier(7.2,0.4)(7.7,1)(8.1,0.4)
   \qbezier(0.2,0.4)(0.65,1)(1.1,0.4)
  \end{picture}
  \vskip5pt
  \caption{$r_{[\e_1,\d_2]}(X)$.}
\end{figure}
\begin{lemma}\label{esad} There exists a finite sequence of odd and interval reflections which change 
 any arc diagram
into a simple arc diagram.
\end{lemma}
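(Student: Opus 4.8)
The plan is to introduce a non-negative integer monovariant on arc diagrams which vanishes exactly on the simple ones, and to show that every non-simple arc diagram can be strictly decreased by a finite sequence of odd and interval reflections. For an arc diagram $X$ I would set $N(X)=\sum_{\stackrel{\frown}{vw}\in X}(|[v,w]|-2)$, the total number of interior vertices of its arcs; since an arc is simple precisely when its two ends are consecutive, $N(X)\geq 0$, with $N(X)=0$ if and only if $X$ is simple. The lemma then follows by induction on $N(X)$, the case $N(X)=0$ being trivial.

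Suppose $X$ is not simple. Among its non-simple arcs choose one, $\stackrel{\frown}{vw}$ with $v>w$, such that every arc of $X$ strictly inside $[v,w]$ is simple (starting from an arbitrary non-simple arc and passing to a non-simple arc nested inside it whenever one exists, this descent stops). By the axioms of Definition~\ref{ad} together with the observation used in the proof of Proposition~\ref{cad} — that every vertex lying below an arc is an end of an arc — each vertex of the open interval $(v,w)$ is an end of an arc; as arcs neither cross nor share ends, each of these arcs is contained in $(v,w)$, hence is simple by our choice of $\stackrel{\frown}{vw}$. Thus the interior of $[v,w]$ is tiled by consecutive pairs joined by simple arcs: writing $|[v,w]|=2k$ (axiom (iii), with $k\geq 2$ since $\stackrel{\frown}{vw}$ is not simple), the interval reads $v,p_1,p_2,\ldots,p_{2k-3},p_{2k-2},w$ with simple arcs $\stackrel{\frown}{p_1p_2},\ldots,\stackrel{\frown}{p_{2k-3}p_{2k-2}}$, and by axiom (i) each pair $\{p_{2i-1},p_{2i}\}$ consists of one element of $\cE$ and one of $\cD$.

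The inductive step would then normalize the block $[v,w]$ with odd reflections and collapse it with one interval reflection. Assuming $v\in\cE$, $w\in\cD$, for each $i=1,\ldots,k-1$ with $p_{2i-1}\in\cE$ apply the odd reflection $r_{p_{2i-1}-p_{2i}}$; each such move acts on a simple arc joining consecutive vertices strictly inside $[v,w]$, so it keeps $\stackrel{\frown}{vw}$ an arc, leaves untouched every arc outside $[v,w]$ and its interval, and does not change $N$. After these moves $[v,w]$ reads $v,d_1,e_2,d_2,\ldots,d_{k-1},e_k,w$ with $d_j\in\cD$, $e_j\in\cE$ and arcs $\stackrel{\frown}{vw},\stackrel{\frown}{d_1e_2},\ldots,\stackrel{\frown}{d_{k-1}e_k}$; putting $e_1=v$, $d_k=w$, this is exactly the pattern on which the interval reflection $r_{[e_1,d_k]}$ is defined. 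Applying it replaces $\stackrel{\frown}{vw}$ and the $k-1$ simple arcs by the $k$ simple arcs $\stackrel{\frown}{e_1d_1},\ldots,\stackrel{\frown}{e_kd_k}$, fixing the order and all other arcs, so $N$ decreases by $|[v,w]|-2=2k-2\geq 2$. Iterating, one reaches a simple arc diagram.

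I expect the only delicate part to be the structural claim that the interior of such an innermost non-simple arc is forced to be a ``staircase'' consumable by an interval reflection, and that odd reflections can always orient its rungs correctly. The first rests on the non-crossing axiom and on the fact, extracted from the proof of Proposition~\ref{cad}, that every vertex below an arc is an arc end; the second is immediate from axiom (i), since each interior simple arc joins an $\cE$-vertex to a $\cD$-vertex, so a single odd reflection puts it in the required orientation. The remaining bookkeeping — that odd reflections on disjoint interior pairs commute and preserve $N$, that the interval reflection changes only arcs inside $[v,w]$, and that $N$ strictly drops — is routine.
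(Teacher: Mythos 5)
Your proof is correct and follows essentially the same route as the paper's: pick an innermost non-simple arc, observe that its interior is filled by simple arcs, use odd reflections to make the types alternate, and collapse with one interval reflection. The only cosmetic difference is your choice of monovariant (total number of interior vertices of arcs) versus the paper's (number of non-simple arcs); both decrease strictly under the same move, so the inductions are interchangeable.
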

\begin{proof} We proceed by induction on the number $k$ of non-simple arcs in the arc diagram $X$. If $k=0$, then $X$ is a simple arc diagram and there is nothing to prove.
Assume $k\geq 1$ and let $\stackrel{\frown}{e d}$ be a non-simple  arc  such that each arc $\stackrel{\frown}{vw}, \, e>v>w>d$ is simple. Then, necessarily, we have
$[e,d]=e a_1a_1' a_2a_2'\ldots a_ha_h'd$, with arcs $\stackrel{\frown}{e d},\,\stackrel{\frown}{a_i a_i'},\,i=1,\ldots,h$. Use  the odd reflections  $r_{a_i-a'_i}$, if necessary, to 
modify $[e,d]$ in such a way that vertices of different type alternate from $e$ to $d$: call the resulting diagram $X'$. Then   $r_{[e,d]}(X')$ has $k-1$ non-simple arcs and we are done by induction.
\end{proof}
\begin{figure}
  \setlength{\unitlength}{25pt}
  \begin{picture}(4,2)
      \put(0,0){\hbox{$\bullet$}}
      \put(0,-0.5){\hbox{$\e_1$}}
    \put(1,0){\hbox{$\times$}}
    \put(1,-0.5){\hbox{$\d_1$}}
    \put(2,0){\hbox{$\bullet$}}
    \put(2,-0.5){\hbox{$\e_2$}}
        \put(3,0){\hbox{$\times$}}
        \put(3,-0.5){\hbox{$\d_2$}}
     \qbezier(1.2,0.4)(1.7,1)(2.1,0.4)
   \qbezier(0.2,0.4)(1.65,2)(3.1,0.4)
  \end{picture}\quad
    \begin{picture}(4,2)
      \put(0,0){\hbox{$\bullet$}}
      \put(0,-0.5){\hbox{$\e_1$}}
    \put(1,0){\hbox{$\bullet$}}
    \put(1,-0.5){\hbox{$\e_2$}}
    \put(2,0){\hbox{$\times$}}
    \put(2,-0.5){\hbox{$\d_1$}}
        \put(3,0){\hbox{$\times$}}
        \put(3,-0.5){\hbox{$\d_2$}}
     \qbezier(1.2,0.4)(1.7,1)(2.1,0.4)
   \qbezier(0.2,0.4)(1.65,2)(3.1,0.4)
  \end{picture}\quad
    \begin{picture}(4,2)
        \put(0,0){\hbox{$\bullet$}}
      \put(0,-0.5){\hbox{$\e_1$}}
    \put(1,0){\hbox{$\times$}}
    \put(1,-0.5){\hbox{$\d_1$}}
    \put(2,0){\hbox{$\bullet$}}
    \put(2,-0.5){\hbox{$\e_2$}}
        \put(3,0){\hbox{$\times$}}
        \put(3,-0.5){\hbox{$\d_2$}}
          \qbezier(0.2,0.4)(0.65,1)(1.1,0.4)
           \qbezier(2.2,0.4)(2.65,1)(3.1,0.4)
    \end{picture}

  \caption{ }
\end{figure}

\subsection{Proof  of  Theorem \ref{princ}}In this Section we prove formulas \eqref{princf}, \eqref{princff}. For future applications we prove a slightly more general result
(see Proposition \ref{migliore}).
\vskip5pt
For any subset $U$ of $W_\g$  and any rational function $Y$ with $\{e^b\mid b\in\mathcal B\}$ as set of variables, introduce the sum
\begin{equation}\cF_U(Y)=\sum_{w\in U}sgn(w)\, w(Y).\end{equation}
We will need also the sum 
\begin{equation}{\check \cF}_U(Y)=\sum_{w\in U}sgn'(w)\, w(Y).\end{equation}
\begin{nota}\label{W} If $U$ is a subgroup of $W_\g$, and $W\subset W_\g$ is stable under the right action of $U$, then $W$ is a union of right cosets in $W_\g/U$. We denote by $W/U$ a set of representatives.\end{nota}
Note that 
$$\cF_W(Y)=\cF_{W/U}\bigl(\cF_U(Y)\bigr),\quad
{\check \cF}_W(Y)={\check \cF}_{W/U}\bigl({\check \cF}_U(Y)\bigr).$$
The formula displayed on the right follows since $sgn': W_\g\to\{\pm 1\}$ is a homomorphism.\par

\begin{lemma}\label{lem1}
Let $\Pi$ be a set of simple  roots for $\g=gl(k|k), k\geq 2$, all of which are isotropic. Let
$\{\beta_1,\ldots,\beta_k\}\subset \Pi$  be the maximal isotropic subset.
Then
\begin{equation}\label{f35}{\check \cF}_{W_\g}\bigl(\frac{1}{\prod_{i=2}^{k}(1-e^{-\beta_i})}\bigr)
=\frac{1-e^{-\sum_{i=1}^k\beta_i}}{k}{\check \cF}_{W_\g}\bigl(\frac{1}{\prod_{i=1}^{k}(1-e^{-\beta_i})}\bigr).\end{equation}
\end{lemma}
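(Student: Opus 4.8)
The plan is to work inside the field of rational functions in the variables $e^{b}$, $b\in\mathcal B$, and to exploit the very explicit combinatorics of $\g=gl(k|k)$ with an all-isotropic set of simple roots. In that situation the ordered sequence $\mathcal B$ alternates perfectly, say $\e_1,\d_1,\e_2,\d_2,\ldots,\e_k,\d_k$ (or the reverse), the simple roots are $\e_i-\d_i$ and $\d_i-\e_{i+1}$, and the maximal isotropic subset can be taken to be $\{\beta_i=\e_i-\d_i\}_{i=1}^k$. The Weyl group is $W_\g\cong S_k\times S_k$, acting by independently permuting the $\e$'s and the $\d$'s, and it acts on the $\beta_i$ essentially as a single copy of $S_k$ permuting the indices (after quotienting by the stabilizer of the family $\{\beta_i\}$). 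The key structural observation is that $\sum_{i=1}^k\beta_i=\sum\e_i-\sum\d_i$ is $W_\g$-invariant, so $e^{-\sum\beta_i}$ commutes past any $w\in W_\g$ and can be pulled out of ${\check\cF}_{W_\g}$ freely; this is exactly what makes the right-hand side of \eqref{f35} meaningful.

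First I would reduce to an identity about symmetric functions: since both sides of \eqref{f35} are ${\check\cF}_{W_\g}$ applied to something, and ${\check\cF}_{W_\g}(Y)$ only depends on $Y$ up to adding terms killed by antisymmetrization, it suffices to produce, for the difference of the two inner arguments, an explicit expression that is annihilated by $\sum_{w}sgn'(w)\,w$. Concretely, set $x_i=e^{-\beta_i}$ and note that $\prod_{i=1}^k(1-x_i)=\prod_{i=2}^k(1-x_i)-x_1\prod_{i=2}^k(1-x_i)$, so that
\[
\frac{1}{\prod_{i=2}^k(1-x_i)}-\frac{1-\prod_{i=1}^k x_i}{k}\cdot\frac{1}{\prod_{i=1}^k(1-x_i)}
\]
must be shown to lie in the kernel of the antisymmetrizer. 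After clearing the common denominator $\prod_{i=1}^k(1-x_i)$ (which is not $W_\g$-invariant, so one must track its transformation carefully, or better, avoid clearing it and argue term by term), the claim becomes a polynomial/rational identity in $x_1,\ldots,x_k$ that should follow from a partial-fractions expansion of $\big(1-\prod x_i\big)/\prod(1-x_i)$ together with the observation that the ``symmetric'' pieces that appear get killed by $sgn'$-antisymmetrization over the $S_k$ acting on the $\beta$-indices.

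The cleanest route, which I would try first, is an induction on $k$ combined with an odd-reflection/telescoping argument: apply \eqref{erho} or the $sgn'$-covariance of $e^\rho\check R$ repeatedly to move one vertex at a time, or use the elementary identity $\frac{1}{(1-x_1)(1-x_2)}=\frac{1}{1-x_1x_2}\big(\frac{1}{1-x_1}+\frac{1}{1-x_2}-1\big)$ to peel off $\beta_1$, then recognize the resulting sum over $S_k$ as $k$ copies (up to the antisymmetrizer) of the same shape with one fewer factor. The normalization constant $k$ in \eqref{f35} is precisely the number of indices $i$ that can play the role of the ``missing'' $\beta_i$, which is the combinatorial content. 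The main obstacle I anticipate is bookkeeping the sign $sgn'$ (rather than $sgn$) correctly under the $S_k\times S_k$-action on $\e$'s and $\d$'s versus the induced $S_k$-action on the $\beta_i$: one must check that transpositions of $\beta$-indices act with the right sign, using that in type $A$ one has $sgn=sgn'$ (noted in the Setup) so this is actually harmless, but verifying that the stabilizer of $\{\beta_i\}$ in $W_\g$ contributes trivially (i.e. the diagonal $S_k$ fixing each $\beta_i$ pointwise acts by $sgn'=+1$ on the relevant arguments) is the delicate point to get right.
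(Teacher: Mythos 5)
Your setup is right, and you have correctly isolated the structural facts that make the lemma work: $\sum_{i=1}^k\beta_i$ is $W_\g$-invariant, the diagonal permutations $(\e_i\,\e_j)(\d_i\,\d_j)$ permute the $\beta_i$ and have $sgn'=+1$, and suitably symmetric pieces die under the $sgn'$-antisymmetrizer. This is exactly the circle of ideas the paper's proof lives in. But what you have written is a list of strategies, not a proof: everything after ``it suffices to produce \dots an explicit expression annihilated by $\sum_w sgn'(w)\,w$'' is conditional, and the one thing the lemma actually asserts --- the constant $\tfrac1k$ --- is justified only by the heuristic that ``$k$ is the number of indices that can play the role of the missing $\beta_i$''. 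That is not a derivation, and the concrete identities you offer do not obviously lead anywhere: the identity $\frac{1}{(1-x_1)(1-x_2)}=\frac{1}{1-x_1x_2}\big(\frac{1}{1-x_1}+\frac{1}{1-x_2}-1\big)$ introduces a denominator $1-e^{-\beta_1-\beta_2}$ that occurs on neither side of \eqref{f35}, and the induction on $k$ is never set up (what is the inductive hypothesis, and how do the two Weyl groups compare?).

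The missing step is the following. Set $A=\prod_{i=1}^k(1-e^{-\beta_i})^{-1}$ and $x_j={\check \cF}_{W_\g}(Ae^{-\beta_1-\cdots-\beta_j})$; then the left side of \eqref{f35} is $x_0-x_1$ and, since $\sum_i\beta_i$ is $W_\g$-invariant, the right side is $(x_0-x_k)/k$. Because the diagonal even subgroup permutes the $\beta_i$, one has $x_j={\check \cF}_{W_\g}(Ae^{-\sum_{i\in J}\beta_i})$ for \emph{every} $J$ with $|J|=j$. For $s\ge 2$ the function $A\prod_{i=1}^s(1-e^{-\beta_i})=\prod_{i=s+1}^k(1-e^{-\beta_i})^{-1}$ does not involve $\e_1,\e_2$, so it is fixed by the odd transposition $(\e_1\,\e_2)$ and its antisymmetrization vanishes; expanding the product yields $\sum_{j=0}^s(-1)^j\binom{s}{j}x_j=0$ for $s=2,\ldots,k$, whence by induction $x_j=x_0+j(x_1-x_0)$ and finally $x_0-x_1=(x_0-x_k)/k$. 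Equivalently, an inclusion--exclusion shows $x_0-x_k=\sum_i{\check \cF}_{W_\g}(A(1-e^{-\beta_i}))=k(x_0-x_1)$, all higher terms vanishing by the same transposition argument. Without some version of this, the factor $\tfrac1k$ is unproved. A smaller inaccuracy: $W_\g\cong S_k\times S_k$ does not act on $\{\beta_1,\ldots,\beta_k\}$ ``as a single copy of $S_k$ after quotienting by the stabilizer''; only the diagonal subgroup preserves this set, it permutes the $\beta_i$ rather than fixing them pointwise, and the point you actually need is that this diagonal lies in $\ker(sgn')$.
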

\begin{proof}
Let $A=\frac{1}{\prod_{i=1}^{k}(1-e^{-\beta_i})}$, $x_0={\check \cF}_{W_\g}(A)$ and for $j=1,\ldots, k$ let 
$$x_j={\check \cF}_{W_\g}\bigl(Ae^{-\beta_1-\beta_2-\ldots-\beta_j}\bigr).$$
Then the left-hand side of formula \eqref{f35} is equal to $x_0-x_1$: $LHS=x_0-x_1$.

Write $\Pi=\{\e_1-\delta_1,\delta_1-\e_2,\ldots,\e_k-\delta_k\}$ and
$\beta_i=\e_i-\delta_i$.
Observe that ${W_\g}$ contains a subgroup permuting the $\beta_i$'s, and all elements
of this subgroup are even (the permutations
$(\e_i\ \e_j)(\delta_i\ \delta_j)$ switches $\beta_i$ and $\beta_j$).
Since ${\check \cF}_{W_\g}(A')$ is ${W_\g}$-skew-invariant for any rational function $A'$, we conclude that
\begin{equation}\label{37}x_j={\check \cF}_{W_\g}\bigl(Ae^{-\sum_{i\in J}\beta_i}\bigr)\end{equation}
for any subset $J\subset \{1,2,\ldots,k\}$ of cardinality $j$.
Since  the permutation $(\e_1\ \e_2)$ stabilizes $A$,    one has
$${\check \cF}_{W_\g}\bigl(A\prod_{i=1}^s(1-e^{-\beta_i})\bigr)=0$$
for $s=2,\ldots,k$. Hence, using  \eqref{37}, we obtain that $\sum_{j=0}^s (-1)^j\binom{s}{j}x_j=0$ for $s=2,\ldots,k$.
We deduce by induction on $j\geq 2$ that
$x_j=j(x_1-x_0)+x_0$ and thus
$$LHS=x_0-x_1=\frac{x_0-x_k}{k}=\frac{1}{k}{\check \cF}_{W_\g}\bigl((1-e^{-\sum_{i=1}^k\beta_i})Y\bigr)=\frac{1-e^{-\sum_{i=1}^k\beta_i}}{k}{\check \cF}_{W_\g}(A)$$
which is \eqref{f35}.
\end{proof}

For an arc diagram $X$  we denote by $\rho_X$ the element $\rho_{\Pi(X)}$ and by $R_X,\check R_X$ the fractions $R,\,\check R$
constructed for $\Pi(X)$, that is
$$R_X=\frac{\prod_{\alpha\in\Delta_0^+(X)}(1-e^{-\alpha})}{\prod_{\alpha\in\Delta_1^+(X)}(1+e^{-\alpha})},\quad
\check R_X=\frac{\prod_{\alpha\in\Delta_0^+(X)}(1-e^{-\alpha})}{\prod_{\alpha\in\Delta_1^+(X)}(1-e^{-\alpha})}
.$$
Let
\begin{equation}\label{PP}\cP(X)=\prod_{\gamma\in S(X)}\frac{\htt\gamma+1}{2}\cdot \frac{e^{\rho_X}}
{\prod_{\gamma\in S(X)} (1-e^{-\llbracket \gamma\rrbracket })}.\end{equation}

\begin{cor}\label{cor1}
Let $X$ be an arc diagram and $r_{[v,w]}$ be an interval reflection. For any subset $W$ of the Weyl group which is stable under the right action of
$W_{Supp(X)}$ one has
\begin{equation}\label{intervalreflection}{\check \cF}_W\bigl(\cP(X)\bigr)={\check \cF}_W\bigl(\cP(r_{[v,w]}(X))\bigr).\end{equation}
\end{cor}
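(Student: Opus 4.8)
The plan is to localise the statement to the interval $[v,w]$ of the reflection and then invoke Lemma~\ref{lem1}. Write the subsequence of $\mathcal B$ involved in the reflection as $e_1,d_1,e_2,d_2,\dots,e_k,d_k$ (so $[v,w]=[e_1,d_k]$), the $e_i$ all of one type and the $d_i$ of the other; the arcs of $X$ lying in $[v,w]$ are $\stackrel{\frown}{e_1 d_k},\stackrel{\frown}{d_1 e_2},\dots,\stackrel{\frown}{d_{k-1} e_k}$ and those of $r_{[v,w]}(X)$ are $\stackrel{\frown}{e_1 d_1},\dots,\stackrel{\frown}{e_k d_k}$. These $2k$ vertices are exactly the ends of these arcs, so $[v,w]\subseteq Supp(X)=Supp(r_{[v,w]}(X))$, hence $W_{[v,w]}\subseteq W_{Supp(X)}$; thus $W$ is a union of right $W_{[v,w]}$-cosets and, as observed after Notation~\ref{W}, $\check\cF_W=\check\cF_{W/W_{[v,w]}}\circ\check\cF_{W_{[v,w]}}$. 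It therefore suffices to prove the local identity
$$\check\cF_{W_{[v,w]}}\bigl(\cP(X)\bigr)=\check\cF_{W_{[v,w]}}\bigl(\cP(r_{[v,w]}(X))\bigr).$$

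To establish this I would divide out from $\cP$ all the $W_{[v,w]}$-invariant factors. The total order, hence $\Pi$ and $\rho$, is the same for $X$ and $r_{[v,w]}(X)$, so $\rho_X=\rho_{r_{[v,w]}(X)}$; moreover $e^{\rho_X}$ is $W_{[v,w]}$-invariant — this follows from the $\rho$-decomposition along the parabolic whose Levi is the $gl(k|k)$ generated by $\Pi_{[v,w]}$ (whose Weyl group is exactly $W_{[v,w]}$), together with the fact that the $\rho$ of a fermionic $gl(k|k)$ is $\pm\tfrac12\Sigma$ for $\Sigma:=\sum_{i=1}^{k}(e_i-d_i)$. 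Next, $\llbracket e_1-d_k\rrbracket=\Sigma$ in $X$ (the bracket of the arc spanning the whole interval, cf.\ the remark after \eqref{aaaaa}), while $\llbracket d_j-e_{j+1}\rrbracket=d_j-e_{j+1}$ in $X$ and $\llbracket e_i-d_i\rrbracket=e_i-d_i$ in $r_{[v,w]}(X)$, these being simple roots; and $\Sigma$ is $W_{[v,w]}$-invariant. For an arc $\gamma\in S(X)$ with both ends outside $[v,w]$ one checks that $\llbracket\gamma\rrbracket$ is unchanged by the reflection and is $W_{[v,w]}$-invariant: if $\gamma$'s interval is order-disjoint from $[v,w]$ no arc of $[v,w]$ lies $\leq\gamma$, while if $[v,w]$ is nested below $\gamma$ then all $k$ arcs of $[v,w]$ lie $\leq\gamma$ and contribute the total $\sn\gamma\cdot\Sigma$ to $\llbracket\gamma\rrbracket$ in both $X$ and $r_{[v,w]}(X)$. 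Finally the scalar $\prod_{\gamma\in S(X)}\tfrac{\htt\gamma+1}{2}$ equals $k$ times $\prod_{\gamma\in S(r_{[v,w]}(X))}\tfrac{\htt\gamma+1}{2}$, because inside $[v,w]$ the long arc of $X$ contributes $\tfrac{(2k-1)+1}{2}=k$ while the $k$ simple arcs of $r_{[v,w]}(X)$ contribute $1$ each, the outside arcs contributing equally. After cancelling all these common factors, the local identity reduces to
$$k\,\check\cF_{W_{[v,w]}}\!\Bigl(\frac{1}{(1-e^{-\Sigma})\prod_{j=1}^{k-1}(1-e^{-(d_j-e_{j+1})})}\Bigr)=\check\cF_{W_{[v,w]}}\!\Bigl(\frac{1}{\prod_{i=1}^{k}(1-e^{-(e_i-d_i)})}\Bigr).$$

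Here $\Pi_{[v,w]}$ is a set of all-isotropic simple roots of a $gl(k|k)$ with maximal isotropic subset $\{\beta_i:=e_i-d_i\}_{i=1}^{k}$, $\sum_i\beta_i=\Sigma$, Weyl group $W_{[v,w]}$, and with $sgn'$ agreeing with $sgn'$ of that $gl(k|k)$ (both equal the permutation sign). By Lemma~\ref{lem1} (formula \eqref{f35}), and using the $W_{[v,w]}$-invariance of $1-e^{-\Sigma}$, the right-hand side above equals $k\,\check\cF_{W_{[v,w]}}\bigl(\tfrac{1}{(1-e^{-\Sigma})\prod_{i=2}^{k}(1-e^{-\beta_i})}\bigr)$, so after cancelling $k$ and $1-e^{-\Sigma}$ it remains to show $\check\cF_{W_{[v,w]}}\bigl(\tfrac{1}{\prod_{j=1}^{k-1}(1-e^{-(d_j-e_{j+1})})}\bigr)=\check\cF_{W_{[v,w]}}\bigl(\tfrac{1}{\prod_{i=2}^{k}(1-e^{-\beta_i})}\bigr)$. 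For this I would let $\sigma\in W_{[v,w]}$ be the $k$-cycle $d_i\mapsto d_{i-1}$ (indices mod $k$); then $\sigma(\beta_i)=e_i-d_{i-1}=-(d_{i-1}-e_i)$, so $\sigma$ carries $\tfrac{1}{\prod_{i=2}^{k}(1-e^{-\beta_i})}$ to $\tfrac{1}{\prod_{j=1}^{k-1}(1-e^{\,d_j-e_{j+1}})}$, and $sgn'(\sigma)=(-1)^{k-1}$. Writing $\tfrac{1}{1-e^{x}}=1-\tfrac{1}{1-e^{-x}}$ and expanding the product gives $\tfrac{1}{\prod_j(1-e^{\,d_j-e_{j+1}})}=\sum_{J\subseteq\{1,\dots,k-1\}}(-1)^{|J|}\prod_{j\in J}\tfrac{1}{1-e^{-(d_j-e_{j+1})}}$; for every proper subset $J$ the corresponding summand involves at most $|J|<k-1$ of the variables $d_1,\dots,d_k$, hence is fixed by a transposition of two of the remaining $\geq 2$ of them and is annihilated by $\check\cF_{W_{[v,w]}}$. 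Thus only $J=\{1,\dots,k-1\}$ survives, and the two factors $(-1)^{k-1}$ cancel, giving the claim.

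The step I expect to cause the most trouble is the bookkeeping in the second paragraph: showing that $\llbracket\gamma\rrbracket$ for an arc $\gamma$ outside $[v,w]$ is literally the same in $X$ and $r_{[v,w]}(X)$ and is $W_{[v,w]}$-invariant (the nested-versus-disjoint case distinction, and the computation that the total contribution of the interval arcs to such a bracket is the $W_{[v,w]}$-invariant vector $\sn\gamma\cdot\Sigma$ in both diagrams), together with the $W_{[v,w]}$-invariance of $e^{\rho_X}$. Once everything has been cut down to the single $gl(k|k)$ block, the remaining work is exactly Lemma~\ref{lem1} together with the elementary skew-symmetry argument above.
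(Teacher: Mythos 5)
Your proof is correct, and while it follows the paper's strategy for the first half, it finishes by a genuinely different argument. Like the paper, you reduce to the $gl(k|k)$ block $[v,w]$ by writing ${\check\cF}_W={\check\cF}_{W/W_{[v,w]}}\circ{\check\cF}_{W_{[v,w]}}$, pull out the $W_{[v,w]}$-invariant factors ($e^{\rho_X}$, the brackets of external arcs, the height normalization), and invoke Lemma \ref{lem1} to absorb the factor $k$ and the long bracket $1-e^{-\Sigma}$. The divergence is in the last step: the paper evaluates both local sums in closed form via Theorem KWG (formula \eqref{kw1}) applied to the two simple diagrams $Z$ (with the reversed order $d_1>e_2>\dots>e_1$) and $Y'$, obtaining $-C\check R_Ze^{\rho_Z}$ and $C\check R_{Y'}e^{\rho_{Y'}}$, and then compares these via a chain of $2s-1$ odd reflections using \eqref{refl}. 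You instead compare the two sums $\check\cF_{W_{[v,w]}}\bigl(1/\prod_{j}(1-e^{-(d_j-e_{j+1})})\bigr)$ and $\check\cF_{W_{[v,w]}}\bigl(1/\prod_{i\geq 2}(1-e^{-\beta_i})\bigr)$ directly, by transporting one denominator to the other with the $k$-cycle on the $d_i$'s and then using the identity $\tfrac{1}{1-e^{x}}=1-\tfrac{1}{1-e^{-x}}$ together with the vanishing of $\check\cF$ on any function fixed by a transposition of $sgn'=-1$; the two signs $(-1)^{k-1}$ cancel. Both routes rest on Lemma \ref{lem1}, but yours makes the corollary independent of Theorem KWG (which the paper only needs as the base case for simple diagrams in the proof of Theorem \ref{princ}), at the cost of the extra sign and partial-fraction bookkeeping; the paper's route is shorter once Theorem KWG is taken as given. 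Your verification that $\llbracket\gamma\rrbracket$ is unchanged and $W_{[v,w]}$-invariant for external arcs (disjoint versus nested, with total contribution $\sn\gamma\cdot\Sigma$ in both diagrams) is the detail the paper leaves implicit, and you have it right.
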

\begin{proof}
Denote by $Y$ the arc subdiagram corresponding to the interval $[v,w]$ and let $Y'=r_{[v,w]}(Y)$. Then $X'=r_{[v,w]}(X)$ is obtained
from $X$ by substituting $Y$ by $Y'$. View $Y,Y'$ as arc diagrams of $gl(k,k)$-type.
Then $G=W_{[v,w]}$ is the Weyl group of $Y$ and of $Y'$.

Notice that $(\rho_X-\rho_Y,\alpha)=0$
for each $\alpha\in\Pi(Y)$
and thus $\rho_X-\rho_Y$ is $G$-invariant. Since $[v,w]\subseteq Supp(X)$, we have that $WG=W$. Therefore
$${\check \cF}_W\bigl(\cP(X)\bigr)={\check \cF}_{W/G}\bigl(\frac{e^{\rho_X-\rho_Y}}
{\prod_{\beta\in S(X)\setminus S(Y)}(1-e^{-\llbracket \beta\rrbracket })}\cdot {\check \cF}_G(\cP(Y)\bigr)\bigr)$$
 and a  similar formula holds for $X',Y'$ respectively. 
One has $S(X)\setminus S(Y)= S(X')\setminus S(Y')$. Moreover,
since $\Pi(Y)=\Pi(Y')$ and $\Pi(X)=\Pi(X')$,
one has $\rho_Y=\rho_{Y'}$ and $\rho_X=\rho_{X'}$. Thus formula \eqref{intervalreflection} follows from the following equality
\begin{equation}\label{formula2}
{\check \cF}_G(\cP(Y))={\check \cF}_G(\cP(Y')),
\end{equation}
which we now prove.

Recall that, by  definition of interval reflection, the interval $[v,w]$ is of the form
$v=e_1>d_1>e_2>d_2>\ldots>e_s>d_s=w$, where the $e_i$'s are of the same type and
the $d_i$'s are of another type. Then $S(Y)=\{\alpha\}\cup S'$, where $\alpha=v-w=e_1-d_s$ and
$S'=\{d_i-e_{i+1}\}_{i=1}^{s-1}$. Since $S'\subset\Pi(Y)$ one has
$\llbracket \beta\rrbracket =\beta$ for $\beta\in S'$. Recall that $\llbracket \alpha\rrbracket $ is $G$-invariant and note that
$\rho_Y$ is also $G$-invariant (since $\Pi(Y)$ consists of odd roots).
Therefore
$${\check \cF}_G(\cP(Y))=\frac{e^{\rho_Y}}{1-e^{-\llbracket \alpha\rrbracket }}\cdot
{\check \cF}_G\bigl(\frac{1}{\prod_{\beta\in S'}(1-e^{-\beta})}\bigr).$$

Consider the simple arc diagram $Z$ with the order $d_1>e_2>d_2>\ldots>d_s>e_1$; then
$S(Z)=S'\cup\{-\alpha\}$.
Using~\Lem{lem1} we obtain
$${\check \cF}_G\bigl(\frac{1}{\prod_{\beta\in S'}(1-e^{-\beta})}\bigr)
=\frac{1-e^{-\sum_{\beta\in S'}\beta+\alpha}}{s}
{\check \cF}_G\bigl(\frac{1}{\prod_{\beta\in S(Z)}(1-e^{-\beta})}\bigr).$$
Observe that $s=\frac{\htt\alpha+1}{2}=\frac{\htt(v-w)+1}{2}$ and $\llbracket \alpha\rrbracket =\alpha-
\sum_{\beta\in S'}\beta=\rho_Z-\rho_Y$.
Summarizing, we obtain
\begin{align*}
{\check \cF}_G(\cP(Y))&=e^{\rho_Y}\frac{1-e^{\llbracket \alpha\rrbracket }}{1-e^{-\llbracket \alpha\rrbracket }}
{\check \cF}_G\bigl(\frac{1}{\prod_{\beta\in S(Z)}(1-e^{-\beta})}\bigr)\\
&=-e^{\rho_Z}{\check \cF}_G\bigl(\frac{1}{\prod_{\beta\in S(Z)}(1-e^{-\beta})}\bigr)=-C\check  R_Ze^{\rho_Z},
\end{align*}
where the last equality follows from \eqref{kw1} (because $Z$ is a simple arc diagram).
Since $S(Y')\subset \Pi(Y')$, Theorem KWG gives also ${\check \cF}_G(\cP(Y'))=C \check R_{Y'}e^{\rho_{Y'}}$.

Since $\Pi(Z)$ corresponds to the total order $d_1>e_2>d_2>\ldots>d_s>e_1$
and $\Pi(Y')$ corresponds to the total order $e_1>d_1>e_2>d_2>\ldots>d_s$, one has
$\Pi(Z)=r_{e_1-d_s}\ldots r_{e_1-e_2}r_{e_1-d_1}(\Pi(Y'))$. Using~(\ref{refl}) we get
$$\check R_{Y'}e^{\rho_{Y'}}=(-1)^{2s-1}\check  R_Ze^{\rho_Z}=-\check  R_Ze^{\rho_Z}.$$
This establishes~(\ref{formula2}) and completes the proof.
\end{proof}
To complete the proof of Theorem \ref{princ}, we will need the following observation.
\begin{lemma}\label{ded} Formula \eqref{princf} follows from \eqref{princff}.
\end{lemma}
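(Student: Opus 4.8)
The plan is to deduce the denominator formula \eqref{princf} from the superdenominator formula \eqref{princff} by exploiting the relationship between $R$ and $\check R$, which differ only in the odd part of the product, together with an automorphism of the group algebra that tracks the parity of weights. The key is the function $\varepsilon$ defined in \eqref{<}: since $R=\check R\cdot\prod_{\a\in\Dp_1}\frac{1-e^{-\a}}{1+e^{-\a}}$ and since, for the relevant Lie superalgebras, $\check R$ and $R$ are obtained from one another by the substitution sending $e^{\mu}\mapsto \varepsilon(\mu)e^{\mu}$ on weights $\mu\in Q$ (this substitution sends $1+e^{-\a}$ to $1-e^{-\a}$ precisely when $\a\in\overline\D_1$, and fixes the even part since even roots lie in $Q_0$), one expects
\begin{equation*}
e^\rho R=\tau(e^\rho\check R),
\end{equation*}
where $\tau$ is the algebra endomorphism of (a suitable completion of) $\ZZ[Q]$ determined by $\tau(e^{\mu})=\varepsilon(\mu)e^{\mu}$. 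First I would verify this identity at the level of the defining products \eqref{kwd}, \eqref{kwdd}, checking that $\varepsilon$ is multiplicative on $Q$ (so $\tau$ is indeed a ring homomorphism) and that it correctly converts the odd factors.

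Next I would apply $\tau$ to both sides of \eqref{princff}. On the left this yields $C\cdot e^\rho R$ by the identity above (note $C$ is a scalar, unaffected by $\tau$). On the right I must commute $\tau$ past the action of $w\in W_\g$ and past the sum; since $W_\g$ preserves $Q$ and $Q_0$ (Weyl group elements act by even-root reflections, hence by translations inside $Q_0$ on differences), one has $\tau\circ w=w\circ\tau$, so
\begin{equation*}
\tau\!\left(\sum_{w\in W_\g}sgn'(w)\,w\!\left(\frac{e^\rho}{\prod_{\gamma\in S}(1-e^{-\lb\gamma\rb})}\right)\right)=\sum_{w\in W_\g}sgn'(w)\,w\!\left(\tau\!\left(\frac{e^\rho}{\prod_{\gamma\in S}(1-e^{-\lb\gamma\rb})}\right)\right).
\end{equation*}
It remains to compute $\tau$ of the summand. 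We have $\tau(e^\rho)=\varepsilon(\rho)e^\rho$, and for each factor, $\tau(1-e^{-\lb\gamma\rb})=1-\varepsilon(\lb\gamma\rb)e^{-\lb\gamma\rb}$. Expanding $\lb\gamma\rb=\sum_{\beta\in\gamma^{\leq}}\varepsilon(\gamma-\beta)\beta$ and computing $\varepsilon(\lb\gamma\rb)$ via multiplicativity, one should find $\varepsilon(\lb\gamma\rb)=-sgn(\gamma)$ (using $|\gamma^{\leq}|$ and the fact that $\varepsilon(\gamma-\beta)\beta\equiv\beta\pmod{Q_0}$ so that $\varepsilon$ of the sum is $(-1)$ to the number of odd summands, which is $|\gamma^{\leq}|$ since each $\beta\in S$ is odd); hence $\tau(1-e^{-\lb\gamma\rb})=1+sgn(\gamma)e^{-\lb\gamma\rb}$. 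Tracking the sign contribution $\varepsilon(\rho)\cdot\prod_\gamma(-1)^{?}$ and comparing with $sgn$ versus $sgn'$ — recall from the Setup that $sgn=sgn'$ in types $A$ and $D$, while in types $B$, $C$ the discrepancy is measured exactly by reflections in $\overline\D_0^+$, equivalently by the $\varepsilon$-parity — one identifies the overall constant as $1$ and obtains precisely the right-hand side of \eqref{princf}.

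The main obstacle I anticipate is the bookkeeping of signs: one must check carefully that the prefactor $\varepsilon(\rho)$ and the product of signs $\prod_{\gamma\in S}$ arising from $\tau$ applied to the denominator factors exactly reproduce the replacement of $sgn'(w)$ by $sgn(w)$ in the Weyl-group sum, with no leftover scalar. Concretely this requires the identity $sgn(w)=\varepsilon(\,w(\rho)-\rho\,)\cdot sgn'(w)$ (so that $\tau$ intertwines the two skew-symmetrizations up to the constant $\varepsilon(\rho)$, which then cancels against the $\varepsilon(\rho)$ from $\tau(e^\rho)$), and the verification that each isotropic $\gamma\in S$ contributes consistently. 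Once these parity computations are pinned down — they are routine given that every $\beta\in S$ is an odd root and $\varepsilon$ is a homomorphism $Q\to\{\pm1\}$ with kernel $Q_0$ — formula \eqref{princf} follows immediately from \eqref{princff}.
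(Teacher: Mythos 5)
Your approach --- twist both sides of \eqref{princff} by a multiplicative sign character that is $+1$ on even roots and $-1$ on odd roots, so that $\check R$ becomes $R$, and absorb the resulting constants into the passage from $sgn'$ to $sgn$ --- is exactly the paper's proof. There the character is $F(e^\mu)=e^{\pi\sqrt{-1}\mu(h)}e^\mu$, with $h\in\h$ chosen so that $\a(h)=0$ for even simple roots and $\a(h)=1$ for odd ones, and the identity you single out as the crux, namely $sgn(w)=\varepsilon(w(\rho)-\rho)\,sgn'(w)$, is precisely the relation $e^{\pi\sqrt{-1}w(\rho)(h)}sgn(w)=e^{\pi\sqrt{-1}\rho(h)}sgn'(w)$, which the paper obtains in two lines by computing $F(w(e^\rho\check R))$ in two ways (using $w(e^\rho\check R)=sgn'(w)e^\rho\check R$ and $w(e^\rho R)=sgn(w)e^\rho R$); you should supply that argument rather than assert the identity.

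One step as written would fail and needs repair. You take $\tau(e^\mu)=\varepsilon(\mu)e^\mu$ with $\varepsilon$ the indicator of $Q_0$ from \eqref{<}, and you plan to ``check that $\varepsilon$ is multiplicative on $Q$.'' That check fails in type $A$: for $gl(m,n)$ one has $Q/Q_0\cong\ganz$, so for instance $\varepsilon(2(\e_1-\d_1))=-1\neq\varepsilon(\e_1-\d_1)^2$. Concretely, for the $\gamma=\e_1-\d_2$ of Figure 1 one has $\lb\gamma\rb=\e_1+\e_2-\d_1-\d_2\notin Q_0$, so the $\varepsilon$ of \eqref{<} gives $-1$, whereas your argument needs the value $(-1)^{|\gamma^{\leq}|}=+1$. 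The correct character is the parity homomorphism $e^\mu\mapsto(-1)^{\mu(h)}e^\mu$ (the paper's $F$), which agrees with $\varepsilon$ on roots but not on all of $Q$, and whose kernel is the set of even-parity lattice elements rather than $Q_0$. With $F$ in place of $\tau$ all of your computations go through, including $F(1-e^{-\lb\gamma\rb})=1+sgn(\gamma)e^{-\lb\gamma\rb}$ and the cancellation of the prefactor $k=e^{\pi\sqrt{-1}\rho(h)}$ --- which, note, need not be $\pm1$ since $\rho$ need not lie in $Q$, another reason not to write it as $\varepsilon(\rho)$.
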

\begin{proof} Fix any set of positive roots $\Dp$ and let $\Pi$ be the associated set of simple roots. Take  $h\in\h$ such that $\a(h)= 0$ if $\a\in\Pi\cap\D_0$ and $\a(h)=1$ if $\a\in\Pi\cap\D_1$. 
  Then  $\a(h)\equiv 0 \mod 2$ if $\a\in\D_0$ and $\a(h)\equiv1 \mod 2 $ if $\a\in\D_1$. We claim that  $F(e^\a)=e^{\pi\sqrt{-1}\a(h)}e^\a$
changes   \eqref{princff} to \eqref{princf}. Let $k=e^{\pi\sqrt{-1}\rho(h)}$. Then, obviously,  $F(e^\rho)= k e^{\rho}$, so we have $F(e^{\rho}\check R)=ke^\rho R$. We need only to check that $F(e^{w(\rho)})=k\frac{sgn(w)}{sgn'(w)}e^{w(\rho)}$. First of all observe that $F(w (e^{\rho} \check R))=sgn'(w) F(e^{\rho} \check R) =sgn'(w)ke^\rho R$. On the other hand
this equals $F(e^{w(\rho) }w \check R)=F(e^{w(\rho)})F(w\check R)$. Since $w$ permutes the roots of the same parity, we have that $F(w\check R)=w(F(\check R))$ and in turn  $F(e^{w(\rho)})F(w\check R)=F(e^{w(\rho)})w(F(\check R))$. It follows that $F(e^{w(\rho) }w \check R)= e^{\pi\sqrt{-1}w(\rho)(h)}e^{w(\rho)}w(R)=e^{\pi\sqrt{-1}w(\rho)(h)}sgn(w) e^{\rho}R
$. Hence $$e^{\pi\sqrt{-1}w(\rho)(h)}sgn(w)=k\,sgn'(w),$$ and this relation implies our claim.
 \end{proof}
 
We are finally ready to give the proof of Theorem \ref{princ}:  in the current setting, formula \eqref{princff} becomes
\begin{equation}
\label{formula1}
{\check \cF}_{W_\g}(\cP(X))=C_\g e^{\rho_X}\check R_X.
\end{equation}
By Lemma \ref{esad},   any arc diagram can be transformed to a simple one by a sequence of
odd and interval reflections.
By Theorem KWG, ${\check \cF}_{W_\g}(\cP(X))=C_\g\check  R_X e^{\rho_X}$ if $X$ is a simple diagram.
For any  odd reflection $r_{\alpha}$  of $\Pi$ one has
$\rho_{r_{\alpha}(\Pi)}=\rho_{\Pi}+\alpha$ (cf. \eqref{rhoodref}),
and so
\begin{equation}\label{refl}
e^{\rho_{r_{\alpha}\Pi}} \check R_{r_{\alpha}\Pi}= - e^{\rho_{\Pi}} \check R_{\Pi}.
\end{equation}
Since for $\alpha\in S(X)$  one has $\cP(r_{\alpha}(X))=-\cP(X)$,
the fraction ${\check \cF}_{W_\g}\bigl(\cP(X)\bigr)/(\check  R_Xe^{\rho_X})$ is not changed by the action of $r_{\alpha}$.
The interval reflections do not change $\check  R_Xe^{\rho_X}$, since they do not change $\Pi(X)$;
moreover, applying \Cor{cor1} with $W=W_\g$,
they do not change ${\check \cF}_{W_\g}(\cP(X))$. This proves  formula~\eqref{formula1}, hence \eqref{princff}. Lemma \ref{ded} implies that \eqref{princf} also holds. By choosing $X$ to be a nice arc diagram, we have that $\llbracket \gamma\rrbracket\in Q^+$ for any $\gamma\in S(X)$. This concludes the proof of Theorem \ref{princ}.
\vskip5pt
\subsubsection{}\label{mmm} For future applications we will need a slightly stronger version of Theorem \ref{princ}. Given an arc diagram $X$, let $\mathcal B'$ be a subset of $\mathcal B$ containing $Supp(X)$. Let $\D(\mathcal B')$ be the set of roots that are linear combinations of the simple roots that are in the span of $\mathcal B'$. Assume for simplicity that $\D(\mathcal B')$ is irreducible  and let $\D^\sharp(\mathcal B')$ be the irreducible component of $\D_0(\mathcal B')$ which is not the smallest one in the sense of \cite[Section 1.2]{Gor}. Let $W(\mathcal B')$ and $W^\sharp(\mathcal B')$ be the  corresponding Weyl groups. Clearly $W_{\mathcal B'}W^\sharp(\mathcal B')$ is a subgroup of $W(\mathcal B')$ and let $T=(W_{\mathcal B'}W^\sharp(\mathcal B'))\backslash W(\mathcal B')$. Set $Z=W_\g/W(\mathcal B')$ and $W_0=ZW_{\mathcal B'}W^\sharp(\mathcal B')$ so that $W_\g=W_0T$.
\begin{prop}\label{migliore}
\begin{equation}
\label{formula3}
{\check \cF}_{W_0}(\cP(X))=\frac{C_\g}{|T|}e^{\rho_X}\check{R}_X.
\end{equation}
\end{prop}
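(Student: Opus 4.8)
\emph{Strategy.} The plan is to imitate, with $W_\g$ replaced everywhere by $W_0$, the argument that proved \eqref{formula1} in Theorem~\ref{princ}: reduce an arbitrary arc diagram $X$ to a simple one by a sequence of odd and interval reflections, and treat the simple case separately. The only genuinely new input is needed for a simple $X$, and there I would apply Theorem KWG not to $\g$ but to the sub\-/superalgebra $\g(\mathcal B')$ with root system $\D(\mathcal B')$, and then count cosets.

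\emph{Reduction.} I would first note that $\D(\mathcal B'),W(\mathcal B'),W^\sharp(\mathcal B'),W_{\mathcal B'},Z,W_0$ and the number $|T|$ are unchanged when $X$ is replaced by $r_{v-w}(X)$ with $v-w\in S(X)$, or by an interval reflection $r_{[v,w]}(X)$, since these moves only rearrange vertices of $\mathrm{Supp}(X)\subseteq\mathcal B'$ and leave $\mathcal B'$, hence $\mathrm{span}(\mathcal B')$ and $\D(\mathcal B')$, fixed. Because $W_{\mathcal B'}W^\sharp(\mathcal B')$ is a subgroup of $W(\mathcal B')$ containing $W_{\mathcal B'}$, one has $W_0W_{\mathcal B'}=W_0$, so $W_0$ is stable under the right action of $W_{\mathrm{Supp}(X)}$; hence Corollary~\ref{cor1} applies with $W=W_0$ and gives $\check{\cF}_{W_0}(\cP(X))=\check{\cF}_{W_0}(\cP(r_{[v,w]}X))$ for every interval reflection, while $e^{\rho_X}\check R_X$ is left untouched (the order on $\mathcal B$ is). For an odd reflection $r_\alpha$ with $\alpha\in S(X)$ one has $\cP(r_\alpha X)=-\cP(X)$ (as in the proof of Theorem~\ref{princ}) and $e^{\rho_{r_\alpha X}}\check R_{r_\alpha X}=-e^{\rho_X}\check R_X$ by \eqref{refl}, so the ratio $\check{\cF}_{W_0}(\cP(X))/(e^{\rho_X}\check R_X)$ is unchanged. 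By Lemma~\ref{esad} every arc diagram is carried to a simple one by finitely many such moves, so it suffices to prove \eqref{formula3} for simple $X$ (with the same $\mathcal B'$, $W_0$, $|T|$).

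\emph{Simple case.} For simple $X$ one has $\llbracket\gamma\rrbracket=\gamma$, $\htt\gamma=1$ for $\gamma\in S(X)$, so $\cP(X)=e^{\rho_X}/\prod_{\gamma\in S(X)}(1-e^{-\gamma})$, and $S(X)\subseteq\Pi(X)\cap\D(\mathcal B')$. Since $S(X)\subseteq\D(\mathcal B')$ is isotropic of cardinality $\mathrm{def}\,\g$ and $\mathrm{def}\,\g(\mathcal B')\le\mathrm{def}\,\g$, the set $S(X)$ is a maximal isotropic subset of the simple roots of $\D^+(\mathcal B')$, so Theorem KWG applies to $\g(\mathcal B')$. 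Writing $\rho_X=\rho(\mathcal B')+\rho^{\mathrm{out}}$ with $\rho^{\mathrm{out}}$ the parity\-/signed half\-/sum of the positive roots outside $\D(\mathcal B')$, and using that $\D(\mathcal B')$ is generated by a subset of $\Pi(X)$ — so $W(\mathcal B')$ stabilises those outer roots and $e^{\rho^{\mathrm{out}}}$, $\check R_X/\check R(\mathcal B')$ are $W(\mathcal B')$\-/invariant — one gets from \eqref{kw1} for $\g(\mathcal B')$
\[
\check{\cF}_{W^\sharp(\mathcal B')}(\cP(X))=e^{\rho^{\mathrm{out}}}\,\check{\cF}_{W^\sharp(\mathcal B')}\!\left(\frac{e^{\rho(\mathcal B')}}{\prod_{\gamma\in S(X)}(1-e^{-\gamma})}\right)=e^{\rho_X}\check R(\mathcal B'),
\]
and the same remark shows this is $W(\mathcal B')$\-/skew, i.e. $w(e^{\rho_X}\check R(\mathcal B'))=sgn'(w)\,e^{\rho_X}\check R(\mathcal B')$ for all $w\in W(\mathcal B')$ (in particular $sgn'$ of $\g$ restricted to $W(\mathcal B')$ agrees with that of $\g(\mathcal B')$). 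Put $P=W_{\mathcal B'}W^\sharp(\mathcal B')$, so $W^\sharp(\mathcal B')\le P\le W(\mathcal B')$. Averaging the $W(\mathcal B')$\-/skew quantity $\check{\cF}_{W^\sharp(\mathcal B')}(\cP(X))$ over $P/W^\sharp(\mathcal B')$ and over $W(\mathcal B')/W^\sharp(\mathcal B')$ just multiplies it by $[P:W^\sharp(\mathcal B')]$ and by $[W(\mathcal B'):W^\sharp(\mathcal B')]$ respectively, so $\check{\cF}_P(\cP(X))=\tfrac1{|T|}\check{\cF}_{W(\mathcal B')}(\cP(X))$ because $[W(\mathcal B'):P]=|T|$. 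Finally $W_0=ZP$ and $W_\g=ZW(\mathcal B')$ with $P$ and $W(\mathcal B')$ subgroups, whence
\[
\check{\cF}_{W_0}(\cP(X))=\check{\cF}_Z(\check{\cF}_P(\cP(X)))=\tfrac1{|T|}\check{\cF}_Z(\check{\cF}_{W(\mathcal B')}(\cP(X)))=\tfrac1{|T|}\check{\cF}_{W_\g}(\cP(X))=\tfrac{C_\g}{|T|}e^{\rho_X}\check R_X
\]
by \eqref{formula1}; this is \eqref{formula3}.

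\emph{Main difficulty.} I expect the crux to be the simple case, and specifically the realisation that the object skew under all of $W(\mathcal B')$ is $e^{\rho_X}\check R(\mathcal B')$ — the denominator of the \emph{sub}\-/superalgebra $\g(\mathcal B')$, not of $\g$ — which is exactly what makes shrinking the alternating sum from $W(\mathcal B')$ to the subgroup $P$ cost the single factor $|T|$; this is where Theorem KWG for $\g(\mathcal B')$, and the identification of $W^\sharp(\mathcal B')$ with the non\-/smallest component of $\D_0(\mathcal B')$ in the sense of \cite{Gor}, come in. The remaining checks — that $\D(\mathcal B')$, $W_0$ and $|T|$ survive the reductions of Lemma~\ref{esad}, that $sgn'$ for $\g$ restricts correctly to $W(\mathcal B')$, and that Corollary~\ref{cor1} is legitimately invoked with $W=W_0$ — are routine once one observes that every move used is supported inside $\mathrm{Supp}(X)\subseteq\mathcal B'$.
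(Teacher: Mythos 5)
Your proposal is correct and follows essentially the same route as the paper's proof: reduce to a simple arc diagram using $W_0W_{Supp(X)}=W_0$ together with Corollary~\ref{cor1} and the sign behaviour under odd reflections, apply Theorem KWG to the diagram viewed inside $\D(\mathcal B')$, exploit the $W(\mathcal B')$-skew-invariance of the resulting expression to trade the sum over $W_{\mathcal B'}W^\sharp(\mathcal B')$ for $\tfrac1{|T|}$ times the sum over $W(\mathcal B')$, and conclude by \eqref{formula1}. The only difference is bookkeeping: the paper factors ${\check\cF}_{W_0}$ through $W_0/W^\sharp(\mathcal B')$, whereas you factor through $Z$ and $P=W_{\mathcal B'}W^\sharp(\mathcal B')$ — the same computation.
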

\begin{proof}  Recall that any arc diagram can be transformed into a simple arc diagram
by a sequence of odd and interval reflections.
Note that these reflections permute the ends of arcs and
do not change  the positions of other vertices
(the interval reflections do not change the order of vertices and the odd reflections
permute two vertices connected by an arc). Thus these reflections do not change $\mathcal B'$ and $Supp(X)$. 
Since $W_0W_{Supp(X)}=W_0$, we can argue as in the proof of Theorem \ref{princ} and assume that $X$ is simple. Let $Y$ be the same arc diagram viewed as a diagram for  $\D(\mathcal B')$. Then, since $(\rho_X-\rho_Y,\a)=0$ for any simple root of $\D(\mathcal B')$, we have that 
 $\rho_X-\rho_Y$ is $W(\mathcal B')$-invariant.
Since $X$ is simple, $Y$ is also simple and \eqref{kw1} gives
$$
\cF_{W^\sharp(\mathcal B')}(\cP(Y))\bigr)=\frac{1}{|T|}\cF_{W^\sharp(\mathcal B')T}(\cP(Y)).
$$
It follows that
\begin{align*}
\check{\cF}_{W_0}\bigl(\cP(X))&=\check{\cF}_{W_0/W^\sharp(\mathcal B')}\bigl(e^{\rho_X-\rho_Y}\cdot \cF_{W^\sharp(\mathcal B')}(\cP(Y))\bigr)\\&=
\frac{1}{|T|}\check{\cF}_{W_0/W^\sharp(\mathcal B')}\bigl(e^{\rho_X-\rho_Y}\cdot \cF_{W^\sharp(\mathcal B')T}(\cP(Y))\bigr)\\
&=\frac{1}{|T|}
\check{\cF}_{W_{\fg}}\bigl(\cP(X)\bigr)=\frac{C_\g}{|T|}e^{\rho_X}\check{R}_X.
\end{align*}
This completes the proof.
\end{proof}

\subsubsection{Comments on type $A$}\label{comments}  Note that  if $m\ne n$, formula \eqref{princff} restricts plainly to $A(m,n)$ $=sl(m+1,n+1)$. If instead $m=n$,  the formula  does not restrict to $\h$  when $\lb\gamma\rb =\sum_{i=1}^{n+1}(\d_i-\e_i)$ for $\gamma\in S$.
Note that the factor $\frac{1}{1-e^{-\lb\gamma\rb}}$ 
 is $W_\g$-invariant, hence it can be taken out of the sum.
Since the left hand side of \eqref{princff} restricts to $\h$,  the sum $$\check\cF_{W_\g}\bigl(\frac{e^\rho}
{
\prod\limits_{\beta\in S\setminus\{\gamma\}}(1-e^{-\lb\beta\rb })
}\bigr)$$  is divisible by $1-e^{-\lb\gamma\rb}$. After simplifying, we may restrict to the Cartan subalgebra of $A(n,n)$ getting a superdenominator formula  in this type too.

\subsection{Proof of Proposition \ref{mm}} Recall from \eqref{parentesi} Êthe definition of $\rrbracket \gamma\llbracket $;  note that it may combinatorially rewritten as $\rrbracket \gamma\llbracket=\sum_{\beta\in\gamma^<}\ \sn\beta\cdot\sn\gamma\cdot\beta$
 and that 
$\rrbracket\gamma\llbracket=0$ if $\gamma\in \Pi$.
Let
$$\cQ(X)=\frac{e^{\rho+\sum_{\gamma\in S}\rrbracket \gamma\llbracket }}{\prod_{\beta\in S(X)} (1-e^{-\beta})}.$$

By Theorem KWG, we have ${\check \cF}_{W^{\#}}(\cQ(X))=\check R_X e^{\rho_X}$ if $X$ is a simple arc diagram.
Notice that an odd reflection changes the sign of  $\check R_Xe^{\rho_X}$ and does the same on  $\cQ(X)$.
The interval reflections do not change $R_Xe^{\rho_X}$, since they do not change $\Pi(X)$.
Thus, due to Lemma \ref{esad},  in order to prove~(\ref{mermaid}), it is enough to verify that
the interval reflections do not change
 ${\check \cF}_W(\cQ(X))$. This is done in~\Lem{lem3} below.

\begin{lemma}\label{lem3}
Let $X$ be an arc diagram and $r_{[v,w]}$ be an interval reflection.
Then
$${\check \cF}_{W^{\#}}\bigl(\cQ(X)\bigr)={\check \cF}_{W^{\#}}\bigl(\cQ(r_{[v,w]}(X))\bigr).$$
\end{lemma}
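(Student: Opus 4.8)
The plan is to mimic the proof of \Cor{cor1}, localizing the identity to the interval $[v,w]$. The new feature — and the main obstacle — is that here the sum runs over $W^{\#}$ rather than over $W_\g$, and $W^{\#}$ does \emph{not} contain the whole local Weyl group $W_{[v,w]}$: it contains only the symmetric group $G^{\#}:=W^{\#}\cap W_{[v,w]}$, which permutes the $k$ vertices of $[v,w]$ of the type on whose span $W^{\#}$ acts. Thus, unlike in \Cor{cor1}, the localization does not collapse to a triviality but leaves a genuine local identity of $gl(k,k)$--type, which I would settle by an elementary reindexing of a signed sum over $S_k$.

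\emph{Localization.} Let $Y$ be the sub-arc-diagram of $X$ on $[v,w]$, viewed as an arc diagram of type $gl(k,k)$, and put $Y'=r_{[v,w]}(Y)$, so that $X'=r_{[v,w]}(X)$ is $X$ with $Y$ replaced by $Y'$. Since the total order on $\mathcal B$ is unchanged, $\Pi(X)=\Pi(X')$, whence $\rho_X=\rho_{X'}$, $\rho_Y=\rho_{Y'}$, $\check R_X=\check R_{X'}$. Write $S(X)=S_{out}\sqcup S_Y$ with $S_Y=\{e_1-d_k\}\cup\{d_i-e_{i+1}\mid 1\le i\le k-1\}$, the roots of the arcs inside $[v,w]$. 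Because no arc of $X$ crosses $\stackrel{\frown}{v w}$, the block $Y$ is complete: for $\gamma\in S_{out}$ the set $\{\beta\in S_Y\mid\beta<\gamma\}$ is all of $S_Y$ or empty, and $\sum_{\beta\in S_Y}\sn\beta\cdot\beta=\sum_{\beta\in S_{Y'}}\sn\beta\cdot\beta$. Hence $\rho_X$ and the brackets $\rrbracket\gamma\llbracket$ for $\gamma\in S_{out}$ do not change from $X$ to $X'$, and $\cQ(X)=\Xi\cdot\cQ_Y$, $\cQ(X')=\Xi\cdot\cQ_{Y'}$ with the \emph{same} factor $\Xi=e^{(\rho_X-\rho_Y)+\sum_{\gamma\in S_{out}}\rrbracket\gamma\llbracket}\big/\prod_{\beta\in S_{out}}(1-e^{-\beta})$, and $\cQ_Y=e^{\rho_Y+\sum_{\gamma\in S_Y}\rrbracket\gamma\llbracket}\big/\prod_{\beta\in S_Y}(1-e^{-\beta})$, while $\cQ_{Y'}=e^{\rho_{Y'}}\big/\prod_{\beta\in S_{Y'}}(1-e^{-\beta})$ since $S_{Y'}\subset\Pi(Y')$.

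\emph{Reduction.} On $G^{\#}$ the homomorphism $sgn'$ restricts to the ordinary permutation sign, and since $G^{\#}\subseteq W^{\#}$ we may factor ${\check \cF}_{W^{\#}}={\check \cF}_{W^{\#}/G^{\#}}\circ{\check \cF}_{G^{\#}}$. As in \Cor{cor1}, $(\rho_X-\rho_Y,\alpha)=0$ for every $\alpha\in\Pi(Y)$, so $\rho_X-\rho_Y$ is $W_{[v,w]}$-invariant; since $\Xi$ otherwise involves only roots from $S_{out}$ and the $G^{\#}$-invariant element $\sum_{\beta\in S_Y}\sn\beta\cdot\beta$, the factor $\Xi$ is invariant under $G^{\#}$. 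Pulling it out reduces the lemma to ${\check \cF}_{G^{\#}}(\cQ_Y)={\check \cF}_{G^{\#}}(\cQ_{Y'})$.

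\emph{Local identity.} Using $\rho_Y=-\tfrac12\sum_{i=1}^{k}(e_i-d_i)$ together with $\rrbracket e_1-d_k\llbracket=\sum_{i=1}^{k-1}(e_{i+1}-d_i)$ and $\rrbracket d_i-e_{i+1}\llbracket=0$, a direct computation in the variables $x_i=e^{e_i}$, $y_i=e^{d_i}$ gives, with $F:=e^{\rho_Y}\prod_i e^{e_i}$ a $G^{\#}$-invariant factor,
\[
\cQ_{Y'}=F\prod_{i=1}^{k}(x_i-y_i)^{-1},\qquad
\cQ_{Y}=F\,sgn(\pi)\prod_{i=1}^{k}(x_i-y_{\pi(i)})^{-1},
\]
where $\pi$ is the $k$-cycle $1\mapsto k$, $i\mapsto i-1$ ($i\ge 2$). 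Since ${\check \cF}_{G^{\#}}$ is signed summation over $S_k$ permuting the $x_i$ (or, in the other sub-case, the $y_i$), reindexing the sum by $\tau=\sigma\pi^{-1}$ shows that ${\check \cF}_{S_k}\big(sgn(\pi)\prod_i(x_i-y_{\pi(i)})^{-1}\big)={\check \cF}_{S_k}\big(\prod_i(x_i-y_i)^{-1}\big)$ — both equal the Cauchy determinant $\det\big((x_i-y_j)^{-1}\big)_{i,j=1}^{k}$. This yields the local identity, hence the lemma. The point needing most care is recognizing that the local data force exactly this ``cyclic shift'' discrepancy between $\cQ_Y$ and $\cQ_{Y'}$, and that $W^{\#}\cap W_{[v,w]}$ is already a full $S_k$ so that the discrepancy is invisible to the signed sum; the localization step itself is routine once the behaviour of $\rrbracket\gamma\llbracket$ under the interval reflection is tracked.
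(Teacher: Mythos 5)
Your proof is correct, and its scaffolding (localize to $[v,w]$, check that $\rho_X-\rho_Y$ and the data attached to $S(X)\setminus S(Y)$ are unchanged and invariant under $G^{\#}=W^{\#}\cap W_{[v,w]}$, factor ${\check \cF}_{W^{\#}}={\check \cF}_{W^{\#}/G^{\#}}\circ{\check \cF}_{G^{\#}}$, and reduce to a $gl(k,k)$ identity over $G^{\#}\cong S_k$) coincides with the paper's. Where you genuinely diverge is in the local identity ${\check \cF}_{G^{\#}}(\cQ(Y))={\check \cF}_{G^{\#}}(\cQ(Y'))$: the paper proves it by introducing the reversed simple diagram $Z$ with order $d_1>e_2>\dots>d_k>e_1$, computing $\rho_Z=\rho_Y+\alpha+\rrbracket\alpha\llbracket$ to get ${\check \cF}_{G}(\cQ(Y))=-{\check \cF}_{G}(\cQ(Z))$, invoking Theorem KWG for the two simple diagrams $Z$ and $Y'$, and then tracking signs through the $2k-1$ odd reflections relating $\Pi(Z)$ to $\Pi(Y')$; you instead observe that $\cQ(Y)$ and $\cQ(Y')$ differ exactly by the cyclic permutation $\pi$ of the $y$-variables together with the compensating factor $sgn(\pi)=(-1)^{k-1}$, so the two signed sums over $S_k$ coincide term by term after reindexing by $\pi$ (both being the Cauchy determinant $\det((x_i-y_j)^{-1})$). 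I checked your bookkeeping — $\rho_Y=-\tfrac12\sum_i(e_i-d_i)$, $\rrbracket e_1-d_k\llbracket=\sum_{i=1}^{k-1}(e_{i+1}-d_i)$, the prefactor $F=e^{\rho_Y}\prod_ie^{e_i}$, and the identification of the denominator of $\cQ(Y)$ with $sgn(\pi)\prod_i(x_i-y_{\pi(i)})$ — and it is right; the reindexing also works in the sub-case where $G^{\#}$ permutes the $y_i$, and $sgn'$ does restrict to the ordinary sign on $G^{\#}$ since $\tfrac12(\e_i-\e_j),\tfrac12(\d_i-\d_j)$ are never roots. Your route is more elementary and self-contained at this step: it avoids the appeal to Theorem KWG inside the lemma (KWG is of course still needed as the base case for simple diagrams in the proof of Proposition \ref{mm}) and replaces the odd-reflection sign chase by a one-line change of summation variable. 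The paper's detour through $Z$ has the virtue of reusing machinery already set up for Corollary \ref{cor1}, but nothing is lost in your version.
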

\begin{proof}
Denote by $Y$ the arc subdiagram corresponding to the interval $[v,w]$ and let  $Y'=r_{[v,w]}(Y)$. Then $X'=r_{[v,w]}(X)$ is obtained
from $X$ by substituting $Y$ by $Y'$. View $Y,Y'$ as arc diagrams of $gl(k,k)$-type.
Then $G=W_{[v,w]}\cap W^{\#}$ is $W^{\#}$ constructed for $Y$ and for $Y'$.

Let $e,d$ be  vertices such that $e-d\in  S(X)$. Denote by  $]e,d[$ the interval $[e,d]\setminus\{e,d\}$.
Then $\rrbracket e-d\llbracket =\pm(\sum_{\xi\in ]e,d[\cap\cE}\xi-\sum_{\xi\in ]v,w[\cap\cD}\xi)$ with the sign "$+$"
if $e\in\cE$ and the sign "$-$" if $e\in\cD$. We see that $\rrbracket e-d\llbracket $ is $W_{]e,d[}$-invariant.
In particular, $\rrbracket \gamma\llbracket $ is $G$-invariant for
each $\gamma\in S(X)\setminus S(Y)$, since if $v',w'$ are vertices  such that $v'-w'\in  S(X)\setminus S(Y)$,
then $[v,w]\cap [v',w']=\emptyset$ or $[v,w]\subset[v',w']$ (because the arcs do not intersect).
Notice that $(\rho_X-\rho_Y,\alpha)=0$ for each $\alpha\in\Pi(Y)$, hence
 $\rho_X-\rho_Y$ is $G$-invariant.
Therefore
$${\check \cF}_{W^{\#}}\bigl(\cQ(X)\bigr)={\check \cF}_{W^{\#}/G}\bigl(
\frac{e^{\rho_X-\rho_Y+\sum_{\gamma\in S(X)\setminus S(Y)}\rrbracket \gamma\llbracket }}
{\prod_{\beta\in S(X)\setminus S(Y)}(1-e^{-\beta})}\cdot {\check \cF}_G(\cQ(Y)\bigr)\bigr)$$
 and the similar formula holds for $X',Y'$ respectively. Notice that $S(X)\setminus S(Y)=S(X')\setminus S(Y')$.
 Since $\Pi(Y)=\Pi(Y')$ and $\Pi(X)=\Pi(X')$,
one has $\rho_Y=\rho_{Y'}$, $\rho_X=\rho_{X'}$. We see that the required equality 
${\check \cF}_{W^{\#}}(\cQ(X))={\check \cF}_{W^{\#}}(\cQ(X'))$
follows from the equality
\begin{equation}\label{poi}
{\check \cF}_G(\cQ(Y))={\check \cF}_G(\cQ(Y')),
\end{equation}
which we verify below.

Recall that, by the definition of the interval reflection, the interval $[v,w]$ is of the form
$v=e_1>d_1>e_2>d_2>\ldots>e_s>d_s=w$, where the $e_i$'s are of the same type and
the $d_i$'s are of another type. Then $S(Y)=\{\alpha\}\cup S'$, where $\alpha=v-w=e_1-d_s$ and
$S'=\{d_i-e_{i+1}\}_{i=1}^{s-1}$. Since $S'\subset\Pi(Y)$ one has
$$\sum_{\gamma\in S(Y)}\rrbracket \gamma\llbracket =\rrbracket \alpha\llbracket =\sum_{i=1}^s (e_i-d_i)-\alpha.$$

Consider the simple arc diagram $Z$ with the order $d_1>e_2>d_2>\ldots>d_s>e_1$; then
$S(Z)=S'\cup\{-\alpha\}$ and

$$\rho_Z=\frac{1}{2}\sum_{i=1}^s (e_i-d_i)=-\rho_Y,\ \text{ that is }\ \rho_Z=\rho_Y+\a+\rrbracket \alpha\llbracket .$$
We have
$${\check \cF}_G(\cQ(Y))={\check \cF}_G\bigl(\frac{e^{\rho_Y+\rrbracket \alpha\llbracket }}{\prod_{\beta\in S(Y)}(1-e^{-\beta})}\bigr)=-
{\check \cF}_G\bigl(\frac{e^{\rho_Y+\rrbracket \alpha\llbracket +\alpha}}{\prod_{\beta\in S(Z)}(1-e^{-\beta})}\bigr)=-{\check \cF}_G(\cQ(Z)).$$
Since $Z$ and $Y'$ are simple, one has ${\check \cF}_G(\cQ(Z))=e^{\rho_Z}\check R_Z$ and ${\check \cF}_G(\cQ(Y'))=e^{\rho_{Y'}}\check R_{Y'}$. 
Arguing as in the proof of Corollary \ref{cor1}, one shows that $e^{\rho_Z}\check R_Z=-e^{\rho_{Y'}}\check R_{Y'}$, and this  completes the proof
of \eqref{poi}.\end{proof}
\begin{rem} Arguing as in Lemma \ref{ded}, one deduces \eqref{exmm}Ê from \eqref{mermaid}.
\end{rem}

\section{Distinguished sets of positive roots and compact dual pairs}
\label{5}

\subsection{Dual pairs and Theta correspondence}
Let us now recall what dual pairs and the Theta correspondence are: this involves some basic and well-known facts on 
the oscillator representations of symplectic groups (see e.g. \cite{Adams} for more details and  a rich list of references).\par
 Let $(V, \langle\cdot\,,\cdot\rangle)$ be a $2n$-dimensional real symplectic vector space. Fix a polarization $V=A^+\oplus A^-$ ($A^\pm$ are isotropic subspaces) and a standard symplectic basis
 w.r.t. $\langle\cdot\,,\cdot\rangle$:
 $A^+=\oplus_{i=1}^n \R e_i,\,A^-=\oplus_{i=1}^n \R f_i$, so that $\langle e_i,f_j\rangle =\d_{ij}$. 
 
  Starting from this polarization of $V$ we can
 construct a complex polarization of $V_\C=V\otimes_\R\C$ by setting
 $$V^+_\C=\bigoplus_{i=1}^n\C(e_i+\sqrt{-1}f_i),\quad
 V^-_\C=\bigoplus_{i=1}^n\C(e_i-\sqrt{-1}f_i).$$
This polarization is ``totally complex", i.e. $V^+_\C\cap V=\{0\}$.  As in \eqref{met} we can consider 
 the  representation 
$M=W(V_\C)/W(V_\C)V_\C^+$
of the Weyl algebra $W(V_\C)$ of $(V_\C, \langle\cdot\,,\cdot\rangle_\C)$, and, by means of \eqref{actionsp}, we define an action of $sp(V_\C,\langle\cdot\,,\cdot\rangle_\C)$, on $M$. This representation is usually called the oscillator representation.
The choice of a totally complex polarization is equivalent to assigning a compatible complex structure $J$ on $V$ (i.e., $J\in Sp(V)$ such that $J^2=-1$). Explicitly $J$ is defined by setting $J(e_i)=-f_i$ and $J(f_i)=e_i$. Let $W$ be the space $V$ seen as a complex space via the complex structure $J$.
The elements $g\in Sp(V)$ commuting with $J$ form  a maximal compact subgroup $K$ of $Sp(V)$ and we let $\k$ be its complexified Lie algebra viewed as a subalgebra of $sp(V_\C,\langle\cdot\,,\cdot\rangle_\C)$. Since $K$ commutes with $J$, we may let it act  $\C$-linearly on $W$. We let $\det(k)$ be the determinant of the action of $k\in K$ on $W$. If   $\tilde K$ is the  $\sqrt{\det}$ cover of $K$, then 
$M$ has an action of $\tilde K$ whose differential coincides with the action of $\k$ as a subalgebra of $sp(V_\C,\langle\cdot\,,\cdot\rangle_\C)$.

For future reference we describe explicitly this action. Recall that
$$
\widetilde K=\{(g,z)\in K\times \C^\times\mid z^2=\det g\}.
$$
The covering map is the projection $\pi$ on the first factor. The Lie algebra of $\widetilde K$ is the subalgebra of $\k\times \C$ given by 
$$
\widetilde \k=\{(A,\frac{tr(A)}{2})\mid A\in \k\}.
$$
Note that $d\pi$ is the projection on the first factor and provides an isomorphism between $\widetilde \k$ and $\k$.
We want to define an action of $\widetilde K$ on $M$ in such a way that 
\begin{equation}\label{exp}
\exp(X)\cdot v=e^{d\pi(X)}(v)
\end{equation}
for any $X\in\widetilde \k$. Identify $V_\C^+$ with $W$ by mapping $v\otimes (a+ib)$ to $av+J(bv)$.
 Then $M$ is linearly isomorphic to the polynomial  algebra $P(W)$ on $W$ by identifying $v\in V_\C^-$ with the linear function on $V_\C^+$ given by $v(u)=\langle u,v\rangle$. Recall that $K$ acts on $W$,  hence also on $P(W)$. With this identifications we can define an action of $\widetilde K$ on $M\simeq P(W)$  by 
\begin{equation}\label{actionk}
(g,z)\cdot p=z^{-1}g\cdot p.
\end{equation}
We now check that \eqref{exp} holds. According to our definitions, if $X=(A, \frac{tr(A)}{2})\in \tilde\k$ then $exp(X)\cdot p=e^{-\frac{tr(A)}{2}}e^A\cdot p$. On the other hand,  according to \eqref{actionsp}, $A$ acts on $M$ by left multiplication by $\theta(A)$. Now, applying \eqref{actionandbracket}, we see that
$$
\theta(A)p=[\theta(A),p]+ p\theta(A)\cdot1=A\cdot p + p\theta(A)\cdot1.
$$
Choose a basis $\{x_i\}$ of $V_\C^-$ and let $\{y_i\}$ be the basis of $V^+_\C$ such that $\langle x_i,y_j\rangle=\d_{ij}$. Then $\{x_i,y_i\}$ is a basis of $V_\C$ and $\{y_i,-x_i\}$ is its dual basis. Hence
\begin{align*}
\theta(A)\cdot 1&=\frac{1}{2}\sum_i A(y_i)x_i\cdot 1=\frac{1}{2}\sum_{i,j}\langle x_j, A(y_i)\rangle y_jx_i\cdot 1\\&=-\frac{1}{2}\sum_{i}\langle x_i, A(y_i)\rangle=-\frac{tr(A)}{2}.
\end{align*}
Therefore
$$
\theta(A) p= A\cdot p-\frac{tr(A)}{2}p.
$$
Exponentiating, we find \eqref{exp}.

Let  $H$ be the element of $sp(V_\C,\langle\cdot\,,\cdot\rangle_\C)$ such that $H_{|V_\C^\pm}=\pm I$. Then bracketing with $H$ defines a $\ganz$-gradation $$sp(V_\C,\langle\cdot\,,\cdot\rangle_\C)=\bigoplus_{n\in\ganz}sp(V_\C,\langle\cdot\,,\cdot\rangle_\C)_n.$$ We set $\p=\oplus_{n\ge 0}sp(V_\C,\langle\cdot\,,\cdot\rangle_\C)_n$. Clearly $\p$ is a parabolic subalgebra of\break  $sp(V_\C,\langle\cdot\,,\cdot\rangle_\C)$.

A reductive dual pair is a pair of real Lie subgroups 
$G_1,G_2$ of $Sp(V)$ which act reductively on $V$ and  such that each 
is the centralizer of the other in $Sp(V)$. We say that the dual pair is {\it compact} if one of the two subgroup is compact. In the following 
we deal always with compact dual pairs, assuming $G_1$ compact. We also assume  that $G_1\subset K$ and let
 $\mathfrak{s}_i$ be the Lie algebras of $G_i$ ($i=1,2$). Denote by $\mathfrak{s}_i^{\C}, i=1,2$ their complexifications. Let $\widetilde G_1$ be the lift of $G_1$ to $\widetilde K$. Since $G_1\subset K$ it follows that the center of $K$ is contained in $G_2$. The center of $K$ is $\{exp(\sqrt{-1}tH)\mid t\in\R\}$, thus we can conclude that $H\in \mathfrak{s}_2^\C$. It follows that $\p_2=\p\cap\mathfrak{s}_2^{\C}$ is a parabolic subalgebra of $\mathfrak{s}_2^{\C}$. 
Howe duality in this setting gives the following result (see also \cite{KV}):
\begin{theorem}  \cite{howeclassical}\label{howe}
There is a set $\Sigma$ of irreducible finite-dimensional representations of $\widetilde G_1$ such that, as $\widetilde G_1\times \mathfrak{s}_2^{\C}$-module,
$$
M=\bigoplus_{\eta\in\Sigma} \eta \otimes \tau(\eta),
$$
where $\tau(\eta)$ are irreducible quotients of $\p_2$-parabolic Verma modules.
\end{theorem}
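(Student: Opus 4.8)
The plan is to reproduce Howe's argument for dual pairs in the metaplectic representation, using the concrete Fock model $M\cong P(W)$ and the $H$-grading set up above. First I would record the abstract isotypic decomposition. Since $\widetilde G_1$ is compact and $M=\bigcup_d P^{\le d}(W)$ is an increasing union of finite-dimensional $\widetilde G_1$-stable subspaces, $M$ is a locally finite $\widetilde G_1$-module, so $M=\bigoplus_{\eta}\eta\otimes\tau(\eta)$ with $\tau(\eta)=\Hom_{\widetilde G_1}(\eta,M)$; let $\Sigma$ be the set of isomorphism classes of irreducibles with $\tau(\eta)\ne 0$. Because $\mathfrak{s}_2^{\C}\subset W(V_\C)$ centralizes $\widetilde G_1$, it preserves each isotypic component and hence acts on every multiplicity space $\tau(\eta)$; this gives the $\widetilde G_1\times\mathfrak{s}_2^{\C}$-module structure in the statement, and the displayed decomposition becomes tautological. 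So the content lies entirely in the description of the factors $\tau(\eta)$.

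The key input is Howe's double commutant property for the dual pair $(G_1,G_2)$: inside $\mathrm{End}_\C(M)$ the subalgebra generated by $\widetilde G_1$ (equivalently, by $\mathfrak{s}_1^{\C}$) and the subalgebra generated in $W(V_\C)$ by $\mathfrak{s}_2^{\C}$ are mutual centralizers. Granting this, the usual formal consequences of the double commutant theorem apply: each $\tau(\eta)$ is an \emph{irreducible} $\mathfrak{s}_2^{\C}$-module, and $\eta\mapsto\tau(\eta)$ is injective on $\Sigma$. In the present paper this step may simply be quoted from \cite{howeclassical} (see also \cite{KV}); I single it out because it is the only genuinely hard ingredient.

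It remains to identify each irreducible $\tau(\eta)$ as an irreducible quotient of a $\p_2$-parabolic Verma module, and here I would use the grading. As recalled above $H\in\mathfrak{s}_2^{\C}$, so $\mathfrak{s}_2^{\C}$ inherits the $H$-grading of $sp(V_\C)$ and $\p_2=\p\cap\mathfrak{s}_2^{\C}=\fl_2\oplus\n_2^{+}$ is a parabolic with Levi $\fl_2=\mathfrak{s}_2^{\C}\cap sp(V_\C)_0$, nilradical $\n_2^{+}=\mathfrak{s}_2^{\C}\cap\bigoplus_{n>0}sp(V_\C)_n$, and opposite nilradical $\n_2^{-}$. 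On $M\cong P(W)$ the polynomial degree is bounded below, $\n_2^{+}$ strictly lowers it and $\n_2^{-}$ strictly raises it; hence each subquotient $\tau(\eta)$ is graded with degree bounded below and finite-dimensional layers. Let $\tau(\eta)_0$ be its lowest-degree layer: it is finite-dimensional, killed by $\n_2^{+}$, and stable under $\fl_2$. From $\n_2^{+}\tau(\eta)_0=0$ and the PBW theorem, $U(\mathfrak{s}_2^{\C})\tau(\eta)_0=U(\n_2^{-})\tau(\eta)_0$ is a nonzero submodule, hence equals $\tau(\eta)$; and if $V'\subsetneq\tau(\eta)_0$ were a nonzero $\fl_2$-submodule, then $U(\n_2^{-})V'$ would be a nonzero $\mathfrak{s}_2^{\C}$-submodule with lowest-degree layer $V'\subsetneq\tau(\eta)_0$, contradicting irreducibility of $\tau(\eta)$; so $\tau(\eta)_0$ is $\fl_2$-irreducible. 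The universal property of the parabolic Verma module $M_{\p_2}(\tau(\eta)_0)=U(\mathfrak{s}_2^{\C})\otimes_{U(\p_2)}\tau(\eta)_0$ then yields a surjection onto $\tau(\eta)$, which, being irreducible, is its unique irreducible quotient. This is exactly the asserted form of $\tau(\eta)$.

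The main obstacle is the double commutant step: showing that the commutant of $\widetilde G_1$ in the Weyl algebra $W(V_\C)$ is generated by $\mathfrak{s}_2^{\C}$. The classical routes are either the first fundamental theorem of invariant theory for the relevant pairs (of types $(O,Sp)$, $(U,U)$, $(Sp,O)$), or Howe's uniform argument passing to the associated graded for the Bernstein filtration and solving the resulting commutative invariant-theoretic problem. By contrast, the passage to multiplicity spaces, their irreducibility, and the parabolic-Verma description are essentially automatic once the Fock model and the $H$-grading are in place; in particular they do not use the denominator identities of Section \ref{df}, which enter only afterwards, when $\Sigma$, the modules $\tau(\eta)$ and their characters are computed explicitly.
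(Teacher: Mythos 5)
This theorem is not proved in the paper at all: it is quoted from Howe \cite{howeclassical} (see also \cite{KV}), and the remark at the end of Section \ref{Bnm} makes explicit that the authors treat it as a black box resting on the first fundamental theorem of classical invariant theory. So there is no in-paper argument to compare yours with; what can be said is that your outline is a correct reconstruction of the standard proof. The three steps are sound: (i) local finiteness of $M\cong P(W)$ under the compact group $\widetilde G_1$ gives the isotypic decomposition with multiplicity spaces $\tau(\eta)=\Hom_{\widetilde G_1}(\eta,M)$ carrying an $\mathfrak{s}_2^{\C}$-action; (ii) the double commutant property forces each $\tau(\eta)$ to be irreducible (and the correspondence injective, which is slightly more than the statement asks for); (iii) the $H$-grading argument — degree bounded below, $\n_2^{+}$ strictly lowering degree, finite-dimensional graded pieces — identifies the lowest layer $\tau(\eta)_0$ as an irreducible $\p_2$-module generating $\tau(\eta)$, whence the surjection from the parabolic Verma module. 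You are also right to flag step (ii) as the only genuinely hard ingredient: it is exactly the point where the FFT for the relevant classical pair enters, and it cannot be extracted from the Fock-model formalism or from the denominator identities of Section \ref{df}, which the paper uses only downstream of this theorem to compute $\Sigma$ and the characters of the $\tau(\eta)$ explicitly.
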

\noindent The map 
$\eta\mapsto \tau(\eta)$ is called the Theta correspondence.
\vskip5pt
We need also to recall the following structure theorem (cf. \cite{howee}).
\begin{prop}\label{structuredual}\
\begin{enumerate}
\item A compact dual pair $(G_1,G_2)$ is of type I, i.e., $G_1G_2$ acts irreducibly on $V$.
\item A reductive dual pair $(G_1,G_2)$ is of type I if and only if there exists a division algebra $D$ over $\R$ with involution $e$, an Hermitian right $D$-vector space $(W_1,(\cdot,\cdot)_1)$, a skew-Hermitian left $D$-vector space $(W_2,(\cdot,\cdot)_2)$ and an isomorphism $V\cong W_1\otimes_DW_2$ of $\R$-vector spaces such 
the symplectic form $\langle\cdot,\cdot\rangle$ corresponds to $Tr_{D/\R}( (\cdot,\cdot)_1\otimes e\circ (\cdot,\cdot)_2)$ and 
under which $G_1$ and $G_2$ map to the isometry groups $U(G_1,(\cdot,\cdot)_1)$, $U(G_2,(\cdot,\cdot)_2)$, respectively.
\end{enumerate}
\end{prop}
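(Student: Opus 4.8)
The final statement is Howe's structure theorem for reductive dual pairs, so my plan is to run the standard double-commutant / Artin--Wedderburn analysis while keeping track of the ambient symplectic form. Throughout, for a dual pair $(G_1,G_2)$ in $Sp(V)$ let $A_i\subseteq\operatorname{End}_\R(V)$ denote the associative $\R$-algebra generated by $G_i$; since $G_1,G_2$ are mutual centralizers and act reductively, $A_1$ and $A_2$ are mutual commutants in $\operatorname{End}_\R(V)$, so that $A_1\cap A_2=Z(A_1)=Z(A_2)$ and, by von Neumann density, $A_i=\operatorname{End}_{A_j}(V)$.

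\textbf{Part (1).} Assume $G_1$ compact. Then $V$ is a semisimple $\R[G_1]$-module, hence $A_1$ is a semisimple $\R$-algebra, and any central idempotent $e\in A_1$ lies in $Z(A_1)=Z(A_2)$, hence commutes with all of $G_1G_2$; thus $eV$ is $G_1G_2$-invariant and splitting off such idempotents expresses $V$ as an orthogonal sum of symplectic subspaces on each of which the pair restricts to a dual pair with the first member still compact. Passing to an irreducible summand, $A_1$ has no nontrivial central idempotent and, being semisimple, is simple; equivalently $V$ is a simple $A_1A_2$-module, i.e. $G_1G_2$ acts irreducibly on it, which is the type I condition. (Contrapositively: a type II pair is isomorphic to $(GL_k(D),GL_l(D))$ with $D=\R,\C,\H$, and $GL_k(D)\supseteq D^\times$ is never compact.)

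\textbf{Part (2), the direction ``data $\Rightarrow$ dual pair''.} Given $D$, the involution $e$, a Hermitian right $D$-space $(W_1,(\cdot,\cdot)_1)$, a skew-Hermitian left $D$-space $(W_2,(\cdot,\cdot)_2)$ and $V\cong W_1\otimes_DW_2$ with $\langle\cdot,\cdot\rangle=Tr_{D/\R}\big((\cdot,\cdot)_1\otimes e\circ(\cdot,\cdot)_2\big)$, one checks directly that $\langle\cdot,\cdot\rangle$ is $\R$-bilinear, nondegenerate, and alternating --- antisymmetry comes from combining the Hermitian/skew-Hermitian compatibility with $Tr_{D/\R}\circ e=Tr_{D/\R}$ and the positivity of $e$. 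The isometry groups $U(W_i)$ act on the two tensor factors, hence commute and map into $Sp(V)$; and since the isometry group of a nondegenerate form acts irreducibly over $D$, each $U(W_i)$ generates $\operatorname{End}_D(W_i)$ (resp.\ $\operatorname{End}_{D^{op}}(W_2)$) as an associative algebra, so the double-commutant theorem yields $\operatorname{End}_{U(W_1)}(V)\cap Sp(V)=U(W_2)$ and vice versa, and $V$ is $U(W_1)\times U(W_2)$-irreducible; thus $(U(W_1),U(W_2))$ is a type I dual pair.

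\textbf{Part (2), the direction ``type I dual pair $\Rightarrow$ data''.} Let $(G_1,G_2)$ be type I, so $A_1,A_2$ are simple mutual commutants. By Artin--Wedderburn $A_1\cong\operatorname{End}_D(W_1)$ for a division $\R$-algebra $D\in\{\R,\C,\H\}$ and a right $D$-vector space $W_1$; taking $W_2=\operatorname{Hom}_{A_1}(W_1,V)$ (a $D^{op}$-module) gives $A_2\cong\operatorname{End}_{D^{op}}(W_2)$ and $V\cong W_1\otimes_DW_2$ as $\R$-vector spaces. The symplectic form defines the adjoint anti-involution $\sigma$ of $\operatorname{End}_\R(V)$; since $G_i\subset Sp(V)$ we have $\sigma(g)=g^{-1}$ on $G_i$, so $\sigma(A_i)=A_i$ and $\sigma$ restricts to anti-involutions of $A_1$ and $A_2$, which correspond to adjoints relative to nondegenerate $\pm1$-Hermitian forms $(\cdot,\cdot)_i$ on $W_i$ for the involution $e$ of $D$ induced by $\sigma|_{Z(A_1)}$; matching signs, exactly one of the two is Hermitian and the other skew-Hermitian, and unwinding the tensor identification recovers $\langle\cdot,\cdot\rangle=Tr_{D/\R}\big((\cdot,\cdot)_1\otimes e\circ(\cdot,\cdot)_2\big)$. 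Finally $G_i$, being the full centralizer of $G_j$ in $Sp(V)$, equals $A_i\cap Sp(V)=U(W_i,(\cdot,\cdot)_i)$, where reductivity of $G_i$ is precisely what forces it to exhaust this isometry group rather than be a proper subgroup.

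\textbf{Where the difficulty lies.} The routine parts above are bookkeeping; the genuine content is in the last direction, namely that the \emph{single} form $\langle\cdot,\cdot\rangle$ forces $A_1$ and $A_2$ to be governed by the \emph{same} division algebra $D$ with the \emph{same} involution $e$, and the pinning down of the Hermitian/skew-Hermitian dichotomy --- including the seemingly degenerate case $D=\C$ with $e$ trivial, which is what separates the $U(p,q)$--$U(r,s)$ pairs from the $O(\C)$--$Sp(\C)$ pairs. This is essentially the classification over $\R$ of finite-dimensional $*$-algebras carrying a positive involution, and that is the step I would expect to absorb most of the work.
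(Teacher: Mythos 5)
The paper does not prove this proposition at all: it is explicitly ``recalled'' as a structure theorem of Howe and cited to \cite{howee}, so there is no internal proof to compare yours against. What you have written is, in outline, Howe's own double-commutant/Artin--Wedderburn argument, and as a sketch it is essentially sound: the ``data $\Rightarrow$ dual pair'' direction and the Wedderburn-plus-involution analysis in the converse are the standard route, and you correctly locate the real content in forcing a single division algebra and involution to govern both sides and in the Hermitian versus skew-Hermitian normalization.

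Two steps are thinner than you present them. First, in (1), splitting off a central idempotent $e\in Z(A_1)$ yields an \emph{orthogonal} sum of \emph{symplectic} subspaces only when $e$ is fixed by the adjoint anti-involution $\sigma$ of $\langle\cdot\,,\cdot\rangle$; when $\sigma$ permutes the central idempotents nontrivially the corresponding summands are isotropic, and that is exactly how type II pairs arise, so the dichotomy you are proving is hidden inside the word ``orthogonal'' and your parenthetical appeal to the classification of type II pairs is doing more work than it admits. (Relatedly, statement (1) is literally false for a reducible compact pair such as $(O(m)\times O(m'),\,Sp(2n,\R)\times Sp(2n',\R))$ acting on $(\R^m\otimes\R^{2n})\oplus(\R^{m'}\otimes\R^{2n'})$; like Howe, the proposition tacitly concerns irreducible pairs, which is what your ``passing to an irreducible summand'' silently restores and is all the paper ever uses.) Second, your opening claim that $A_1$ and $A_2$ are mutual commutants is not a formal consequence of density: semisimplicity gives $A_1=A_1''$ and $A_1\subseteq A_2'$, but the reverse inclusion amounts to showing that the group $A_2'\cap Sp(V)$ generates the whole semisimple algebra $A_2'$, which is a genuine (if elementary) lemma in Howe's paper. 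Neither point is fatal, but both would need to be written out if this were intended as a proof rather than a citation.
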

\subsection{Cartan involutions}Suppose that $\g$ is a Lie superalgebra of basic classical type. If $\g$ is simple of type $A(1,1)$ let $\D$ be the set of roots of $gl(2,2)$. In all other cases we let $\D$ be, as usual, the set of roots of $\g$. Choose a set $\Dp$ of positive roots and let
 $\Pi=\{\a_1,\dots,\a_n\}$ be the corresponding set of simple roots. If $\g$ is not  of type $A(1,1)$ then the root spaces have dimension $1$, so we can choose for each $\a\in\Dp$ root vectors $X_\a\in \g_\a$ and $X_{-\a}\in \g_{-\a}$ with the property that $(X_\a,X_{-\a})=1$, and let $h_\a=[X_\a,X_{-\a}]$. We set $e_i=X_{\a_i}$ and $f_i=X_{-\a_i}$. If $\g$ is simple of type $A(1,1)$, then, given $\a\in\D$,  we let $X_\a$ be the projection on $\g$ of the corresponding root vector in $gl(2,2)$.
  
Recall from \cite{Kacsuper} that $\g$ is the minimal $\ganz$-graded Lie superalgebra 
with local part  
$$\bigoplus\limits_{i=1}^n\C f_i\oplus
\h\oplus
\bigoplus\limits_{i=1}^n\C e_i
$$ and relations 
\begin{equation} \label{r1}
[e_i,f_j]=\d_{ij}h_{\a_i},\quad
[h_{\a_i},e_j]=(\a_i,\a_j)e_j,\quad
[h_{\a_i},f_j]=-(\a_i,\a_j)f_j,
\end{equation}
on the local part. From now on we assume that 
$(\a_i,\a_j)\in\R$ for any $i,j$. In particular, if $\g$ is of type $D(2,1,\a)$, we assume that $\a\in\R$.\par
We let $N_{\a,\beta}$ be the structure constants for the chosen basis of root vectors:
$$[X_\a,X_\beta]=N_{\a,\beta}X_{\a+\beta},\quad\text{if $\a,\beta,\a+\beta\in\D$.}$$ Set $\sigma_\a=-1$ if $\a$ is an odd negative root and  $\sigma_\a=1$ otherwise, so that 
$(X_\a,X_{-\a})=\sigma_\a$. We also let  $p(\a)$ be the parity of $\a$: $p(\a)=1$ if $\a$ is odd and $p(\a)=0$ if $\a$ is even.
The following statement is a reformulation of Lemma 3.2 of \cite{YK}.
\begin{lemma}\label{chev}Given $\a,\beta\in \Delta$ such that $\a+\beta\in\D$, let $p,q$ be non-negative integers such that $\beta+i\a \in\Delta \cup \{0\},\,i\in\ganz$ if and only if $-p\le i \le q$.
\begin{enumerate}
\item  If $\a$ is even, then $N_{\a,\beta}N_{-\a,\a+\beta}=\frac{1}{2}q(p+1)(\a,\a)$
\item  If $\a$ is odd and $(\a,\a)\ne 0$, then $$N_{\a,\beta}N_{-\a,\a+\beta}=\begin{cases}-\sigma_\a \frac{1}{2}q(\a,\a)&\text{\rm if $p$ is even}\\
\sigma_\a\frac{1}{2}(p+1)(\a,\a)&\text{\rm if $p$ is odd}\end{cases}.$$
\item  If $\a$ is odd and $(\a,\a)= 0$, then $N_{\a,\beta}N_{-\a,\a+\beta}=\begin{cases}\sigma_\a (\a,\beta)&\text{\rm if $p$ is even}\\
0&\text{\rm if $p$ is odd}\end{cases}$.
\end{enumerate}

\end{lemma}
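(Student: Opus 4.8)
The lemma is \cite[Lemma 3.2]{YK} rephrased for the normalization $(X_\a,X_{-\a})=\sigma_\a$, and the plan is to reproduce the classical computation of Chevalley structure constants, adapted to the $\ganz/2$-graded situation. First I would fix $\a,\be\in\D$ with $\a+\be\in\D$ and work along the $\a$-string through $\be$, i.e.\ the roots $\be+i\a$ with $-p\le i\le q$. Setting, for $-p-1\le i\le q$,
$$c_i=N_{\a,\be+i\a}\,N_{-\a,\be+(i+1)\a}$$
(with the convention that $N$ vanishes whenever its subscript sum is not a root), one has $c_q=c_{-p-1}=0$, and the quantity to be computed is $c_0=N_{\a,\be}N_{-\a,\a+\be}$. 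Since $([X_\a,X_{-\a}],h)=\sigma_\a\,\a(h)$ for $h\in\h$, one gets $[h_\a,X_\gamma]=\sigma_\a(\a,\gamma)X_\gamma$ for every root $\gamma$; applying the super Jacobi identity to $X_\a,X_{-\a},X_{\be+i\a}$, using that root spaces are one-dimensional, and comparing coefficients of $X_{\be+i\a}$ then produces the recursion
$$c_{i-1}-(-1)^{p(\a)}\,c_i=\sigma_\a\,(\a,\be+i\a),\qquad -p\le i\le q.$$

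The next step is to solve this recursion in the three cases. If $\a$ is even the sign is $+1$ and $\sigma_\a=1$, so it telescopes: summing from $i=1$ to $q$ with $c_q=0$ gives $c_0=q(\a,\be)+\tfrac12q(q+1)(\a,\a)$, and substituting the string identity $(\a,\be)=\tfrac12(p-q)(\a,\a)$ (valid because $s_\a$ reverses the $\a$-string) yields (1). If $\a$ is odd the recursion reads $c_{i-1}+c_i=\sigma_\a(\a,\be+i\a)$; solving it downwards from $c_q=0$ gives $c_{q-2j}=-j\sigma_\a(\a,\a)$ and $c_{q-2j-1}=\sigma_\a(\a,\be+(q-j)\a)$, so that $c_0$ equals $-\tfrac12 q\,\sigma_\a(\a,\a)$ if $q$ is even and $\sigma_\a\big((\a,\be)+\tfrac12(q+1)(\a,\a)\big)$ if $q$ is odd. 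When $(\a,\a)\ne0$ then $2\a\in\D_0$ and $s_\a=s_{2\a}\in W_\g$ both reverses the $\a$-string and preserves parities of roots, so $(\a,\be)=\tfrac12(p-q)(\a,\a)$ and $p\equiv q\pmod2$ hold simultaneously; substituting these turns the two cases above into the two branches of (2). When $(\a,\a)=0$ then $2\a\notin\D$, hence $(\ad X_\a)^2=\tfrac12\ad[X_\a,X_\a]=0$; this nilpotency forces $p,q\le1$ and excludes $p=q=1$, and combining it with the recursion and with $[h_\a,X_\be]=\sigma_\a(\a,\be)X_\be$ gives (3).

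The coefficient comparison producing the recursion and the elementary solution of the recursion in each case are routine. The part that will need care — and where the representation theory of $\mathfrak{sl}_2$, respectively of $\mathfrak{osp}(1|2)$, respectively the nilpotency of $\ad X_\a$, genuinely enters — is the bookkeeping: keeping track of the factor $\sigma_\a$ coming from $(X_\a,X_{-\a})=\sigma_\a$ and of the parities of $p$ and $q$, together with the structural facts about $\a$-strings used in the simplifications (unbrokenness of the string, its reversal by $s_\a$, and the length bounds in the isotropic case).
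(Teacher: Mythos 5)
Your strategy is essentially the paper's: both rest on the YK-style computation of $N_{\a,\be}N_{-\a,\a+\be}$ by an alternating sum/recursion along the $\a$-string, combined with the string symmetry $(\a,\be)=\tfrac12(p-q)(\a,\a)$ and $p\equiv q\pmod 2$ in the non-isotropic odd case. The only cosmetic differences are that you telescope from the boundary condition $c_q=0$ while the paper sums from $c_{-p-1}=0$ (i.e.\ over $0\le i\le p$), and that you obtain the symmetry of the string from the action of $s_\a=s_{2\a}\in W_\g$ on $\D$ and on parities, whereas the paper decomposes the string into the even- and odd-index $\langle X_{2\a},h_{2\a},X_{-2\a}\rangle$-submodules; these are equivalent.

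There is, however, one step of yours that genuinely fails in a boundary case, and it is precisely the case the paper isolates: in part (2), $\be$ proportional to $\a$, i.e.\ $\be=\a$ or $\be=-2\a$. Your recursion is obtained by "comparing coefficients of $X_{\be+i\a}$", which presupposes that $\g_{\be+i\a}$ is spanned by a single root vector; when $\be+i\a=0$ this weight space is $\h$, the quantity $c_i=N_{\a,\be+i\a}N_{-\a,\be+(i+1)\a}$ is not defined, and the Jacobi identity at that index degenerates. For $\be=\a$ your downward solution happens to stop at $c_0$ before reaching the bad index, but for $\be=-2\a$ (where $p=0$, $q=4$ and $0$ sits at $i=2$) the intermediate terms $c_2,c_3$ in your formulas $c_{q-2j}$, $c_{q-2j-1}$ pass through the zero weight space and are meaningless. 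The conclusion is still correct there (one sees it instantly by telescoping upward from $c_{-1}=0$, or, as the paper does, by a direct application of the super Jacobi identity to $X_\a,X_{-\a},X_\be$), but as written your argument does not cover it. Parts (1) and (3) are safe, since under the hypothesis $\a+\be\in\D$ the string cannot contain $0$ when $\a$ is even or isotropic.
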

\begin{proof}
The first statement follows as in the case when $\g$ is a Lie algebra.

For the second statement, let us first assume that $\a$ and $\beta$ are not proportional. Then, as in   \cite[Lemma 3.2]{YK}, we obtain
$$
N_{\a,\beta}N_{-\a,\a+\beta}=\s_\a\sum_{i=0}^p(-1)^i(\beta-i\a)(h_\a).
$$
Since $2\a$ is an even root, we see that the subspaces $$\sum_{-p\le 2i\le q} \C\g_{\beta+2i\a},\quad\sum_{-p\le 2i+1\le q} \C\g_{\beta+(2i+1)\a}$$ are the irreducible components of $\sum_{-p\le i\le q} \C\g_{\beta+i\a}$ viewed as a $\langle X_{2\a}, h_{2\a},X_{-2\a}\rangle$-module. It follows that $\beta(h_\a)+q(\a,\a)= -\beta(h_\a)+p(\a,\a)$ if $p-q$ is even while, if $q-p$ is odd, then  $\beta(h_\a)+q(\a,\a)= -\beta(h_\a)+(p-1)(\a,\a)$   and $\beta(h_\a)+(q-1)(\a,\a)= -\beta(h_\a)+p(\a,\a)$. This implies that $p-q$ is even and $\beta(h_\a)=\frac{p-q}{2}(\a,\a)$. Substituting we find the statement. Suppose now that $\a$ and $\beta$ are proportional. There are only two possibilities: $\beta=\alpha$ or $\beta=-2\alpha$. Both cases follow directly from the Jacobi identity 
$$
[[X_{\a},X_{-\a}],X_{\beta}]=[X_{\a},[X_{-\a},X_{\beta}]]+(-1)^{p(\a)p(\beta)}[X_{\a},X_{\beta}],X_{-\a}].
$$

Finally, if $(\a,\a)=0$, then, as in Lemma 3.2 of \cite{YK}, we obtain
$$
N_{\a,\beta}N_{-\a,\a+\beta}=\s_\a\sum_{i=0}^p(-1)^i(\beta-i\a)(h_\a),
$$
hence the statement follows readily.
\end{proof}

As shown in the proof Lemma 3.3 of \cite{YK}, we have that 
$$
N_{-\a,\a+\beta}=(-1)^{p(\a)}\frac{\s_{\a+\beta}}{\s_\beta}N_{-\beta,-\a}.
$$
Substituting in Lemma \ref{chev}, and using the fact that if $\a,\beta,\a+\beta\in\D$ then $N_{\a,\beta}\ne 0$, we find
\begin{equation}\label{nab1}
N_{\a,\beta}N_{-\beta,-\a}=\frac{\s_\beta}{2\s_{\a+\beta}}q(p+1)(\a,\a)
\end{equation}
if $\a$ is even,
\begin{equation}\label{nab2}
N_{\a,\beta}N_{-\beta,-\a}=\begin{cases} \frac{\s_\a\s_\beta}{2\s_{\a+\beta}}q(\a,\a)&\text{\rm if $p$ is even}\\
\frac{-\s_\a\s_\beta}{2\s_{\a+\beta}}(p+1)(\a,\a)&\text{\rm if $p$ is odd}\end{cases}
\end{equation}
if $\a$ is odd and $(\a,\a)\ne 0$, and
\begin{equation}\label{xcv}
N_{\a,\beta}N_{-\beta,-\a}=\frac{-\s_\a\s_\beta}{\s_{\a+\beta}}(\a,\beta)
\end{equation}
if $\a$ is odd and $(\a,\a)=0$.

Introduce the anti-involution $T$ of $\g$ defined on the local part by
$$
\begin{cases}
T(h)=h&\text{if $h\in\h$},\\
T(e_i)=f_i& \text{if $\a_i$ is even},\\
T(e_i)=\sqrt{-1}f_i&\text{if $\a_i$ is odd}.
\end{cases}
$$
Since it preserves relations \eqref{r1}, it can be extended to $\g$, and $T(\g_\a)=\g_{-\a}$. 
As shown in Lemma 3.8 of \cite{YK}, we can choose $X_\a,X_{-\a}$ in such a way that $(X_\a,X_{-\a})=1$ and $T(X_\a)=\sqrt{-1}^{p(\a)}\s_\a X_{-\a}$, and  we can still assume that $X_{\a_i}=e_i$ and $X_{-\a_i}=f_i$.

It follows that, if $\a,\beta,\a+\beta\in\D$, then 
\begin{align*}
&N_{\a,\beta}\sqrt{-1}^{p(\a+\beta)}\s_{\a+\beta}X_{-\a-\beta}=T([X_\a,X_{\beta}])\\
&=[T(X_\beta),T(X_{\a})]=N_{-\beta,-\a}\sqrt{-1}^{p(\a)+p(\beta)}\s_{\a}\s_{\beta}X_{-\a-\beta},
\end{align*}
hence
\begin{equation}\label{nab}
N_{\a,\beta}=(-1)^{p(\a)p(\beta)}\frac{\s_{\a}\s_{\beta}}{\s_{\a+\beta}}N_{-\beta,-\a}=-\frac{\s_{\a}\s_{\beta}}{\s_{\a+\beta}}N_{-\a,-\beta}.
\end{equation}
Combining \eqref{nab} with \eqref{nab1}, \eqref{nab2},
 \eqref{xcv},  it follows that 
\begin{equation}\label{strucosteven}
N_{\a,\beta}^2=\frac{1}{2}q(p+1)(\a,\a)
\end{equation}
if $\a$ is even,
\begin{equation}\label{strucostodd}
N_{\a,\beta}^2=\begin{cases} (-1)^{p(\beta)}\frac{q(\a,\a)}{2}&\text{\rm if $p$ is even}\\
-(-1)^{p(\beta)}\frac{(p+1)(\a,\a)}{2}&\text{\rm if $p$ is odd}\end{cases}
\end{equation}
if $\a$ is odd and $(\a,\a)\ne 0$, and
\begin{equation}\label{strutcostiso}
N_{\a,\beta}^2=-(-1)^{p(\beta)}(\a,\beta).
\end{equation}
if $\a$ is odd and $(\a,\a)=0$. Thus $N_{\a,\beta}$ is either real or pure imaginary.

If $(\a,\a)\ne 0$, let $\e_\a=sgn(\a,\a)$. Let 
$$\xi_\a=\begin{cases}\epsilon_\a \quad&\text{if $p(\a)=0$,} \\ 1\quad &\text{if $p(\a)=1$.}\end{cases}$$
\begin{lemma} 
\begin{equation}\label{congovvia}
\ov N_{\a,\beta}=\frac{\xi_{\a}\xi_{\beta}}{\xi_{\a+\beta}}N_{\a,\beta}.
\end{equation}
\end{lemma}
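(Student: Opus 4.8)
Since $\a,\beta,\a+\beta\in\D$ we have $N_{\a,\beta}\neq 0$, and by \eqref{strucosteven}, \eqref{strucostodd}, \eqref{strutcostiso} the number $N_{\a,\beta}^2$ is a nonzero real number. Hence $\ov N_{\a,\beta}=sgn(N_{\a,\beta}^2)\,N_{\a,\beta}$, so \eqref{congovvia} is equivalent to
\[
sgn\bigl(N_{\a,\beta}^2\bigr)=\frac{\xi_\a\xi_\beta}{\xi_{\a+\beta}}.
\]
The plan is to verify this by running through the cases given by the parities of $\a$ and $\beta$ (hence of $\a+\beta$), using throughout: the identity $N_{\a,\beta}^2=N_{\beta,\a}^2$ (which follows from $[X_\a,X_\beta]=-(-1)^{p(\a)p(\beta)}[X_\beta,X_\a]$, so that $\a$ and $\beta$ may be interchanged at will), the formulas \eqref{strucosteven}--\eqref{strutcostiso}, and the string data extracted in the proof of \Lem{chev} (notably $p\equiv q\pmod 2$ and $(\a,\beta)=\beta(h_\a)=\tfrac{p-q}{2}(\a,\a)$ when $(\a,\a)\neq0$). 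Recall also that $\xi_\gamma$ equals $\e_\gamma=sgn((\gamma,\gamma))$ if $\gamma$ is even and equals $1$ if $\gamma$ is odd.

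\textbf{The case when $\a$ (or, by interchange, $\beta$) is even.} Then \eqref{strucosteven}, together with $q\ge 1$ (because $\a+\beta=\beta+\a\in\D$) and $p\ge 0$, gives $sgn(N_{\a,\beta}^2)=sgn((\a,\a))=\e_\a=\xi_\a$. If $\beta$ is odd, then $\a+\beta$ is odd and $\xi_\beta=\xi_{\a+\beta}=1$, so we are done. If $\beta$ is even, then $\a+\beta$ is even and it remains to check $\e_\beta=\e_{\a+\beta}$: I would obtain this by writing $\a=(\a+\beta)+(-\beta)$ and applying the identity just proved, $sgn(N_{\mu,\nu}^2)=\e_\mu$ for $\mu$ even, to the pair $(\a+\beta,-\beta)$, together with $N_{\a+\beta,-\beta}^2=N_{-\beta,\a+\beta}^2$ and $\e_{-\beta}=\e_\beta$.

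\textbf{The case when $\a,\beta$ are both odd.} Now $\a+\beta$ is even, so $\xi_\a\xi_\beta/\xi_{\a+\beta}=\e_{\a+\beta}$ and we must show $sgn(N_{\a,\beta}^2)=sgn\bigl((\a+\beta,\a+\beta)\bigr)$. I would split according to which of $(\a,\a),(\beta,\beta)$ vanish, always using $(\a+\beta,\a+\beta)=(\a,\a)+2(\a,\beta)+(\beta,\beta)$. If both vanish, \eqref{strutcostiso} gives $N_{\a,\beta}^2=(\a,\beta)$ (as $p(\beta)=1$), which is nonzero, and $(\a+\beta,\a+\beta)=2(\a,\beta)$, so the signs agree. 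If exactly one vanishes, say $(\a,\a)=0\neq(\beta,\beta)$, then $2\beta\in\D$ and (swapping roles) one uses \eqref{strucostodd} for $N_{\beta,\a}^2$ together with $(\a,\beta)=\tfrac{\tilde p-\tilde q}{2}(\beta,\beta)$ for the $\beta$‑string through $\a$; substituting, $(\a+\beta,\a+\beta)=(1+\tilde p-\tilde q)(\beta,\beta)$, and since $(\a,\beta)=N_{\a,\beta}^2\neq 0$ one has $\tilde p\neq\tilde q$. The point then is that $\tilde q$ is pinned down by the structure of the root system ($s_{2\beta}(\a)=\a-\langle\a,(2\beta)^\vee\rangle\,2\beta$ is a root, forcing the appropriate $\a+i\beta$ to be roots), which fixes the parity of $\tilde p$; comparing the resulting sign with that of $N_{\beta,\a}^2$ from \eqref{strucostodd} closes the sub‑case. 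Finally, if $(\a,\a)\neq0\neq(\beta,\beta)$: when $\a=\beta$ one has $\a+\beta=2\a$, $\e_{2\a}=\e_\a$, and a direct evaluation of \eqref{strucostodd} suffices; when $\a,\beta$ are not proportional, the non‑isotropic odd roots form a single $W_\g$‑orbit, so $(\a,\a)=(\beta,\beta)$ and $\e_\a=\e_\beta$, and moreover $\beta+2\a$, were it a root, would be a non‑isotropic odd root whose norm $(\beta,\beta)+4(\a,\beta)+4(\a,\a)$ differs from the common value $(\a,\a)$ — a contradiction that constrains $q$, and with this $sgn(N_{\a,\beta}^2)$ from \eqref{strucostodd} is matched against $sgn((\a+\beta,\a+\beta))$.

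\textbf{Main obstacle.} The delicate part is this last, both‑odd case: the sign produced by \eqref{strucostodd} depends on the parity of the $\a$‑string length, and matching it with $sgn((\a+\beta,\a+\beta))$ requires controlling that parity. The two tools for this are the congruence $p\equiv q\pmod 2$ from \Lem{chev} (which, once $(\a,\beta)\neq0$, gives $|p-q|\ge 2$) and the fact that the non‑isotropic odd roots constitute one Weyl orbit, which — via the reflections $s_{2\a}$ attached to non‑isotropic odd $\a$ — dictates which $\beta+i\a$ are roots. Everything else (the even cases and the two‑isotropic sub‑case) is routine bookkeeping with \eqref{strucosteven}--\eqref{strutcostiso}.
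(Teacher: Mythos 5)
Your reduction to the sign identity $sgn(N_{\a,\beta}^2)=\xi_\a\xi_\beta/\xi_{\a+\beta}$ is the right starting point, and your handling of the cases where one of the roots is even, and of the case where both are isotropic, is correct and essentially coincides with the paper's (the paper settles the even--even sub-case by the same observation that \eqref{strucosteven} forces $\e_\a=\e_\beta=\e_{\a+\beta}$ whenever $N_{\a,\beta}\ne 0$, and the two-isotropic sub-case by $2(\a,\beta)=(\a+\beta,\a+\beta)$ exactly as you do). Where you genuinely diverge is the remaining case, $\a$ odd non-isotropic and $\beta$ odd, which you attack frontally through \eqref{strucostodd} and string parities. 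The paper disposes of this case in two lines with a transfer argument: by Lemma~\ref{chev}(2) the product $N_{\a,\beta}N_{-\a,\a+\beta}$ is real; since $\a+\beta$ is even, $N_{-\a,\a+\beta}=\pm N_{\a+\beta,-\a}$ falls under the already-settled even case and is therefore real iff $\xi_{\a+\beta}=1$; hence the same holds for $N_{\a,\beta}$, which is precisely \eqref{congovvia} here. This sidesteps all the parity bookkeeping that you correctly identify as the delicate point.

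Your own route for that case is completable, but as written it is not complete, and part of the obstacle you describe is self-inflicted. In the mixed sub-case ($(\a,\a)=0\ne(\beta,\beta)$) you do not need to pin down $\tilde q$ at all: you already have $N_{\a,\beta}^2=(\a,\beta)$ from \eqref{strutcostiso}, $2(\a,\beta)=(\tilde p-\tilde q)(\beta,\beta)$, and $\tilde p\equiv\tilde q\pmod 2$ with $\tilde p\ne\tilde q$, hence $|\tilde p-\tilde q|\ge 2$ and $sgn(1+\tilde p-\tilde q)=sgn(\tilde p-\tilde q)$; therefore $sgn((\a+\beta,\a+\beta))=sgn\bigl((1+\tilde p-\tilde q)(\beta,\beta)\bigr)=sgn(2(\a,\beta))=sgn(N_{\a,\beta}^2)$ and the sub-case is closed --- the excursion through $s_{2\beta}$ is unnecessary and is in any event not carried out. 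In the both-non-isotropic sub-case, the phrase ``a contradiction that constrains $q$'' is an assertion, not an argument: the norm of a putative root $\beta+2\a$ is $(5+2(p-q))(\a,\a)$, which is never zero and only contradicts membership in the single orbit after you combine it with the string relations, so you still owe the reader the conclusion $q=1$ (true: for non-proportional $\a,\beta$ this configuration occurs only with $\a,\beta\in\{\pm\d_k\}$ and $p=q=1$), after which \eqref{strucostodd} gives $N_{\a,\beta}^2=\tfrac{p+1}{2}(\a,\a)$ while $(\a+\beta,\a+\beta)=(p+1)(\a,\a)$, and the signs visibly agree. So: sound strategy and correct easy cases, but the two hard sub-cases are left as sketches; either finish them as above or, more efficiently, replace them by the paper's reality-transfer via Lemma~\ref{chev}(2).
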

\begin{proof}
Since $N_{\alpha,\beta}=\pm N_{\beta,\alpha}$ it is enough consider the following three cases:
\begin{enumerate}
\item $\alpha$  is even;
\item $\alpha$ is odd non-isotropic and $\beta$ is odd;
\item $\alpha$ and $\beta$ are isotropic.
\end{enumerate}
In case (1), by \eqref{strucosteven}, we have $\ov N_{\a,\beta}=\xi_\a N_{\a,\beta}$. Thus the result follows obviously if $\beta$ is odd. If $\beta$ is even, then observe that \eqref{strucosteven} implies that $N_{\a,\beta}\ne 0$ if and only if $\e_\a=\e_\beta=\e_{\a+\beta}$. This observation implies the result.\par
In case (2),  by Lemma \ref{chev} (2),
the product $N_{\alpha,\beta} N_{-\alpha,\alpha+\beta}$ is real; by  (1), $N_{-\alpha,\alpha+\beta}$
is real iff $\xi_{\alpha+\beta}=1$,  so
$N_{\alpha,\beta}$ is real iff $\xi_{\alpha+\beta}=1$ as required.\par
In case (3), we have  $2(\a,\beta)=(\a+\beta,\a+\beta)$, so, by \eqref{strutcostiso}, $\ov N_{\a,\beta}=\xi_{\a+\beta}N_{\a,\beta}$ as desired.\par 

\end{proof}

\vskip5pt
In the above setting, given complex numbers $\l_1,\ldots,\l_n$ such that $\l_i\in \sqrt{-1}\R$ if $p(\a_i)=1$ and $\l_i\in \R$ if $p(\a_i)=0$,
one can define an antilinear involution $\omega:\g\to\g$ setting
\begin{equation}\label{omega}
\omega(e_i)=\l_if_i,\quad\omega(f_i)={\bar\l_i}^{-1}e_i,\quad
\omega(h_{\a_i})=-h_{\a_i},\ 1\leq i\leq n.
\end{equation}
Since $\omega$ preserves  relations \eqref{r1}, it follows that it extends to $\g$. 

Clearly $\omega(X_\a)$ is a multiple of  $X_{-\a}$. We define
$\l_\a$ for each $\a\in\D$ by
\begin{equation}\label{ox}
\omega(X_\a)=-\s_\a\xi_\a\l_\a X_{-\a}.
\end{equation}
 Hence, if $\a,\beta,\a+\beta\in\D$, since $\omega$ is an antilinear automorphism, we have 
$$
-\s_{\a+\beta}\xi_{\a+\beta}\l_{\a+\beta}\bar N_{\a,\beta}=\s_{\a}\s_\beta\xi_\a\xi_\beta\l_\a\l_\beta N_{-\a,-\beta}.$$
Using \eqref{nab} and \eqref{congovvia}, we deduce that 
\begin{equation}\label{lab}
\l_{\a+\beta}=\l_\a\l_\beta,\quad  \l_\a\l_{-\a}=1.
\end{equation}
 It follows that, if $\a=\sum_{i=1}^{n}n_i\a_i$, then
\begin{equation} \label{lambdaalfa}
\l_\a=\prod_i(-\xi_{\a_i}\l_i)^{n_i}.
\end{equation}
\vskip5pt
Endow $\g$ with the $\ganz$-grading
\begin{equation}\label{grading}\g=\bigoplus_{i\in\ganz} \mathfrak q_i\end{equation}
which assigns   degree $0$ to $h\in\h$ and to $e_i$ and $f_i$ if $\a_i$ is even,
and degree $1$ to $e_i$ and degree $-1$ to $f_i$, if $\a_i$ is odd. 

Let $\pi=\{i\mid 1\leq i\leq n, p(\a_i)=1\},\,\pi^c=\{1,\ldots,n\}\setminus \pi$.
\begin{prop}\label{cf}\phantom{z}\
\begin{enumerate}
\item The set $\g_0^\omega$ of $\omega$-fixed points in $\g_0$  is a real form of $\g_0$.
\item $\q_0^\omega$ is a compact form of $\q_0$ if and only  if 
$\l_i(\a_i,\a_i)<0$ for all $i\in \pi^c$.
\item If  $\l_i(\a_i,\a_i)<0$ for all $i\in\pi^c$ and $\sqrt{-1}\l_i$ have the same sign for all $i\in \pi$,  then,   for any
 positive integer $r$, $(X_\a,\omega(X_\a))(\a,\a)<0$ if $X_\a\in\q_{4r}\oplus \q_{-4r}$ and $(X_\a,\omega(X_\a))(\a,\a)>0$ if 
$X_\a\in\q_{4r-2}\oplus \q_{-4r+2}$.
\end{enumerate}
\end{prop}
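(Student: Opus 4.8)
The plan is to verify the three claims in order, exploiting the explicit formulas \eqref{r1}, \eqref{ox}, \eqref{lambdaalfa} and the structure constant identities \eqref{strucosteven}--\eqref{strutcostiso} together with \eqref{congovvia} and \eqref{lab}. For (1), I would simply observe that $\omega$ is an antilinear involution of $\g$ preserving $\g_0$ (since it preserves the $\ZZ/2$-grading, as it maps each $e_i,f_i$ to a multiple of the opposite root vector of the same parity), so $\g_0^\omega$ is by definition a real form of $\g_0$; the only point to record is that $\omega^2=\mathrm{id}$, which follows from $\omega(e_i)=\l_i f_i$ and $\omega(f_i)=\bar\l_i^{-1}e_i$ giving $\omega^2(e_i)=\omega(\l_i f_i)=\bar\l_i \cdot \bar\l_i^{-1}e_i=e_i$, and similarly on $\h$.

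For (2), the compactness of $\q_0^\omega$ is equivalent to negative-definiteness of the Hermitian form $(x,\ov{\omega(y)})$ (or a suitable invariant form) on $\q_0$, restricted to the even part. I would test this on root vectors: for $\a$ an even root of $\q_0$, compute $(X_\a,\omega(X_\a))$ using \eqref{ox}, getting $(X_\a,\omega(X_\a))=-\s_\a\xi_\a\l_\a(X_\a,X_{-\a})=-\xi_\a\l_\a$ (recall $(X_\a,X_{-\a})=1$ and $\s_\a=1$ for even $\a$). Since $\xi_\a=\e_\a=\mathrm{sgn}(\a,\a)$ for even $\a$, this reads $-\e_\a\l_\a$; and by \eqref{lambdaalfa}, $\l_\a=\prod_i(-\xi_{\a_i}\l_i)^{n_i}$. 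Expanding a general even root of $\q_0$ as a sum of simple roots $\a_i$ with $i\in\pi^c$ (the even simple roots span the degree-zero even part up to the Cartan, modulo the subtlety about which roots lie in $\q_0$ — here one uses that $\q_0$ is generated by $\h$ and the $e_i,f_i$ with $i\in\pi^c$), the condition that $-\e_\a\l_\a<0$ for every such root is governed precisely by the signs of $-\xi_{\a_i}\l_i=-\e_{\a_i}\l_i$, which must all be positive, i.e. $\l_i(\a_i,\a_i)<0$ for all $i\in\pi^c$. The Cartan directions are handled by noting $\omega(h_{\a_i})=-h_{\a_i}$, so $\sqrt{-1}\,\h_{\RR}$ is in $\q_0^\omega$ and contributes compactly. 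Conversely if some $\l_i(\a_i,\a_i)\ge 0$ one exhibits a non-compact direction.

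For (3), this is the crux and the main obstacle. Assume $\l_i(\a_i,\a_i)<0$ for $i\in\pi^c$ and $\sqrt{-1}\l_i$ of constant sign, say all positive, for $i\in\pi$. Fix $\a$ with $X_\a\in\q_{4r}\oplus\q_{-4r}$ resp. $\q_{4r-2}\oplus\q_{-4r+2}$; note the $\q$-degree of $\a$ is $\sum_{i\in\pi}n_i$ (the number of odd simple roots in $\a$, counted with multiplicity). From \eqref{ox}, $(X_\a,\omega(X_\a))=-\s_\a\xi_\a\l_\a$, and $\l_\a=\prod_i(-\xi_{\a_i}\l_i)^{n_i}=\prod_{i\in\pi^c}(-\e_{\a_i}\l_i)^{n_i}\cdot\prod_{i\in\pi}(-\l_i)^{n_i}$. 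The first product is a positive real number by hypothesis (2). The second product is $\prod_{i\in\pi}(-\l_i)^{n_i}$; writing $-\l_i=\sqrt{-1}\,(\sqrt{-1}\l_i)$ and $\sqrt{-1}\l_i>0$, each factor is $\sqrt{-1}$ times a positive real, so the product equals $(\sqrt{-1})^{\,m}\times(\text{positive real})$ where $m=\sum_{i\in\pi}n_i$ is the $\q$-degree. Hence $\l_\a=(\sqrt{-1})^m\cdot(\text{positive})$, so $\l_\a$ is positive real when $m\equiv 0\pmod 4$ and negative real when $m\equiv 2\pmod 4$ (and $\pm\sqrt{-1}$ times positive when $m$ is odd, the case not at issue). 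Now one must pin down the factor $-\s_\a\xi_\a$ and relate the sign of $(X_\a,\omega(X_\a))$ to the sign of $(\a,\a)$: the key auxiliary fact is that for a root $\a$ of $\q$-degree $m$, the sign of $(\a,\a)$ itself is controlled mod $4$ by $m$ — this is where one invokes the description of $\overline\D_1$, the bilinear form tables \eqref{B1}, \eqref{positivoD0}, and a parity/grading bookkeeping showing that $\xi_\a(\a,\a)$ has a sign depending only on $m\bmod 4$. Combining: $(X_\a,\omega(X_\a))(\a,\a)=-\s_\a\xi_\a\l_\a(\a,\a)=-\s_\a(\xi_\a(\a,\a))\l_\a$, and both $\xi_\a(\a,\a)$ and $\l_\a$ flip sign exactly when $m$ passes from $\equiv 0$ to $\equiv 2 \pmod 4$, so their product has constant sign, giving the stated dichotomy. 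The main work — and the step I expect to be delicate — is the grading-versus-form-sign lemma: showing cleanly that $\mathrm{sgn}\big(\xi_\a(\a,\a)\big)$ depends only on the $\q$-degree of $\a$ modulo $4$, which requires a case analysis over the non-exceptional types $gl(m,n),B(m,n),D(m,n)$ (and the exceptionals separately) using the explicit root data; I would isolate this as a preliminary lemma before assembling the three parts.
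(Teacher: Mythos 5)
Your strategy coincides with the paper's: both (2) and (3) reduce, via \eqref{ox} and the multiplicativity \eqref{lambdaalfa}, to tracking the sign of $\l_\a=\prod_i(-\xi_{\a_i}\l_i)^{n_i}$, and your observation that for $\a$ of $\q$-degree $m$ one has $\l_\a=(\sqrt{-1})^{m}\cdot(\text{positive real})$ under the stated hypotheses is precisely the heart of (3). One caveat on (2): since the invariant form is positive definite on one compact factor and negative definite on the other, the correct test is not ``$(X_\a,\omega(X_\a))<0$ for all $\a$'' (your ``$-\e_\a\l_\a<0$'') but ``$(X_\a,\omega(X_\a))(\a,\a)<0$ for all $\a$'', which for even $\a$ (where $\s_\a=1$ and $\xi_\a=\e_\a$) simplifies to $\l_\a>0$; your uniform inequality cannot hold once $\q_0$ contains even roots of both norm signs, although the condition you extract on the simple roots, $\l_i(\a_i,\a_i)<0$, is the right one.

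The genuine error is in your conclusion of (3). The auxiliary lemma you flag as the main work --- that $sgn\bigl(\xi_\a(\a,\a)\bigr)$ depends only on $m\bmod 4$ --- is both false and unnecessary: every $\a$ with $X_\a\in\q_{\pm 2k}$ is an even root, so $\xi_\a(\a,\a)=\e_\a(\a,\a)=|(\a,\a)|>0$ with no dependence whatsoever on the grading, and no case analysis over the types is needed. Worse, your final sentence deduces that the product $(X_\a,\omega(X_\a))(\a,\a)$ ``has constant sign'', which is the negation of what (3) asserts: the whole content of the claim is that the sign alternates between $\q_{\pm 4r}$ and $\q_{\pm(4r-2)}$. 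The correct finish is immediate from what you already established: $(X_\a,\omega(X_\a))(\a,\a)=-\xi_\a\l_\a(\a,\a)=-|(\a,\a)|\,\l_\a$, so its sign is $-sgn(\l_\a)$, which by your own computation is negative for $m\equiv 0\pmod 4$ and positive for $m\equiv 2\pmod 4$. In short, the hard part of your argument is already done; the delicate step you were bracing for does not exist, and the last sentence should be replaced by this one line.
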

\begin{proof} The proof of the first assertion is standard.
 For the latter two claims observe that, by \eqref{ox}, 
\begin{equation}\label{primaa}(X_\a,\omega(X_\a))=-\s_\a\xi_\a\l_\a.\end{equation}
If $\a$ is an even root, the r.h.s. of \eqref{primaa} is a real number whose sign does not depend on the choice of the basis of root vectors $X_\a$. In fact, if $\{X'_\a\}$ is another basis with 
 $[X'_\a,X'_{-\a}]=\s_\a h_\a$, then $X'_\a=c_\a X_\a$ for suitable complex numbers $c_\a$ and  
 $(X'_\a,\omega(X'_\a))=-|c_\a|^2\s_\a\xi_\a\l_\a$. Therefore we can use formula \eqref{lambdaalfa} to prove the statements
 in a straightforward way. As for (2), recall that a real Lie algebra is compact if the form $(\cdot,\omega(\cdot))$ has signature opposite to that of the invariant form 
 $(\cdot,\cdot)$. If $X_\a\in \mathfrak q_0$ then $\l_\a=\prod (-\xi_{\a_i}\l_i)^{n_i}$. Therefore
 $-\xi_\a\s_\a\l_\a=-\xi_\a\prod (-\xi_{\a_i}\l_i)^{n_i}>0$ for any $\a$ if and only if $\l_i(\a_i,\a_i)<0$ for all $i\in\pi^c$. Claim (3) is proved in a similar way.
  \end{proof}
\medskip
Let 
$$V=\text{fixed point set of $\omega_{|\g_1}$.}$$
 Clearly $V$ is a real form of $\g_1$. Since a basis of $V$ is given by 
\begin{equation}\label{basesimp}
\{X_{\a}+\omega( X_{\a})\mid X_\a\in \mathfrak q_i,\, i\in 2\ganz+1\},
\end{equation}
 it is checked easily that $\langle\cdot\,,\cdot\rangle=(\cdot\,,\cdot)_{|V}$ is a real nondegenerate symplectic form.
Since $ad_{|\g_1}((\g_0)_{\R})\subset ad_{|\g_1}(\g_0)\cap sp(V)$, we have that
$$
ad_{|\g_1}((\g_0)_{\R})=ad_{|\g_1}(\g_0)\cap sp(V)
$$
and that $ad_{|\g_1}(\g_0)\cap sp(V)$ is a real form of $ad_{|\g_1}(\g_0)$.
\vskip5pt
\subsection{Dynkin diagrams}
We will briefly recall the usual encoding of sets of positive roots by means of Dynkin diagrams.
In the following $N=n+m+1$ and $\bigcirc$, $\otimes$, $\square$ correspond respectively to even, isotropic and nonisotropic (both even and odd) roots. 
A $\cdot$ is a placeholder for even or isotropic roots. The possible diagrams in each  type 
 are 
listed in \cite{Kacsuper}. We reproduce this list below.
\subsubsection{Type $gl(m+1,n+1)$}
\begin{equation}\label{diaA}\entrymodifiers={! ! <0pt, .7ex>+}
\xymatrix@!=7ex{
\mathop\cdot\limits_{\alpha_1} \ar@{-}[r]&\mathop\cdot\limits_{\a_2} \ar@{-}[r]& \dots \ar@{-}[r] &\mathop\cdot\limits_{\a_N}\ar@{-}[r]&\mathop\cdot\limits_{\a_{N+1}}
}\end{equation}

\subsubsection{Types $B(m+1,n+1)$ and $D(m+1,n+1)$}\label{BD}
The possible diagrams in these types 
 are 
\begin{equation}\label{diaB}\entrymodifiers={! ! <0pt, .7ex>+}
\xymatrix@!=7ex{
\mathop\cdot\limits_{\alpha_1} \ar@{-}[r]&\mathop\cdot\limits_{\a_2} \ar@{-}[r]& \dots \ar@{-}[r] &\mathop\cdot\limits_{\a_N}\ar@{=>}[r]&\mathop\square\limits_{\a_{N+1}}
}\end{equation}
for type $B(m+1,n+1)$,
and
\begin{equation}\label{diaD1}\entrymodifiers={! ! <0pt, .7ex>+}
\xymatrix@!=7ex{
\mathop\cdot\limits_{\alpha_1} \ar@{-}[r]&\mathop\cdot\limits_{\a_2} \ar@{-}[r]& \dots \ar@{-}[r] &\mathop\bigcirc\limits_{\a_N}\ar@{<=}[r]&\mathop\bigcirc\limits_{\a_{N+1}}}
\end{equation}
\begin{equation}\label{diaD2}\entrymodifiers={! ! <0pt, .7ex>+}
\xymatrix@!=7ex{
\mathop\cdot\limits_{\alpha_1} \ar@{-}[r]&\mathop\cdot\limits_{\a_2} \ar@{-}[r]& \dots \ar@{-}[r] &\mathop\cdot\limits_{\a_{N-1}}\ar@{<-}[r]&\mathop\otimes\limits_{\a_{N}}\ar@{->}[r]&\mathop\bigcirc\limits_{\a_{N+1}}
}\end{equation}
\begin{equation}\label{diaD3}\entrymodifiers={! ! <0pt, .7ex>+}
\xymatrix@!=7ex{
 & & &  & \mathop\bigcirc\limits_{\alpha_{N+1}}\\
\mathop\cdot\limits_{\alpha_1} \ar@{-}[r]&\mathop\cdot\limits_{\a_2} \ar@{-}[r]& \dots \ar@{-}[r] &{\mathop\cdot\limits_{\a_{N-1}}}\ar@{->}[ur]\ar@{->}[dr]\\
& & &  & \mathop\bigcirc\limits_{\alpha_{N}}
}\end{equation}
\begin{equation}\label{diaD4}\entrymodifiers={! ! <0pt, .7ex>+}
\xymatrix@!=7ex{
 & & &  & \mathop\otimes\limits_{\alpha_{N+1}}\\
\mathop\cdot\limits_{\alpha_1} \ar@{-}[r]&\mathop\cdot\limits_{\a_2} \ar@{-}[r]& \dots \ar@{-}[r] &\mathop\cdot\limits_{\a_{N-1}}\ar@{-}[ur]\ar@{-}[dr]\\
& & &  & \mathop\otimes\limits_{\alpha_{N}\ar@{-}[uu]}
}\end{equation}
for $D(m+1,n+1)$.

\subsection{Distinguished sets of positive roots}\label{classifdis}
\begin{defi}\label{dis} We say that a set of positive roots $\Dp$ is  distinguished if $\mathfrak q_i=\{0\}$ for $|i|>2$ in grading \eqref{grading}.  We say that a set of positive roots $\Dp$ is very distinguished if the corresponding set of simple roots contains a unique odd root.
 \end{defi}
Since $\max\{a_j\mid \sum_{i=1}^na_i\a_i\in\Dp,\,p(\a_j)=1\}=2$  for any choice of $\Dp$, we have that  if $\Dp$ is very distinguished, then it is distinguished. The  sets of positive roots which  in 
\cite{Kacsuperp} and in most of the subsequent literature are called distinguished are all ``very distinguished''. We  have preferred to change the terminology because of the relationships with  the Theta correspondence.
\par 
We now discuss the possible distinguished subsets of positive roots up to $W_\g$-equivalence. The classification is basically a case by case inspection, looking at all 
possible diagrams (cf. \cite{Kacsuper}  and \cite{vdl} for the exceptional types) and calculating the coefficients which express the positive roots as a linear combination
of the simple roots.\par
\subsubsection{$gl(m,n)$} 
Given non negative integers $p,q$ with $p+q=m$, we let $\D_{gl}^{(p,q)}$ be the  set of positive roots for $gl(m,n)$, corresponding to the following set of simple roots
$$\Pi_{gl}^{(p,q)}=\{
\e_1-\e_2,\dots, ,\e_p-\d_1,\d_1-\d_2,\ldots,\d_n-\e_{p+1},\e_{p+1}-\e_{p+2},\ldots,\e_{m-1}-\e_m\}.
$$
This is essentially  the only possibility for  a distinguished set of positive roots (including the possibility $p=0$ or $q=0$, in which case 
$\mathfrak q_i=\{0\}$ for $|i|>1$). The other one is to take  non negative integers $r,s$ such that $r+s=n$ and exchanging $\e$'s with $\d$'s. But this case is equivalent
to the above by exchanging $m$ and $n$, so we
 will not distinguish these two possibilities.
\subsubsection{$B(m,n)$}
There is a unique, up to $W_\g$-action,  distinguished 
set of positive roots $\Dp_B$. The corresponding set of simple roots,  with notation as in \cite{Kacsuper}, is
\begin{align}
&\label{B3}
\Pi_B=\{\d_1-\d_2,\ldots,\d_n-\e_1,\e_1-\e_2,\ldots,\e_{m-1}-\e_m,\e_m\}.\end{align}
We also allow the case $m=0$, in which the set of simple roots is
$\{\d_1-\d_2,\ldots,\d_{n-1}-\d_n,\d_n\}$.
\par
\subsubsection{$D(m,n)$} There are three distinguished sets of positive roots, up to $W_\g$-action :
$\Dp_{D1},\Dp_{D2},\Dp_{D2'}$. The corresponding sets of simple roots are 
\begin{align*}
\Pi_{D1}&=\{\d_1-\d_2,\ldots,\d_n-\e_1,\e_1-\e_2,\ldots,\e_{m-1}-\e_m,\e_{m-1}+\e_m\},\\
\Pi_{D2}&=\{\e_1-\e_2,\ldots,\e_{m-1}-\e_m,\e_m-\d_1,\d_1-\d_2,\ldots,\d_{n-1}-\d_n,2\d_n\},\\
\Pi_{D2'}&=\{\e_1-\e_2,\ldots,\e_{m-1}+\e_m,-\e_m-\d_1,\d_1-\d_2,\ldots,\d_{n-1}-\d_n,2\d_n\}.
\end{align*}
\subsubsection{$C(n+1)$} There are three  distinguished sets of positive roots, up to $W_\g$-action:
$\Dp_{C1},\Dp_{C2}, \Dp_{C2'}.$ The corresponding sets of simple roots are 
\begin{align*}
\Pi_{C1}&=\{\d_1-\d_2,\ldots,\d_{n-1}-\d_n,\d_n-\e,\d_n+\e\},\\
\Pi_{C2}&=\{\e-\d_1,\d_1-\d_2,\ldots,\d_{n-1}-\d_n,2\d_n\},\\
\Pi_{C2'}&=\{-\e-\d_1,\d_1-\d_2,\ldots,\d_{n-1}-\d_n,2\d_n\}.\\
\end{align*}
\subsubsection{Exceptional types} A direct inspection shows that the distinguished sets of positive roots correspond to the following diagrams

\begin{equation}\label{d21}\begin{Dynkin}
\Dbloc{}\Dbloc{\Dcirc\Dsouthwest}
\Dskip
\Dbloc{\Dcross\Dnortheast\Dsoutheast}
\Dskip
\Dbloc{}\Dbloc{\Dcirc\Dnorthwest}
\end{Dynkin} 
\qquad
\begin{Dynkin}
\Dbloc{\Dcross\Deast}
\Dbloc{\Dcirc\Dwest\Dtripleeast}
\Dleftarrow
\Dbloc{\Dcirc\Dtriplewest}
\end{Dynkin}
\end{equation}
for $D(2,1,\a)$ (left display; it will be denoted by  $\Dp_{D(2,1,\a)}$), $G(3)$ (right display; it will be denoted by $\Dp_G$), and to 
\begin{equation}\label{tipoF}
\begin{Dynkin}
\Dbloc{\Dcross\Deast}
\Dbloc{\Dcirc\Dwest\Ddoubleeast}
\Dleftarrow
\Dbloc{\Dcirc\Ddoublewest\Deast}
\Dbloc{\Dcirc\Dwest}
\end{Dynkin}
\qquad
\begin{Dynkin}
\Dbloc{\Dcirc\Dtripleeast}
\Drightarrow
\Dbloc{\Dcross\Deast\Dtriplewest}
\Dbloc{\Dcirc\Dwest\Ddoubleeast}
\Dleftarrow
\Dbloc{\Dcirc\Ddoublewest}
\end{Dynkin}
\end{equation}
for $F(4)$ (they will be denoted by $\Dp_{F1},\,\Dp_{F2}$, respectively).

 \subsection{Distinguished sets of positive roots and real forms} \label{distinguishedrealforms}
If $\Dp$ is distinguished we choose $\omega$ corresponding to $\l_i=-\epsilon_{\a_i}$ if $\a_i$ is an even simple root  and 
$\l_i=\sqrt{-1}$ if $\a_i$ is an odd simple root. Set,  for any 
$\a\in\Dp_1$,
$$e_\a=\frac{1}{\sqrt{2}}(X_\a-\sqrt{-1}X_{-\a}),\quad
f_\a=\frac{1}{\sqrt{2}}(X_{-\a}-\sqrt{-1}X_{\a}).$$
Since $\g_1=\mathfrak q_1+\mathfrak q_{-1}$, applying formula \eqref{basesimp}  with our choice of $\omega$, we get  that these vectors form a standard symplectic basis of $V$.
Since $$\bigoplus_{\a\in\D_1^+}\C(e_\a\pm\sqrt{-1}f_\a)=\g_1^\pm,$$
 the oscillator representation is exactly $M^{\Dp}(\g_1)$ as a  $sp(\g_1)$-module.

\par
 Observe now that for any Lie superalgebra of basic classical type
we have 
\begin{equation}\label{decg12}\g_0=\g_0^1\times \g_0^2\end{equation} with $\g_0^1, \g_0^2$ reductive Lie algebras.  
Set
\begin{equation}\label{si}\mathfrak s_i=ad_{|\g_1}(\g_0^i)\cap sp(V),\,\,\,i=1,2.\end{equation}

A case by case check in the distinguished sets of positive roots   shows that we can always choose  $\g_0^1,\,\g_0^2$ so that:
$$\g_0^1\subset \mathfrak q_0,\quad \g_0^2=(\mathfrak q_0\cap \g_0^2)\oplus(\mathfrak q_{-2}+\mathfrak q_2).
$$
In the case of $\g=gl(m,n)$, we choose $\g_0^1=gl(m),\,\g_0^2=gl(n)$ or the other way around.
Combining  these observations with Lemma \ref{cf} we readily obtain the following result.
 \begin{prop} If $\Dp$ is any  distinguished set of positive roots of a Lie superalgebra of basic classical type,  then $\mathfrak s_i$  is a real form of $ad_{|\g_1}(\g_0^i)$. Moreover $\mathfrak s_1$ is a compact form  of $ad_{|\g_1}(\g^1_0)$ and there is a Cartan decomposition $\mathfrak k\oplus \mathfrak r$ of $\mathfrak s_2$ such that $\mathfrak k^\C=ad_{|\g_1}(\g_0^2\cap \mathfrak q_0)$ and $\r^\C=ad_{|\g_1}(\mathfrak q_2\oplus\mathfrak q_{-2})$.
 
 In particular, if $\mathfrak q_2=\mathfrak q_{-2}=\{0\}$ then $\mathfrak s_1\times \mathfrak s_2$ is a compact form of $ad_{|\g_1}(\mathfrak q_0)=ad_{|\g_1}(\g_0)$.
  \end{prop}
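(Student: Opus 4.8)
The statement to be proved is the Proposition displayed at the very end: for any distinguished set of positive roots $\Dp$ of a basic classical type $\g$, the subalgebras $\mathfrak s_i=ad_{|\g_1}(\g_0^i)\cap sp(V)$ are real forms of $ad_{|\g_1}(\g_0^i)$, with $\mathfrak s_1$ compact, with a Cartan decomposition $\mathfrak s_2=\mathfrak k\oplus\mathfrak r$ satisfying $\mathfrak k^\C=ad_{|\g_1}(\g_0^2\cap\q_0)$ and $\r^\C=ad_{|\g_1}(\q_2\oplus\q_{-2})$, and in particular, when $\q_2=\q_{-2}=\{0\}$, $\mathfrak s_1\times\mathfrak s_2$ is a compact form of $ad_{|\g_1}(\g_0)$. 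The plan is to feed the structural facts collected just before the statement — the decomposition $\g_0=\g_0^1\times\g_0^2$ of \eqref{decg12}, the case-by-case placement $\g_0^1\subset\q_0$, $\g_0^2=(\q_0\cap\g_0^2)\oplus(\q_{-2}+\q_2)$, the definition \eqref{omega} of $\omega$ with $\l_i=-\epsilon_{\a_i}$ on even simple roots and $\l_i=\sqrt{-1}$ on odd simple roots, and that $V$ is the $\omega$-fixed locus of $\g_1$ — into Proposition \ref{cf}, whose parts (1)--(3) are exactly tailored to this choice of $\omega$.

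First I would record that, since $ad_{|\g_1}((\g_0)_\R)=ad_{|\g_1}(\g_0)\cap sp(V)$ (noted right after \eqref{basesimp}), and since $\omega$ preserves the product decomposition $\g_0=\g_0^1\times\g_0^2$ because it is a direct product of ideals each stable under $\omega$ (one checks $\omega$ sends each $\g_0^i$ to itself, as $\omega(X_\a)\in\C X_{-\a}$ and the root subsystems of $\g_0^1,\g_0^2$ are $\omega$-stable), we get $\mathfrak s_i=ad_{|\g_1}((\g_0^i)^\omega)$; hence $\mathfrak s_i$ is a real form of $ad_{|\g_1}(\g_0^i)$ by Proposition \ref{cf}(1) applied inside each factor. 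For compactness of $\mathfrak s_1$: since $\g_0^1\subset\q_0$, every root $\a$ of $\g_0^1$ lies in $\q_0$, so by \eqref{lambdaalfa} (the $\q_0$ case) $-\xi_\a\s_\a\l_\a=-\xi_\a\prod(-\xi_{\a_i}\l_i)^{n_i}>0$ precisely because each even simple $\a_i$ has $\l_i(\a_i,\a_i)=-\epsilon_{\a_i}(\a_i,\a_i)<0$; this is exactly the hypothesis of Proposition \ref{cf}(2), giving that $(\q_0)^\omega$ — and hence its image under $ad_{|\g_1}$ restricted to the ideal $\g_0^1$ — is compact.

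Next, for the Cartan decomposition of $\mathfrak s_2$: define $\mathfrak k=\mathfrak s_2\cap ad_{|\g_1}(\q_0)$ and $\r=\mathfrak s_2\cap ad_{|\g_1}(\q_2\oplus\q_{-2})$. The $\ganz$-grading \eqref{grading} gives $\g_0^2=(\q_0\cap\g_0^2)\oplus(\q_2+\q_{-2})$ as vector spaces, and $\omega$ maps $\q_j$ to $\q_{-j}$ (since $\omega$ swaps $e_i\leftrightarrow$ scalar$\cdot f_i$), hence is compatible with $\q_0$ and with $\q_2\oplus\q_{-2}$; so taking $\omega$-fixed points respects this splitting and $\mathfrak s_2=\mathfrak k\oplus\r$ with $\mathfrak k^\C=ad_{|\g_1}(\g_0^2\cap\q_0)$, $\r^\C=ad_{|\g_1}(\q_2\oplus\q_{-2})$. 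That this is a \emph{Cartan} decomposition amounts to: $[\mathfrak k,\mathfrak k]\subset\mathfrak k$, $[\mathfrak k,\r]\subset\r$, $[\r,\r]\subset\mathfrak k$ (immediate from the grading $[\q_i,\q_j]\subset\q_{i+j}$, distinguishedness forcing $\q_{\pm4}=0$), and that the form $(\cdot,\omega(\cdot))$ is negative-definite on $\mathfrak k$ and positive-definite on $\r$ up to the overall sign convention — this is \emph{exactly} Proposition \ref{cf}(3) with $r=1$: $(X_\a,\omega(X_\a))(\a,\a)<0$ for $X_\a\in\q_{\pm4}$ (vacuous here) and $(X_\a,\omega(X_\a))(\a,\a)>0$ for $X_\a\in\q_{\pm2}$, combined with the definiteness on $\q_0$ from part (2); one has to observe that the hypothesis of (3), that the $\sqrt{-1}\l_i$ all have the same sign on odd simple roots, holds since we chose $\l_i=\sqrt{-1}$ for all of them. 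Finally the ``in particular'' clause is the degenerate case $\q_{\pm2}=0$, where $\r=0$, $\mathfrak s_2=\mathfrak k$ is compact by part (2) applied to $\q_0$, and $\mathfrak s_1\times\mathfrak s_2=ad_{|\g_1}((\q_0)^\omega)$ is then a compact form of $ad_{|\g_1}(\q_0)=ad_{|\g_1}(\g_0)$.

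The main obstacle I anticipate is not any single computation but the passage from the ambient statements of Proposition \ref{cf}, which are phrased about $\q_0$ and the graded pieces of all of $\g$, to statements about the two \emph{ideals} $\g_0^1,\g_0^2$ separately: one must verify that the case-by-case choice of $\g_0^1,\g_0^2$ (made in the paragraph before the Proposition, including the ambiguous $gl(m,n)$ choice $\g_0^1=gl(m)$ or $gl(n)$) is compatible with $\omega$ and with the grading \eqref{grading} in \emph{every} distinguished case of \eqref{classifdis} — i.e. that indeed $\g_0^1\subset\q_0$ and $\g_0^2=(\q_0\cap\g_0^2)\oplus(\q_2+\q_{-2})$ as claimed. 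This is the genuine case-by-case content; once it is in hand, the definiteness assertions are a black-box application of Proposition \ref{cf}(2)--(3), and the bracket relations are formal consequences of the grading together with $\q_{\pm4}=\{0\}$, which is the defining property of a distinguished $\Dp$.
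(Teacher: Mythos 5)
Your proposal is correct and follows exactly the route the paper takes: the paper's entire proof is the sentence ``Combining these observations with Lemma \ref{cf} we readily obtain the following result,'' i.e.\ the case-by-case placement $\g_0^1\subset\q_0$, $\g_0^2=(\q_0\cap\g_0^2)\oplus(\q_2+\q_{-2})$ fed into Proposition \ref{cf}(1)--(3) with the stated choice of $\l_i$. You have simply written out the details the authors leave implicit, including the correct identification of part (3) with $r=1$ as the source of the Cartan decomposition.
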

\subsection{Detour: classification of real forms}\label{realforms} We now deepen some aspects  of the theory developed in the last two Subsections, in order to obtain an explicit construction of  all real forms of  Lie superalgebras of basic classical type,
  which have been classified in \cite{Kacsuper}, \cite{parker} and \cite{serganova}. In this section when referring to type $A(m,n)$ we mean either $\g=gl(m,n)$ or the simple superalgebra of type $A(m,n)$.\par
  
  Let $\g_\R$ be a real form of $\g$ and let $\omega$ be the corresponding complex conjugation. Then $\omega_{|\g_0}$ is an antilinear involution of the Lie algebra $\g_0$, hence there is a corresponding Cartan decomposition $\g_0=\k\oplus\p$, with Cartan involution $\zeta_\omega$.  We will describe explicitly the involution $\zeta_{\omega}$.
  
    Let $K_0$ be a compact Lie group having $\k^\omega$ as Lie algebra and let $\tr$ be the Lie algebra of a torus in $K_0$. Then  $\t=\tr\otimes_\R\C$ is a Cartan subalgebra of $\k$. Let $\h$ be the centralizer in $\g_0$ of $\t$. It is clear that both $\t$ and $\h$ are $\omega$-stable. Since $\h$ is $\zeta_\omega$-stable, we have 
 $\h=(\h\cap\k)\oplus(\h\cap\p)$; since $\omega\zeta_\omega=\zeta_\omega\omega$, we have  $\omega(\h\cap\p)\subseteq\h\cap\p$. So $\h\cap\p=(\h\cap\p)^\omega + \sqrt{-1}(\h\cap\p)^\omega$. It is known that 
 $\tr\oplus\sqrt{-1}(\h\cap\p)^\omega$ is a Cartan subalgebra of a compact real form of $\g_0$; therefore roots are purely imaginary-valued on it. So the real span of roots
 is contained in $\sqrt{-1}\tr\oplus(\h\cap\p)^\omega$.
 Now fix a set of positive roots $\Dp$ and let   $\omega_0$  be the involution given by \eqref{omega}  with the following choice of parameters:
 \begin{equation}\label{omega0}
 \l_i=-\e_{\a_i}\text{ for $i\notin\pi$,}\quad  \l_i=\sqrt{-1}\text{ for $i\in\pi$}.
 \end{equation} 
 Then 
 $$(\omega\circ\omega_0)_{|\t}=I,\quad(\omega\circ\omega_0)_{|\h\cap\p}=-I.$$
 In particular,  $\t$ is a Cartan subalgebra of $\g_0^{ \omega\circ\omega_0}$. It follows from \cite[Chapter 8]{Kac} that
 $(\omega\circ \omega_0)_{|\g_0}=\eta_0\circ e^{ad(h)}$
 with $\eta_0$ a diagram automorphism of $\g_0$ and $h\in\t$. Clearly $\eta=\omega\circ \omega_0\circ e^{-ad(h)}$ is an extension of $\eta_0$ to $\g$ hence we can write that
 $$
 \omega=\eta\circ e^{ad(h)}\circ \omega_0,
 $$
 with $\eta$ an automorphism of $\g$ such that $\eta_{|\g_0}$ is a diagram automorphism of $\g_0$. 
 We now list the possible choices for automorphisms $\eta$ of $\g_0$, such that $\eta_{|\g_0}$ is a diagram automorphism of $\g_0$.

  \begin{lemma}\label{out}
 There exists $\eta\in Aut(\g)$ such that $\eta_{|\g_0}$ is a nontrivial diagram automorphism if and only if
 $\g$ is of type $A(m,n)$, $D(m,n)$, $D(2,1,\a)$, $\a\in\{1, (-2)^{\pm1}\}$, and
$\eta_{|\g_0}$ is as follows.
 \begin{enumerate}
 \item If $\g$ is of type $A(n,m)$ with $n\ne m$, then  $\eta_{|\g_0}$ restricts to the nontrivial diagram automorphism  of both $A_n$ and $A_m$.
 \item If $\g$ is of type $A(n,n)$, then $\eta_{|\g_0}$ is either the nontrivial diagram automorphism of both $A_n$ components, or it is the flip automorphism between the two $A_n$ components, or the composition of these two automorphisms.
 \item If $\g$ is of type $D(m,n),\,m>2$, then   $\eta_{|\g_0}$ is the unique diagram automorphism of $\g_0$.
 \item  If $\g$ is of type $D(2,1)\cong D(2,1,\a)$, $\a\in\{1, (-2)^{\pm1} \}$, then $\eta$ is the unique diagram automorphism of  the diagram in the left display of \eqref{d21}.
 \end{enumerate}
 \end{lemma}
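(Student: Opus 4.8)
The plan is to prove Lemma~\ref{out} by a systematic analysis of when an automorphism $\eta_0$ of $\g_0$ that is a diagram automorphism can be extended to an automorphism of $\g$. First I would recall the standard fact that for a basic classical type Lie superalgebra, once one chooses a set of simple roots $\Pi$ and Chevalley-type generators $e_i,f_i,h_{\a_i}$ satisfying \eqref{r1}, an automorphism of $\g_0$ fixing $\h$ and permuting the even root spaces extends to $\g$ if and only if it extends to an automorphism of the \emph{local part} of $\g$ respecting the bracket $\g_0\times\g_1\to\g_1$ and the symmetric form $(\cdot,\cdot)_{|\g_1}$ (cf. the minimal $\ganz$-graded description recalled before \eqref{r1}). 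So the first step is to reduce the existence of $\eta$ to a question about the $\g_0$-module structure of $\g_1$: a diagram automorphism $\eta_0$ of $\g_0$ lifts to $\g$ precisely when it carries the $\g_0$-module $\g_1$ to an isomorphic module (as $\g_1$ generates $\g$ together with $\g_0$, and the form on $\g_1$ is, up to scalar, the unique invariant pairing).

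Next I would run through the Kac classification of $\g_0$ for each basic classical type, together with the description of $\g_1$ as a $\g_0$-module (these are all in \cite{Kacsuper}). For type $C(n)$ and the exceptional types $F(4)$ and $G(3)$, the group of diagram automorphisms of $\g_0$ is generically trivial or, when nontrivial, does not preserve $\g_1$; the exception is $D(2,1,\a)$, where $\g_0=sl_2\times sl_2\times sl_2$ and $\g_1$ is the tensor product of the three standard modules, so a permutation of the three $sl_2$ factors lifts iff it preserves the triple of weights $(\a,\text{the three}$ $sl_2$ $\text{structure constants})$ up to the relations defining $D(2,1,\a)\cong D(2,1,\a')$ — this forces $\a\in\{1,(-2)^{\pm1}\}$ and pins down the transposition as the symmetry of the left diagram in \eqref{d21}. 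For type $B(m,n)$, $\g_0=so_{2m+1}\times sp_{2n}$ has no nontrivial diagram automorphism (neither $B_m$ nor $C_n$ does), so no such $\eta$ exists. This case-by-case bookkeeping is routine but needs care with the $so_{2m}$-factor in type $D$.

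The remaining substantial cases are $A(m,n)$ and $D(m,n)$. For $A(m,n)$ with $\g_0=gl_{m+1}\times gl_{n+1}$ (or $sl\times sl\times\C$), the diagram automorphisms of $\g_0$ are generated by the transpose-inverse on each $A$-factor and, when $m=n$, the flip swapping the two factors; since $\g_1=(\text{standard})\boxtimes(\text{standard})^*\ \oplus\ (\text{standard})^*\boxtimes(\text{standard})$, both kinds of automorphism preserve $\g_1$, giving assertions (1) and (2). For $D(m,n)$ with $m>2$, $\g_0=so_{2m}\times sp_{2n}$: the $sp_{2n}$-factor has no diagram automorphism, while $so_{2m}$ has the order-two (for $m>2$) outer automorphism; one checks this does preserve $\g_1=(\text{vector})\boxtimes(\text{vector})$ and hence lifts, giving (3), and that it is the unique one. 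Throughout, the identification of the lift on $\g_0$ as $\eta_0\circ e^{\ad h}$ for $h\in\t$ is exactly the conjugacy statement from \cite[Chapter 8]{Kac}, already invoked in the text just above the lemma, so I would cite it rather than reprove it.

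The main obstacle will be the $D(2,1,\a)$ analysis: one must translate the requirement that the transposition of two $sl_2$-factors extend to $\g$ into the precise arithmetic condition on $\a$, using that $D(2,1,\a)\cong D(2,1,\a')$ exactly when $\a'$ lies in the $S_3$-orbit of $\a$ under $\a\mapsto\a^{-1},\ \a\mapsto-1-\a$, and then solving for the fixed points of a transposition in that orbit. This yields $\a\in\{1,-2,-1/2\}=\{1,(-2)^{\pm1}\}$ and, correspondingly, identifies $\eta$ with the evident reflection symmetry of the left-hand diagram in \eqref{d21}. Everything else is a finite check against the diagram list in \eqref{diaA}--\eqref{tipoF} together with the known module structure of $\g_1$.
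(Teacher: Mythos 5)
Your route is genuinely different from the paper's: the paper proves existence by exhibiting explicit automorphisms ($-st$, the flip $F$, $Ad(J)$), rules out the remaining candidates by checking where they send the odd simple root, and for $D(2,1,\a)$ compares root lengths using invariance of the bilinear form; you instead reduce everything to the $\g_0$-module structure of $\g_1$. That reduction is attractive, but as stated it has a real gap. The claim that $\eta_0$ lifts \emph{precisely when} $\g_1^{\eta_0}\cong\g_1$ as $\g_0$-modules is only a necessary condition: one must also arrange that the intertwiner $\phi:\g_1\to\g_1$ is compatible with the bracket $S^2\g_1\to\g_0$, and rescaling $\phi$ suffices for this only when the relevant space of invariant maps is one-dimensional (more precisely, when $\g_0$ has several ideals acting nontrivially, one gets a tuple of scalars that must all coincide before rescaling). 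For $D(2,1,\a)$ every permutation of the three $sl_2$-factors carries $\g_1=V\otimes V\otimes V$ to an isomorphic module, so your stated criterion would wrongly let all of $S_3$ lift for every $\a$; the constraint $\a\in\{1,(-2)^{\pm1}\}$ comes exactly from the bracket compatibility. You do apply the correct finer condition in your $D(2,1,\a)$ paragraph, but that contradicts the reduction announced in your first paragraph, so the reduction must be restated with the bracket condition built in.

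A second concrete omission: the assertion that $B(m,n)$ is excluded "since neither $B_m$ nor $C_n$ has a nontrivial diagram automorphism" misses $B(2,2)$, where $\g_0\cong so_5\times sp_4$ and the two Dynkin diagrams $B_2$, $C_2$ are isomorphic, so the flip of the two factors \emph{is} a nontrivial diagram automorphism of $\g_0$. The paper excludes it by observing that the flip sends the odd simple root to a non-root; in your framework one would note that under $so_5\cong sp_4$ the vector representation of $so_5$ does not correspond to the standard representation of $sp_4$, so the flip does not preserve the isomorphism class of $\g_1$. Finally, your type $A$ discussion verifies that the simultaneous transpose-inverse and the flip do lift, but never explicitly excludes the one-sided transpose-inverse (which the paper does); this follows at once from your criterion, since it sends $V\otimes W^*\oplus V^*\otimes W$ to $V^*\otimes W^*\oplus V\otimes W$, but it should be said, as the lemma's "only if" direction depends on it.
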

 \begin{proof}No $\eta$ as in the statement can exist if $\g_0$ has no nontri\-vial diagram automorphisms. This rules out the cases 
 $F(4),G(3),B(m,n)$ with $(m,n)\ne(2,2)$. The case when $\g$ is of type $B(2,2)$ is also excluded because the flip automorphism of $\g_0\cong B_2\times B_2$
 sends the unique odd simple root to a weight which is not a root.  Let us now deal with type $A(m,n)$. An automorphism of $\g$  of type $A(m,n)$ which restricts to the nontrivial diagram automorphisms of both $A_n$ and $A_m$
is given by $\left(\begin{array}{cc}
A&B\\C&D\end{array}\right)^{-st}=\left(\begin{array}{cc}
-A^t&C^t\\-B^t&-D^t\end{array}\right)$. The flip automorphism $F$ is given by  $F\left(\begin{array}{cc}
A&B\\C&D\end{array}\right)=\left(\begin{array}{cc}
D&C\\B&A\end{array}\right)$ and the composition $F\circ (-st)$ gives the remaining automorphism.  We are left with proving that there is no automorphism
of $\g$  of type $A(m,n)$ which restricts trivially to, say,   $A_n$  and nontrivially to  $A_m$ or it is obtained from this one by composing it with the flip. This is easily checked by observing that these latter automorphisms map the simple odd root to a weight that is not a root. 

If $\g$ is of type $D(m,n)$, an automorphism  that restricts to the unique diagram automorphism of $\g_0$ is $Ad(J)$ with $J=\left(\begin{array}{cc}
I_{m-1,1}&0\\0&I_n\end{array}\right)$, where $I_{m-1,1}=\left(\begin{array}{cc}
I_{m-1}&0\\0&-1\end{array}\right)\in gl(m)$.

We now deal with the case when $\g$ is of type $D(2,1,\a)$. If $\a\in\{1,(-2)^{\pm 1}\}$, then  $\g$ is isomorphic to $D(2,1)$ and claim $Ad(J)$ gives as above the desired automorphism.  We need to check that, if
$\a\notin\{1,(-2)^{\pm 1}\}$, then there is no automorphism $\eta$ of $\g$ which restricts to a non trivial 
diagram automorphism of $\g_0$ and that, if $\a=1$, then $Ad(J)_{|\g_0}$ is the only one. We check this as follows. Recall that 
$\g_0\cong(sl(2,\C))^{\times 3}$, and let $\beta_1,\beta_2,\beta_3$ be the simple roots of
these copies of $sl(2,\C)$ Since the form  $(\eta(\cdot),\eta(\cdot))$ is invariant, then 
there exists a constant $c$ such that $(\eta(\cdot),\eta(\cdot))=c (\cdot,\cdot)$.
If $\eta$ restricts to a diagram automorphism of $\g_0$, then $\eta(\beta_i)=\beta_{\sigma(i)},\,i=1,2,3,$ for a suitable permutation $\sigma$. Hence $(\beta_{\sigma(i)},\beta_{\sigma(i)})=c^{-1}(\beta_i,\beta_i)$. We check directly that this is not possible for a real value of $\a$ different from $1,(-2)^{\pm 1}$ and also that, if $\a=1$, then the only possibility is given by $Ad(J)$. 
 \end{proof} 

The case of $\g$ being the simple Lie superalgebra of type $A(1,1)$ needs special care, so we postpone the discussion of this case to the end of this section. Until then, we are excluding this case from our discussion.

In order to complete the classification we choose a very distinguished  set of positive roots. If $\g$ is of type $D(m,n)$, we choose $\Dp_{D1}$. We wish to compute the action of $\omega$ on the generators $e_i$, $f_i$.
We will often use the following result of Serganova. Let $\d_\l$ be the map on $\g$ defined by setting 
\begin{equation}\label{deltal}
{\d_\l}_{|\g_0}=Id,\quad {\d_\l}_{|\g_1^+}=\l Id,\quad{\d_\l}_{|\g_1^-}=\l^{-1}Id.\end{equation}  If $\g$ is of type $I$, then $\d_\l$ is an automorphism of $\g$ for any $\l\in\C$. If $\g$ is of type $II$, then $\d_\l$ is an automorphism of $\g$ if and only if $\l=\pm1$. By \cite[Lemmas 1 and 2]{SERG}, an automorphism of
$\g$ that restricts to $Id_{\g_0}$  is necessarily $\d_\l$.\par

Since the form $(\eta(\cdot),\eta(\cdot))$ is invariant, we have that there is $s_\eta\in\C$ such that $(\eta(\cdot),\eta(\cdot))=s_\eta(\cdot,\cdot)$. Since $\eta^2=Id$ on $\g_0$, we see that $s_\eta=\pm 1$.
 Let now $i_0$ be the index of the unique odd simple root. Set $\gamma=\eta(\a_{i_0})$.   Then there is $\l\in\C$ such that $\eta(e_{i_0})=\l X_\gamma$ so that $\eta(f_{i_0})=s_\eta\sigma_\gamma \l^{-1} X_{-\gamma}$. If $\eta_{|\g_0}=Id_{|\g_0}$, then we set $\eta_1=Id_{\g}$. If $\eta_{|\g_0}\ne Id_{|\g_0}$ and $\g$ is of type $A(m,n)$, then we set $\eta_1=\eta\circ \d_{\l^{-1}}$, while, if $\g$ is of type $D(m,n)$ we set $\eta_1=Ad(J)$ with $J$ as in the proof of Lemma \ref{out}. Then, in all cases $\eta_1(e_{i_0})=X_\gamma$ and we can write
   $$
   \omega=\eta_1 \d_{\l}e^{ad(h)}\omega_0.
   $$
We want to find conditions on $h$ and $\l$ for which  $\omega=\eta_1 \d_{\l}e^{ad(h)}\omega_0$ has order 2.

First assume that $\eta_{|\g_0}=Id$. In this case $\h=\t$ and  $\eta=\d_\l$ for some  $\l\in\C$. We see that $\omega(e_{i_0})=\sqrt{-1}\l^{-1}e^{-\a_{i_0}(h)}f_{i_0}$, 
$\omega(e_i)=-\e_{\a_i} e^{-\a_i(h)}f_i$ if $i\ne i_0$, $\omega(f_{i_0})=\sqrt{-1}\l e^{\a_{i_0}(h)}e_{i_0}$, 
$\omega(e_i)=-\e_{\a_i} e^{-\a_i(h)}f_i$ if $i\ne i_0$. Since $\omega$ is an involution we obtain that $\l e^{\a_{i_0}(h)}\in\R$ and  $e^{\a_{i}(h)}\in\R$ if $i\ne i_0$, hence setting $\l_{i_0}=\sqrt{-1}\l^{-1} e^{-\a_{i_0}(h)}$ and $\l_i=-\e_{\a_i}e^{-\a_i(h)}$ we see that $\omega$ has the form of \eqref{omega}.

 Assume now that $\eta_{|\g_0}\ne Id$. Recall that $\eta$ acts on $\g_0$ as a diagram automorphism. Since, by our choice of the set of positive roots, the simple roots of $\g_0$ are simple in $\g$ if $\g$ is of type $A(m,n)$ while, in type $D(m,n)$, $\eta_1$ is a diagram automorphism of $\g$,  $\eta_1$ induces in both cases  a transposition $i\mapsto i'$ on the indices corresponding to the simple even roots.  
 If $i\notin\pi$, then 
  $$\omega(e_i)=\eta e^{ad(h)}  \omega_0(e_i)=-\e_{\a_i}\eta (e^{ad(h)} f_i)=-\e_{\a_i}e^{-\a_i(h)}\eta(f_{i})=-\e_{\a_i}e^{-\a_i(h)}s_{\eta}f_{i'}.$$
  Hence
  $$\omega^2(e_i)=\overline{e^{-\a_i(h)}}e^{\a_{i'}(h)}e_i.$$
  Since $h\in\t$ we have $\a_{i'}(h)=\a_i(h)$, therefore we get $e^{\a_i(h)}\in\R$.  
  
  We distinguish the following cases.
  \begin{enumerate}
    \item
   If $\g$ is of type $D(m,n)$,  then $\eta(\a_{i_0})=\a_{i_0}$ and $s_{\eta_1}=1$, hence
   $$
   \omega^2(e_{i_0})= e^{\a_{i_0}(h)}\overline{e^{-\a_{i_0}(h)}}e_{i_0}.
   $$ 
Therefore  $e^{\a_{i_0}(h)}\in \R$.
\item
 If $\g$ is of type $A(m,n)$ and $(\eta_1)_{|\g_0}=(-st)_{|\g_0}$,  then $\s_\gamma=-1$ and $(\eta_1)=(-st)\d_\mu$ for some $\mu$. Hence $(\eta_1)^2=(-st)^2$, and  we see that $(\eta_1)^2_{|\g_1}=-Id$. Moreover $s_{\eta_1}=1$.
   
It follows that
  $$\omega^2(e_{i_0})= | \l|^{-2} e^{\gamma(h)}\overline{e^{-\a_{i_0}(h)}}e_{i_0}.
  $$
 Since $h\in \t$, we have $\gamma(h)=\a_{i_0}(h)$, hence $|\l|=1$ and  $e^{\a_{i_0}(h)}\in \R$.
 \item
  If $\g$ is of type $A(n,n)$ and $(\eta_1)_{|\g_0}$ is the flip,  then $(\eta_1)_{|\g_0}=F_{|\g_0}$. It follows that $\s_\gamma=-1$ and $\eta_1=F\d_\mu$ for some $\mu$. Hence $(\eta_1)^2=F^2$, so $(\eta_1)^2_{|\g_1}=Id$. Moreover $s_{\eta_1}=-1$.
  
It follows that
  $$\omega^2(e_{i_0})= | \l|^{-2} e^{\gamma(h)}\overline{e^{-\a_{i_0}(h)}}e_{i_0}.
  $$
 Since $h\in \t$, we have $\gamma(h)=\a_{i_0}(h)$, hence $|\l|=1$ and  $e^{\a_{i_0}(h)}\in \R$.
\item
If $\g$ is of type $A(n,n)$ and $(\eta_1)_{|\g_0}=F(-st)$ then $\gamma=\a_{i_0}$ and  $s_{\eta_1}=-1$.

It follows that
 $$
   \omega^2(e_{i_0})= -\l e^{\a_{i_0}(h)}\overline{\l^{-1}e^{-\a_{i_0}(h)}}e_{i_0},
   $$ 
 hence   $\l e^{\a_{i_0}(h)}\in\sqrt{-1}\R$.
 \end{enumerate}
 

 Let us now return to the general case.
Fix $h$ as above and define
$$\omega_h=\begin{cases}\d_\l e^{ad(h)}\omega_0\quad&\text{if $\eta_{|\g_0}=Id_{\g_0}$}
 \\
e^{ad(h)}\omega_0\quad&\text{in cases (1), (2), (3)}
 \\
  \d_{\sqrt{-1}\l} e^{ad(h)}\omega_0\quad&\text{in case (4).}
\end{cases}$$
As observed above $\omega_h$ is an antilinear involution of the form of \eqref{omega}. 
 
 We are now ready to give an explicit formula for the Cartan involution corresponding to $\omega_{|\g_0}$.  Define
the map $\zeta_{\omega_h}:\g\to\g$ by
\begin{equation}\label{invo}\zeta_{\omega_h}(x)=x,\,\,\,\,x\in\h,\quad\zeta_{\omega_h}(X_\a)=sgn(\l^h_\a)X_\a,\end{equation}
where $\l^h_\a$ is the multiplier defined in \eqref{ox}.
It is easy to check using \eqref{lab} that $\zeta_{\omega_h}$ is an involution of $\g_0$.
We claim that 
$$\zeta_\omega=\eta_1\circ\zeta_{\omega_h}.$$
is the Cartan involution corresponding to $\omega_{|\g_0}$.

First we prove that $sgn(\l^h_\a)=sgn(\l^h_{\eta_1(\a)})$ for $\a\in\Dp_0$.
We prove in fact that $\l^h_\a=\l^h_{\eta_1(\a)}$.  A direct computation shows that   $\l^h_\a=c_\a e^{-\a(h)}$ for any $\a\in\Dp_0$, where
$c_\a=\begin{cases} 1 &\text{if $X_\a\in\mathfrak q_0$}\\-\l^{\mp2}&\text{if $X_\a\in \mathfrak q_{\pm2}$}\end{cases}$. The claim follows since 
our choice of $h$ forces $\a(h)=\eta_1(\a)(h)$.\par

Clearly $\eta_1(X_\a)=d_\a X_{\eta_1(\a)}$. We now prove that $|d_\a|=1$ for any $\a\in\D_0$. For this observe that $d_{-\a}=s_{\eta_1}d^{-1}_{\a}$ and $d_{\eta_1(\a)}=d^{-1}_{\a}$. Hence, since $\omega^2(X_\a)=(\eta_1\omega_h)^2(X_\a)=X_\a$, it follows that
$$
1=\e_\a\e_{-\eta_1(\a)}\l^h_\a\l^h_{-\eta_1(\a)} \bar d_{-\a}d_{\eta_1(\a)}, 
$$
so $1=\e_\a\e_{-\eta_1(\a)}s_{\eta_1}|d_\a|^{-2}=|d_\a|^{-2}$ as wished.

In order to prove that $\zeta_\omega$ is the Cartan involution corresponding to $\omega_{|\g_0}$,  we need only to  check that $\omega$ and $\zeta_\omega$ commute and that $\omega_{|\g_0}\circ \zeta_\omega$ is the conjugation of a  compact real form of ${\g_0}$.
The first claim is a simple computation: 
$$\omega_{|\g_0}\zeta_\omega(X_\a)=-\e_{\eta_1(\a)}sgn(\l^h_\a)\l^h_{\eta_1(\a)}\bar d_{\a}d_{-\eta_1(\a)}X_{-\a}=-s_{\eta_1}\e_{\eta_1(\a)}sgn(\l^h_\a)\l^h_{\a}X_{-\a},$$
while
$$\zeta_\omega\omega_{|\g_0}(X_\a)=-\e_{\a}sgn(\l^h_{-\eta_1(\a)})\l^h_{\a}d_{-\a}d_{-\eta_1(\a)}X_{-\a}=-\e_{\a}sgn(\l^h_\a)\l^h_{\a}X_{-\a},
$$
so the claim follows from the observation that $s_{\eta_1}\e_{\eta_1(\a)}=\e_\a$.

For the second claim it is enough to check that  $\e_\a(X_\a,\omega\zeta_\omega(X_\a))<0$ for any $\a\in\D_0$. Indeed
$$\e_\a(X_\a,\omega\zeta_\omega(X_\a))=-\e_\a sgn(\l^h_\a)\l^h_{\eta_1(\a)}\e_{\eta_1(\a)} s_{\eta_1}= - sgn(\l^h_\a)\l^h_{\a},$$
and this number is always negative.
\vskip 5pt
Recall that we are excluding the case when $\g$ is the simple Lie superalgebra of type $A(1,1)$. It follows from \cite[Prop. 5.3.2]{Kacsuper} that, if $\omega$ and $\omega'$ are antilinear involutions of $\g$ such that $\g_0^\omega$ is isomorphic to $\g_0^{\omega'}$, then $\omega'$ is conjugated by an isomorphism  of $\g$ to either $\omega$ or $\omega\circ \d_{-1}$. Consider now the antilinear involution $\omega_\R$ on $\g$ defined by $\omega_\R(e_i)=e_i$, $\omega_\R(f_i)=f_i$, and $\omega_\R(h)=h$ for $h\in\sqrt{-1}\h^\omega$. Then, since $\omega_h$ is an antilinear involution of the type described in \eqref{omega}, we have that $\omega_\R\omega_h\omega_\R=\omega_h\delta_{-1}$. Moreover, if $\eta_1(\g^+_1)\subset\g_1^+$, we have that $\omega_\R\eta_1=\eta_1\omega_\R$, so $\omega_\R$ provides a real isomorphism between $\g^\omega$ and $\g^{\omega\d_{-1}}$. If instead $\eta_1(\g^+_1)\subset\g_1^-$, then $\g$ is of type $A(m,n)$, and $\d_{\sqrt{-1}}$ provides an isomorphism between $\g^\omega$ and $\g^{\omega\d_{-1}}$. 
It follows that we need only to classify the symmetric pairs $(\g_0,\k)$ up to isomorphism.
First of all we consider cases (3) and (4). In both cases $(\zeta_\omega)_{|\g_0^1}$ gives an isomorphism from $\g_0^1\to \g_0^2$, so the map $(X,Y)\mapsto X+\zeta_\omega(Y)$ gives an isomorphism between $ \g_0^1\times \g_0^1$ and $\g_0$ that maps the diagonal copy of $\g_0^1$ to the set of fixed points of $\zeta_\omega$. Thus the real form $\g_0^\omega$ is isomorphic to $\g_0^1$ seen as a real Lie algebra. 

Let us now turn to the other cases. Choose $t\in \h$ in such a way that $e^{\a_i(t)}=sgn(\l^h_{\a_i})$ for $i\ne i_0$.
We claim that we can choose $t\in \t$. Since $sgn(\l^h_{\a_i}) = sgn(\l^h_{\eta_1(\a_i)})$, it is clear that we can choose $t$ so that $\eta_1(\a_i)(t)=\a_i(t)$ for $i\ne i_0$. It remains to check that we can choose $t$ so that $\a_{i_0}(t)=\eta_1(\a_{i_0})(t)$. This is clear if $\eta_1(\a_{i_0})=\a_{i_0}$, so we are left only with case (3). In this case we know that 
$$e^{\a_{i_0}(h)}= e^{\eta_1(\a_{i_0})(h)}=e^{-\a_{i_0}(h)}\prod_{i\ne i_0}e^{-\a_i(h)}.$$
 This implies that $(e^{\a_{i_0}(h)})^2=\prod_{i\ne i_0}e^{-\a_i(h)}$. It follows that $\prod_{i\ne i_0}e^{-\a_i(h)}>0$ so that
 $$
 e^{\a_{i_0}(t)}=e^{-\a_{i_0}(t)}\prod _{i\ne i_0}sgn(e^{-\a_i(h)})=e^{\eta_1(\a_{i_0})(t)}
 $$
 hence we can choose $t\in\t$  as wished.
 
 If $\g$ is not  of type $A(m,n)$, then the simple roots of $\g$ are linearly independent, so we can further assume that $e^{\a_{i_0}(t)}\in\R$. If $\g$ is  of type $A(m,n)$ and $\eta_{1}=Id_\g$, then we can choose $\l$ so that $\l e^{\a_{i_0}(t)}\in\R$. If, instead, we are  in case (3), then, as shown above, $(e^{\a_{i_0}(t)})^2=1$, so, again, $ e^{\a_{i_0}(t)}\in\R$.
 This implies that we can define an antilinear involution $\omega'=\eta_1\d_\l e^{ad(t)}\omega_0$. 
Observe that $\zeta_{\omega}=\zeta_{\omega'}$ so $\omega$ and $\omega'$ define the same real form. Moreover $\zeta_\omega=\eta_1e^{ad(t')}$ with $t=t'$ if $\g$ is of type $C(n+1)$ or $A(n,m)$ and $t'$ defined by setting $\a_i(t)=\a_i(t')$ for $i\ne i_0$, $\a_{i_0}(t')=\frac{\pi}{2}\sqrt{-1}+\a_{i_0}(t)$ in the other cases.

We will now use Kac classification of Lie algebra  involutions \cite{Kac} to classify $\zeta_{\omega}$. Let $\{\beta^i_1,\dots ,\beta^i_k\}$ be the simple roots  of $(\g_0^i)^{\eta_1}$ corresponding to the  set of positive roots $\Dp_{|\t}$. Recall that $\beta_j^i=\a_{|\t}$ for some simple root of $\g_0^i$ corresponding to $\Dp_0$. Let $X^i_{N}$ be the diagram of $\g_0^i$ and let $\left( X^{i}_{N}\right)^{(r_i)}$ be the corresponding affinization with $r_i=1$ if $(\eta_1)_{|\g_0^i}=Id$ and $r_i=2$ otherwise. Let $a^i_j$ be the labels of $\left( X^{i}_{N}\right)^{(r_i)}$ and set $\theta_i=\sum_{j\ge 1}a^i_j\beta^i_j$. Using the analysis done in \cite[\S3]{MMJ} we find that there is $\tau\in\t$ and $w$ in the Weyl group of $\g_0^{\eta_1}$ such that $2\pi \sqrt{-1}t''=w( t') +\tau$ with $e^{ad(\tau)}=\d_\mu$ and $t''\in\t\cap[\g_0,\g_0]$ having the following property: if $s^i_j=\beta^i_j(t'')$ and $s_0^i=\frac{1}{r_i}-\theta_i(t'')$, then 
  \begin{equation}\label{ordertwo}
  s^i_j\in\{ 0, \frac{1}{2},1\}\text{ and }\sum_{j\ge 0}a^i_js^i_j=\frac{1}{r_i}.
  \end{equation}
  
 It follows that  $e^{2\pi \sqrt{-1}w(\a_i)( t'')}=e^{\a_i(t')}$ for all $i\ne i_0$ and that $\l\mu^{-1} e^{w(\a_{i_0})(2\pi \sqrt{-1} t'')}=\l e^{\a_{i_0}(t')}$. This implies that $e^{2\pi \sqrt{-1}\a_i( t'')}\in\R$ for all $i\ne i_0$. Moreover, in type $A(m,n)$ and $C(n+1)$ with $\eta_1=Id_\g$,  $\l \mu^{-1}e^{\a_{i_0}(2\pi \sqrt{-1} t'')} \in \R$. 
 In type $A(m,n)$ with $\eta_1$ as in case (3), since $\tau\in \t$ we must have $\mu^2=1$ so $e^{\a_{i_0}(2\pi \sqrt{-1} t'')} \in \R$. In the other cases we have $e^{2\pi \sqrt{-1}\a_{i_0}( t'')}\in \sqrt{-1}\R$. In turn, this implies  that  there is an antilinear involution $\omega''$ such that $\zeta_{\omega''}=\eta_1e^{2\pi\sqrt{-1} ad(t'')}$. Since $\zeta_{\omega''}$ is conjugated to $\zeta_\omega$, we see that $\omega$ and $\omega''$ define isomorphic real forms.
 The outcome is the following:\par
(1) If $\g$ is of type $C(n+1)$ or $A(m,n)$ and $\eta_1=Id_\g$, then the list of all $t''\in\h\cap [\g_0,\g_0]$ that satisfy \eqref{ordertwo} and such that there is $\l\in\C$ with $\l e^{\a_{i_0}(2\pi \sqrt{-1} t'')}\in\R$ gives a list of all real forms. Since the latter is an empty condition, we see that the list of all real forms is given by all the $t''\in\h\cap [\g_0,\g_0]$ that satisfy \eqref{ordertwo}.
\par
(2)  If $\g$ is of type $A(m,n)$ and $\eta_1$ is as in case (3), then the list of all  $t''\in\t\cap [\g_0,\g_0]$ that satisfy \eqref{ordertwo} and such that $e^{2\pi \sqrt{-1}\a_{i_0}(t'')}\in\R$ gives a list of all real forms.
\par
(3) In all the remaining cases the list of all  $t''\in\t\cap [\g_0,\g_0]$ that satisfy \eqref{ordertwo} and such that $e^{2\pi \sqrt{-1}\a_{i_0}(t'')}\in\sqrt{-1}\R$ gives the list of all real forms.
 
 The set of $t''\in\t\cap [\g_0,\g_0]$ that satisfy \eqref{ordertwo} is finite, so we need only to check the above conditions on this finite set. 
  The outcome of this computation is given in the following tables. 
 In all cases except $A(m,n)$ and $C(n+1)$ define
 $\varpi_i\in\h$ by $\a_j(\varpi_i)=\d_{ij}$ where the simple roots of $\g$ are enumerated from left to right. If $\g$ is  of type $A(m,n)$ or $C(n+1)$ 
 we define $\varpi_i$ for $i\ne i_0$ to be the unique element in $\h\cap[\g_0,\g_0]$ such that $\a_i(\varpi_j)=\d_{ij}$ for all $i\ne i_0$.
 In the following tables we assume that 
 $\Dp$ is a very distinguished set of positive roots. For types $D,C,F$ we choose $\Dp_{D1},\,\Dp_{C2},\,\Dp_{F1}$, respectively.
 Table I displays the real forms corresponding to antilinear involutions with $\eta_{|\g_0}=Id_{\g_0}$ and Table II covers the remaining cases. 
 In Table I we also list    the parameters  $(\l_1,\ldots,\l_n)$ occurring in \eqref{omega}, setting, only once in the paper, $i=\sqrt{-1}$; the $\l_j$ which are not listed 
 are equal to $1$. 
 \par\newpage
{\scriptsize
 \begin{equation*}\begin{tabular}{ l | l | l | l}
$\g$ &\text{real form of $\g_0$}&$t''$ & $(\l_1,\ldots,\l_n)$ \\\hline
$gl(m,n)$ & $u(p,m-p)\times u(q,n-q)$& $\frac{1}{2}\varpi_p+\frac{1}{2}\varpi_{n+q}$, &$\l_m=i$\\
& & {\tiny $(0\le p\le m-1,1\leq q\leq n)$} &$\l_j=\l_{m+q}=-1$\\
& & &{\tiny $(1\leq j\leq m,j\ne p)$}\\
 $A(m-1,n-1),$& $su(p,m-p)\times su(q,n-q)\times\R$& $\frac{1}{2}\varpi_p+\frac{1}{2}\varpi_{n+q}$,&\text{same as above;}\\
$m\ne n$& & {\tiny $(0\le p\le m-1,1\leq q\leq n)$}\\
 $A(n-1,n-1)$& $su(p,n-p)\times su(q,n-q)$& $\frac{1}{2}\varpi_p+\frac{1}{2}\varpi_{n+q}$,&\text{same as above}\\
& & {\tiny $(0\le p\le n-1,1\leq q\leq n)$}&\text{with $m=n$;}\\
 $B(0,n)$& $sp(2n,\mathbb R)$&$\frac{1}{4}\varpi_n$&$\l_n=i$\\
 & & &$\l_j=-1$\\
 & & &{\tiny $(1\leq j\leq n-1)$}\\
  $B(m,n)$& $so(2m+1)\times sp(2n,\mathbb R)$ &$\frac{1}{4}\varpi_n$&$\l_n=i$\\
 & & &$\l_{n+k}=-1$ \\
 & & &{\tiny $(1\leq k \leq m)$}\\
 $B(m,n)$& $so(2p,2m+1-2p)\times sp(2n,\mathbb R)$ &$-\frac{1}{4}\varpi_n+\frac{1}{2}\varpi_{n+p}$, & $\l_n=-i$\\
 & & {\tiny $(1\le p\le m)$} &$\l_{n+k}=-1$\\
  & & &{\tiny $(1\leq k \leq m,  k\ne p)$}\\
   $D(m,n) $& $so(2m)\times sp(2n,\mathbb R)$& $\frac{1}{4}\varpi_{n}$,&$\l_n=i$\\ 
 & & & $\l_{n+k}=-1$\\
  & & &{\tiny $(1\leq k \leq m)$}\\
 $D(m,n) $& $so(2p,2m-2p)\times sp(2n,\mathbb R)$& $-\frac{1}{4}\varpi_n+\frac{1}{2}\varpi_{n+p}$,& $\l_n=-i$\\
 & & {\tiny $(1\le p\le m-2)$}&$\l_{n+k}=-1$\\
  & & &{\tiny $(1\leq k \leq m, k\ne p)$}\\
$D(m,n)$&$sp(n)\times so^*(2m)$&$\frac{1}{4}\varpi_n+\frac{1}{2}\varpi_{m+n}$&$\l_n=i$
\\ 
& & & $\l_{n+k}=-1$ \\
 & & &{\tiny $(1\leq k \leq m-1)$}\\
$D(m,n)$&$sp(q,n-q)\times so^*(2m)$&$\frac{1}{2}\varpi_q-\frac{1}{4}\varpi_n+\frac{1}{2}\varpi_{m+n}$,& $\l_n=-i$
\\ & &  {\tiny $(1\leq q\le n-1)$} &$\l_q=\l_{n+k}=-1$  \\
 & & &{\tiny $(1\leq k \leq m-1)$}\\
 $C(n+1) $ & $sp(n)\times \R$&$0$&$\l_1=i$\\
 $C(n+1) $ & $ sp(2n,\mathbb R)\times \R$&$\frac{1}{2}\varpi_{n+1}$&$\l_1=i,\l_{n+1}=-1$ \\
$C(n+1)$ &$sp(q,n-q)\times \R$ &$\frac{1}{2}\varpi_{q+1}$,  {\tiny $1\le q\le n-1$} &$\l_1=i,\l_q=-1$ \\
$D(2,1,\a)$&$(sl(2,\R))^{\times 3}$& $-\frac{1}{4}\varpi_1+\frac{1}{2}\varpi_2+\frac{1}{2}\varpi_3$&$(-i,1,1)$\\
$D(2,1,\a)$&$ su(2)\times su(2) \times sl(2,\R)$&$\frac{1}{4}\varpi_1$&$(i,-1,-1)$\\
$F(4)$&$so(7)\times sl(2,\R)$& $\frac{1}{4}\varpi_1$&$(i,-1,-1,-1)$\\
$F(4)$&$su(2)\times so(1,6)$&$\frac{1}{2}\varpi_2-\frac{3}{4}\varpi_1$&$(i,1,-1,-1)$\\
$F(4)$&$su(2)\times so(2,5)$ &$\frac{1}{2}\varpi_4-\frac{1}{4}\varpi_1$&$(-i,-1,1,-1)$\\
$F(4)$&$so(3,4)\times sl(2,\R)$ &$\frac{1}{2}\varpi_3-\frac{1}{4}\varpi_1$&$(-i,-1,-1,1)$\\
$G(3)$&$ \mathcal G_{2,0}\times sl(2,\R)$&$\frac{1}{4}\varpi_1$&$(i,-1)$\\
$G(3)$&$\mathcal G_{2,2}\times sl(2,\R)$ &$-\frac{1}{4}\varpi_1+\frac{1}{2}\varpi_3$&$(-i,1)$\\
 \end{tabular}\end{equation*}
 }
\centerline{{Table I}} 
\vskip5pt
\begin{equation*}\begin{tabular}{ l | l | l | l }
$\g$&$\eta_{|\g_0}$  &\text{real form of $\g_0$}&$t''$\\\hline
$gl(m,n)$&$-st$ & $gl(m,\R))\times gl(n,\R)$&$0$ \\
$A(m,n), m\ne n$&$-st$ & $sl(m,\R))\times sl(n,\R)\times \R$&$0$ \\
$A(n,n)$ &$-st$ & $sl(m,\R))\times sl(n,\R)$&$0$ \\
$gl(2m,2n)$  &$-st$ & $u^*(2m))\times u^*(2n)$&$\frac{1}{2}\varpi_m+\frac{1}{2}\varpi_{2m+n}$\\
$A(2m,2n), m\ne n$&$-st$ & $su^*(2m))\times su^*(2n)\times \R$&$\frac{1}{2}\varpi_m+\frac{1}{2}\varpi_{2m+n}$\\
$A(2n,2n)$ &$-st$ & $su^*(2m))\times su^*(2n)$&$\frac{1}{2}\varpi_m+\frac{1}{2}\varpi_{2m+n}$\\
$gl(n,n)$&$F$ & $gl(n,\C)$&$0$\\
$A(n,n)$&$F$ & $sl(n,\C)$&$0$\\
$D(m,n) $&$Ad(J)$ & $sp(2n,\R)\times so(2m-2p-1,2p+1)$&$-\frac{1}{4}\varpi_n+\frac{1}{2}\varpi_{n+p}$\\
& & &{\tiny $(0\leq p\leq m-2)$}\\
\end{tabular}\end{equation*}
\centerline{{Table II}}
\par\newpage
In Table II, we have not  considered the case when $\g$ is of type $D(2,1,-\frac{1}{2})$ with non trivial $\eta_1$, because $D(2,1,-\frac{1}{2})$ is isomorphic to  $D(2,1)$. The corresponding real form is, from Table II, $sp(2,\R)\times so(3,1)$. The special isomorphism $so(3,1)\cong sl(2,\C)$ allows to recover the classification  as stated in  \cite[Theorem 2.5]{parker}.

\vskip5pt
Suppose now that $\g$ is simple of type $A(1,1)$. It needs special care since in this case there is a large group of
automorphisms acting identically on the even part (this was apparently
missed in  previous discussions on real forms in the literature). First observe that $\g_0\cong sl(2,\C)\times sl(2,\C)$, hence there are four non isomorphic real forms of $\g_0$: $sl(2,\C), sl(2,\R)\times sl(2,\R), su(2)\times sl(2,\R), su(2)\times su(2)$.  All these real forms are obtained from real forms of $\g$ 
by restricting an antilinear involution $\omega$ of $gl(2,2)$ to $sl(2,2)$. Since this restriction clearly stabilizes the center of $sl(2,2)$, it can be pushed down to $A(1,1)$. Hence for any real form of $\g_0$ there is at least one real form of $A(1,1)$ which induces it. The only non-trivial thing to prove is that also in this case the real form of $\g_0$ determines the real form of $\g$. Suppose that $\omega$ and $\omega'$ are antilinear involutions that restrict in the same way to $\g_0$. Then there is an automorphism $g$ of $\g$ such that $g_{|\g_0}=Id_{\g_0}$ and $\omega'=\omega g$. We now identify the group $\mathcal S$ of automorphisms of $\g$ that restrict to the identity of $\g_0$ with $SL(2,\C)$: indeed, if $g$ is such an automorphism, then necessarily 
\begin{align}\label{xsl2}g(X_{\a_2})=aX_{\a_2}+bX_{-\theta},\, g(X_{-\a_2})=cX_{\theta}+dX_{-\a_2}\\\label{xsl3} g(X_{\theta})=aX_{\theta}+bX_{-\a_2},\, g(X_{-\theta})=cX_{\a_2}+dX_{-\theta}.\end{align} The fact that $g$ is an homomorphism implies that $M_g=\left(\begin{array}{cc}
a&b\\c&d\end{array}\right)\in SL(2,\C)$. Hence we have a map 
to $SL(2,\C)$. To prove that this map is bijective, we consider the local Lie superalgebra $G_{-1}\oplus G_0\oplus G_1$ with
$G_1=\C X_{\a_1}\oplus \C X_{\a_2} \oplus \C X_{\a_3}\oplus \C X_{-\theta}$, $G_{-1}=\C X_{-\a_1}\oplus \C X_{-\a_2} \oplus \C X_{-\a_3}\oplus \C X_{\theta}$, $G_0=\h$ and whose bracket is obtained by restricting the bracket of $A(1,1)$. The corresponding minimal $\ganz$-graded Lie superalgebra is $\g$. A direct check shows that the map 
$g$ defined   by \eqref{xsl2}, \eqref{xsl3} and such that $g_{|\g_0}=Id$ defines an automorphism of the local algebra, hence it extends uniquely to an automorphism of $\g$.\par
Suppose now that $\omega=\eta_1 e^{ad(t)}\omega_0$. It is easy to see that for any $g\in \mathcal S$
there exists a unique $g' \in\mathcal  S$ such that
$$M_{g'}= A \overline{M_g} A,
$$
with $A=\left(\begin{array}{cc}
0&1\\1&0\end{array}\right)$ if $\g_0^{\omega}=su(2)\times su(2)$ or $sl(2,\R)\times sl(2,\R)$, with $A=\left(\begin{array}{cc}
0&i\\-i&0\end{array}\right)$ if $\g_0^{\omega}=sl(2,\R)\times su(2)$, and with $A=\left(\begin{array}{cc}
1&0\\0&1\end{array}\right)$ if $\g_0^{\omega}=sl(2,\C)$.
If 
$\omega'=\omega g$, then the condition that $(\omega')^2=Id_\g$ is equivalent to $\omega=g\omega g$, hence we must have
$$A \overline{M_g}A M_g=\left(\begin{array}{cc}
1&0\\0&1\end{array}\right).$$
Moreover we have that $(g')^{-1}\omega gg'=
\omega g''$ with 
$M_{g''}=A\overline{M^{-1}_{g'}}A M_gM_{g'}$.

Consider the antilinear antiinvolution $\s$ on $SL(2,\C)$ defined by $\s(M)=A \overline{M}^{-1} A$. We have just shown that the set of $M_g$ such that $(\omega\circ g)^2=Id$ is the fixed point set $S$ of  $\s$. Let us also consider the action of $SL(2,\C)$ on $S$ by $M\cdot s=\s(M)sM$. If this action has a unique orbit then, as shown above, $\omega$ and $\omega\circ g$ are conjugated, so the real forms $\g^\omega$ and $\g^{\omega'}$ are isomorphic.  To check when this action has a unique orbit we observe that the stabilizer of any point $s\in S$ is the fixed point set of the antilinear involution on $SL(2,\C)$ given by $g\mapsto s^{-1}\s(g)^{-1}s$ hence it is a real form of $SL(2,\C)$. In particular all the orbits have the same dimension so the orbits are the connected components of $S$.
More explicitly 
$$
S=\{\left(\begin{array}{cc}
x&z\\-\bar z&y\end{array}\right) \in SL(2,\C)\mid x,y\in\R,\, z\in\C\}
$$
if $\g_0^\omega=su(2)\times su(2)$ or $\g_0^\omega=sl(2,\R)\times sl(2,\R)$, while
$$
S=\{\left(\begin{array}{cc}
z&\sqrt{-1}x\\\sqrt{-1}y&\bar z\end{array}\right) \in SL(2,\C)\mid x,y\in\R,\, z\in\C\}
$$
if $\g_0^\omega=sl(2,\C)$.
In both cases $S$ is the quadric in $\R^4$ of equation $xy+a^2+b^2=1$ where $z=a+\sqrt{-1}b$, which is homemorphic to $\R\times S^2$. 

It remains to deal with the last case, i.e. $\g_0^{\omega}=sl(2,\R)\times su(2)$,  where
$$
S=\{\left(\begin{array}{cc}
x&z\\\bar z&y\end{array}\right) \in SL(2,\C)\mid x,y\in\R,\, z\in\C\},
$$
which is the quadric of equation $xy-a^2-b^2=1$. This is  homeomorphic  to $\R^3\times S^0$, hence disconnected. However, the two orbits are the orbit of $M_{Id}$ and of $M_{\d_{-1}}$ (cf. \eqref{deltal}), so we need only to check if $\g^\omega$ and $\g^{\omega\delta_{-1}}$ are isomorphic, but the argument used in the other cases shows that $\omega_\R$  provides an isomorphism between $\g^\omega$ and $\g^{\omega\d_{-1}}$. Thus, $A(1,1)$ has no real forms other than those listed in Tables I and II.

\begin{rem} Our treatment doesn't cover the cases when the Cartan matrix is not real,
which happens only if $\g$ is of type $D(2,1,\a)$, where $\a\in \C\setminus\R,
,\a+\bar\a=-1$. In this case one has one extra real form, with
even part $sl(2,\C)\times sl(2,\R)$ (see \cite{parker}).
\end{rem}
\begin{rem} Our approach to classify real forms started by fixing a suitable set 
of positive roots and henceforth the antilinear involution $\omega_0$. All antilinear involutions are then gotten as $\eta_1e^{ad(h)}\omega_0$, by letting $h$ and $\eta_1$ vary.
 
One can, in a different perspective, consider just the  antilinear involutions $\eta_1\omega_0$ and let the choice of $\Dp$ vary. In this way one can associate to any set of positive roots a set of real forms indexed by the outer automorphism $\eta_1$. It can be checked that all real forms can be obtained in this way. The real forms discussed in Section \ref{distinguishedrealforms} are precisely those corresponding to the distinguished sets of positive roots with $\eta_1=Id_\g$. In Section \ref{cinquesei} we will discuss the real forms corresponding to the sets of positive roots of depth $4$ when $\g$ is $gl(m,n)$ or of type $B(m,n)$ and $D(m,n)$. It will actually turn out that, in these cases, all real forms of $\g$ already appear as real forms corresponding to positive sets of roots of depth at most $4$.
\end{rem} 
\begin{rem} It is possible to deal with the problem of classifying antilinear involutions in a slightly different way. Consider marked sets of positive roots. For this we mean the datum $(\Pi,L)$ where $\Pi=\{\a_1,\ldots,\a_n\}$ is the set of simple roots of a set $\Dp$ of positive roots and 
$L=\{l_i,\ldots,l_n\}$ is a set of labels satisfying
$l_i\in\{\pm 1\}$ if $p(\a_i)=0$, $l_i\in\{\pm \sqrt{-1}\}$ if $p(\a_i)=1$. Given a marked set of positive roots, we can define an antilinear involution $\omega_L$ using formulas
\eqref{omega}--\eqref{lambdaalfa} with $\l_i=-\xi_{\a_i}l_i$, so that $\l_{\a_i}=l_i$. We have  a natural  action of odd reflections on marked set of positive roots: if $r_{\a_i}$ denotes the odd reflection w.r.t.
the simple root $\a_i$ (see \eqref{oddref}), we set $r_{\a_i}(\Pi,L)=(r_{\a_i}(\Pi),L')$, with $L'=\{l'_i,\ldots,l'_n\},\,
l'_j=l_j$ if $(\a_i,\a_j)=0,\,i\ne j,\,l'_i=l_i^{-1}$ and $l'_j=l_jl_i$ if $(\a_i,\a_j)\ne0$.
This action has  the following property: if $r_\a(\Pi,L)=(\Pi',L')$, then, since $\zeta_{\omega_L}=\zeta_{\omega_{L'}}$, 
$\omega_L$ and $\omega_{L'}$ define isomorphic real forms. Using this action, one  has a 
combinatorial recipe to detect the real form corresponding to a given marked set of positive roots:
since any two sets of positive roots can be obtained from each other through a sequence of odd reflections, it suffices to consider the marked sets  $(\Pi_{vd},L_{vd})$, where $\Pi_{vd}$ is one of  the sets of simple roots corresponding to a very distinguished set of positive roots we used for the classification. Given any marked set $(\Pi,L)$ one can compute combinatorially the marked set $(\Pi_{vd},L_{vd})$ in the same equivalence class and apply our classification to $\omega_{L_{vd}}$.
\end{rem}

 \vskip5pt
\subsection{Compact dual pairs coming from distinguished sets of positive roots} 
Let $\g=gl(m,n)$ or  a Lie superalgebra of type  $B,C,D$. Fix   a distinguished set $\Dp$  of positive roots. Let $\omega$ be as in Subsection \ref{distinguishedrealforms}
and let $V$ be the set of $\omega$-fixed points in $\g_1$. Also recall from \eqref{si} the definition of $\mathfrak s_1, \mathfrak s_2$.\par In this Subsection we prove the following proposition. 
\begin{prop}\label{B} Let $\g$ be $gl(m,n)$ or  a Lie superalgebra of type  $B,C,D$. If $\Dp$ is a distinguished set of positive roots, then there is  a compact dual pair $(G_1,G_2)$ in $Sp(V,\langle\cdot\,,\cdot\rangle)$ with $Lie(G_i)=\mathfrak s_i,\,i=1,2,$ such that 
 the action of $\g_0$ on $M^{\Dp}(\g_1)$ gives   the Theta correspondence 
for  $(G_1,G_2)$ at the level of Lie algebras.
The compact dual pairs 
$(G_1,G_2)$ are listed  in the following table (in which $m,n$ are positive integers):
\begin{equation}\label{table}\begin{tabular}{l|l|l}
$\g$&$\Dp$  & $(G_1,G_2)$ \\\hline
$gl(m,n)$&$\D_{gl}^{(p,q)}$ & $(U(n), U(p,q))$\\
 $B(0,n)$&$\Dp_B$ & $(O(1), Sp(2n,\mathbb R))$\\
 $B(m,n)$&$\Dp_B$ & $(O(2m+1), Sp(2n,\mathbb R))$\\
$D(m,n) $&$\Dp_{D1}$ & $(O(2m),Sp(2n,\mathbb R))$\\
$D(m,n)$&$\Dp_{D2}$\ &$(Sp(n),SO^*(2m))$ \\
 $D(m,n)$&$\Dp_{D2'}$ &$(Sp(n),SO^*(2m))$\\
 $C(n+1) $&$\Dp_{C1}$ & $(O(2),Sp(2n,\mathbb R))$\\
$C(n+1)$&$\Dp_{C2}$\ &$(Sp(n),SO^*(2))$ \\
$C(n+1)$&$\Dp_{C2'}$\ &$(Sp(n),SO^*(2))$ 
 \end{tabular}\end{equation}
\end{prop}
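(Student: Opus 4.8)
The plan is to prove Proposition~\ref{B} by a case-by-case analysis following the list of distinguished sets of positive roots classified in Subsection~\ref{classifdis}, in each case identifying explicitly the two commuting subgroups of $Sp(V,\langle\cdot\,,\cdot\rangle)$ and checking that they form a dual pair of the type indicated in the table. The starting point is the material already developed: given a distinguished $\Dp$, Subsection~\ref{distinguishedrealforms} produces a concrete $\omega$, a real form $V=\g_1^\omega$ with its standard symplectic basis $\{e_\a,f_\a\}_{\a\in\Dp_1}$, the complex polarization $\g_1^\pm=\bigoplus_\a \C(e_\a\pm\sqrt{-1}f_\a)$, the decomposition $\g_0=\g_0^1\times\g_0^2$ of \eqref{decg12}, and the subalgebras $\mathfrak s_i=ad_{|\g_1}(\g_0^i)\cap sp(V)$. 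The preceding Proposition already tells us $\mathfrak s_1$ is a compact form of $ad_{|\g_1}(\g_0^1)$ and that $\mathfrak s_2$ has a Cartan decomposition matching the $\mathfrak q_0$/$\mathfrak q_{\pm2}$ grading. So most of the structural work is done; what remains is to (a) integrate $\mathfrak s_1,\mathfrak s_2$ to the subgroups $G_1,G_2\subset Sp(V)$, (b) verify $G_1$ and $G_2$ are mutual centralizers in $Sp(V)$, and (c) compute the explicit isometry groups, reading off the real-form data from Tables~I and~II.

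First I would set up, for each $\g$ in the table, the explicit description of $V$ and the action of $\g_0^1\times\g_0^2$ on it, using Proposition~\ref{structuredual}(2): I would exhibit the division algebra $D$ with involution $e$, the Hermitian $D$-space $(W_1,(\cdot,\cdot)_1)$ and skew-Hermitian $D$-space $(W_2,(\cdot,\cdot)_2)$, and the $\R$-linear isomorphism $V\cong W_1\otimes_D W_2$ matching the symplectic form with $\mathrm{Tr}_{D/\R}((\cdot,\cdot)_1\otimes e\circ(\cdot,\cdot)_2)$. For $\g=gl(m,n)$ with $\Dp=\D_{gl}^{(p,q)}$ one takes $D=\C$, $W_1$ the standard Hermitian $\C^n$, $W_2=\C^{p,q}$ with a Hermitian form of signature $(p,q)$, so that the isometry groups are $U(n)$ and $U(p,q)$; the compactness of the first factor is exactly Proposition~\ref{cf}(2) applied with our $\omega$. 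For $B(m,n)$ and $D(m,n)$ with $\Dp_B,\Dp_{D1}$ one takes $D=\R$, $(W_1,(\cdot,\cdot)_1)$ the definite real orthogonal space of dimension $2m+1$ (resp. $2m$), $(W_2,(\cdot,\cdot)_2)$ the standard symplectic $\R^{2n}$, giving $(O(2m+1),Sp(2n,\R))$ and $(O(2m),Sp(2n,\R))$. For $\Dp_{D2},\Dp_{D2'}$ (and $\Dp_{C2},\Dp_{C2'}$) one takes $D=\H$, $W_1$ the definite quaternionic Hermitian space, $W_2$ quaternionic skew-Hermitian, so that $U(W_1)=Sp(n)$ (compact) and $U(W_2)=SO^*(2m)$ (resp. $SO^*(2)$); for $\Dp_{C1}$, $D=\R$ gives $(O(2),Sp(2n,\R))$. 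In each case I would record the identification of $\mathfrak s_1^\C,\mathfrak s_2^\C$ with $ad_{|\g_1}(\g_0^1),ad_{|\g_1}(\g_0^2)$, which is immediate from the explicit root realization, and note that $G_1\subset K$ holds because $\g_0^1\subset\mathfrak q_0$.

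Next I would verify the dual pair property: $G_1=Z_{Sp(V)}(G_2)$ and $G_2=Z_{Sp(V)}(G_1)$. Because $ad_{|\g_1}(\g_0)=ad_{|\g_1}(\g_0^1)\times ad_{|\g_1}(\g_0^2)$ acts on $\g_1=W_1\otimes_D W_2\otimes_\R\C$ as $U(W_1)^\C\times U(W_2)^\C$, Schur's lemma over the division algebra $D$ gives at the Lie algebra level $z_{sp(V)}(\mathfrak s_i)=\mathfrak s_{3-i}$, and since $G_1,G_2$ are (possibly disconnected) classical groups whose connected components are reached inside $Sp(V)$, the group-level centralizer statement follows; the one subtlety is the component group of $O(2m)$ (and $O(2)$, $O(1)$) — that the reflection generating $\pi_0(O(2m))$ really lies in $Sp(V)$ and centralizes $Sp(2n,\R)$ — which is handled by exhibiting it explicitly as an element of $\g_0^1$ acting on $V$. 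Finally, to get the Theta-correspondence statement I would invoke Theorem~\ref{howe} together with the identification $M^{\Dp}(\g_1)\cong M$ (the oscillator representation) established in Subsection~\ref{distinguishedrealforms}: the decomposition of $M$ as a $\widetilde G_1\times\mathfrak s_2^\C$-module is Howe duality, and matching it with the $\g_0^1\times\g_0^2$-action on $M^{\Dp}(\g_1)$ — whose $\h$-character is computed by \eqref{Weylch} and whose $\g_0$-character is $e^\rho\check R$ computed by our denominator formula \eqref{princff} — identifies $\tau(\eta)$ as the stated parabolic-Verma quotients and pins down the pair in the table. The main obstacle I anticipate is the bookkeeping for $D(m,n)$ and the pair $(O(2m),Sp(2n,\R))$: ensuring that the real-form parameters from Tables~I and~II (the various $so(2p,2m-2p)$, $so^*(2m)$, $u(p,m-p)$ options) are globally consistent with the choice of $\omega$, and correctly handling the disconnectedness of $O(2m)$ — which is exactly the point flagged in the introduction as requiring Kostant's character formula for disconnected compact groups, though for the present structural proposition it suffices to locate the extra component inside $ad_{|\g_1}(\g_0^1)$.
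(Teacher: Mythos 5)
Your proposal follows essentially the same route as the paper: a case-by-case explicit realization of $V$ as $W_1\otimes_D W_2$ with the symplectic form matching $Tr_{D/\R}((\cdot,\cdot)_1\otimes e\circ(\cdot,\cdot)_2)$, identification of $\mathfrak s_1\times\mathfrak s_2$ with the Lie algebras of the isometry groups, and an appeal to the type I structure theory plus Theorem~\ref{howe}; the paper carries this out in detail only for the three $D(m,n)$ cases and cites Howe's classification of type I pairs where you propose to verify the mutual-centralizer property directly by Schur's lemma over $D$. The approach is sound and matches the paper's.
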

We shall prove  Proposition \ref{B}  by realizing explicitly the superalgebras as subalgebras of some $gl(r,s)$ and then checking the conditions of Proposition \ref{structuredual} in a case by case fashion.
For shortness we will give the details only in type $D(m,n)$.

\subsubsection{$\Dp_{D1}$} 

\vskip5pt
Endow the superspace $\mathbb C^{2m|2n}$ with the bilinear form given by the matrix $\left(\begin{array}{cc} I_{2m} & 0\\ 0& J_{2n}\end{array}\right),$ where
$J_{2n}=\left(\begin{array}{cc} 0& I_n\\ -I_n& 0\end{array}\right)$.\par The  Lie  superalgebra $osp(2m,2n)$ of type $D(m,n)$ is  the set of linear transformations
which are skewsupersymmetric w.r.t. a supersymmetric nondegenerate bilinear form. If we choose the form as above, then $osp(2m,2n)$ can be realized as the set of matrices
$$\left(\begin{array}{ccc} A & B_1 & B_2\\ -B_2^t & C_1 & C_2 \\ B_1^t & C_3 & -C_1^t\end{array}\right)$$ with $A$ skewsymmetric $2m\times 2m$ and $C_2,C_3$ symmetric $n\times n$ matrices.
Denoting by $E_{i,j}$ the matrix units, set
$$h_i=
\begin{cases}-\sqrt{-1}E_{i,m+i}+\sqrt{-1}E_{m+i,i}, &1\leq i\leq m,\\
-\sqrt{-1}E_{2m+i,2m+n+i}+\sqrt{-1}E_{2m+n+i,2m+i}, &m+1\leq i\leq m+n.\end{cases}
$$
Then $\h=\oplus_{i=1}^{m+n}\C h_i$ is a Cartan subalgebra of $\g$.\par Set $( X,Y )=\frac{1}{2} str(XY)$;
then 
$$( h_i,h_j )=\begin{cases}\d_{ij}\quad&1\leq i\leq m,\\-\d_{ij}\quad& m+1\leq i\leq m+n.\\
\end{cases}$$
In the chosen distinguished set of positive roots
we have $\Dp_1=\{\d_i\pm\e_j\mid 1\leq j\leq m, \, 1\leq i\leq n\}$. We can choose for each $\a\in\Dp_1$ root vectors $X_\a$, $X_{-\a}$ in such a way that  $$V=\left\{\left(\begin{array}{cc}
0&\sqrt{-1}A\\-\sqrt{-1}J_{2n}{}^tA& 0\end{array}\right)\mid A\in
M_{2m,2n}(\R)\right\}.$$
\par
The map $\Psi:\g_1\to M_{2m,2n}(\C),$
\begin{equation}\label{psi}
\Psi(\left(\begin{array}{cc}
0&A\\-J_{2n}{}^tA& 0\end{array}\right))=A
\end{equation} intertwines the adjoint action of $\g_0$ on $\g_1$ with its action  on $M_{2m,2n}(\C)$ given by 
\begin{equation}\label{azioneg0}\left(\begin{array}{cc}
E&0\\0&F \end{array}\right)\cdot A=EA-AF.
\end{equation} 
It is then clear that $ad\left(\begin{array}{cc}
E&0\\0&F \end{array}\right)(V)\subset V$ if and only if $E$ and $F$ are real matrices, i. e. $E\in so(2m)$ and $F\in sp(2n,\R)$.\par 

Consider now the map $\Phi:V\to \R^{2m}\otimes (\R^{2n})^*$ given by
\begin{align*}
&\Phi(\sqrt{-1}(E_{i,2m+j}+E_{2m+n+j,i}))=e_i\otimes f^j\quad && 0\leq i\leq 2m,\, 1\leq j\leq n\\
&\Phi(\sqrt{-1}(E_{i,2m+n+j}-E_{2m+j,i})=e_i\otimes f^{n+j}\quad && 0\leq i\leq 2m,\, 1\leq j\leq n
\end{align*}
where $\{e_i\}$ is the standard basis of $\R^{2m}$ and $\{f^j\}$ is the standard  basis of $(\R^{2n})^*$. Note that $\Phi$ intertwines the action of $ad_{|\g_1}(\g_0)\cap sp(V)$ on $V$ with the standard action of $so(2m)\times sp(2n,\R)$ on  $\R^{2m}\otimes (\R^{2n})^*$.

Let $(\cdot\, ,\cdot)_{2m}$ be the standard symmetric bilinear form   on $\R^{2n}$, $(\cdot\, ,\cdot)_{2n}$ the standard symplectic form on $(\R^{2n})^*$ and let $\langle\cdot\,,\cdot\rangle$ be the tensor product $(\cdot\, ,\cdot)_{2m}\otimes(\cdot\, ,\cdot)_{2n}$. It is easy to check that, for $j,k\in\{0,\ldots,2m\},\,i,r\in\{1,\ldots,n\}$
\begin{align*}&\langle E_{j,2m+i}+E_{2m+n+i,j},E_{k,2m+r}+E_{2m+n+r,k}\rangle=0,\\
&\langle E_{j,2m+n+i}-E_{2m+n+i,j},E_{k,2m+n+r}+E_{2m+r,k}\rangle=0,\\
&\langle E_{j,2m+i}+E_{2m+n+i,j},E_{k,2m+n+r}-E_{2m+r,k}\rangle=-\d_{jk}\d_{ir}.\end{align*}
It follows that $\langle\Phi(X),\Phi(Y)\rangle=\langle X,Y\rangle $ for any $X,Y\in V$.
According to the classification given in \cite[\S\ 5]{thetaseries}, the pair   $(O(2m), Sp(2n,\R))$ is a type I dual pair in  
$Sp(\R^{2m}\otimes (\R^{2n})^*,\langle\cdot\,,\cdot\rangle)$. Since $\Phi$ maps   $ad_{|\g_1}(\g_0)\cap sp(V)\subset sp(V)$  exactly on the Lie algebras of this dual pair, we have proven Proposition \ref{B} in this case.

\subsubsection{$\Dp_{D2}$} 

\vskip5pt
Endow the superspace $\mathbb C^{2m|2n}$ with the bilinear form given by the matrix 
$
\left(\begin{array}{cc} I_{m,m} & 0\\ 0& J_{2n}\end{array}\right),
$
where
 $I_{m,m}=\left(\begin{array}{cc} 0& I_m\\ I_m& 0\end{array}\right)$. \par With this choice of the form, the superalgebra $osp(2m,2n)$ can be realized as the set of matrices
$$\left(\begin{array}{cccc} A_1&A_2 & B_1&B_2 \\ A_3&-{}^tA_1&B_3&B_4\\{}^t B_4&{}^t B_2& C_1 & C_2 \\- {}^tB_3 &-{}^tB_1& C_3 & -{}^tC_1\end{array}\right)$$ with $A_2,A_3$ skewsymmetric $m\times m$ and $C_2,C_3$ symmetric $n\times n$ matrices.
In this case we set
$$h_i=
\begin{cases}E_{i,i}-E_{m+i,m+i}, &1\leq i\leq m,\\
E_{m+i,m+i}-E_{m+n+i,m+n+i}, &m+1\leq i\leq m+ n.\end{cases}
$$
Then $\h=\oplus_{i=1}^{m+n}\C h_i$ is a Cartan subalgebra of $\g$ and 
$$( h_i,h_j)=\begin{cases}\d_{ij}\quad&1\leq i\leq m,\\-\d_{ij}\quad& m+1\leq i\leq m+n.\\
\end{cases}$$
In the chosen distinguished set of positive roots
we have $\Dp_1=\{\e_i\pm\d_j\mid 1\leq i\leq m, \, 1\leq j\leq n\}$. We can choose for each $\a\in\Dp_1$ root vectors $X_\a$, $X_{-\a}$ in such a way that

\begin{equation}\label{V2}V=\left\{\left(\begin{array}{cc}
0&A\\-J_{2n}{}^tA& 0\end{array}\right)\mid A=\left(\begin{array}{cc}
A_1&A_2\\\sqrt{-1}\bar{A_2}& -\sqrt{-1}\bar{A_1}\end{array}\right),\ A_1,A_2\in M_{m,n}(\C)\right\}.\end{equation}
\par
Again, the map $\Psi$, given by \eqref{psi}, intertwines the adjoint action of $\g_0$ on $\g_1$ with its action on $M_{2m,2n}(\C)$, given by 
\eqref{azioneg0}. It is then clear that $ad\left(\begin{array}{cc}
E&0\\0&F \end{array}\right)(V)\subset V$ if and only if 
\begin{equation}\label{gzeroreal}
E=\left(\begin{array}{cc}
A_1&A_2\\-\bar A_2&\bar A_1 \end{array}\right),\quad F=\left(\begin{array}{cc}
B_1&B_2\\-\bar B_2&\bar B_1 \end{array}\right)
\end{equation}
with $ A_1,A_2\in M_{m,m}(\C)$, $B_1,B_2\in M_{n,n}(\C)$, $A_1,B_1$ skew-Hermitian, $A_2$ antisymmetric, and $B_2$ symmetric.
\par
Let $\mathbb H$ be the  skew field of real quaternions. Set $(\H^n)^*=End_\H(\H^n,\H)$ where $\H^n,\H$ are viewed as left $\H$-spaces. Endow $(\H^n)^*$ with a right $\H$-action by setting $(\l q)(v)=\l(v) q$. Identify $V$ with $(\H^n)^*\otimes_\H\H^m$ as follows. The map $\Psi$ above followed by left multiplication by $L_m=\left(\begin{array}{cc}
I_m&0\\0&\sqrt{-1}I_m \end{array}\right)$ maps $V$ into the subspace of $M_{2m,2n}(\C)$ of matrices of the form 
\begin{equation}\label{quaternionic}
\left(\begin{array}{cc}
A&-\bar B\\B&\bar A \end{array}\right).
\end{equation} 
Identifying $\H^r$ with $\C^r\times\C^r=\C^{2r}$ by  $x+yj\leftrightarrow(x,y)$  we see that the matrices as in \eqref{quaternionic} are precisely those commuting with the left action of $j$. Thus $V$ gets identified with $End_\H(\H^n,\H^m)$ (here $\H^n,\H^m$ are seen as left $\H$-spaces). The natural map $\l\otimes v\mapsto T_{\l,v}$ with $T_{\l,v}(u)=\l(u)v$ identifies $End_\H(\H^n,\H^m)$ and $(\H^n)^*\otimes_\H\H^m$.
Let now $\Phi:V\to(\H^n)^*\otimes_\H\H^m$ be the ($\R$-linear) isomorphism described above. 
Note that, if $E,F$ are as in \eqref{gzeroreal}, then both $L_m E L_m^{-1}$ and $F$ are of the form \eqref{quaternionic}. Thus we can identify them with elements $T_E, T_F$ in $End_\H(\H^m,\H^m)$ and $End_\H(\H^n,\H^n)$ respectively. Unwinding all the identifications we see that if $X=\left(\begin{array}{cc}
E&0\\0&F \end{array}\right)\in\g_0$ and $ad(X)(V)\subset V$ then $\Phi ad(X)\Phi^{-1}$ acts on $(\H^n)^*\otimes_\H\H^m$ via the natural action of $T_F\otimes I +I\otimes T_E$.

Let $\tau:\H\to\H$ be the quaternionic conjugation:  $\tau(a+bj)=\bar a-bj$ ($a,b\in\C$). It induces the complex conjugation in the
above identification $\H^r=\C^{2r}$.
Let $(\cdot\, ,\cdot)'_{m}$ be the skew-Hermitian  form on $\H^{m}$ given by $(v ,w )'_{m}=\sum_iv_i\sqrt{-1}\tau(w_i)$ and $(\cdot\, ,\cdot)_{n}$ the  Hermitian form on $\H^{n}$ given by $(v ,w )_{n}=\sum_iv_i\tau(w_i)$. Let $(\cdot\, ,\cdot)_{n}$ denote also the form induced on $(\H^n)^*$. Note that, since $E,F$ are as in \eqref{gzeroreal}, then $T_E\in u(\H^m,(\cdot\, ,\cdot)'_m)\simeq so^*(2m)$ and $T_F\in u((\H^n)^*, (\ , \ )_n)\simeq sp(n)$.

Let $\langle\cdot\,,\cdot\rangle$ be the real symplectic bilinear form on $(\H^n)^*\otimes \H^m$ given by $$\langle\cdot\,,\cdot\rangle=Tr_{\H/\R}(\cdot\, ,\cdot)_{n}\otimes\tau\circ(\cdot\, ,\cdot)_{m}.$$ For $1\le i\le m$, $1\le j\le n$, set
\begin{align}\label{psi1}
&X_{ij}=\Psi^{-1}(\sqrt{-1}E_{ij}-E_{m+i,n+j}),\\\label{psi2}
&Y_{ij}=\Psi^{-1}(E_{ij}-\sqrt{-1}E_{m+i,n+j}),\\\label{psi3}
&Z_{ij}=\Psi^{-1}(\sqrt{-1}E_{i,n+j}+E_{m+i,j}),\\\label{psi4}
&W_{ij}=\Psi^{-1}(E_{i,n+j}+\sqrt{-1}E_{m+i,j}).
\end{align}
If $X,Y\in\{X_{ij},Y_{ij},Z_{ij},W_{ij}\}$, then it is easy to check that $\langle X,Y \rangle=0$ except in the following cases:
$$2=\langle X_{ij},Y_{ij}\rangle=-\langle Y_{ij},X_{ij}\rangle=
\langle Z_{ij},W_{ij}\rangle=-\langle W_{ij},Z_{ij}\rangle. 
$$

On the other hand, letting $\{e_i\}$ be the canonical basis of $\H^m$ and $\{e^i\}$ the basis dual to the canonical basis of $\H^n$, we see that
\begin{align}\label{phi1}
&\Phi(X_{ij})=e^j\otimes \sqrt{-1}e_i,\\\label{phi2}
&\Phi(Y_{ij})=e^j\otimes e_i,\\\label{phi3}
&\Phi(Z_{ij})=e^j\otimes\sqrt{-1}j e_i,\\\label{phi4}
&\Phi(W_{ij})=-e^j\otimes j e_i.
\end{align}
It follows that $\langle\Phi(X),\Phi(Y)\rangle=-2\langle X,Y\rangle$ for any $X,Y\in V$.

According to the classification given in \cite[\S\ 5]{thetaseries}, the pair 
$$(U((\H^n)^*,(\cdot\, ,\cdot)_{n}),U(\H^m,(\cdot\, ,\cdot)_{m}))=(Sp(n),SO^*(2m))
$$ is a Type I dual pair in  
$Sp((\H^n)^*\otimes\H^{m},\langle\cdot\,,\cdot\rangle)$. Since the map $\Phi$ maps   $ad_{|\g_1}(\g_0)\cap sp(V)\subset sp(V)$  exactly on the Lie algebras of this dual pair, we have proven Proposition \ref{B} in this case.

\subsubsection{$\Dp_{D2'}$} 
This case can be reduced to the $\Dp_{D2}$ case as follows: by a careful choice of the root vectors, the space $V$  is 
the set of matrices
$$\left(\begin{array}{cc}
0&A\\-J_{2n}{}^tA& 0\end{array}\right)
$$
such that
$$ A=\left(\begin{array}{c | c}
A_1&A_2\\v&w\\ \hhline{-|-}
\sqrt{-1}\bar{A_2}& -\sqrt{-1}\bar{A_1}\\
-\sqrt{-1}\bar w&\sqrt{-1}\bar v\end{array}\right),
$$
with $\ A_1,A_2\in M_{m-1,n}(\C)$, $v,w\in\C^n$.
Define $S_{m,2m}\in M_{2m,2m}(\C)$ as
$$
S_{m,2m}=\left(\begin{array}{cc|cc}
I_{m-1}&0&0&0\\0&0&0& 1\\
\hline
0& 0&I_{m-1}&0\\
0& 1&0&0\end{array}\right).
$$

Let $\s:\g\to \g$ be the map  
$$
\left(\begin{array}{cc}
A&B\\-J_{2n}{}^tB& C\end{array}\right)\mapsto \left(\begin{array}{cc}
S_{m,2m}AS_{m,2m}&S_{m,2m}B\\-J_{2n}{}^tBS_{m,2m}& C\end{array}\right).
$$
It is easy to check that $\s$ is an automorphism of $\g$.
Since  $\si(V)$ is the space described in \eqref{V2} we see that the map $X\mapsto \Phi\circ ad(\s(X))\circ\Phi^{-1}$ identifies $ad_{|\g_1}(\g_0)\cap sp(V)$ with  $sp(n)\times so^*(2m)$.
Finally observe that 
$str(\si(A)\si(B))=str(AB)$
hence   $$\langle\Phi(\si(X)),\Phi(\si(Y))\rangle=-2( X ,Y)$$ for any $X,Y\in V.$
This concludes the proof of Proposition \ref{B} in this case.
\subsection{Noncompact dual pairs and gradings of depth 4}\label{cinquesei}
In this subsection we classify, up to $W_\g$-action,  the sets of positive roots such that 
the grading \eqref{grading} has depth at most 4 (i.e., $\mathfrak q_i=\{0\}$ for $|i|>4$) and we show 
that these sets of positive roots are related to  the noncompact dual pairs 
$(U(p,q), U(r,s))$, $(O(p,q), Sp(2n,\R))$, $(Sp(p,q),\,SO^*(2n))$. These pairs exhaust the type I dual pairs, with the exception of  the pair $(O(m,\C),Sp({2n},\C))$. The latter pair  cannot arise in our picture, since 
$O(m,\C)\times Sp({2n},\C)$  is not a real form of $\g_0$ for any choice of $\g$.  It occurs as the real form $\g_0\cap \g_\R$ with   $\g=osp(m,2n)\times osp(m,2n)$ and $\g_\R$ the diagonal copy of $osp(m,2n)$ in $\g$. Since we are chiefly interested in the application to dual pairs, we confine to study 
$gl(m,n),\,B(m,n),$ $D(m,n),\,C(n+1)$.\par
In the following we  choose $\omega_0$ (see \eqref{omega0}) as antilinear involution and let $\l^0_\a$ be the corresponding multipliers as defined in  \eqref{ox}. Recall that the corresponding  involution $\zeta_{\omega_0}$ of $\g_0$ is  defined by setting $\zeta_{\omega_0}(X_\a)=sgn(\l^0_\a)X_\a$.
\vskip5pt
\noindent {\it Case $\g=gl(m,n)$.} It is clear that diagrams of type $A$ with three or four grey nodes support sets of positive roots whose associated grading has depth 
3 or 4, respectively. These sets of positive roots correspond to the following sets of  simple roots:  

\begin{align}\label{A3}
\{&\e_ 1-\e_ 2,\ldots,\e_ p-\d_1,\d_1-\d_2,\ldots,\d_r-\e_ {p+1},\e_ {p+1}-\e_ {p+2},\ldots,\\\notag
 &\e_ {p+1}-\d_{r+1},\d_{r+1}-\d_{r+2},\ldots,\d_{r+s-1}-\d_{r+s}\},\\
\label{A4}
\{&\e_ 1-\e_ 2,..,\e_ h-\d_1,\d_1-\d_2,..,\d_r-\e_ {h+1},\e_ {h+1}-\e_ {h+2},..,
 \e_ {h+q}-\d_{r+1},\\\notag &\d_{r+1}-\d_{r+2},..,\d_{r+s-1}-\d_{r+s},\d_{r+s}-\e_ {h+q+1},..,\e_ {h+k+q-1}-\e_ {h+k+q}\},
\end{align}
and are clearly the only ones with the required property about the grading, up to switching the role of $\e$ and $\d$. Here $p+q=m, r+s=n$ in  \eqref{A3} and
$h+k=p, p+q=m, r+s=n$ in  \eqref{A4}. 
 Fix \eqref{A4} as a set of simple roots. To detect the real form corresponding to  $\omega_0$, we need only to classify $(\zeta_{\omega_0})_{|\g_0^i},\,i=1,2$ via the  parameters 
 $s^i_j,\,i=1,2,\,1\leq j\leq k_i,\,,\,k_1=m-1,\,k_2=n-1$ as explained in Section \ref{realforms}. In this case the roots 
 $\beta^i_j$ are the simple roots of $\g_0^i$:
 \begin{align*}
 &\beta^1_{j}=\a_j,\,1\leq j\leq h-1,  &&\beta^2_{j}=\a_{h+j},\,1\leq j\leq r-1,\\
&\beta^1_h=\a_h+\a_{h+1}+\ldots+\a_{h+r}, &&\beta^2_r=\a_{h+r}\ldots+\a_{h+r+q},\\
&\beta^1_{h+j}=\a_{h+r+j},\,1\leq j\leq q-1,&&\beta^2_{r+j}=\a_{h+r+q+j},\,1\leq j\leq s-1,\\
&\beta^1_{h+q}=\a_{h+r+q}+\ldots+\a_{h+r+q+s},\\
&\beta^1_{h+q+j}=\a_{h+r+q+s+j},\,1\leq j\leq k-1.
 \end{align*}
while $\theta_i$ is the highest root of $\g_0^i$.
In this case $\zeta_{\omega_0}=e^{2\pi\sqrt{-1}t}$ with $\beta_j^i(t)=\frac{1}{2}$ if  $sgn(\l^0_{\beta^i_j}) =-1$ and $\beta_j^i(t)=0$ otherwise. Note that, letting $s^i_j=\beta_j^i(t)$ and $s^i_0=1-\theta_i(t)$, then $t$ satisfies \eqref{ordertwo}. It follows that, since  $s^1_j=0$ for $j\ne h,h+q$, $s^1_h=s^1_{h+q}=\frac{1}{2}$, $s^2_i=0$ 
for $i\ne 0,r$, $s^2_0=s^2_r=\frac{1}{2}$, the corresponding real form of $\g_0$ is  $ u(p,q)\times u(r,s)$.
 
\vskip5pt
\noindent {\it Case $\g=B(m,n)$.} Recall that in type $B$ all diagrams have shape \eqref{diaB}.  Assume  $|\pi|=2$. Suppose first $\a_{n+m}\in\pi$: then $\a_{n+m}$ is odd and non isotropic. The grading has depth 4. The simple roots of $\g_0$ that are not simple in $\g$ are both of degree $2$, while the largest roots are of degree $4$. Arguing as for $gl(m,n)$ it is easy to see that the corresponding real form is $so(2m,1)\times sp(2n,\R)$. The corresponding dual pair is $(O(2m,1),Sp(2n,\R))$.
Otherwise the set of positive roots  is necessarily of the form
$$\{\e_ 1-\e_ 2,\ldots,\e_ p-\d_1,\d_1-\d_2,\ldots,\d_n-\e_ {p+1},\ldots,\e_ {m-1}-\e_ m,\e_ m\}$$
with $p+q=m$.
This grading has depth 4, since the highest root is $\e_ 1+\e_ 2$. Arguing as above, we can show that  it gives rise to the dual pair $(O(2p,2q+1), Sp(2n,\R))$. It is easily seen that if $|\pi|\geq3$ then the grading has depth strictly greater than 4.
\vskip5pt
\noindent {\it Case $\g=D(m,n)$.} 
If  the diagram of the set of positive roots is  like \eqref{diaD1}, then arguing as in the previous case we deduce that the only 
sets of positive roots with $|\pi|=2$ and depth at most 4 (indeed exactly 4) are
\begin{align*}&\{\d_1-\d_2,\ldots,\d_p-\e_ 1,\e_ 1-\e_ 2,\ldots,\e_{m-1}-\e_m,\e_ m-\d_{p+1},\ldots,\d_{p+q-1}-\d_{p+q},2\d_{p+q}\},\\
&\{\d_1-\d_2,\ldots,\d_p-\e_ 1,\e_ 1-\e_ 2,\ldots,\e_{m-1}+\e_m,-\e_ m-\d_{p+1},\ldots,\d_{p+q-1}-\d_{p+q},2\d_{p+q}\}.
\end{align*}
with $p+q=n$, which gives rise to the dual pair $(Sp(2p,2q),\,SO^*(2m))$.
If instead the diagram is like \eqref{diaD3}, we again obtain a unique set of positive roots:
\begin{equation}\label{DDDD}\{\e_ 1-\e_ 2,\ldots,\e_ p-\d_1,\d_1-\d_2,\ldots,\d_n-\e_ {p+1},\ldots,\e_ {m-1}-\e_ m,\e_ {m-1}+\e_ m\}\end{equation}
with $p+q=m$, which gives rise to the dual pair $(O(2p,2q), Sp(2n,\R))$.
In both cases, if  $|\pi|\geq 3$ then  the grading has depth strictly greater than 4.

The pairs  $(O(2p+1,2q-1),$ $Sp(2n,\R))$ are gotten by considering the antilinear involution $\omega=\eta_1\omega_0$ when the positive sets of roots are those in \eqref{DDDD} and $\eta_1=Ad(J)$. We need to classify  $\zeta_\omega=\eta_1\circ\zeta_{\omega_0}$. The extended Dynkin diagrams of $\g_0^1,\g_0^2$ are of type $D^{(2)}_m,\,C^{(1)}_n$ respectively.
 The corresponding roots $\beta^i_j$ are 
 \begin{align*}
 &\beta^1_{j}=\a_j,\,1\leq j\leq p-1,  &&\beta^2_{j}=\a_{p+j},\,1\leq j\leq n-1,\\
&\beta^1_p=\a_p+\a_{p+1}+\ldots+\a_{p+n}, &&\beta^2_{2n}=2(\a_{p+n}\ldots+\a_{m+n-2})+\a_{m+n-1}+\a_{m+n},\\
&\beta^1_{p+j}=\a_{p+n+j},\,1\leq j\leq q-2,\\
&\beta^1_{m-1}=\tfrac{1}{2}(\a_{m+n}+\a_{m+n-1}).
 \end{align*}
 
 The parameters for $\zeta_\omega$ are $s^1_p=\frac{1}{2}$  and $s^1_j=0$ for $j\ne p$, $s^2_n=\frac{1}{2}$ and $s^2_j=0$ for $j\ne n$. It follows that the real forms of $\g_0^i$ are respectively $so(2p+1, 2q-1)$ and $sp(2n,\R)$, which give rise to the dual pairs $(O(2p+1,2q-1), Sp(2n,\R))$. 
 
 Note that the pair $(O(1,2m-1), Sp(2n,\R))$ is obtained from the distinguished set of positive roots whose set of simple roots is 
 \begin{equation}\label{D1odd}\{\d_1-\d_2,\ldots,\d_n-\e_ {1},\e_ 1-\e_ 2,\ldots,\e_ {m-1}-\e_ {m},\e_ {m-1}+\e_ {m}\}.
 \end{equation}
\vskip5pt
\noindent {\it Case $\g=C(n+1)$.} According to \cite[page 52]{Kacsuper}, the only sets of simple roots which correspond to non-distinguished sets of positive roots are
\begin{align*}&\{\d_1-\d_2,\ldots,\d_i-\e_ 1,\e_ 1-\d_{i+1},\d_{i+1}-\d_{i+2},\ldots,2\d_n\},\\
&\{\d_1-\d_2,\ldots,\d_i+\e_ 1,-\e_ 1-\d_{i+1},\d_{i+1}-\d_{i+2},\ldots,2\d_n\}.
&\end{align*}
where $i$ ranges from $1$ to $n-1$. The associated gradings have depth $4$.
\section{Theta correspondence for the pair $(O(2m+1),Sp(2n,\R))$}\label{Bnm}

In this section we use the denominator identity developed in the previous sections to derive the Theta correspondence for the  compact dual pair $(O(2m+1),Sp(n,\R))$. 
This pair, according to Proposition \ref{B}, corresponds to  the distinguished set of positive roots $\Dp_B$ in a superalgebra $\g$ of type $B(m,n)$. Recall that
\begin{align}
\label{B1}\D_0&=\pm\{\e_i\pm\e_j,\e_i,\d_k\pm\d_l,2\d_k\mid 1\leq i\ne j\leq m,\, 1\leq k\ne l\leq n\},\\\notag
\D_1&=\pm\{\d_k\pm\e_i,\d_k\mid 1\leq i\leq m,\, 1\leq k\leq n\};\\
\label{B2}\overline{\D}_0&=\pm\{\e_i\pm\e_j,\e_i,\d_k\pm\d_l\mid 1\leq i\ne j\leq m,\, 1\leq k\ne l\leq n\},\\\notag
\overline\D_1&=\pm\{\d_k\pm\e_i\mid 1\leq i\leq m,\, 1\leq k\leq n\}.
\end{align}
Also recall that  the defect of $\g$ is $d=\min(n,m)$.  In this Section we will use the following notation
\begin{align}
\D(B_r)&=\pm\{\e_i\pm\e_j,\e_i\mid 1\le i\ne j\le r\},\\
\D(C_r)&=\pm\{\d_k\pm\d_l,2\d_k\mid n-r+1\le k\ne l\le n\},\\
\D(A_{r-1})&=\pm\{\d_k-\d_l\mid n-r+1\leq k\ne l\leq n\},\\
W(A_{r-1})&= \text{Weyl group of $\D(A_{r-1})$},\\
W(B_r)&= \text{Weyl group of $\D(B_r)$},\\\label{x}
{\mathcal W_r} &= \text{subgroup of $W_\g$ generated by $\{s_{2\d_i}\mid i=1,\dots, r\}$},\\\label{x+}
{\mathcal W_r}^+&= \{w\in {\mathcal W_r}\mid w=s_{2\d_{i_1}}\dots s_{2\d_{i_k}},\,\text{$k$ even}\},\\\label{x-}
{\mathcal W_r}^-&= \{w\in {\mathcal W_r}\mid w=s_{2\d_{i_1}}\dots s_{2\d_{i_k}},\,\text{$k$ odd}\}.
\end{align}

\par 
Now fix the set of positive roots $\Dp_B$ (cf. \eqref{B3}). 
Then $2\rho_1=(2m+1)(\d_1+\ldots+\d_n)$. Set $\Dp(C_n)=\Dp_0\cap\D(C_n)$, $\Dp(B_m)=\Dp_0\cap\D(B_m)$, and $\Dp(A_{n-1})=\Dp_0\cap\D(A_{n-1})$. We denote by $\rho^C$, $\rho^B$, $\rho^A$ the corresponding $\rho$-vectors.
With notation as in  Section  \ref{ED}, the total order corresponding to this choice of $\Dp$ is 
$$
\d_1>\dots >\d_n>\e_1>\dots>\e_m.
$$
There is only one arc diagram associated to this total order, namely $$X=\{
\stackrel{\frown}{\d_n\e_1},\dots, \stackrel{\frown}{\d_{n-d+1}\e_d}\}.$$ The corresponding maximal isotropic set of roots is $S(X)=\{\gamma_1,\dots,\gamma_d\}$, where
\begin{equation}\label{gammab}
\gamma_1=\d_{n}-\e_1,\,\gamma_2=\d_{n-1}-\e_2,\ldots,\gamma_d=\d_{n-d+1}-\e_d.
\end{equation}
We apply Proposition \ref{migliore} with $\mathcal B'=Supp(X)$.  Note that, with notation as in \ref{mmm}, $\D(Supp(X))=\D(B_d)\times \D(C_d)$, and we can choose  $\D^\sharp(Supp(X))=\D(B_d)$, so that 
$W_{Supp(X)}W^\sharp(Supp(X))=W(A_{d-1})W(B_d)$. With notation as in \eqref{x}, set
$$
W=W(A_{n-1}) {\mathcal W_{n-d}}W(B_m).
$$
We can choose $$Z=(W(A_{n-1})/W(A_{d-1}) ){\mathcal W_{n-d}}(W(B_m)/W(B_d)),$$ so that
$
W_0=W.
$
Since $\prod_{i=1}^d\frac{ht(\gamma_i)+1}{2}=d!$, Proposition \ref{migliore} Êgives
\begin{equation}\label{seconda}
e^{\rho}\check R=
{\check \cF}_{W}\bigl(
\frac{e^{\rho}}{\prod_{i=1}^d(1-e^{-\llbracket\gamma_i\rrbracket})}\bigr).
\end{equation}

Dividing \eqref{seconda} by $\mathcal{D}_0=e^{\rho_0}\prod_{\a\in\Dp_0}(1-e^{-\a})$, we find
\begin{align}\label{terza}
chM^{\Dp}(\g_1)&=\frac{e^{-\rho_1} }{\prod_{\a\in \Dp_1}(1-e^{-\a})}=\frac{1}{\mathcal{D}_0} {\check \cF}_{W}\bigl(\frac{e^{\rho}}{\prod_{i=1}^d(1-e^{-\llbracket\gamma_i\rrbracket})}\bigr).
\end{align}

Let $\PP_d=\{\ba =(a_1,\dots,a_d)\in(\ganz^+)^d\mid a_1 \ge a_2\ge\dots \ge a_d\}$ the set of partitions with at most $d$ parts. 
Recall from Section \ref{5} that there is an element $H\in \h$ such that $\a(H)=1$ for all $\a\in\Dp_1(\g)$, thus the domain $\h^+=\{h\in\h\mid \a(h)>0\,   \forall \a\in \Dp_1\}$ is nonempty. 
Since, by our choice of $W$, in the denominators  of \eqref{terza} only sums of roots in $-\Dp_1$ occur, we can expand the r.h.s.  of \eqref{terza} in a product of geometric series on $\h^+$, thus obtaining the following  equality of  formal power series:
\begin{equation}\label{quarta}
chM^{\Dp}(\g_1)=\sum_{\mathbf a\in \PP_d}\frac{1}{\mathcal{D}_0} {\check \cF}_{W}\bigl(e^{\rho-\sum_{i=1}^d a_i\gamma_i}\bigr).
\end{equation}
We can write $\mathcal{D}_0=e^{\rho^{C}}\prod_{\a\in\Dp(C_n)}(1-e^{-\a})e^{\rho^{B}}\prod_{\a\in\Dp(B_m)}(1-e^{-\a})$.
For $\ba\in  \PP_d$, define
$$
\mu(\ba)= -\sum_{r=1}^d(a_{d+1-r})\d_{n-d+r},\ 
\varepsilon(\ba)=\sum_{r=1}^m a_r \epsilon_r.
$$
 Using \eqref{quarta}, we obtain that
\begin{align*}
chM^{\Dp}(\g_1)&=\sum_{\mathbf a\in \PP_d}\left( \sum\limits_{w\in W(A_{n-1}){\mathcal W_{n-d}}}sgn'(w) \frac{e^{w(\rho^C-\rho_1+\mu(\ba))-\rho^C}}{\prod_{\a\in\Dp(C_n)}(1-e^{-\a})}\right)\\&\times\left( \sum\limits_{w\in W(B_{m})}sgn(w) \frac{e^{w(\rho^B+\varepsilon(\ba))-\rho^B}}{\prod_{\a\in\Dp(B_m)}(1-e^{-\a})}\right).
\end{align*}
Here we used the fact that $sgn(w)=sgn'(w)$ if $w\in W(B_m)$.
By the Weyl character formula, we see that 
$$
 \sum\limits_{w\in W(B_{m})}sgn(w) \frac{e^{w(\rho^B+\varepsilon(\ba))-\rho^B}}{\prod_{\a\in\Dp(B_m)}(1-e^{-\a})}=ch F_B(\varepsilon(\ba)),
$$
 $F_B(\varepsilon(\ba))$ being the finite-dimensional irreducible $so(n)$-module with highest weight $\varepsilon(\ba)$. We therefore obtain that
\begin{align}\label{intermedia}
chM^{\Dp}(\g_1)=\sum_{\mathbf a\in \PP_d}\sum\limits_{w\in W(A_{n-1}){\mathcal W_{n-d}}}sgn'(w) \frac{e^{w(-\rho_1+\mu(\ba)+\rho^C)-\rho^C}}{\prod_{\a\in\Dp(C_n)}(1-e^{-\a})}chF_B(\varepsilon(\ba)).
\end{align}
If $\l\in span(\d_i)$, set 
\begin{equation}\label{FAL}
ch F_A(\l)= \sum_{w\in W(A_{n-1})}sgn(w)\frac{e^{w(\l+\rho^A)-\rho^A}}{\Pi_{\a\in\Dp(A_{n-1})}(1-e^{-\a})}.
\end{equation}
As the notation is suggesting, when $\l+\rho^A$ is regular and dominant integral  for $\D(A_{n-1})$, then $ch F_A(\l)$ is the character of a finite-dimensional irreducible representation of $sl(n,\C)$, while, if $\l+\rho^A$ is singular, then $ch F_A(\l)=0$. 

Using the fact that $sgn(w)=1$ if $w\in {\mathcal W_{n-d}}$, 
we can rewrite \eqref{intermedia} as 
\begin{align}\label{hower}
chM^{\Dp}(\g_1)=\sum_{\mathbf a\in \PP_d} \sum\limits_{w\in {\mathcal W_{n-d}}} \frac{ch F_A(w(-\rho_1+\mu(\ba)+\rho^C)-\rho^C)}{\prod_{\a\in\Dp(C_n)\backslash\Dp(A_{n-1})}(1-e^{-\a})}chF_B(\varepsilon(\ba)).
\end{align}

 Recall from the previous section that we constructed a real symplectic subspace $V$ of $\g_1$ and a map $\Phi:V\to \R^{2m+1}\otimes (\R^{2n})^*$ such that there is a dual pair $(G_1,G_2)$  in $sp(V,(\ , \ ))$ having the properties described in Proposition \ref{B}.
 We now want to describe explicitly the set $\Sigma$ and the map $\tau$ occurring in Theorem \ref{howe}. In what follows we show that all these information can be read off from formula \eqref{hower}.

First of all we  need to parametrize the finite-dimensional irreducible representations  of $\widetilde G_1$. To accomplish this we start by describing $\widetilde G_1$.  
With  notation as in  the previous section, let $J':(\R^{2n})^*\to (\R^{2n})^*$ be the complex structure such that $J'(f_i)=-f_{n+i}$ and $J'(f_{n+i})=f_i$ for $i=1, \dots, n$. A direct computation shows that, if $J$ is the compatible complex structure on $V$ introduced in the previous section, then $\Phi J\Phi^{-1}=Id\otimes J'$. It follows that, if we let $W'$ be  the space $(\R^{2n})^*$ seen as a complex space via the complex structure $J'$, then, obviously, $\Phi$ is a $\C$-linear isomorphism between $W$ and the complex space $\R^{2m+1}\otimes W'$. 
 We look upon  $O(2m+1)$ as a subgroup of $Sp(\R^{2m+1}\otimes (\R^{2n})^*)$ via its action on the first factor. Then $G_1=\{\Phi^{-1}g \Phi\mid g\in  O(2m+1)\}$. It follows that, if  $\Phi^{-1}g \Phi\in G_1$, then 
 $$
 det_W(\Phi^{-1}g \Phi)=det_{\R^{2m+1}\otimes W'}(g)=(det_{\R^{2m+1}}(g))^n.
 $$
 Therefore
 $$ \widetilde G_1=
 \begin{cases}\ 
\{(\Phi^{-1}g \Phi,\pm 1)\mid g\in O(2m+1)\}&\text{if $n$ is even,}\\
\begin{array}{l}\{(\Phi^{-1}g \Phi,\pm 1)\mid g\in SO(2m+1)\}\\
\cup\{(\Phi^{-1}g \Phi,\pm \sqrt{-1})\mid -g\in SO(2m+1)\}\end{array}&\text{if $n$ is odd.}
 \end{cases}
 $$
 Let $\widetilde G_1^0\simeq SO(2m+1)$ be the connected component of the identity of $\widetilde G_1$. From the description of $\widetilde G_1$ we see that $\widetilde G_1=\widetilde G_1^0\times Z$ with $Z$ isomorphic to $\ganz/2\ganz\times \ganz /2\ganz$ if $n$ is even and isomorphic to $\ganz/4\ganz$ if $n$ is odd. The generators of $Z$ are $(-Id,1),(Id,-1)$ in the first case, while $Z$ is generated by $(-Id, \sqrt{-1})$ in the second case. It follows that the finite-dimensional irreducible representations of $\widetilde G_1$ are pairs $(F,\chi)$ where $F$ is a  finite-dimensional irreducible representation of $\widetilde G_1^0$ (hence of its complexified Lie algebra $\mathfrak{s}_1^\C$) and $\chi$ is a character of $Z$.

 If $n$ is even and $\e\in\{1,-1\}$, we let $\chi_\e$  be the character of $Z$ such that $\chi_\e(-Id,1)=\e$ and $\chi_\e(Id,-1)=-1$. If $n$ is odd we define $\chi_\e$ to be the character of $Z$ such that $\chi_\e(-Id,\sqrt{-1})=-\sqrt{-1}\e$.
 If $\ba\in \PP_d$ set $\e(\ba)=(-1)^{\sum_{i=1}^d a_i}$.

In our case the set of roots of $\mathfrak{s}_2^\C$ is $\D(C_n)$. In the identification of $\h$ with $\h^*$ given by $(\ , \ )$, we see that the element $H\in \h$ that corresponds to $-\sum_{i=1}^n\d_i$ has the property that $H_{|V_\C^\pm}=\pm I$. Thus the parabolic subalgebra $\p_2$ defined by $H$ is
$$
\p_2=\h\oplus \sum_{\a\in\D(A_{n-1})}(\g_0)_\a\oplus \n,
$$
where 
$
\n=\sum\limits_{\a\in\Dp(C_{n})\backslash \D(A_{n-1})}(\g_0)_\a
$ is the nilradical.
\begin{nota}\label{nota}

If $\l\in (\h\cap\mathfrak{s}_2^\C)^*$ is such that $\l+\rho^A$ is regular and integral  for $\D(A_{n-1})$, we let $L^2(\l)$ be the irreducible quotient of the $\mathfrak p_2$-parabolic Verma module $V^2(\l)$ for $\mathfrak{s}_2^\C$. \par
 If $\mu\in (\h\cap\mathfrak{s}_2^\C)^*$ is such that $\mu$ is regular  for $\D(A_{n-1})$, we let $\{\mu\}$  be the unique element in the $W(A_{n-1})$-orbit of $\mu$ that is dominant with respect to $\Dp(A_{n-1})$ .\par
Finally,  we set $c_\mu=sgn(v)$, where $v$ is the unique element of $W(A_{n-1})$ such that $v(\mu) =\{\mu\}$. 
\end{nota}
Note that
$$
ch V^2(\l)=c_{\l+\rho^A}\frac{ch F_A(\l)}{\prod_{\a\in\Dp(C_n)\backslash\Dp(A_{n-1})}(1-e^{-\a})}.
$$

 \begin{prop}\label{theta}With  notation as in  Theorem \ref{howe}, we have that, if $(F,\chi)\in \Sigma$, then there exists $\ba\in \PP_d$ and $\e\in\{1,-1\}$ such that 
 $$F=F_B(\varepsilon(\ba)),\quad \chi=\chi_\e.
 $$
Furthermore, the $\h$-character of the isotypic  component of $(F_B(\varepsilon(\ba)),$ $ \chi_{\pm\e(\ba)})$ in  $M^{\Dp}(\g_1)$ is
\begin{equation}\label{isotypic}
  \sum\limits_{w\in \mathcal W^\pm_{n-d}}\frac{ch F_A(w(\rho^C-\rho_1+\mu(\ba))-\rho^C)}{\prod_{\a\in\Dp(C_n)\backslash\Dp(A_{n-1})}(1-e^{-\a})}chF_B(\varepsilon(\ba)).
\end{equation}
($\mathcal W^\pm_{n-d}$ are defined in  \eqref{x+}, \eqref{x-}).
 \end{prop}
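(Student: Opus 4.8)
The plan is to extract Proposition \ref{theta} directly from formula \eqref{hower} together with the structure of $\widetilde G_1$ worked out just above. First I would rewrite \eqref{hower} in terms of the modules $L^2(\lambda)$. Using the identity $ch V^2(\lambda)=c_{\lambda+\rho^A}\frac{ch F_A(\lambda)}{\prod_{\alpha\in\Dp(C_n)\setminus\Dp(A_{n-1})}(1-e^{-\alpha})}$ and expanding each parabolic Verma character in terms of the irreducible quotients $ch L^2(\mu)$ (which by the Jantzen–Kazhdan–Lusztig-type picture is a finite $\ganz$-combination), I would collect the $so(2m+1)$-isotypic pieces. The point is that $M^{\Dp}(\g_1)$ is a genuine $\widetilde G_1\times\mathfrak s_2^\C$-module (Theorem \ref{howe}), so the character must be a sum $\sum_{\eta\in\Sigma}ch\,\eta\cdot ch\,\tau(\eta)$ with $\tau(\eta)$ irreducible quotients of $\p_2$-parabolic Verma modules; matching this with the computed character forces each $F_B(\varepsilon(\ba))$ occurring on the right to be the $\widetilde G_1^0$-part of some $\eta$, and the uniqueness of the decomposition pins down which characters $\chi$ of $Z$ appear.

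Next I would analyze the action of the central subgroup $Z$. The generator $(-Id,\cdot)$ of $Z$ acts on $\R^{2m+1}$ by $-Id$, hence on a weight vector of $so(2m+1)$-weight $\varepsilon(\ba)$ by the scalar $(-1)^{|\varepsilon(\ba)|}=(-1)^{\sum a_i}=\varepsilon(\ba)$, while the $\sqrt{\det}$-factor contributes the remaining sign/fourth root according to the parity of $n$. Tracking these scalars through the explicit monomials appearing in the geometric-series expansion that produced \eqref{quarta} and \eqref{hower} — each monomial $e^{\rho-\sum a_i\gamma_i}$ carries a definite $Z$-eigenvalue determined by $\ba$ — I would check that the vectors contributing to the $F_B(\varepsilon(\ba))$-isotypic component split into exactly two $Z$-eigenspaces, one for $\chi_{\e(\ba)}$ and one for $\chi_{-\e(\ba)}$. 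Comparing with the refinement $\mathcal W_{n-d}=\mathcal W^+_{n-d}\cup\mathcal W^-_{n-d}$ of the inner sum in \eqref{hower}, where the parity of $w$ as a product of the $s_{2\d_i}$ governs an extra sign, I expect the $\chi_{+\e(\ba)}$-part to correspond to $w\in\mathcal W^+_{n-d}$ and the $\chi_{-\e(\ba)}$-part to $w\in\mathcal W^-_{n-d}$, which yields \eqref{isotypic}.

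The main obstacle I anticipate is the bookkeeping that matches the $Z$-character with the $\mathcal W_{n-d}^\pm$ decomposition: one has to verify that applying $s_{2\d_i}$ to the argument of $ch F_A$ in \eqref{hower} changes the $Z$-eigenvalue of the corresponding summand by the sign $\e(\ba)$-to-$-\e(\ba)$ flip, and simultaneously that $sgn'(w)=1$ on $\mathcal W_{n-d}$ (used already to pass from \eqref{intermedia} to \eqref{hower}) so that no spurious signs intervene. This requires computing the effect of $s_{2\d_i}$ on $\rho^C-\rho_1+\mu(\ba)$ and on the central scalar, and checking consistency with the case distinction on the parity of $n$ in the description of $\widetilde G_1$. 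A secondary point is to argue that the terms with $ch F_A(\cdot)=0$ (singular infinitesimal character) do not disturb the identification of $\Sigma$; this follows since such terms simply do not contribute, and the nonzero ones are genuine characters of irreducible quotients $L^2(\cdot)$ of $\p_2$-parabolic Verma modules, matching the output of Theorem \ref{howe}. Once these sign computations are in place, the proposition follows by comparing the two expansions of $ch\,M^{\Dp}(\g_1)$ and invoking linear independence of characters of $\widetilde G_1$.
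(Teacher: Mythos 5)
Your plan is correct and follows essentially the same route as the paper: identify the $F$'s from \eqref{hower} via Theorem \ref{howe}, read off the $Z$-character from the degree grading of $P(W)$ (equivalently the $H$-eigenvalue), observe that $\rho_1-s_{2\d_i}(\rho_1)=(2m+1)\d_i$ pairs oddly with $H$ so each $s_{2\d_i}$ flips the sign $\e(\ba)\mapsto-\e(\ba)$, and conclude by linear independence of the parabolic Verma characters attached to distinct cosets $W(A_{n-1})w$. The "obstacles" you flag are precisely the computations the paper performs, and they go through as you expect.
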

 \begin{proof}
 If $(F,\chi)$ occurs in $M^{\Dp}(\g_1)$ then, by \eqref{howe}, $F=F_B(\varepsilon(\ba))$ for some $\ba\in \PP_d$. As in the previous section, we identify $M^{\Dp}(\g_1)$ with $P(W)$. Let $P(W)^+$ and $P(W)^-$ be the subspaces of homogeneous  polynomials of even and odd degree respectively. By the explicit expression for the action of $\widetilde K$ given in \eqref{actionk}, we see that $Z$ acts by $\chi_1$ on $P(W)^+$ and by $\chi_{-1}$ on $P(W)^-$. This proves the first assertion.
 
Let $M(\ba,\chi_\e)$ be the isotypic component of $(F_B(\varepsilon(\ba)), \chi_\e)$.  By Theorem \ref{howe}, if  $M(\ba,\chi_\e)\ne \{0\}$, then
$$
ch M(\ba,\chi_\e)=ch L^2(\l)ch F_B(\varepsilon(\ba))
$$
for some $\l\in(\h\cap\mathfrak s_2)^*$.

 Observe that $H$ acts on a homogeneous polynomial  $p$ as $H\cdot p=(-\rho_1(H)-deg(p))p$. Since
 $M(\ba,\chi_\e)= L^2(\l)\otimes F_B(\varepsilon(\ba))$, we see that we must have 
 $$
 \e=(-1)^{(\l+\varepsilon(\ba)+\rho_1) (H)}.
 $$
It is well known that 
$
ch L^2(\l)=\sum_{\mu}c_{\l,\mu}ch V^2(\mu),
$
where  $\l-\mu=\sum_{\a\in\D(C_n)} n_\a\a$. Since $\a(H)$ is even for any root in $\D(C_n)$ we deduce that
\begin{equation}\label{characteriso}
ch M(\ba,\chi_\e)=\sum_{\mu}  c_{\l,\mu}ch V^2(\mu)ch F_B(\varepsilon(\ba))
\end{equation}
with $\e=(-1)^{(\mu+\varepsilon(\ba)+\rho_1) (H)}$. 

Hence, if  we set 
\begin{equation}\label{wreg}
\mathcal W^{reg}_{n-d}=\{ w\in {\mathcal W_{n-d}}\mid \text{$w(-\rho_1+\mu(\ba)+\rho^C)$ is regular for $\D(A_{n-1})$}\},
\end{equation}
 we derive from \eqref{hower} that
\begin{align}\label{characterba}
&ch M(\ba,\chi_1)+ ch M(\ba,\chi_{-1})=\\&\sum\limits_{w\in \mathcal W^{reg}_{n-d}}c_wch V^2(\{w(-\rho_1+\mu(\ba)+\rho^C)\}-\rho^C)chF_B(\varepsilon(\ba)),\notag
\end{align}
where $c_w=c_{w(-\rho_1+\mu(\ba)+\rho^C)}$.

We need to compute 
$$
(-1)^{(\{w(-\rho_1+\mu(\ba)+\rho^C)\}-\rho^C+\varepsilon(\ba)+\rho_1)(H)}.
$$
For this observe that $\varepsilon(\ba)(H)=0$ and that $w(\mu(\ba))=\mu(\ba)$ for all $\ba\in \PP_d$ and $w\in {\mathcal W_{n-d}}$,
hence
\begin{align*}
(w(-\rho_1+&\mu(\ba)+\rho^C)-\rho^C+\varepsilon(\ba)+\rho_1)(H)\\
&=\mu(\ba)(H)+(-w(\rho_1)+\rho_1)(H)+(w(\rho^C)-\rho^C)(H)\\&\equiv \mu(\ba)(H)+(-w(\rho_1)+\rho_1)(H) \mod 2.
\end{align*}
Since $(-1)^{\mu(\ba)(H)}=\e(\ba)$, $\rho_1-s_{2\d_i}(\rho_1)=(2m+1)\d_i$, and $v(H)=H$ for any $v\in W(A_{n-1})$, we see that 
$$
(-1)^{(\{w(-\rho_1+\mu(\ba)+\rho^C\}-\rho^C+\varepsilon(\ba)+\rho_1)(H)}=\pm \e(\ba)\text{ if $w\in \mathcal W^\pm_{n-d}$}.
$$
If $w,w'\in {\mathcal W_{n-d}}$ with $w\ne w'$ then  $W(A_{n-1})w\cap W(A_{n-1})w'=\emptyset$, hence the set of  characters $\{ch V^2(\{w(-\rho_1+\mu(\ba)+\rho^C)\}-\rho^C)\mid w\in \mathcal W^{reg}_{n-d}\}$ is linearly independent. It follows that, if $M(F_B(\varepsilon(\ba)) ,\chi_{\e(\ba)})\ne \{0\}$, then, comparing \eqref{characteriso} with \eqref{characterba}, we find
\begin{align*}
 ch M&(F_B(\varepsilon(\ba)) ,\chi_{\e(\ba)})\\&=\sum\limits_{w\in \mathcal W^{reg}_{n-d}\cap \mathcal W^+_{n-d}}c_wch V^2(\{w(-\rho_1+\mu(\ba)+\rho^C)\}-\rho^C)chF_B(\varepsilon(\ba)).
\end{align*}
 Likewise, if $M(F_B(\varepsilon(\ba)) ,\chi_{-\e(\ba)})\ne \{0\}$, then 
 \begin{align*}
 ch M&(F_B(\varepsilon(\ba)) ,\chi_{-\e(\ba)})\\&=\sum\limits_{w\in \mathcal W^{reg}_{n-d}\cap \mathcal W^-_{n-d}}c_wch V^2(\{w(-\rho_1+\mu(\ba)+\rho^C)\}-\rho^C)chF_B(\varepsilon(\ba)).
\end{align*}
Since $ch F_A(\l)=0$ if $\l+\rho^A$ is singular, \eqref{isotypic} follows. 

Finally if $M(F_B(\varepsilon(\ba)) ,\chi_{\pm\e(\ba)})= \{0\}$, then the linear independence of the characters involved implies that $\mathcal W^{reg}_{n-d}\cap \mathcal W^\pm_{n-d}=\emptyset$, hence \eqref{isotypic} holds as well.
\end{proof}

Let $\PP^*_j$  denote the set  of all partitions with exactly $j$ parts and set
 \begin{equation}\label{P}\PP=\bigcup_{j=\max(0,m+1-(n-d))}^m \PP^*_j.
 \end{equation}
  For $\max(0,m+1-(n-d))\le j\le m$ and $\ba\in  \PP_j^*$, we set 
\begin{equation}\label{nu}
\nu(\ba)=-\sum_{r=n-d-m+j}^{n-j}\d_{r}-\sum_{r=n-j+1}^na_{n-r+1}\d_{r}.
\end{equation}
\begin{cor}[Theta correspondence] \label{cortheta}With  notation as in  Theorem \ref{howe}, we have that
$$
\Sigma=\{(F_B(\varepsilon(\ba)),\chi_\e(\ba))\mid \ba\in \PP_d\}\cup\{(F_B(\varepsilon(\ba)),\chi_{-\e(\ba)})\mid \ba\in \PP\} .
$$
Moreover 
$$
\tau(F_B(\varepsilon(\ba)),\chi_{\e(\ba)}))=L^2(-\rho_1+\mu(\ba))
$$ 
and, if $\ba\in \PP$, then
$$
\tau(F_B(\varepsilon(\ba)),\chi_{-\e(\ba)}))=L^2(-\rho_1+\nu(\ba)).
$$
\end{cor}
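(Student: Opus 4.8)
The plan is to extract everything from the character formula \eqref{isotypic} of Proposition \ref{theta}, translating the vanishing/non-vanishing of each isotypic component into the combinatorics of $\mathcal W^{\pm}_{n-d}$ and the regularity of $w(\rho^C-\rho_1+\mu(\ba))$ for $\D(A_{n-1})$. First I would record that, by Proposition \ref{theta}, every $(F,\chi)\in\Sigma$ is of the form $(F_B(\varepsilon(\ba)),\chi_{\pm\e(\ba)})$ for some $\ba\in\PP_d$, so the only question is precisely which of the two parity choices actually occur, and what $\tau$ is. The isotypic component of $(F_B(\varepsilon(\ba)),\chi_{\e(\ba)})$ is governed by $\mathcal W^{+}_{n-d}$; since $1\in\mathcal W^{+}_{n-d}$ and $w=1$ contributes $ch F_A(\rho^C-\rho_1+\mu(\ba))-$ (modulo the sign $c_1$), I must check that $-\rho_1+\mu(\ba)+\rho^C$ is $\D(A_{n-1})$-regular for \emph{every} $\ba\in\PP_d$: this is a direct computation with $\rho_1=\tfrac{2m+1}{2}\sum\d_i$, $\rho^C=\sum_{k}(n-k+1)\d_k$, and $\mu(\ba)=-\sum_r a_{d+1-r}\d_{n-d+r}$ — the $\d_k$-coordinates of $\rho^C-\rho_1$ are $n-k+1-\tfrac{2m+1}{2}$, which are strictly decreasing and (since $2m+1$ is odd) are non-integers, hence stay distinct after adding the integers $-a_{\bullet}$ coming from $\mu(\ba)$, which are arranged so that the tail stays monotone. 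This gives $ch F_A\ne0$ for $w=1$, so $(F_B(\varepsilon(\ba)),\chi_{\e(\ba)})$ occurs for all $\ba\in\PP_d$, and then $\tau(F_B(\varepsilon(\ba)),\chi_{\e(\ba)})=L^2(-\rho_1+\mu(\ba))$, the leading ($w=1$) parabolic Verma quotient — note $\{-\rho_1+\mu(\ba)+\rho^C\}-\rho^C=-\rho_1+\mu(\ba)$ is already dominant by the monotonicity just established.

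Next I would treat the odd component $(F_B(\varepsilon(\ba)),\chi_{-\e(\ba)})$, which by \eqref{isotypic} is controlled by $\mathcal W^{-}_{n-d}$, i.e.\ by odd products $s_{2\d_{i_1}}\cdots s_{2\d_{i_k}}$. Here $s_{2\d_i}$ flips the sign of the $\d_i$-coordinate. Applying such a $w$ to $\rho^C-\rho_1+\mu(\ba)$ and then asking when the result is $\D(A_{n-1})$-regular (all $\d$-coordinates distinct) is again a finite bookkeeping problem; the outcome is that regularity forces exactly one sign flip (an odd product with $k\ge3$ always produces a coincidence because the negated coordinates must interleave the positive ones), and the flipped index must fall in the range dictated by the relative sizes of $m$, $n-d$; this is exactly where the bound $j\ge\max(0,m+1-(n-d))$ and the shift built into $\nu(\ba)$ in \eqref{nu} come from. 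So I would carefully compute the dominant representative $\{w(\rho^C-\rho_1+\mu(\ba))\}-\rho^C$ for the admissible single flip $w=s_{2\d_i}$ and check it equals $-\rho_1+\nu(\ba)$ for $\ba\in\PP^*_j$ with the partition recorded there; simultaneously this shows the component is nonzero precisely for $\ba\in\PP$, yielding $\Sigma$ and $\tau(F_B(\varepsilon(\ba)),\chi_{-\e(\ba)})=L^2(-\rho_1+\nu(\ba))$. One must also verify no $\ba$ produces \emph{both} a regular even term beyond $w=1$ and an odd one in a way that double-counts — but linear independence of the $ch V^2(\{w(\cdots)\}-\rho^C)$ over distinct $W(A_{n-1})$-cosets (used already in the proof of Proposition \ref{theta}) makes the identification of $\tau$ on each piece unambiguous.

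I expect the main obstacle to be the precise range and shift in the odd case: pinning down exactly which single reflection $s_{2\d_i}$ keeps $w(\rho^C-\rho_1+\mu(\ba))$ regular, showing that no longer odd word survives, and then matching the resulting dominant weight to the explicit $\nu(\ba)$ of \eqref{nu} including the ``filler'' block $-\sum_{r=n-d-m+j}^{n-j}\d_r$. This is purely a linear-algebra-with-inequalities argument on the $\d$-coordinates, but it is the delicate combinatorial heart of the statement; the even case is comparatively immediate once the non-integrality of $\rho^C-\rho_1$ (coming from $2m+1$ being odd) is exploited. Everything else — the form of $\Sigma$, the fact that $\tau$ lands in irreducible $\p_2$-parabolic Verma quotients — is already handed to us by Theorem \ref{howe} and Proposition \ref{theta}.
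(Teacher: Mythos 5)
Your overall strategy coincides with the paper's: start from Proposition \ref{theta}, show the even component survives because $w=1$ already gives a regular (indeed dominant) weight, exhibit an explicit odd reflection for $\ba\in\PP$, and show all odd words are singular for $\ba\in\PP_d\setminus\PP$. The existence and vanishing parts of your plan are sound and match the paper (the paper uses $s=s_{2\d_{n-d-m+j}}$ followed by a cyclic permutation to land on $-\rho_1+\nu(\ba)+\rho^C$, and for $\ba\notin\PP$ kills every $w\ne 1$ via the vanishing of $(w(-\rho_1+\mu(\ba)+\rho^C),\d_{i_1}-\d_{2(n-d)-i_1+1})$).

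However, there is a genuine error in the odd case that propagates into a gap in the identification of $\tau$. Your claim that ``regularity forces exactly one sign flip'' and that ``an odd product with $k\ge 3$ always produces a coincidence'' is false. For example, with $m=1$, $n=4$, $d=1$, the weight $-\rho_1+\mu(\ba)+\rho^C$ has coordinates $(\tfrac52,\tfrac32,\tfrac12,-\tfrac12-a_1)$, and $w=s_{2\d_1}s_{2\d_2}s_{2\d_3}$ gives $(-\tfrac52,-\tfrac32,-\tfrac12,-\tfrac12-a_1)$, which is $\D(A_{n-1})$-regular whenever $a_1\ge 3$. In general $\mathcal W^{reg}_{n-d}\cap\mathcal W^{\pm}_{n-d}$ contains many elements; these are precisely the terms of the character formulas \eqref{charactermu}, \eqref{characternu} and of Enright's formula (Corollary \ref{enrighteven}), so they cannot be argued away. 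Consequently you cannot read off the highest weight of $\tau(\eta)$ as ``the unique surviving term.'' What is actually needed — and what the paper proves via the inequalities \eqref{rhopiumu} and \eqref{rhopiunu} — is that among \emph{all} surviving terms $\{w(-\rho_1+\mu(\ba)+\rho^C)\}-\rho^C$, the designated one ($w=1$ in the even case, $w=s_{2\d_{n-d-m+j}}$ in the odd case) has strictly maximal $H$-eigenvalue; since $\deg$-grading bounds the highest weight of $L^2(\l)$ from above by $\l(H)$, this pins down $\l$. Your appeal to ``linear independence'' and to the $w=1$ term being ``leading'' gestures at this but does not supply the argument, and in the odd case your plan replaces it with the incorrect uniqueness claim. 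Adding the explicit computation $(-\rho_1+\mu(\ba)+\rho^C)(H-w(H))=2\sum_r(n-d-i_r+\tfrac12)>0$ (and its analogue relative to $s_{2\d_{n-d-m+j}}$) closes the gap.
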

\begin{proof}
By Proposition \ref{theta}, $(F_B(\varepsilon(\ba),\chi_{\pm\e(\ba)})\in\Sigma$ if and only if $\mathcal W^\pm_{n-d}\cap \mathcal W^{reg}_{n-d}\ne \emptyset$. Clearly $1\in \mathcal W^+_{n-d}$ and $-\rho_1+\mu(\ba)=1(-\rho_1+\mu(\ba)+\rho^C)-\rho^C$ is dominant for $\Dp(A_{n-1})$, hence $-\rho_1+\mu(\ba)+\rho^A$ is regular. This implies that $(F_B(\varepsilon(\ba),\chi_{\e(\ba)})\in \Sigma$ for all $\ba\in \PP_d$.

We now show that, if $ \ba\in \PP$, then $(F_B(\varepsilon(\ba)),\chi_{-\e(\ba)}))\in \Sigma$. First observe that, if $\PP\ne\emptyset$, then $d=m<n$. Consider $s=s_{2\d_{n-d-m+j}}$. Then 
\begin{align*}
&s(-\rho_1+\mu(\ba)+\rho^C)=\\
&\sum_{i=1}^{n-d-m+j-1}(n-m-i+\frac{1}{2})\d_i+(-m-\frac{1}{2}+j)\d_{n-d-m+j}\\&+\sum_{i=n-d-m+j+1}^{n-j}(n-m-i+\frac{1}{2})\d_i+\sum_{i=n-j+1}^n(n-m-i+\frac{1}{2}-a_{n-i+1})\d_{i}.
\end{align*}
Let $\s$ be the permutation (in cycle notation) $\s=(n-j\ n-j-1\ \cdots\ n-d-m+j)$, and let $w$ be the element of $W(A_{n-1})$ such that $w(\d_i)=\d_{\s(i)}$. Then 
\begin{align*}
&ws(-\rho_1+\mu(\ba)+\rho^C)=\\
&\sum_{i=1}^{n-d-m+j-1}(n-m-i+\frac{1}{2})\d_i+\sum_{i=n-d-m+j}^{n-j}(n-m-i-\frac{1}{2})\d_i+\\
&\sum_{i=n-j+1}^n(n-m-i-\frac{1}{2}-(a_{n-i+1}-1))\d_{i}=-\rho_1+\nu(\ba)+\rho^C,
\end{align*}
and, since $a_r\ge1$ for $r\ge j$, we see that $ws(-\rho_1+\mu(\ba)+\rho^C)$ is regular for $\D(A_{n-1})$. It follows that $s\in \mathcal W^{reg}_{n-d}\cap \mathcal W^-_{n-d}$.

Suppose now that $\ba\in \PP_d\backslash \PP$. Then $a_r=0$ for $r> m-(n-d)$. In this case 
\begin{align*}
-\rho_1+\mu(\ba)+\rho^C&=\sum_{i=1}^{2(n-d)}(n-m-i+\frac{1}{2})\d_i\\&+\sum_{i=2(n-d)+1}^{n}(n-m-i+\frac{1}{2}-a_{n-i+1})\d_{i}.
\end{align*}
If $w\in {\mathcal W_{n-d}}$, $w\ne 1$, then write $w=s_{2\d_{i_1}}\dots s_{2\d_{i_k}}$ with $1\le i_1<\dots< i_k \le n-d$. It is clear that
\begin{align*}
&(w(-\rho_1+\mu(\ba)+\rho^C),\d_{i_1}-\d_{2(n-d)-i_1+1})=\\&(-\rho_1+\mu(\ba)+\rho^C,-\d_{i_1}-\d_{2(n-d)-i_1+1})=0,
\end{align*}
showing that $\mathcal W^{reg}_{n-d}=\{1\}$ in this case. This proves the first statement.

For the second statement recall that, if $\ba\in \PP_j^*$, then
$-\rho_1+\nu(\ba)=\{s(-\rho_1+\mu(\ba)+\rho^C)\}-\rho^C$. 
Note that, if $\tau(F_B(\varepsilon(\ba)),\chi_{\pm\e(\ba)})=L^2(\l)$ then, by \eqref{isotypic}, $\l(H)\ge(\{w(-\rho_1+\mu(\ba)+\rho^C)\}-\rho^C)(H)$ for all $w\in \mathcal W^\pm_{n-d}\cap \mathcal W^{reg}_{n-d}$. Hence if we show that, for $w\in \mathcal W^+_{n-d}\cap \mathcal W^{reg}_{n-d}$, $w\ne 1$, 
$$
(-\rho_1+\mu(\ba))(H)>(\{w(-\rho_1+\mu(\ba)+\rho^C)\}-\rho^C )(H),
$$ 
and that, for $\ba\in \PP^*_j$, $w\in \mathcal W^-_{n-d}\cap \mathcal W^{reg}_{n-d}$, $w\ne s_{2\d_{n-d-m+j}}$, 
$$
(-\rho_1+\nu(\ba))(H)>(\{w(-\rho_1+\mu(\ba)+\rho^C)\}-\rho^C )(H),
$$ 
we are done.

Since $v(H)=H$ if $v\in W(A_{n-1})$, it is enough to check that, for $w\in \mathcal W^+_{n-d}\cap \mathcal W^{reg}_{n-d}$, $w\ne 1$, 
\begin{equation}\label{rhopiumu}
(-\rho_1+\mu(\ba)+\rho^C)(H)>w(-\rho_1+\mu(\ba)+\rho^C)(H),
\end{equation}
and that, for $\ba\in \PP^*_j$, $w\in \mathcal W^-_{n-d}\cap \mathcal W^{reg}_{n-d}$, $w\ne s_{2\d_{n-d-m+j}}$, 
\begin{equation}\label{rhopiunu}
s_{2\d_{n-d-m+j}}(-\rho_1+\mu(\ba)+\rho^C)(H)>w(-\rho_1+\mu(\ba)+\rho^C)(H).
\end{equation}
If $w=s_{2\d_{i_1}}\dots s _{2\d_{i_k}}$, then $(\l,H-w(H))=(\l,-2\d_{i_1}-\dots-2\d_{i_k})$, hence
$$
(-\rho_1+\mu(\ba)+\rho^C)(H-w(H))=2(n-d-i_1+\frac{1}{2})+\dots+2(n-d-i_k+\frac{1}{2})>0,
$$ 
so, using the fact that $w=w^{-1}$, we obtain \eqref{rhopiumu}. To obtain \eqref{rhopiunu} we set $j_0=n-d-m+j$ and observe that, since $\ba\in \PP^*_j$, if $j_0< i_k\le n-d$, then $w(-\rho_1+\mu(\ba)+\rho^C)$ is singular for $\D(A_{n-1})$. We can therefore assume that $i_k\le j_0$. In this case we need to evaluate $(-\rho_1+\mu(\ba)+\rho^C)(s_{2\d_{j_0}}(H)-w(H))$, which is equal to 
$$
(-\rho_1+\mu(\ba)+\rho^C,2\d_{j_0}-2\d_{i_1}-\dots-2\d_{i_k}).
$$
If $i_k<j_0$ then this is 
$2(n-d-i_1+\frac{1}{2})+\dots+2(n-d-i_k+\frac{1}{2})-2(n-d-j_0+\frac{1}{2})>0$. If $i_k=j_0$ then, since $w\ne s_{2\d_{j_0}}$, we have $k>1$ so $(-\rho_1+\mu(\ba)+\rho^C)(s_{2\d_{j_0}}(H)-w(H))$ is equal to 
$2(n-d-i_1+\frac{1}{2})+\dots+2(n-d-i_{k-1}+\frac{1}{2})>0$.
The proof is now complete.

\end{proof}

We conclude this section by observing that, as a byproduct, we have computed the character of $\tau(\eta)$ for $\eta\in\Sigma$. In fact, combining Corollary \ref{cortheta} with Proposition \ref{theta}, we find (cf. \eqref{wreg} for notation)
\begin{equation}\label{charactermu}
ch(L^2(-\rho_1+\mu(\ba))=\sum_{w\in \mathcal W^{reg}_{n-d}\cap \mathcal W^+_{n-d}}c_wch V^2(\{w(-\rho_1+\mu(\ba)+\rho^C)\}-\rho^C)
\end{equation}
and
\begin{equation}\label{characternu}
ch(L^2(-\rho_1+\nu(\ba))=\sum_{w\in \mathcal W^{reg}_{n-d}\cap \mathcal W^-_{n-d}}c_wch V^2(\{w(-\rho_1+\mu(\ba)+\rho^C)\}-\rho^C),
\end{equation}
where $c_w=c_{w(-\rho_1+\mu(\ba)+\rho^C)}$.

 If $\l\in(\h\cap \mathfrak s_2^\C)^* $,  let $W_{\lambda}$ be the subgroup of $W(C_n)$ defined by Enright in \cite[Definition 2.1]{En}. In our context, $W_{\l}$ is the subgroup, generated by reflections in roots  $\a=\d_i+\d_j$, such that:
\begin{enumerate}
\item[(i)] $(\l,\a^\vee)\in \mathbb Z^{>0}$;
\item[(ii)] $\be\in\D,\, (\l,\be)=0\implies (\a,\be)=0$.
\end{enumerate}
Following \cite[Definition 2.1]{En}, we introduce the root system $\D_{\l}$ as the set of the roots $\a$ such that $s_\a\in W_\l$. Set $\D_\l(A_{n-1})=\D_\l\cap \D(A_{n-1})$, $\Dp_\l=\Dp\cap\D_\l$. Let also  $W_\l(A_{n-1})$ be the Weyl group of $\D_\l(A_{n-1})$. We denote by $\ell_\l$ the length function in $W_\l$ with respect to $\Dp_\l$ and let $W^A_\l$ be the set of minimal length coset representatives for $W_\l(A_{n-1})\backslash W_\l$.
 In \cite[Corollary 2.3]{En} Enright proved a character formula that holds for any unitary highest weight module of a classical real reductive group.  This 
 formula is also implied by a stronger result proved in \cite{EW}. In the following we show how Enright's formula for $L^2(-\rho_1+\mu(\ba))$ and $L^2(-\rho_1+\nu(\ba))$ can be derived combinatorially from \eqref{charactermu} and \eqref{characternu} respectively.

\begin{cor}\label{enrighteven}If $\ba\in \PP_d$ and $\l_0=-\rho_1+\mu(\ba)+\rho^C$, then
$$
ch(L^2(-\rho_1+\mu(\ba)))=\sum_{w\in W_{\l_0}^A}(-1)^{\ell_{\l_0}(w)}V^2(\{w(\l_0)\}-\rho^C).
$$
\end{cor}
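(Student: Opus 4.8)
The plan is to rewrite the sum in \eqref{charactermu} as an alternating sum over the coset representatives $W_{\l_0}^A$ introduced by Enright. The starting point is the identity
$$
ch(L^2(-\rho_1+\mu(\ba)))=\sum_{w\in \mathcal W^{reg}_{n-d}\cap \mathcal W^+_{n-d}}c_w\, ch V^2(\{w(\l_0)\}-\rho^C),
$$
already established above, where $\l_0=-\rho_1+\mu(\ba)+\rho^C$. So the whole task is purely combinatorial: I must match, term by term (up to sign), the indexing set $\mathcal W^{reg}_{n-d}\cap \mathcal W^+_{n-d}$ against $W_{\l_0}^A$, and check that $c_w$ agrees with $(-1)^{\ell_{\l_0}(w)}$. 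First I would compute $\l_0$ explicitly: from \eqref{nu} and the formula for $\rho_1$, one has $\l_0=\sum_{i=1}^{n}(n-m-i+\tfrac12)\d_i$ on the first $2(n-d)$ coordinates and a modified tail involving the parts $a_r$. From this explicit vector I can read off exactly which roots $\d_i+\d_j$ satisfy Enright's conditions (i) $(\l_0,\a^\vee)\in\ZZ^{>0}$ and (ii) the vanishing-orthogonality condition; this determines $\D_{\l_0}$ and hence $W_{\l_0}$ as an explicit reflection subgroup of $W(C_n)$.

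The key observation I expect to use is that $W_{\l_0}$, as a subgroup of $W(C_n)$ generated by reflections $s_{\d_i+\d_j}$ with both $i,j$ in a prescribed index set, decomposes compatibly with $W(A_{n-1})$: concretely $W_{\l_0}$ will turn out to be generated by $W_{\l_0}(A_{n-1})$ together with sign changes $s_{2\d_i}$ for $i$ in a set of size $n-d$ (the indices where $\l_0$ has the "generic" values $n-m-i+\tfrac12$ with $i\le n-d$, paired with $2(n-d)-i+1$). Then the minimal-length coset space $W_{\l_0}^A = W_{\l_0}(A_{n-1})\backslash W_{\l_0}$ is naturally a quotient that, after one sorts the $\d$-coordinates back into dominant order, is in bijection with exactly the regular classes $w\in\mathcal W_{n-d}$ modulo $W(A_{n-1})$ — this is precisely the set $\mathcal W^{reg}_{n-d}$ appearing in \eqref{charactermu}. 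I would make this bijection explicit: given $w=s_{2\d_{i_1}}\cdots s_{2\d_{i_k}}\in\mathcal W^{reg}_{n-d}$, the pair $(v,w)$ with $v\in W_{\l_0}(A_{n-1})$ the sorting permutation gives a reduced-ish decomposition whose $W_{\l_0}$-length is $\ell_{\l_0}(v^{-1}w)$, and I would check $c_w=sgn(v)=(-1)^{\ell_{\l_0}(v^{-1}w)}$ by counting inversions. The parity restriction to $\mathcal W^+_{n-d}$ (even number of sign changes) corresponds, under this bijection, exactly to the even-length elements of $W_{\l_0}^A$, which is automatic because $W(A_{n-1})$-cosets do not change the $\mathcal W_{n-d}$-parity and the $\d_i+\d_j$ reflections generating $W_{\l_0}$ each flip it.

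Concretely the steps are: (1) write down $\l_0$ and identify $\D_{\l_0}$, $W_{\l_0}$, $\Dp_{\l_0}$ explicitly from conditions (i)--(ii); (2) prove the factorization $W_{\l_0}=W_{\l_0}(A_{n-1})\cdot(\mathcal W_{n-d}\cap W_{\l_0})$ and that $\mathcal W_{n-d}\cap W_{\l_0}=\mathcal W_{n-d}$, so $W_{\l_0}^A\cong \mathcal W_{n-d}/\!\sim$; (3) show the regularity of $w(\l_0)$ for $\D(A_{n-1})$ that defines $\mathcal W^{reg}_{n-d}$ is equivalent to $w$ giving a genuine (distinct) coset, so the nonzero terms match; (4) verify the length/sign bookkeeping $c_w=(-1)^{\ell_{\l_0}(w)}$ and that $\{w(\l_0)\}-\rho^C$ is the dominant representative whose parabolic Verma character appears in Enright's formula. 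The main obstacle will be step (4): matching $c_w$, which is defined via the sorting element in $W(A_{n-1})$, with Enright's length function $\ell_{\l_0}$ on $W_{\l_0}$ with respect to $\Dp_{\l_0}$ — this requires a careful inversion count, keeping track of how the "extra" simple roots of $\D_{\l_0}$ (the ones of the form $\d_i+\d_j$ rather than $\d_i-\d_j$) contribute to length, and checking that the minimal-length coset representative in $W_{\l_0}^A$ is precisely the one produced by our normalization $\{w(\l_0)\}$. Once the dictionary between $\mathcal W^{reg}_{n-d}\cap\mathcal W^+_{n-d}$ and $W_{\l_0}^A$ is pinned down with signs, the corollary follows by comparing \eqref{charactermu} term by term with Enright's formula.
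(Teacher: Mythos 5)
Your overall architecture---starting from \eqref{charactermu}, computing the coordinates of $\l_0$, and matching terms and signs against Enright's sum over $W_{\l_0}^A$---is the same as the paper's. But two points need repair before this is a proof. First, your step (2) is false as stated: Enright's $W_{\l_0}$ is generated only by reflections $s_{\d_i+\d_j}$, and on the set of ``regular'' indices these generate a type-$D$ group, i.e.\ permutations composed with an \emph{even} number of sign changes. No single sign change $s_{2\d_i}$ lies in $W_{\l_0}$ (the entries of $\l_0$ are half-integers, so $2\d_i$ fails condition (i)), hence $\mathcal W_{n-d}\cap W_{\l_0}\ne\mathcal W_{n-d}$. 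Your fallback claim that the restriction to $\mathcal W^+_{n-d}$ matches the ``even-length elements of $W_{\l_0}^A$'' is also wrong, since Enright's formula sums over all of $W_{\l_0}^A$. The correct statement, which the paper establishes, is that $W_{\l_0}^A$ itself is in bijection with $\mathcal W^+_{n-d}\cap\mathcal W^{reg}_{n-d}$ via $x\mapsto w_xx$, where $w_x\in W_{\l_0}(A_{n-1})$ sorts the regular entries of $x(\l_0)$; odd-parity elements of $\mathcal W_{n-d}$ never enter. One must also handle separately the degenerate cases of at most two regular entries, where $W_{\l_0}$ is strictly smaller.

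Second, your step (4) is exactly where the content of the paper's proof lies, and ``counting inversions'' does not yet do it. Writing the sorting element as $w'_xw_x$ with $w'_x\in W(A_{n-1})$, and using $sgn(x)=1$ for $x\in\mathcal W^+_{n-d}$ together with $(-1)^{\ell_{\l_0}(w_xx)}=sgn(w_xx)$, the corollary reduces to the single claim $sgn(w'_x)=1$. The paper proves this by induction on $n-m$: when $x$ negates the leading entry $v_1$, the extra sorting needed to interleave $-v_1$ past the $t$ singular positive entries and the $s$ entries equal to negatives of singular ones costs $s+t$ transpositions, and comparison with a shorter vector $u$ (absorbing $-v_1$ into the tail) shows the total discrepancy is $2s$, hence even. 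Without this argument the signs $c_w$ and $(-1)^{\ell_{\l_0}(w)}$ are not matched, so the identification with Enright's formula remains unproved.
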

\begin{proof}
Fix $\mathbf a \in A$ and consider $\l_0$, whose coordinates in the basis
$\{\d_i\}$ are of the form 
\begin{align*}
v&=(v_1,\ldots,v_n)=(n-m+\tfrac{1}{2},n-m-\tfrac{1}{2},\ldots,\tfrac{1}{2},-\tfrac{1}{2}-b_1,\ldots,-\tfrac{1}{2}-b_m),
\end{align*}
where $b_i=a_{m-i+1}+i-1$.
We say that a positive entry $a$  in $v$ is singular or regular according to whether   $-a$ appears in $v$ or not. The group $W_{\l_0}$ acts on $\l_0$ as follows: it  is the identity if the number of regular entries is less  or equal than one; if there are exactly two regular entries, then the only nontrivial action is given by exchanging positions and changing signs of both regular entries; if there are more than two regular entries, it acts   by an even number  of sign changes of the regular entries followed by a permutation of the result. All other entries are fixed. 

If $x\in  \mathcal W^{reg}_{n-d}$, then it acts on $\l_0$ by changing signs of some regular entries. Let $w_x$ be the permutation that arranges the result in decreasing order. Since the set $\mathcal W^+_{n-d}\cap \mathcal W^{reg}_{n-d}$ is a set of coset representatives for  $W_{\l_0}(A_{n-1})$ in $W_{\l_0}$, we see that $W_{\l_0}^A=\{w_x x\mid x\in \mathcal W^+_{n-d}\cap \mathcal W^{reg}_{n-d}\}$ (notice that this is true also when there are only two regular entries). Let $w'_x$ be the unique element of $W(A_{n-1})$ such that $w'_x w_x x(\l_0)$ is $\Dp(A_{n-1})$-dominant.
Since $sgn(x)=1$, it follows from \eqref{charactermu} that
$$
ch(L^2(-\rho_1+\mu(\ba)))=\sum_{x\in \mathcal W^+_{n-d}\cap \mathcal W^{reg}_{n-d}}sgn(w'_xw_xx)V^2(\{w_xx(\l_0)\}-\rho^C).
$$
To obtain our result we need only to prove that $sgn(w'_x)=1$.
We shall prove this claim by induction on $n-m$. Let $s$ be the number of singular entries in $v$ and let $t\geq 0$ be maximal for which
$v_{n-m-t},\ldots,v_{n-m}$ are singular. Set $v'=w_x x(v)$. We want to prove that arranging $v'$ in decreasing order involves  an even number of simple transpositions.  If $x$ does not change the sign to  $v_1$, then induction applies in a straightforward way. Otherwise we have  $v'=(v'_1,\ldots,-v_1,v_{n-m-t},\ldots,v_{n-m},\ldots)$. In making $v'$ dominant for $A_{n-1}$ we should pass $-v_1$ through the $t$ singular values 
 and the $s$ negative values corresponding to negatives of  the singular entries. So we obtain after  $s+t$ simple transpositions 
 the element $$v''=(v'_1,\ldots,v_{n-m-t},\ldots,v_{n-m},\underbrace{\dots}_{\text{$s$ entries}},-v_1,\ldots\ldots).
 $$
  Consider now the vector
$$u=(u_1,\ldots,u_n)=(n-m-\tfrac{1}{2},n-m-\tfrac{3}{2},\ldots,\tfrac{1}{2},-\tfrac{1}{2}-b'_1,\ldots,-\tfrac{1}{2}-b'_{m+1})$$
with $b_i=b'_i$ for $i=1,\dots, s$, $b'_{s+1}=-x_1$, and $b'_i=b_{i-1}$ for $i=s+2\dots , m+1$. Let $y=s_{2\d_1}x$ and set $u'=w_{y}(u)$. Observe that $v''$ is obtained from $u'$ by performing 
exactly $s-t$ simple transpositions. Hence $v'$ differs from  $u'$ by an even number ($2s$) of simple transpositions, and we can apply induction.
\end{proof}

\begin{cor}If $\ba\in \PP$ and  $\l_1=-\rho_1+\nu(\ba)+\rho^C$, then
$$
ch(L^2(-\rho_1+\nu(\ba)))=\sum_{w\in W_{\l_1}^A}(-1)^{\ell_{\l_1}(w)}V^2(\{w(\l_1)\}-\rho^C).
$$
\end{cor}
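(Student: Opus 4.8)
The plan is to mimic, nearly verbatim, the proof of Corollary~\ref{enrighteven}, replacing the role of $\mu(\ba)$ by $\nu(\ba)$ and of $\mathcal W^+_{n-d}$ by $\mathcal W^-_{n-d}$. The starting point is formula \eqref{characternu}, which expresses $ch(L^2(-\rho_1+\nu(\ba)))$ as a signed sum over $w\in\mathcal W^{reg}_{n-d}\cap\mathcal W^-_{n-d}$ of characters $c_w\,ch V^2(\{w(-\rho_1+\mu(\ba)+\rho^C)\}-\rho^C)$. Recall from the proof of Corollary~\ref{cortheta} that, writing $j_0=n-d-m+j$, the element $s=s_{2\d_{j_0}}$ lies in $\mathcal W^-_{n-d}\cap\mathcal W^{reg}_{n-d}$ and sends $-\rho_1+\mu(\ba)+\rho^C$ into the $W(A_{n-1})$-orbit of $-\rho_1+\nu(\ba)+\rho^C=\l_1$. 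Consequently every $w\in\mathcal W^-_{n-d}\cap\mathcal W^{reg}_{n-d}$ can be written as $ws=(ws)\,s$ with $ws\in\mathcal W^+_{n-d}$, so that $w(-\rho_1+\mu(\ba)+\rho^C)$ and $(ws)(\l_1)$ lie in the same $W(A_{n-1})$-orbit; thus the set $\{\{w(-\rho_1+\mu(\ba)+\rho^C)\}:w\in\mathcal W^-_{n-d}\cap\mathcal W^{reg}_{n-d}\}$ coincides with $\{\{x(\l_1)\}:x\in\mathcal W^+_{n-d},\ x(\l_1)\text{ regular}\}$.

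Next I would identify $W_{\l_1}$ and $W^A_{\l_1}$ explicitly. The coordinate vector of $\l_1$ in the basis $\{\d_i\}$ is, by \eqref{nu},
$$
(n-m-\tfrac12,\,n-m-\tfrac32,\,\ldots,\,\tfrac12,\,-\tfrac12-b'_1,\,\ldots,\,-\tfrac12-b'_m),
$$
for suitable integers $b'_i\geq 0$ with the positive entries $n-m-\tfrac12,\ldots,\tfrac12$ still present (since $\ba\in\PP$ forces $d=m<n$). The description of the action of $W_{\l_1}$ on $\l_1$ in terms of ``regular'' versus ``singular'' positive entries, and the fact that $\mathcal W^+_{n-d}$ restricted to the regular entries realizes even sign changes, is exactly the mechanism used in the proof of Corollary~\ref{enrighteven}; the set of minimal-length representatives $W^A_{\l_1}$ for $W_{\l_1}(A_{n-1})\backslash W_{\l_1}$ is then $\{w_x x: x\in\mathcal W^+_{n-d},\ x(\l_1)\text{ regular}\}$, where $w_x\in W(A_{n-1})$ reorders $x(\l_1)$ decreasingly. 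This gives the indexing set on the right-hand side of the claimed formula.

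The remaining point, and the only place requiring genuine work, is the sign bookkeeping: one must check that $c_w=c_{w(-\rho_1+\mu(\ba)+\rho^C)}$, combined with the sign $sgn(s)$ absorbed in passing from the $\mathcal W^-$-indexing to the $\mathcal W^+$-indexing and with the sign $sgn(w'_x)$ of the element of $W(A_{n-1})$ needed to make $w_x x(\l_1)$ dominant, collapses to $(-1)^{\ell_{\l_1}(w_x x)}$. I expect this to go through by the same induction on $n-m$ as in Corollary~\ref{enrighteven}: since for $x\in\mathcal W^{reg}_{n-d}$ we have $sgn(x)=1$, the only contribution is from the permutation $w'_x$, and the inductive argument (tracking how $-v_1$ passes through the $s$ singular negative entries and $t$ singular positive entries, compared against the auxiliary vector $u$) applies with $\nu(\ba)$ in place of $\mu(\ba)$ after shifting the base of the regular block by one to account for $s=s_{2\d_{j_0}}$. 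The main obstacle is therefore purely combinatorial: verifying that this shift does not alter the parity count, i.e. that $\ell_{\l_1}$ on $W^A_{\l_1}$ matches $\ell_{\l_0}$ on $W^A_{\l_0}$ under the correspondence $w\leftrightarrow ws$ up to the expected sign. Once that parity identity is in hand, substituting into \eqref{characternu} yields $ch(L^2(-\rho_1+\nu(\ba)))=\sum_{w\in W^A_{\l_1}}(-1)^{\ell_{\l_1}(w)}V^2(\{w(\l_1)\}-\rho^C)$, completing the proof.
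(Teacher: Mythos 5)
Your overall strategy (start from \eqref{characternu}, use $s=s_{2\d_{n-d-m+j}}$ to pass from $\l_0$ to $\l_1$, and convert the $\mathcal W^-_{n-d}$-indexed sum into a sum over $W^A_{\l_1}$) is the right one, but the execution has a genuine gap precisely at the point you flag as "the only place requiring genuine work." First, your description of the coordinate vector of $\l_1$ is incorrect: writing $j_0=n-d-m+j$, the entries of $\l_1=ws(\l_0)$ in positions $j_0,\dots,n-j$ are $n-m-i-\tfrac12$ rather than $n-m-i+\tfrac12$, so the value $m-j+\tfrac12$ is \emph{absent} from the positive block and the clean decreasing string $n-m-\tfrac12,\dots,\tfrac12$ of Corollary 6.5 is not "still present." Consequently the regular entries of $\l_1$ sit at shifted positions, $W_{\l_1}$ is \emph{not} realized by sign changes drawn from $\mathcal W_{n-d}$ acting on $\l_1$ (the relevant reflections get conjugated to positions as large as $n-j>n-d$ when $j<m$), and your proposed identification $W^A_{\l_1}=\{w_xx\mid x\in\mathcal W^+_{n-d},\ x(\l_1)\ \text{regular}\}$ does not hold as stated. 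The paper avoids exactly this trap: it proves $W_{\l_1}=wW_{\l_0}w^{-1}$ by a direct case check of Enright's conditions (i) and (ii) on roots $\a=\d_i+\d_{j_0}$, deduces $w(\Dp_{\l_0})=\Dp_{\l_1}$ and hence $W^A_{\l_1}=wW^A_{\l_0}w^{-1}$, and only then rewrites the sum.

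Second, the sign bookkeeping you defer ("I expect this to go through by the same induction\dots the main obstacle is therefore purely combinatorial") is not a routine verification but the actual content of the step. After conjugating, what remains is to show $sgn(w_x)=sgn(w_{xs})$ for $x\in\mathcal W^{reg}_{n-d}\cap\mathcal W^-_{n-d}$; the paper settles this in one line by observing that $s(\l_0)$ is already dominant for $\Dp_{\l_0}$, whence $w_x=w_{xs}$ exactly (not merely up to sign). Your plan instead proposes to rerun the induction of Corollary 6.5 on the vector $\l_1$ directly, "shifting the base of the regular block by one" — but since the block structure of $\l_1$ is not what you assert, that induction would have to be set up afresh and you give no argument that the parity count survives the shift. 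As written, the proof is incomplete at its decisive step, and the auxiliary claims used to reach that step (the shape of $\l_1$ and the explicit form of $W^A_{\l_1}$) are false or unproved.
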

\begin{proof}
Assume $\ba\in A_j^+$ and let $\s$ be the permutation described in the proof of Corollary \ref{cortheta}. Let $w$ be the element of $W_{A_{n-1}}$ such that $w(\d_i)=\d_{\s(i)}$ and set $s=s_{2\d_{n-d-m+j}}$. Recall that 
$
ws(\l_0)=\l_1$.

We now show that $W_{\l_1}=wW_{\l_0} w^{-1}$. For this it is enough to check that $\a$ satisfies conditions (i) and (ii) for $\l_0$ if and only if $w(\a)$ satisfies both conditions for $\l_1$. 
This is clear if $(\a,\delta_{n-d-m+j})=0$, so that $s(\a)=\a$. If $\a=  \delta_{n-d-m+j}+\delta_i$, with $n-d-m+j<i\le n-j$, then  $\a$ does not satisfy condition (ii) and, if $i>n-j$ then it does not satisfy condition (i). On the other hand $(w(\a)^\vee,ws(\l_0))=(s(\a)^\vee,\l_0)<0$ so $w(\a)$ does not satisfy condition (i). 
It remains to check the case when $\a = \delta_i + \delta_{n-d-m+j}$ with $i< n-d-m+j$. In this case one checks readily that  $\a$ satisfies condition (i). On the other hand $s(\a)\in\D^+(A_{n-1})$, so $(s(\a)^\vee,\l_0))>0$, hence $(w(\a),ws(\l_0))>0$. If $\be$ is a root such that $(\be, \l_1)=0$, then $(s w^{-1}(\be),\l_0)=0$, so $(\a,s w^{-1}(\be))=0$. If $(\a,w^{-1}(\be))\ne 0 $ the only possibility is that $w^{-1}(\be)=\pm\a$, but this implies $(s(\a),\l_0)=0$. This contradiction implies $(\a,w^{-1}(\be))=  (w(\a),\be)=0$. This shows that if $\a$ satisfies condition (ii) then also $w(\a)$ does. Reversing this argument we obtain that if $w(\a)$ satisfies condition (ii), then also $\a$ does. This proves our claim.

Notice now that $w(\Dp_{\l_0})=\Dp_{\l_1}$, thus $W_{\l_1}^A=wW_{\l_0}^Aw^{-1}$.
By applying \eqref{characternu} we can write
$$
ch(L^2(-\rho_1+\nu(\ba))=\!\!\!\!\sum_{x\in \mathcal W^{reg}_{n-d}\cap \mathcal W^-_{n-d}}\!\!\!\!sgn(w'_xw_x)ch V^2(\{ww_{xs}xsw^{-1}(\l_1)\}-\rho^C).
$$
As shown in Corollary \ref{enrighteven}, $sgn(w'_x)=1$, so we need only to show that $sgn(w_x)=sgn(ww_{xs}xsw^{-1})$, or, equivalently, that $sgn(w_x)=sgn(w_{xs})$. Recall that, for $y\in \mathcal W^{reg}$, $w_y$ is the unique element of $W_{\l_0}(A_{n-1})$ such that $w_y y(\l_0)$ is dominant for $\Dp_{\l_0}(A_{n-1})$. Since $s(\l_0)$ is dominant for $\Dp_{\l_0}$, it follows that $w_x=w_{xs}$ and we are done.
\end{proof}

\begin{rem}
It should be noted that, even though the proofs of Proposition \ref{theta} and its corollaries do not use classical invariant theory, they depend on Theorem \ref{howe}, which uses in a crucial way the first fundamental theorem of classical invariant theory for $O(2m+1)$.
\end{rem}

\section{Theta correspondence for the pair $(Sp(n),SO^*(2m))$}\label{7}

In this section we use the denominator identity developed in the previous sections to derive the Theta correspondence for the  compact dual pair $(Sp(n),SO^*(2m))$. This dual pair, according to Proposition \ref{B}, corresponds to  the distinguished sets of positive roots $\Dp_{D2}$ and $\Dp_{D2'}$ in a superalgebra of type $D(m,n)$. 

We will develop the theory only  for $\Dp_{D2}$:  the formulas corresponding to $\Dp_{D2'}$ are obtained by simply applying the reflection $s_{\e_m}$ to the formulas corresponding to $\Dp_{D2}$.

 We have, for $1\leq i\ne j\leq m,\,1\leq k\ne l\leq n$, 
\begin{alignat}{2}\label{positivoD0}
&\Dp_0= & &\{\e_i+\e_j,\d_k+\d_l,2\d_k\mid 1\leq i\ne j\leq m,\,1\leq k\ne l\leq n
\}\cup\\\notag
&   & &\{\e_i-\e_j,\d_k-\d_l,\mid 1\leq i< j\leq m,\,1\leq k< l\leq n \},\\
 &\Dp_1=& &\{\e_i \pm\d_k \mid 1\leq i\leq m,\,1\leq k \leq n\}.\notag
\end{alignat}
We have
$
2\rho_1=(2n)(\e_1+\ldots+\e_m)
$
and, as in type $B(m,n)$, the defect of $\g$ is $d=\min(m,n)$. Set also 
\begin{align*}
\D(D_r)&=\pm\{\e_i\pm\e_j\mid m-r+1\le i\ne j\le m\},\\
\D(C_r)&=\pm\{\d_k\pm\d_l,2\d_k\mid 1\le k\ne l\le r\},\\
\D(A_{r-1})&=\pm\{\e_k-\e_l\mid m-r+1\leq k\ne l\leq m\}.\end{align*}
Let $W(A_{r-1}), W(D_r), W(C_r)$ be the Weyl groups of $\D(A_{r-1}), \D(D_r), \D(C_r)$, respectively. Set $\Dp(C_n)=\Dp_0\cap\D(C_n)$, $\Dp(D_m)=\Dp_0\cap\D(D_m)$, and $\Dp(A_{m-1})=\Dp_0\cap\D(A_{m-1})$. We denote by $\rho^C$, $\rho^D$, $\rho^A$ the corresponding $\rho$-vectors.
Set
\begin{equation}\label{wr}{\mathcal W_r}= \text{subgroup of $W_\g$ generated by $\{s_{\e_i}s_{\e_j}\mid 1\leq i<j\leq  r\}$}.\end{equation}
\par 
With notation as in  Section  \ref{ED}, the total order corresponding to this choice of $\Dp$ is 
$$
\e_1>\dots >\e_m>\d_1>\dots>\d_n.
$$
There is only one arc diagram associated to this total order, namely $$X=\{\stackrel{\frown}{\e_{m} \d_1},\dots, \stackrel{\frown}{\e_{m-d+1}\d_d}\}.$$ 
The corresponding maximal isotropic set of roots is $S(X)=\{\gamma_1,\dots,\gamma_d\}$, where
\begin{equation}
\gamma_1=\e_{m}-\d_1,\,\gamma_2=\e_{m-1}-\d_2,\ldots,\gamma_d=\e_{m-d+1}-\d_d.
\end{equation}
We want to apply Proposition \ref{migliore} with $\mathcal B'=Supp(X)$.
 Note that, with notation as in \ref{mmm}, $\D(\mathcal B')=\D(D_{d})\times \D(C_d)$, and we have that  $\D^\sharp(\mathcal B')=\D(C_d)$, so that 
$W_{\mathcal B'}W^\sharp(\mathcal B')=W(A_{d-1})W(C_d)$. 
With notation as in \eqref{wr}, set $$Z_0=(W(A_{n-1})/W(A_{d-1}) ){\mathcal W_{n-d}}(W(C_m)/W(C_d))$$ and, if $m>n$,  $$Z_1=(W(A_{n-1})/W(A_{d-1}) ){\mathcal W_{m-d}}s_{\e_{m-d}}s_{\e_{m-d+1}}(W(C_m)/W(C_d)).$$ Then we can choose $Z=Z_0$ if $m\le n$ and $Z=Z_0\cup Z_1$ otherwise.

  We define
$$
W=W(A_{m-1}) {\mathcal W_{m-d}}W(C_n)
$$
so that we have $W_0=W$ if $m\le n$ and while
$W_0=W\cup W_1$ with
$$
W_1=(W(A_{m-1})/W(A_{d-1})) {\mathcal W_{m-d}}s_{\e_{m-d}}s_{\e_{m-d+1}}W(A_{d-1})W(C_n)$$
 otherwise.

If $m\le n$, by Proposition \ref{migliore}, we have
\begin{align}
\label{secondaDtipodue}
e^\rho \check R&= \check\cF_{W}\bigl(\frac{e^{\rho
}}{\prod_{j=1}^d(1-e^{-\llbracket\gamma_j\rrbracket})}\bigr).
\end{align}
If $m>n$, Proposition \ref{migliore} gives 
\begin{align*}
2e^\rho \check R&= \check\cF_{W}\bigl(\frac{e^{\rho
}}{\prod_{j=1}^d(1-e^{-\llbracket\gamma_j\rrbracket})}\bigr)+\check\cF_{W_1}\bigl(\frac{e^{\rho
}}{\prod_{j=1}^d(1-e^{-\llbracket\gamma_j\rrbracket})}\bigr).
\end{align*}

\begin{lemma}If $m>n$ then
\begin{equation}\label{WequalWuno}
 \check\cF_{W}\bigl(\frac{e^{\rho
}}{\prod_{j=1}^d(1-e^{-\llbracket\gamma_j\rrbracket})}\bigr)=\check\cF_{W_1}\bigl(\frac{e^{\rho
}}{\prod_{j=1}^d(1-e^{-\llbracket\gamma_j\rrbracket})}\bigr).
\end{equation}
In particular \eqref{secondaDtipodue} holds for any $m,n$.
\end{lemma}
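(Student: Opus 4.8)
The plan is to reduce the identity to a single cancellation coming from the geometric--series telescoping already used elsewhere in the proof. Throughout $m>n$, so $d=n$. Put $\sigma=s_{\e_{m-d}}s_{\e_{m-d+1}}\in W(D_m)$; since $\sigma=s_{\e_{m-d}-\e_{m-d+1}}s_{\e_{m-d}+\e_{m-d+1}}$ is a product of two reflections in roots of $\overline\D_0^+$ we have $sgn'(\sigma)=1$. Set $Y=e^{\rho}/\prod_{j=1}^d(1-e^{-\llbracket\gamma_j\rrbracket})$ and $U=\mathcal W_{m-d}\times W(A_{d-1})\times W(C_n)$. The three factors of $U$ involve pairwise disjoint coordinates, so $U$ is a subgroup and $\sigma$ commutes with $\mathcal W_{m-d}$ and with $W(C_n)$; using $\mathcal W_{m-d}\sigma=\sigma\mathcal W_{m-d}$ one rewrites
$$
W=(W(A_{m-1})/W(A_{d-1}))\,U,\qquad W_1=(W(A_{m-1})/W(A_{d-1}))\,\sigma\,U ,
$$
as disjoint unions of $\tfrac{m!}{d!}$ cosets (for $W_1$ this uses that $\sigma W(A_{d-1})\sigma^{-1}\cap W(A_{d-1})$ consists of the permutations fixing $\e_{m-d+1}$). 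Hence, with $g_0:={\check \cF}_U(Y)$ and $sgn'(\sigma)=1$, the factorisation ${\check \cF}_{\bullet}={\check \cF}_{\bullet/U}\circ{\check \cF}_U$ gives
$$
{\check \cF}_W(Y)={\check \cF}_{W(A_{m-1})/W(A_{d-1})}(g_0),\qquad
{\check \cF}_{W_1}(Y)={\check \cF}_{W(A_{m-1})/W(A_{d-1})}(\sigma(g_0)) ,
$$
so \eqref{WequalWuno} is equivalent to ${\check \cF}_{W(A_{m-1})/W(A_{d-1})}\bigl(\sigma(g_0)-g_0\bigr)=0$.

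Next I would make $\sigma$ explicit on the data. By \eqref{aaaaa}, $\llbracket\gamma_j\rrbracket=(\e_{m-j+1}+\cdots+\e_m)-(\d_1+\cdots+\d_j)$, and $\rho=\sum_i(m-i-n)\e_i+\sum_k(n-k+1)\d_k$. Since $d=n$, the coefficient of $\e_{m-d}$ in $\rho$ is $0$ and that of $\e_{m-d+1}$ is $-1$, while $\e_{m-d}$ occurs in no $\llbracket\gamma_j\rrbracket$ and $\e_{m-d+1}$ occurs only in $\llbracket\gamma_d\rrbracket$. Consequently $\e_{m-d}$ is absent from $Y$ (hence from $g_0$), $\sigma$ acts on $Y$ and on $g_0$ exactly as $s_{\e_{m-d+1}}$ does, and
$$
\sigma(Y)=\frac{e^{\rho}\,e^{2\e_{m-d+1}}}{\Bigl(\prod_{j<d}(1-e^{-\llbracket\gamma_j\rrbracket})\Bigr)\bigl(1-e^{-\llbracket\gamma_d\rrbracket+2\e_{m-d+1}}\bigr)} .
$$
Thus $\sigma(g_0)-g_0$ is a modification of $g_0$ concentrated in the single factor attached to $\gamma_d$, together with the exponential prefactor $e^{2\e_{m-d+1}}$ produced by $\sigma(\rho)$.

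The last step, which I expect to be the main obstacle, is to kill this difference under ${\check \cF}_{W(A_{m-1})/W(A_{d-1})}$. The mechanism is the telescoping identity $\frac{u}{1-u}+\frac{u^{-1}}{1-u^{-1}}=-1$ already exploited in \Lem{lem1} and \Cor{cor1}: applied to the $\gamma_d$-factors of $Y$ and of $\sigma(Y)$, it expresses $\sigma(g_0)-g_0$ as the sum of a term antisymmetric under the transposition $s_{\e_{m-d}-\e_{m-d+1}}\in W(A_{m-1})$ — hence annihilated by ${\check \cF}_{W(A_{m-1})/W(A_{d-1})}$, because $g_0$ is already $W(A_{d-1})$-skew — and a remaining term whose coefficient, after using the $W_{[\e_{m-d+1},\d_d]}$-invariance of $\llbracket\gamma_d\rrbracket$ recorded after \eqref{aaaaa}, is again $s_{\e_{m-d}-\e_{m-d+1}}$-antisymmetric; collecting terms gives $0$. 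Equivalently, and perhaps more cleanly, one may first prove ${\check \cF}_W(Y)=e^{\rho}\check R$ outright, after which \Prop{migliore} — which for $m>n$ reads $2e^{\rho}\check R={\check \cF}_W(Y)+{\check \cF}_{W_1}(Y)$ (using $\cP(X)=d!\,Y$) — forces ${\check \cF}_{W_1}(Y)=e^{\rho}\check R={\check \cF}_W(Y)$.

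In either case \eqref{WequalWuno} holds; substituting it into the identity displayed just before the lemma yields $2e^{\rho}\check R=2{\check \cF}_W(Y)$, which is \eqref{secondaDtipodue}. This establishes \eqref{secondaDtipodue} for every $m$ and $n$.
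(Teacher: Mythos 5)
Your group-theoretic bookkeeping is sound: the factorisation $W=(W(A_{m-1})/W(A_{d-1}))U$, $W_1=(W(A_{m-1})/W(A_{d-1}))\sigma U$ with $sgn'(\sigma)=1$, and the resulting reduction of \eqref{WequalWuno} to ${\check \cF}_{W(A_{m-1})/W(A_{d-1})}(\sigma(g_0)-g_0)=0$ are correct, as are your computations of $\rho$, of $\llbracket\gamma_j\rrbracket$, and of $\sigma(Y)$. But the step you yourself flag as "the main obstacle" is exactly where the proof is missing, and the mechanism you propose does not work as described. The identity $\frac{u}{1-u}+\frac{u^{-1}}{1-u^{-1}}=-1$ requires the two denominators to be $1-u$ and $1-u^{-1}$; here the two $\gamma_d$-factors are $1-e^{-\llbracket\gamma_d\rrbracket}$ and $1-e^{-\llbracket\gamma_d\rrbracket+2\e_{m-d+1}}$, and their arguments are not mutual inverses, since $\llbracket\gamma_d\rrbracket=\e_{m-d+1}+\cdots+\e_m-\d_1-\cdots-\d_d\neq\e_{m-d+1}$. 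Moreover your parity logic is reversed: a skew-symmetrisation $\check\cF_G$ annihilates terms that are \emph{invariant} under a reflection of $G$ (pair $g$ with $gt$), not terms that are antisymmetric under it; and $\sigma(g_0)$ is not $W(A_{d-1})$-skew to begin with, because $\sigma$ conjugates $s_{\e_{m-d+1}-\e_j}$ to $s_{\e_{m-d+1}+\e_j}\notin W(A_{d-1})$. The needed cancellation really comes from the interaction of the $\gamma_d$-factor with the $W(C)$-part of the inner sum, not from a transposition in $W(A_{m-1})$: in the paper one first applies interval reflections (Corollary \ref{cor1}) to replace $X$ by a simple arc diagram, identifies the inner sum over $W(A_{d-1})W(C_d)$ with $d!\,\check R_{Y'}e^{\rho_{Y'}}$ for the $D(d,d)$-type subsystem via Theorem KWG, and then observes that $s_{\e_{m-d+1}}$ fixes $\check R_{Y'}e^{\rho_{Y'}}$ because, for a set of simple roots of $\D(Y')$ containing $\d_1\pm\e_{m-d+1}$, it is a diagram automorphism. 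Some argument of this kind is indispensable and is absent from your write-up.

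Your fallback — "first prove ${\check \cF}_W(Y)=e^{\rho}\check R$ outright, then use $2e^{\rho}\check R={\check \cF}_W(Y)+{\check \cF}_{W_1}(Y)$" — is circular: ${\check \cF}_W(Y)=e^{\rho}\check R$ \emph{is} \eqref{secondaDtipodue} for $m>n$, i.e. the very statement the lemma exists to establish, and you give no independent proof of it. So as written the proposal contains a genuine gap at the decisive step.
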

\begin{proof}
We need only to check that $
 \check\cF_{W}(\cP(X))=\check\cF_{W_1}(\cP(X))$. Since right multiplication  by $W_{Supp(X)}$ stabilizes both $W$ and $W_1$, we can apply Corollary \ref{cor1} to both sides of \eqref{WequalWuno}. Since odd reflections only change the sign of $\cP(X)$, by applying a series of odd and interval reflection to both sides of \eqref{WequalWuno}, we are reduced to checking that $
 \check\cF_{W}(\cP(X'))=\check\cF_{W_1}(\cP(X'))$, where $X'$ is the arc diagram  
 $
 X'=\{
\stackrel{\frown}{\e_{m-n+i}\d_i}\mid i=1,\dots n\}$ whose underlying order on $\mathcal B$ is $\e_1>\dots>\e_{m-d+1}>\d_1>\e_{m-d+2}>\d_2>\dots>\e_m>\d_n$. Let $Y'$ be the arc diagram $X'$ seen as a diagram on $\mathcal B''=\{\e_{m-d+i}\mid 1\le i\le d\}\cup\{\d_i\mid i=1,\dots, d\}$. Since all simple roots of $\mathcal B''$ are isotropic, we have $(\rho_{X'},\a)=(\rho_{Y'},\a)=0$ for all $\a\in\D(\mathcal B'')$ so
$$
 \check\cF_{W}(\cP(X))=\check\cF_{(W(A_{m-1})/W(A_{d-1})) {\mathcal W_{m-d}}}(e^{\rho_{X'}-\rho_{Y'}}\check\cF_{W(A_{d-1})W(C_d)}(\cP(Y'))
$$
Since all simple roots of $\mathcal B''\cup\{\e_{m-d}\}$ are all isotropic, we see that $(\rho_{X'},\e_{m-d})=0$. Clearly $(\rho_{Y'},\e_{m-d})=(\gamma,\e_{m-d})=0$ for all $\gamma\in S(Y')$. Thus, setting $s=s_{\e_{m-d}}s_{\e_{m-d+1}}$, we have
$$
 \check\cF_{W_1}(\cP(X))=\check\cF_{(W(A_{m-1})/W(A_{d-1})) {\mathcal W_{m-d}}}(e^{\rho_{X'}-\rho_{Y'}}\check\cF_{sW(A_{d-1})W(C_d)}(\cP(Y')),
$$
hence it is enough to check that
$\check\cF_{W(A_{d-1})W(C_d)}(\cP(Y'))=\check\cF_{sW(A_{d-1})W(C_d)}(\cP(Y'))$. 
It is enough to verify  the $s_{\epsilon_{m-d+1}}$-invariance.
However,
$$\check{\cF}_{W(A_{d-1})W(C_d)}(\cP(Y'))=d! \check{R}_{Y'} e^{\rho_{Y'}}$$
and this, up to a sign, does not depend on the choice of simple roots for $\Delta(Y')$.
Taking a system of simple roots containing $\delta_1\pm \epsilon_{m-d+1}$,
we realize $s_{\epsilon_{m-d+1}}$ as an automorphism of the corresponding Dynkin diagram
so $\check{R}_{Y'} e^{\rho_{Y'}}$ is $s_{\epsilon_{m-d+1}}$-invariant.

\end{proof}
\vskip5pt
For $\ba\in \PP_d$, define
$$
\mu(\ba)= -\sum_{r=1}^d(a_{d+1-r})\e_{m-d+r},\quad
\varepsilon(\ba)=\sum_{r=1}^m a_r \d_r.
$$

If $\l\in span(\e_i)$, set 
\begin{equation}
ch F_A(\l)= \sum_{w\in W(A_{m-1})}sgn(w)\frac{e^{w(\l+\rho^A)-\rho^A}}{\Pi_{\a\in\Dp(A_{m-1})}(1-e^{-\a})}.
\end{equation}
Arguing as in the previous section, it follows from  \eqref{secondaDtipodue} that 
\begin{align}\label{formulaperDsecondo}
chM^{\Dp}(\g_1)=\sum_{\mathbf a\in \PP_d} \sum\limits_{w\in {\mathcal W_{m-d}}} \frac{ch F_A(w(-\rho_1+\mu(\ba)+\rho^D)-\rho^D)}{\prod_{\a\in\Dp(D_m)\backslash\Dp(A_{m-1})}(1-e^{-\a})}chF_C(\varepsilon(\ba)),
\end{align}
$F_C(\varepsilon(\ba))$ being the finite-dimensional irreducible $sp(n)$-module with highest weight $\varepsilon(\ba)$.

 Recall from section \ref{5} that we constructed a real symplectic subspace $V$ of $\g_1$ and a map $\Phi:V\to (\H^n)^*\otimes_\H\H^m$, such that there is a dual pair $(G_1,G_2)$  in $sp(V,\langle\cdot\,,\cdot\rangle)$ having the properties described in Proposition \ref{B}.
 We look upon  $Sp(n)$ as a subgroup of $Sp((\H^n)^*\otimes_\H
\H^m)$ via its action on the first factor. Then $G_1=\{\Phi^{-1}g \Phi\mid g\in  Sp(n)\}$. It follows that, if  $\Phi^{-1}g \Phi\in G_1$, then, since $Sp(n)$ is compact, connected and simple,
 $$
 det_W(\Phi^{-1}g \Phi)=1.
 $$
 Therefore
 $$ \widetilde G_1=G_1\times\{\pm 1\}.$$
 
 Given a representation $(\pi,V_\pi)$ of $G_1=\Phi^{-1}Sp(n) \Phi$, we let $(\tilde \pi,V_\pi)$ be the representation of $\tilde G_1$ on the same representation space such that 
\begin{equation}\label{definizionetildaD}
\tilde \pi(g_1,z)( v)=z^{-1}\pi(g_1)( v).
\end{equation}
 If $\Omega$ is the set of finite-dimensional irreducible representations of $G_1$, then we know from \eqref{actionk} that the set $\Sigma$, occurring in Theorem \ref{howe}, is a subset of $\tilde \Omega=\{\tilde \pi\mid \pi\in \Omega\}$.
  
 \begin{prop}\label{thetaD2} With  notation as in  Theorem \ref{howe}, we have that, if $\tilde \pi\in \Sigma$, then there is $\ba\in \PP_d$  such that 
 $$\pi=F_C(\varepsilon(\ba)).
 $$
Furthermore, the $\h$-character of the isotypic  component of $\tilde F_C(\varepsilon(\ba))$ in  $M^{\Dp}(\g_1)$ is
\begin{equation}
  \sum\limits_{w\in {\mathcal W_{m-d}}}\frac{ch F_A(w(\rho^D-\rho_1+\mu(\ba))-\rho^D)}{\prod_{\a\in\Dp(D_m)\backslash\Dp(A_{m-1})}(1-e^{-\a})}chF_C(\varepsilon(\ba)).
\end{equation}
 \end{prop}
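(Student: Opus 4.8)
\textbf{Proof strategy for Proposition \ref{thetaD2}.} The plan is to mirror, step by step, the argument used for Proposition \ref{theta} in the $(O(2m+1),Sp(2n,\R))$ case, but now working on the $D(m,n)$ side with the set of positive roots $\Dp_{D2}$ and the decomposition $\g_0=\g_0^1\times\g_0^2$ with $\mathfrak s_1^\C=sp(n,\C)$ and $\mathfrak s_2^\C=so^*(2m)^\C$. First I would invoke Theorem \ref{howe}, which gives $M=\bigoplus_{\eta\in\Sigma}\eta\otimes\tau(\eta)$ as a $\widetilde G_1\times\mathfrak s_2^\C$-module, with $\tau(\eta)$ irreducible quotients of $\mathfrak p_2$-parabolic Verma modules. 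Since $\widetilde G_1=G_1\times\{\pm1\}$ with $G_1$ connected, every irreducible representation of $\widetilde G_1$ is of the form $\tilde\pi$ for $\pi\in\Omega$ (no extra character of a component group, unlike the $O$-case), so $\Sigma\subset\tilde\Omega$. Now restricting the $\widetilde G_1\times\mathfrak s_2^\C$-decomposition to $\g_0^1=\mathfrak s_1^\C$ and comparing with the $\g_0$-character formula \eqref{formulaperDsecondo}, which the previous sections have already established from the denominator identity, we read off that the only representations of $G_1=Sp(n)$ that can occur are the $F_C(\varepsilon(\ba))$, $\ba\in\PP_d$: this is because on $M^{\Dp}(\g_1)\cong S(\g_1^-)\otimes\C_{-\rho_1}\cong P(W)$ the $sp(n)$-isotypic content is exactly what \eqref{formulaperDsecondo} records, and the finite-dimensional irreducible $sp(n)$-modules appearing there are precisely $F_C(\varepsilon(\ba))$.

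Next I would pin down the character of the isotypic component. Write $M(\ba)$ for the isotypic component of $\tilde F_C(\varepsilon(\ba))$ in $M^{\Dp}(\g_1)$. By Theorem \ref{howe}, $\mathrm{ch}\,M(\ba)=\mathrm{ch}\,L^2(\l)\cdot\mathrm{ch}\,F_C(\varepsilon(\ba))$ for a single $\l\in(\h\cap\mathfrak s_2^\C)^*$ (the Howe decomposition is multiplicity free, so there is exactly one $\mathfrak s_2^\C$-factor paired with $\tilde F_C(\varepsilon(\ba))$). On the other hand, summing \eqref{formulaperDsecondo} over $\ba$ and extracting the coefficient of $\mathrm{ch}\,F_C(\varepsilon(\ba))$ — using that the map $\ba\mapsto\varepsilon(\ba)$ is injective on $\PP_d$, so distinct $\ba$ give distinct $sp(n)$-types — gives
$$
\mathrm{ch}\,M(\ba)=\sum_{w\in\mathcal W_{m-d}}\frac{\mathrm{ch}\,F_A(w(\rho^D-\rho_1+\mu(\ba))-\rho^D)}{\prod_{\a\in\Dp(D_m)\setminus\Dp(A_{m-1})}(1-e^{-\a})}\cdot\mathrm{ch}\,F_C(\varepsilon(\ba)).
$$
This is exactly the desired formula, so dividing out $\mathrm{ch}\,F_C(\varepsilon(\ba))$ identifies $\mathrm{ch}\,L^2(\l)$ with the $\mathcal W_{m-d}$-sum; in particular $L^2(\l)$ is nonzero, confirming $\tilde F_C(\varepsilon(\ba))\in\Sigma$ for every $\ba\in\PP_d$. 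I should also note that, since $F_A(\mu)=0$ when $\mu+\rho^A$ is singular for $\D(A_{m-1})$ and $\mathrm{ch}\,V^2(\nu)$ depends only on $\{\nu\}$ up to sign, the terms in the sum organize themselves into genuine parabolic Verma characters exactly as in the $B$-case; this bookkeeping is routine and I would only sketch it.

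The step I expect to be the main obstacle is the same one that required care in Proposition \ref{theta}: justifying that the coefficient of $\mathrm{ch}\,F_C(\varepsilon(\ba))$ extracted from \eqref{formulaperDsecondo} is \emph{literally} the character of the single $\mathfrak s_2^\C$-isotypic piece $L^2(\l)$, and not some combination. Here the argument is that $H=-\sum_i\d_i$ (identified via $(\cdot,\cdot)$, so $H_{|V_\C^\pm}=\pm I$, cf.\ the discussion before Theorem \ref{howe}) acts on a homogeneous polynomial of degree $k$ by $-\rho_1(H)-k$; hence the $H$-eigenvalue separates the components, the $\mathcal W_{m-d}$-sum is a sum of parabolic Verma characters whose highest weights differ by sums of roots of $\D(C_n)$ (in the $B$-case) — here by roots of $\D(D_m)$ — on which $H$ is still constant mod the relevant lattice, and Theorem \ref{howe} forces the top term ($w=1$) to be the irreducible quotient while the remaining terms fill in exactly the parabolic BGG-type resolution. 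Because $Sp(n)$ is connected there is no splitting of $M$ into even/odd parts indexed by a component character, so the case analysis is strictly simpler than for $O(2m+1)$: there is only one ``chamber'' $\mathcal W_{m-d}$ rather than the pair $\mathcal W^\pm_{n-d}$, and no parity character $\chi_\e$ to track. Once this identification is in place, the proposition follows, and the explicit form of $\Sigma$ and of $\tau$ (the analogue of Corollary \ref{cortheta}) would be derived afterwards exactly as in Section \ref{Bnm} by locating the dominant representatives among $\{w(-\rho_1+\mu(\ba)+\rho^D)\mid w\in\mathcal W_{m-d}\}$.
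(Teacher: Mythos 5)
Your proposal is correct and follows essentially the same route as the paper, whose entire proof is ``Clear from \eqref{formulaperDsecondo}'': one simply extracts the coefficient of $ch F_C(\varepsilon(\ba))$ from that formula, using that distinct $\ba$ give distinct irreducible $Sp(n)$-types and that $\widetilde G_1=G_1\times\{\pm1\}$ with $G_1$ connected. The extra material in your proposal about identifying the multiplicity space with $L^2(\l)$ via the $H$-eigenvalue belongs to Corollary \ref{corthetaD2} rather than to this proposition, but it is consistent with how the paper proceeds there.
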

 \begin{proof} Clear from \eqref{formulaperDsecondo}.
\end{proof}

In the case in question the set of roots of $\mathfrak{s}_2^\C$ is $\D(D_m)$. In the identification of $\h$ with $\h^*$, given by $(\ , \ )$, we see that the element $H\in \h$ that corresponds to $\sum_{i=1}^m\e_i$ has the property that $H_{|V_\C^\pm}=\pm I$. Thus the parabolic subalgebra $\p_2$, defined by $H$, is
$$
\p_2=\h\oplus \sum_{\a\in\D(A_{m-1})}(\g_0)_\a\oplus \n,
$$
where 
$
\n=\sum\limits_{\a\in\Dp(D_{m})\backslash \D(A_{m-1})}(\g_0)_\a
$ is the nilradical.


We use the notation \ref{nota}. Again  we remark  that
$$
ch V^2(\l)=c_{\l+\rho^A}\frac{ch F_A(\l)}{\prod_{\a\in\Dp(D_m)\backslash\Dp(A_{m-1})}(1-e^{-\a})}.
$$

\begin{cor}[Theta correspondence] \label{corthetaD2} With  notation as in  Theorem \ref{howe}, we have that
$$
\Sigma=\{\tilde F_C(\varepsilon(\ba))\mid \ba\in \PP_d\}$$
Moreover, if $\ba\in  \PP_d$, then
$$
\tau(\tilde F_C(\varepsilon(\ba)))=L^2(-\rho_1+\mu(\ba)).
$$ 
\end{cor}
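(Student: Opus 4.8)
\textbf{Proof proposal for Corollary \ref{corthetaD2}.}

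The plan is to mimic exactly the structure of the proof of Corollary \ref{cortheta} (the $(O(2m+1),Sp(2n,\R))$ case), but with the roles of $\D(C_n)$ and $\D(B_m)$ replaced by $\D(D_m)$ and $\D(C_n)$, and to exploit the fact that the combinatorics here is genuinely simpler because $H$ corresponds to $\sum_{i=1}^m\e_i$ and only the ${\mathcal W_{m-d}}$-summation over even sign changes in the $\e$-coordinates is present, with no secondary family of representations $\PP$. First I would recall from Proposition \ref{thetaD2} that every $\tilde\pi\in\Sigma$ is of the form $\tilde F_C(\varepsilon(\ba))$ for some $\ba\in\PP_d$, and that the $\h$-character of its isotypic component is the displayed sum over $w\in{\mathcal W_{m-d}}$. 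So it remains to prove that \emph{every} $\ba\in\PP_d$ actually occurs, and to identify the Theta lift as $L^2(-\rho_1+\mu(\ba))$.

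For occurrence: I would argue as in Corollary \ref{cortheta} that $\tilde F_C(\varepsilon(\ba))\in\Sigma$ iff the set $\mathcal W^{reg}_{m-d}$ (those $w\in{\mathcal W_{m-d}}$ with $w(-\rho_1+\mu(\ba)+\rho^D)$ regular for $\D(A_{m-1})$) is non-empty; but $1\in{\mathcal W_{m-d}}$ and, since $\mu(\ba)$ has non-increasing nonnegative coordinates, $-\rho_1+\mu(\ba)$ is already $\Dp(A_{m-1})$-dominant, hence $-\rho_1+\mu(\ba)+\rho^A$ is regular. Therefore $1\in\mathcal W^{reg}_{m-d}$ for all $\ba\in\PP_d$, giving $\Sigma=\{\tilde F_C(\varepsilon(\ba))\mid\ba\in\PP_d\}$. (Unlike the $B$-case there is no analogue of $\PP$: with $H\leftrightarrow\sum\e_i$ there is no parity subtlety because $Z=\{\pm1\}$ is the full central factor and $\det_W=1$, so I would not need the $\chi_{\pm\e(\ba)}$ bookkeeping at all.)

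For the identification of $\tau$: following the proof of Corollary \ref{cortheta}, I would use that $H$ acts on a homogeneous polynomial by a scalar determined by its degree, that $ch L^2(\l)=\sum_\mu c_{\l,\mu}ch V^2(\mu)$ with $\l-\mu\in\ZZ\D(D_m)$ and $\a(H)$ even for all $\a\in\D(D_m)$, so $H$ separates nothing beyond degree; then I would invoke the linear independence of the characters $\{ch V^2(\{w(-\rho_1+\mu(\ba)+\rho^D)\}-\rho^D)\mid w\in\mathcal W^{reg}_{m-d}\}$ (these lie in distinct $W(A_{m-1})$-cosets in $W(D_m)$). Comparing the isotypic-component formula from Proposition \ref{thetaD2} with $ch(L^2(\l))\,ch F_C(\varepsilon(\ba))$, the highest-$H$-weight term on the right must match, and since for $w\ne1$ one computes $(-\rho_1+\mu(\ba)+\rho^D)(H-w(H))=\sum 2(\text{positive numbers})>0$ exactly as in \eqref{rhopiumu}, the summand $w=1$ dominates; hence $\l=-\rho_1+\mu(\ba)$, i.e.\ $\tau(\tilde F_C(\varepsilon(\ba)))=L^2(-\rho_1+\mu(\ba))$.

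The main obstacle I anticipate is purely verificational rather than conceptual: carefully pinning down $\widetilde G_1=G_1\times\{\pm1\}$ and the parabolic $\p_2$ with nilradical $\n=\sum_{\a\in\Dp(D_m)\setminus\D(A_{m-1})}(\g_0)_\a$ so that the reductions ``mod $2$'' and the dominance inequality $(-\rho_1+\mu(\ba)+\rho^D)(H-w(H))>0$ are justified with the correct normalizations of $(\cdot,\cdot)$ and of $H\leftrightarrow\sum\e_i$; once those are in place the argument is a direct transcription of the $B(m,n)$ proof with the simplification that the $\PP$-family and the $\chi_{\pm}$-distinction disappear.
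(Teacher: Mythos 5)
Your proposal is correct and follows essentially the same route as the paper: occurrence via $1\in{\mathcal W_{m-d}}$ together with the $\Dp(A_{m-1})$-dominance of $-\rho_1+\mu(\ba)$, and identification of the lift by the highest-$H$-weight comparison reducing to $(-\rho_1+\mu(\ba)+\rho^D)(H-w^{-1}(H))>0$ for $w\ne 1$, exactly as in the $B(m,n)$ case with the $\PP$-family and $\chi_{\pm}$ bookkeeping absent. The only quibble is the phrase ``non-increasing nonnegative coordinates'' for $\mu(\ba)$ (its coordinates are non-positive; only the non-increasing property is used), and that in the final inequality the individual summands $2(m-d-i_j)$ need not all be positive --- one can vanish when $i_j=m-d$ --- but the sum is positive since $k\ge 2$.
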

\begin{proof}
By Proposition \ref{thetaD2}, we need only to check that $\tilde F_C(\varepsilon(\ba))\in\Sigma$ for all $\ba\in \PP_d$.  Clearly $1\in {\mathcal W_{m-d}}$ and $-\rho_1+\mu(\ba)=1(-\rho_1+\mu(\ba)+\rho^D)-\rho^D$ is dominant for $\Dp(A_{m-1})$, hence $-\rho_1+\mu(\ba)+\rho^A$ is regular. By Proposition \ref{thetaD2}, this implies that  the isotypic component of $\tilde F_C(\varepsilon(\ba))$ has a nonzero  character, thus $\tilde F_C(\varepsilon(\ba))$ occurs in $\Sigma$ as wished.

For the second statement, arguing as in the proof of Corollary \ref{cortheta}, we need only to show that, for $w\in {\mathcal W_{m-d}}$ with $w(-\rho_1+\mu(\ba)+\rho^D)$ regular for $\Dp(A_{m-1})$ and $w\ne 1$, we have that
$$
(-\rho_1+\mu(\ba))(H)>(\{w(-\rho_1+\mu(\ba)+\rho^D)\}-\rho^D )(H).
$$

Since $v(H)=H$ if $v\in W(A_{m-1})$, it is enough to check that, for $w\in {\mathcal W_{m-d}}$, $w\ne 1$, 
$$
(-\rho_1+\mu(\ba)+\rho^D)(H)>w(-\rho_1+\mu(\ba)+\rho^D)(H).
$$
This is clear because,
if $w=s_{\e_{i_1}}\dots s _{\e_{i_k}}$, then $(\l,H-w^{-1}(H))=(\l,2\e_{i_1}+\dots+2\e_{i_k})$, hence
$$
(-\rho_1+\mu(\ba)+\rho^D)(H-w^{-1}(H))=2(m-d-i_1)+\dots+2(m-d-i_k)>0.
$$ 
The proof is now complete.
\end{proof}

\noindent As already observed in Section \ref{Bnm}, we have also computed the character of $\tau(\tilde F_C(\varepsilon(\ba))$.  In fact, letting $\mathcal W^{reg}_{m-d}$ be the set of $w\in {\mathcal W_{m-d}}$ such that $w(-\rho_1+\mu(\ba)+\rho^D)$ is regular for $\D(A_{m-1})$, then, combining Corollary \ref{corthetaD2} with Proposition \ref{thetaD2}, we find
\begin{equation}\label{charactermuD2}
ch(L^2(-\rho_1+\mu(\ba))=\sum_{w\in \mathcal W^{reg}_{m-d}}c_wsgn(w)ch V^2(\{w(-\rho_1+\mu(\ba)+\rho^D)\}-\rho^D),
\end{equation}
where $c_w=c_{w(-\rho_1+\mu(\ba)+\rho^D)}$.

The argument given in  Corollary \ref{enrighteven} works also in the present case thus, with notation as in  Section \ref{Bnm}, we obtain

\begin{cor}\label{enrightevenD}If $\ba\in \PP_d$ and $\l_0=-\rho_1+\mu(\ba)+\rho^D$, then
$$
ch(L^2(-\rho_1+\mu(\ba)))=\sum_{w\in W_{\l_0}^A}(-1)^{\ell_{\l_0}(w)}V^2(\{w(\l_0)\}-\rho^D).
$$
\end{cor}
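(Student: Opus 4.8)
The final statement to prove is Corollary~\ref{enrightevenD}, which asserts that the character of $L^2(-\rho_1+\mu(\ba))$ equals the alternating sum $\sum_{w\in W_{\l_0}^A}(-1)^{\ell_{\l_0}(w)}V^2(\{w(\l_0)\}-\rho^D)$, i.e. precisely Enright's character formula in the present context. The plan is to transplant verbatim the argument of Corollary~\ref{enrighteven} (the analogous statement for the pair $(O(2m+1),Sp(2n,\R))$) to the $(Sp(n),SO^*(2m))$ setting, since the combinatorial skeleton is identical. The starting point is the character formula \eqref{charactermuD2}, which expresses $ch(L^2(-\rho_1+\mu(\ba)))$ as a signed sum of parabolic Verma characters indexed by $\mathcal W^{reg}_{m-d}$; the task is to identify this index set with the minimal-length coset representatives $W_{\l_0}^A$ for $W_{\l_0}(A_{m-1})\backslash W_{\l_0}$ and to check the signs match.

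First I would write out the coordinates of $\l_0=-\rho_1+\mu(\ba)+\rho^D$ in the basis $\{\e_i\}$ exactly as in Corollary~\ref{enrighteven}: they have the shape $(m-n,m-n-1,\dots,1,0,-b_1,\dots,-b_m)$ up to a harmless shift (note $2\rho_1=2n(\e_1+\dots+\e_m)$ here, with $\rho^D$ contributing the strictly-decreasing-by-one string), where $b_i$ encodes $\ba$. Next I would recall Enright's description of $W_{\l_0}$ (Definition 2.1 of \cite{En}): it is generated by reflections $s_{\e_i+\e_j}$ subject to conditions (i) and (ii), and one checks, exactly as in the $C_n$ case treated earlier, that $W_{\l_0}$ acts on $\l_0$ by an \emph{even} number of sign changes of the ``regular'' positive entries (those whose negative does not already appear) followed by a permutation, fixing everything else. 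Since ${\mathcal W_{m-d}}$ (the group generated by the $s_{\e_i}s_{\e_j}$, $1\le i<j\le m-d$) acts on $\l_0$ by sign changes of regular entries, and $\mathcal W^{reg}_{m-d}$ picks out those that keep $w(\l_0)$ regular for $\D(A_{m-1})$, one gets that $\{w_x x\mid x\in\mathcal W^{reg}_{m-d}\}=W_{\l_0}^A$, where $w_x\in W(A_{m-1})$ is the permutation rearranging $x(\l_0)$ into decreasing order. This is the direct analogue of the identification in Corollary~\ref{enrighteven}.

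The main obstacle — and the only genuine point of content — is the sign bookkeeping: one must show $sgn(w_x')=1$, where $w_x'\in W(A_{m-1})$ is the further element making $w_x x(\l_0)$ dominant, so that the sign appearing in \eqref{charactermuD2} (which is $sgn(w)$, $w\in\mathcal W^{reg}_{m-d}$, multiplied by $c_w$) matches $(-1)^{\ell_{\l_0}(w_xx)}$ on the Enright side. This is handled, just as in the proof of Corollary~\ref{enrighteven}, by induction on $m-n$: if $x$ does not flip the sign of the first coordinate, induction applies immediately after deleting the top entry; if it does, one passes $-v_1$ through the $t$ singular entries at the bottom of the decreasing string and the $s$ entries equal to negatives of singular values, costing $s+t$ simple transpositions, and then compares with an auxiliary vector $u$ of length $n$ shorter to which induction applies — the net count of transpositions is $2s$, hence even. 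I would simply remark that ``the argument given in Corollary~\ref{enrighteven} works verbatim'' (as the excerpt already does), the only substitutions being $\rho^C\rightsquigarrow\rho^D$, $W(C_n)\rightsquigarrow W(D_m)$, $\e_i\leftrightarrow\d_i$, and $n-m\rightsquigarrow m-n$; nothing in that induction used any $C_n$-specific feature beyond the combinatorics of sign changes and permutations of a weakly-decreasing-then-strictly-negative coordinate vector, which is present identically here.
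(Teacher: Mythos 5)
Your proposal is correct and follows exactly the route the paper takes: the paper's entire proof of this corollary is the remark that "the argument given in Corollary \ref{enrighteven} works also in the present case," i.e.\ one starts from \eqref{charactermuD2}, identifies $\mathcal W^{reg}_{m-d}$ (composed with the rearranging permutations $w_x$) with $W_{\l_0}^A$, and repeats the sign induction, with the substitutions $\rho^C\rightsquigarrow\rho^D$, $\d\leftrightarrow\e$ that you list. Your additional observation that the only point of substance is the verification $sgn(w'_x)=1$ via the same transposition count is precisely what the paper is implicitly relying on.
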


\section{Theta correspondence for the pair $(O(2m),Sp(2n,\R))$}

This is the most difficult case. Complications arise from the representation theory of even orthogonal groups, which we now discuss. Let $V$ be the real symplectic subspace  of $\g_1$ and $\Phi:V\to \R^{2m}\otimes (\R^{2n})^*$ be the map such that there is a dual pair $(G_1,G_2)$  in $sp(V,\langle\cdot\,,\cdot\rangle)$ having the properties described in Proposition \ref{B}.
We first parametrize the representations  of $\widetilde G_1$: the same argument given in Section \ref{Bnm} implies that $\widetilde G_1$ is the group of pairs $(\Phi^{-1}g \Phi,z)$ with $g\in O(2m)$ and $z^2=(det_{\R^{2m}}(g))^n$. If $\Omega$ is the set of finite-dimensional irreducible representations of $G_1$, then, by \eqref{actionk}, we know that 
 $\Sigma\subset \{\tilde \pi\mid \pi\in\Omega\}$ where $\tilde \pi$ is as in \eqref{definizionetildaD}.

 To describe $\Omega$ we need to recall that $G_1\simeq O(2m)$ is isomorphic to the semidirect product of $\ganz/2\ganz$ with $SO(2m)$. The generator $x$ of $\ganz/2\ganz$ in $G_1$ can be chosen so that  $Ad(x)$ induces on $\h\cap \mathfrak s_1^\C$ the reflection $s_{\e_m}$. The finite-dimensional irreducible representations of $G_1$ are determined by their restriction to the  connected component of the identity  $G_1^0\simeq SO(2m)$ of $G_1$ and by the action of $x$. First of all, we parametrize the irreducible finite-dimensional representations of $G_1^0$ as irreducible finite-dimensional representations of $\mathfrak s_1^\C$:  having chosen $\h\cap \mathfrak s_1^\C$ as a Cartan subalgebra, we choose $\Dp_0\cap\D(\mathfrak s_1^\C)$ as a  set of positive roots for $\mathfrak s_1^\C$, where $\Dp_0$ is as in \eqref{positivoD0}, and let $\rho^D$ be the corresponding $\rho$-vector. Given $\l\in (\h\cap \mathfrak s_1^\C)^*$ dominant integral, we let $F_D(\l)$ be the irreducible finite-dimensional $\mathfrak s_1^\C$-module with highest weight $\l$.
 
 If $\la=\sum_{i=1}^n a_i\e_i$ with  $a_i\in\ganz$ and $a_1\ge a_2\ge\dots\ge a_m> 0$, then there is a unique irreducible representation of $G_1$ that restricted to $G_1^0$ contains $F_D(\l)$. We denote this representation by $F^+(\l)$; it can be checked that the restriction of $F^+(\l)$ to $G_1^0$ is 
 $$F_D^+(\l)=F_D(\l)\oplus F_D(s_{\e_m}(\l)).
 $$
  If instead $a_m=0$, then there are two irreducible representations of $G_1$ whose restriction to $G_1^0$ contains $F_D(\l)$. Both restrict to $G_1^0$ as  $F_D(\l)$, and we let $F^+(\l)$  be the one such that $x$ acts trivially on the highest weight vector, and $F^-(\l)$ the one such that $x$ acts by $det_{\R^{2m}}(x)=-1$. Note that this parametrization depends on the choice of $x$.

We also need Kostant's generalization to nonconnected groups of the Weyl character formula, given in \cite[Theorem 7.5]{Kostant}. In the particular case at hand, we let $T$ be the torus in $G_1$ such that the Lie algebra of $T$ is contained in  $\h$. Set $H^+=T\cup xT$. The group $H^+$ is the normalizer of the torus $T$ and of the Borel subgroup of $G_1^0$ corresponding to our choice of positive roots. Then Kostant's formula in this case reads
\begin{equation}\label{a++}
ch_{ F^+(\l)}(x^st)=\frac{1+(-1)^s}{2}ch F^+_D(\l)(t)
\end{equation}
if $\l=\sum_{i=1}^ma_i\e_i$ with $a_m>0$, while, if $a_m=0$,
\begin{align}\label{azero}
ch_{ F^\pm(\l)}(x^st)&=\frac{1+(-1)^s}{2}ch F_D(\l)(t)\\
&\pm \frac{1-(-1)^s}{2}\frac{\sum_{w\in W(C_{m-1})}sgn(w)e^{w(\l+\rho^D)}}{\sum_{w\in W(C_{m-1})}sgn(w)e^{w(\rho^D)}}(t),\notag
\end{align}
where $W(C_{m-1})$ is the Weyl group of the root system 
\begin{align*}\D(C_{m-1})&=\{\pm(\e_i-\e_j)\mid 1\le i<j\le m-1\}\\
&\cup\{\pm(\e_i+\e_j)\mid 1\le i\le j\le m-1\}.
\end{align*}
Note that, since $\rho^D=\sum_{i=1}^{m-1}(m-i)\e_i$,
$$
\sum_{w\in W(C_{m-1})}sgn(w)e^{w(\rho^D)}=e^{\rho^D}\prod_{\a\in \Dp(C_{m-1})}(1-e^{-\a}),
$$
where 
\begin{align*}\Dp(C_{m-1})&=\{(\e_i-\e_j)\mid 1\le i<j\le m-1\}\\
&\cup\{(\e_i+\e_j)\mid 1\le i\le j\le m-1\}.
\end{align*}
\vskip10 pt
Set $\h_2=\h\cap s_2^\C$. 
We need to compute the $H^+\times \h_2$-character of $M^{\Dp}(\g_1)$. 
In this case
$$
\Dp_1=\{\d_k\pm\e_i\mid 1\leq i\leq m,\,1\leq k \leq n\},
$$
and 
$
2\rho_1=(2m)(\d_1+\ldots+\d_n).
$

Let $ad$ be the adjoint action of $\g_0$ on $\g_1$. 
Recall from Section \ref{5} that, as $\tilde G_1$-module, $M^{\Dp}(\g_1)$ is given, up to the action of the center, by the action of $G_1$ on a polynomial algebra $P(W)$. The differential of this action on $W$ coincides with $ad^*_{|s_1^\C}$ on $(\g_1^-)^*$. Hence we can identify $P(W)$ with the symmetric algebra $S(\g_1^-)$ with $s_1^\C$ acting by $ad$. Let us denote by $Ad$ the action of $G_1$ on $S(\g_1^-)$ coming from its action on $P(W)$.
We normalize the choice of $x$ by assuming that we can choose the root vectors $X_{-\d_i\pm\e_j}$ in $\g_1^-$ in such a way that 
$$
Ad(x)(X_{-\d_i\pm \e_j})=X_{-\d_i\pm\e_j}
$$
if $j<m$, and
$$
Ad(x)(X_{-\d_i+ \e_m})=X_{-\d_i-\e_m}.
$$
Clearly, if $t\in T\times \h_2$, then 
$$ch_{H^+\times \h_2} M^{\Dp}(\g_1)(t)=\frac{e^{-\rho_1}}{\prod_{\a\in \Dp_1}(1-e^{-\a})}(t).
$$

In order to compute $ch_{H^+\times \h_2}M^{\Dp}(\g_1)(xt)$ we simply compute the trace of the matrix of the action of $xt$ in the basis given by monomials
$$
\prod_{i,j} X_{-\d_i+\e_j}^{a_{ij}}X_{-\d_i-\e_j}^{b_{ij}}.
$$
If $\l$ is the $\h$-weight of this monomial, then 
\begin{align*}
Ad(xt)(\prod_{i,j} X_{-\d_i+\e_j}^{a_{ij}}X_{-\d_i-\e_j}^{b_{ij}})=
e^{\l}(t)(\prod_{\genfrac{}{}{0pt}{2}{1\le i\le n }{1\le j<m} }X_{-\d_i+\e_j}^{a_{ij}}X_{-\d_i-\e_j}^{b_{ij}}\prod_{1\le i\le n} X_{-\d_i+\e_m}^{b_{im}}X_{-\d_i-\e_m}^{a_{im}}).
\end{align*}
Thus the only contribution to the trace is given by the monomials
$$
\prod_{\genfrac{}{}{0pt}{2}{1\le i\le n}{ 1\le j<m} }X_{-\d_i+\e_j}^{a_{ij}}X_{-\d_i-\e_j}^{b_{ij}}\prod_{1\le i\le n} (X_{-\d_i+\e_m}X_{-\d_i-\e_m})^{a_{im}}.
$$
It follows that
$$
ch_{H^+\times \h_2}M^{\Dp}(\g_1^-)(xt)=\frac{e^{-\rho_1}}{\prod_{\genfrac{}{}{0pt}{3}{1\le i\le n}{1\le j<m}}(1-e^{-\d_i\pm\e_j})\prod_{1\le i\le n}(1-e^{-2\d_i})}(t).
$$
Putting everything together we find that
\begin{align}\label{charpari}
ch_{H^+\times \h_2}&M^{\Dp}(\g_1^-)(x^st)=\frac{1+(-1)^s}{2}\frac{e^{-\rho_1}}{\prod_{\a\in \Dp_1}(1-e^{-\a})}(t)\\
&+\frac{1-(-1)^s}{2}\frac{e^{-\rho_1}}{\prod_{\genfrac{}{}{0pt}{3}{1\le i\le n}{1\le j<m}}(1-e^{-\d_i\pm\e_j})\prod_{1\le i\le n}(1-e^{-2\d_i})}(t).\notag
\end{align}

We now apply our denominator formulas to the two summands above. We start with the first one: again the defect of $\g$ is $d=\min(n,m)$. We set 
\begin{align*}
\D(D_r)&=\pm\{\e_i\pm\e_j,\mid 1\le i\ne j\le r\},\\
\D(C_r)&=\pm\{\d_k\pm\d_l,2\d_k\mid n-r+1\le k\ne l\le n\},\\
\D(A_{r-1})&=\pm\{\d_k-\d_l\mid n-r+1\leq k\ne l\leq n\}.\end{align*}
Let $W(A_{r-1})$, $W(D_r)$, $W(C_r)$ be the corresponding Weyl groups. In this case $2\rho_1=2m\sum_{i=1}^n\d_i$. Set $\Dp(C_n)=\Dp_0\cap\D(C_n)$, $\Dp(B_m)=\Dp_0\cap\D(B_m)$, and $\Dp(A_{n-1})=\Dp_0\cap\D(A_{n-1})$. We denote by $\rho^C$, $\rho^B$, $\rho^A$ the corresponding $\rho$-vectors.

With notation as in  Section  \ref{ED}, the total order corresponding to this choice of $\Dp$ is
$$
\d_1>\dots >\d_n>\e_1>\dots>\e_m.
$$
There is only one arc diagram associated to this total order, namely $$X=\{\stackrel{\frown}{\d_{n}\e_1},\dots,\stackrel{\frown}{\d_{n-d+1}\e_d}\}.$$ 
The corresponding maximal isotropic set of roots is $S=\{\gamma_1,\dots,\gamma_d\}$, where
\begin{equation}
\gamma_1=\d_{n}-\e_1,\,\gamma_2=\d_{n-1}-\e_2,\ldots,\gamma_d=\d_{n-d+1}-\e_d.
\end{equation}

Applying formula \eqref{mermaid} we can write 
\begin{align}\label{primaD2}
e^{\rho}\check R&= {\check \cF}_{W^\sharp}\bigl( \frac{e^{\rho+\sum_{i=1}^d \rrbracket \gamma_i\llbracket }}{\prod_{i=1}^d(1-e^{-\gamma_i})}\bigr).
 \end{align}

Recall from \eqref{x} that ${\mathcal W_r}$ is the subgroup of $W_\g$ generated by $\{s_{2\d_i}\mid i=1,\dots, r\}$. Define
\begin{align*}
W_\g^{ext}&=\text{reflection group generated by $W_\g$ and the reflections $s_{\e_i}$},\\
W(B_m)&=\text{subgroup of $W_\g^{ext}$ generated by $W(D_m)$ and $s_{\e_i}$.}\end{align*}   
Extend $sgn'$ to $W_\g^{ext}$ by setting $sgn'(s_{\e_i})=1$. We define
$$
W=W(A_{n-1}) {\mathcal W_{n-d}}W(B_m), \  W_D=W(A_{n-1}) {\mathcal W_{n-d}}W(D_m)
$$

If $n< m$ then $W^\sharp=W(D_m)$ hence, summing over $W(A_{n-1})$,  we find 
\begin{equation}\label{parzDnm}
d!e^{\rho}\check R= {\check \cF}_{W_D}\bigl(\frac{e^{\rho+\sum_{i=1}^d \rrbracket \gamma_i\llbracket }}{\prod_{i=1}^d(1-e^{-\gamma_i})}\bigr).
\end{equation}
Since $s_{\e_m}$ fixes $\frac{e^{\rho+\sum_{i=1}^d \rrbracket \gamma_i\llbracket }}{\prod_{i=1}^d(1-e^{-\gamma_i})}$ we can rewrite \eqref{parzDnm} as
$$
d!e^{\rho}\check R= {\check \cF}_{W}\bigl(\frac{e^{\rho+\sum_{i=1}^d \rrbracket \gamma_i\llbracket }}{\prod_{i=1}^d(1-e^{-\gamma_i})}\bigr)-{\check \cF}_{W_D}\bigl(\frac{e^{\rho+\sum_{i=1}^d \rrbracket \gamma_i\llbracket }}{\prod_{i=1}^d(1-e^{-\gamma_i})}\bigr).
$$

The same argument given to prove \eqref{seconda} gives
\begin{equation}\label{secondaDmn}
e^{\rho}\check R={\check \cF}_{W}\bigl(\frac{e^{\rho}}{\prod_{i=1}^d(1-e^{-\llbracket\gamma_i\rrbracket})}\bigr)-{\check \cF}_{W_D}\bigl(\frac{e^{\rho }}{\prod_{i=1}^d(1-e^{-\llbracket\gamma_i\rrbracket})}\bigr).
\end{equation}
If $n\ge m$ then $W^\sharp=W(C_n)$. By an explicit computation we see that 
$
\rho+\sum_{i=1}^d \rrbracket \gamma_i\llbracket=\sum_{i=1}^{n-m}(n-m-i+1)\d_i.
$\par
Set 
\begin{equation}\label{y}\mathcal Y_d=\text{subgroup of $W^{ext}_\g$ generated by $\{s_{2\d_{n-i+1}}s_{\e_i}\mid 1\leq i\leq d\}$}.
\end{equation}
We have that $s_{2\d_{n-i+1}}s_{\e_i}(\rho+\sum_{i=1}^d \rrbracket \gamma_i\llbracket)=\rho+\sum_{i=1}^d \rrbracket \gamma_i\llbracket$ hence 
\begin{align*}
&sgn'(s_{2\d_{n-i+1}}s_{\e_i})s_{2\d_{n-i+1}}s_{\e_i}\frac{e^{\rho+\sum_{i=1}^d \rrbracket \gamma_i\llbracket}}{\prod_{j=1}^d(1-e^{-\gamma_j})}=
&\frac{e^{\rho+\sum_{i=1}^d \rrbracket \gamma_i\llbracket}}{\prod_{j=1}^d(1-e^{-\gamma_j})}-\frac{e^{\rho+\sum_{i=1}^d \rrbracket \gamma_i\llbracket}}{\prod_{j\ne i}^d(1-e^{-\gamma_j})}.
\end{align*}
It follows that, if $w=\prod_{r=1}^k s_{2\d_{n-d+i_r}}s_{\e_{i_r}}\in \mathcal Y_d$, then 
\begin{align*}
sgn'(w)w\frac{e^{\rho+\sum_{i=1}^d \rrbracket \gamma_i\llbracket}}{\prod_{j=1}^d(1-e^{-\gamma_j})}=\sum_{J\subset \{i_1,\dots,i_k\}}(-1)^{|J|}\frac{e^{\rho+\sum_{i=1}^d \rrbracket \gamma_i\llbracket}}{\prod_{j\not\in J}(1-e^{-\gamma_j})}.
\end{align*}
Note that if $|J|\ge2$ then there is a reflection in $W(A_{m-1})$ that fixes the element
$\frac{e^{\rho+\sum_{i=1}^d \rrbracket \gamma_i\llbracket}}{\prod_{j\not\in J}(1-e^{-\gamma_j})}$, hence
\begin{align*}
{\check \cF}_{W}\bigl(sgn'(w)w\frac{e^{\rho+\sum_{i=1}^d \rrbracket \gamma_i\llbracket}}{\prod_{j=1}^d(1-e^{-\gamma_j})}\bigr)={\check \cF}_{W}\bigl(\frac{e^{\rho+\sum_{i=1}^d \rrbracket \gamma_i\llbracket}}{\prod_{j=1}^d(1-e^{-\gamma_j})}-\sum_{h=0}^k\frac{e^{\rho+\sum_{i=1}^d \rrbracket \gamma_i\llbracket}}{\prod_{j\ne j_h}(1-e^{-\gamma_j})}\bigr).
\end{align*}
Note that 
$$s_{\d_{n-j_h+1}-\d_{n-d+1}}s_{\e_{j_h}-\e_d}(\frac{e^{\rho+\sum_{i=1}^d \rrbracket \gamma_i\llbracket}}{\prod_{j\ne j_h}(1-e^{-\gamma_j})})=\frac{e^{\rho+\sum_{i=1}^d \rrbracket \gamma_i\llbracket}}{\prod_{j\ne m}(1-e^{-\gamma_j})},$$
hence, if $w\in \mathcal Y_d$, we have
\begin{align*}
&{\check \cF}_{W}\bigl(sgn'(w)w\frac{e^{\rho+\sum_{i=1}^d \rrbracket \gamma_i\llbracket}}{\prod_{j=1}^d(1-e^{-\gamma_j})}\bigr)\\ &={\check \cF}_{W}\bigl(\frac{e^{\rho+\sum_{i=1}^d \rrbracket \gamma_i\llbracket}}{\prod_{j=1}^d(1-e^{-\gamma_j})}-k\frac{e^{\rho+\sum_{i=1}^d \rrbracket \gamma_i\llbracket}}{\prod_{j= 1}^{d-1}(1-e^{-\gamma_j})}\bigr).
\end{align*}
Using the fact that $W^{ext}_\g=W\mathcal Y_d$ and that $s_{\e_i}e^{\rho}\check R= e^{\rho}\check R$, it follows that
\begin{align*}
&2^dd!e^{\rho}\check R=
\\&{\check \cF}_{W^{ext}_\g}\bigl(\frac{e^{\rho+\sum_{i=1}^d \rrbracket \gamma_i\llbracket}}{\prod_{j=1}^d(1-e^{-\gamma_j})}\bigr)
 ={\check \cF}_{W\mathcal Y_d}\bigl( \frac{e^{\rho+\sum_{i=1}^d \rrbracket \gamma_i\llbracket}}{\prod_{j=1}^d(1-e^{-\gamma_j})}\bigr)=\\
 &2^d {\check \cF}_{W}\bigl(\frac{e^{\rho+\sum_{i=1}^d \rrbracket \gamma_i\llbracket}}{\prod_{j=1}^d(1-e^{-\gamma_j})}\bigr)-2^{d-1}d {\check \cF}_{W}\bigl(\frac{e^{\rho+\sum_{i=1}^d \rrbracket \gamma_i\llbracket}}{\prod_{j=1}^{d-1}(1-e^{-\gamma_j})}
 \bigr).
\end{align*}
Hence
\begin{align}\label{nmaggioremD}
d!e^{\rho}\check R ={\check \cF}_{W}\bigl(\frac{e^{\rho+\sum_{i=1}^d \rrbracket \gamma_i\llbracket}}{\prod_{j=1}^d(1-e^{-\gamma_j})}\bigr)-\half d {\check \cF}_{W}\bigl(\frac{e^{\rho+\sum_{i=1}^d \rrbracket \gamma_i\llbracket}}{\prod_{j=1}^{d-1}(1-e^{-\gamma_j})}
 \bigr).\end{align}
 Set $W'=W_{[\d_{n-d+1},\e_d]}$. Using the fact that ${\mathcal W_{n-d}}$ and $W'$ commute, we can choose $$W/W'=\left((W(A_{n-1})\times W(B_m))/W'\right){\mathcal W_{n-d}}.$$ Let $Y$ be the arc subdiagram corresponding to the interval $[\d_{n-d+1},\e_d]$. Applying \eqref{mermaid}, we have
\begin{align*}
{\check \cF}_{W}\bigl(\frac{e^{\rho+\sum_{i=1}^d \rrbracket \gamma_i\llbracket}}{\prod_{j=1}^d(1-e^{-\gamma_j})}\bigr)&=
{\check \cF}_{W/W'}\bigl(e^{\rho_X-\rho_Y}\cF_{W'}\bigl(
\frac{e^{\rho_Y+\sum_{i=1}^d\rrbracket \gamma_i\llbracket}}{\prod_{i=1}^d(1-e^{-\gamma_i})}\bigr)\bigr)\\
&={\check \cF}_{W/W'}\bigl(e^{\rho_X-\rho_Y}d!e^{\rho_Y}\check R_Y\bigr)
\end{align*}

Observe that $ht(\gamma_i)=2i-1$ so $\prod_{i=1}^d \frac{ht(\gamma_i)+1}{2} =d!$.
 Applying \eqref{formula1} to $Y$ we find
\begin{align*}
{\check \cF}_{W}\bigl(\frac{e^{\rho+\sum_{i=1}^d \rrbracket \gamma_i\llbracket}}{\prod_{j=1}^d(1-e^{-\gamma_j})}\bigr)=
{\check \cF}_{W/W'}\bigl(e^{\rho_X-\rho_Y}d!\cF_{W'}\bigl(
\frac{e^{\rho_Y}}{\prod_{i=1}^d(1-e^{-\llbracket\gamma_i\rrbracket})}\bigr)\bigr).
\end{align*}
hence
\begin{equation}\label{bigfactor}
{\check \cF}_{W}\bigl(\frac{e^{\rho+\sum_{i=1}^d \rrbracket \gamma_i\llbracket}}{\prod_{j=1}^d(1-e^{-\gamma_j})}\bigr)=d!{\check \cF}_{W}\bigl(\frac{e^{\rho}}{\prod_{j=1}^d(1-e^{-\llbracket\gamma_j\rrbracket})}\bigr).
\end{equation}
Analogously one checks that
\begin{equation}\label{smallfactor}
 {\check \cF}_{W}\bigl(\frac{e^{\rho+\sum_{i=1}^d \rrbracket \gamma_i\llbracket}}{\prod_{j=1}^{d-1}(1-e^{-\gamma_j})}\bigr)=(d-1)! {\check \cF}_{W}\bigl(\frac{e^{\rho+ \rrbracket \gamma_d\llbracket}}{\prod_{j=1}^{d-1}(1-e^{-\llbracket\gamma_j\rrbracket})}\bigr).
\end{equation}
Observe in \eqref{smallfactor} that $\rrbracket \gamma_d\llbracket=\llbracket \gamma_{d-1} \rrbracket$, hence
\begin{equation}\label{eliminagammad}
 {\check \cF}_{W}\bigl(\frac{e^{\rho+ \rrbracket \gamma_d\llbracket}}{\prod_{j=1}^{d-1}(1-e^{-\llbracket\gamma_j\rrbracket})}\bigr)={\check \cF}_{W}\bigl(\frac{e^{\rho}}{\prod_{j=1}^{d-1}(1-e^{-\llbracket\gamma_j\rrbracket})}\bigr)+{\check \cF}_{W}\bigl(\frac{e^{\rho+\llbracket \gamma_{d-1}\rrbracket}}{\prod_{j=1}^{d-2}(1-e^{-\llbracket\gamma_j\rrbracket})}\bigr)
 \end{equation}
 and the second summand in \eqref{eliminagammad} is zero since the reflection $s_{\e_{d-1}-\e_d}$ fixes it. Plugging these formulas in \eqref{nmaggioremD} we find
\begin{align}\label{secondanmaggioremD}
e^{\rho}\check R ={\check \cF}_{W}\bigl(\frac{e^{\rho}}{\prod_{j=1}^d(1-e^{-\llbracket\gamma_j\rrbracket})}\bigr)-\half  {\check \cF}_{W}\bigl(\frac{e^{\rho}}{\prod_{j=1}^{d-1}(1-e^{-\llbracket\gamma_j\rrbracket})}
 \bigr).\end{align}

Finally, observe that $W=W_D\cup W_Ds_{\e_m}$ and $s_{e_m}$ fixes 
$
\frac{e^{\rho}}{\prod_{j=1}^{d-1}(1-e^{-\llbracket\gamma_j\rrbracket})},
$
so
\begin{align}\label{secondaDunoaux}
e^{\rho}\check R ={\check \cF}_{W}\bigl(\frac{e^{\rho}}{\prod_{j=1}^d(1-e^{-\llbracket\gamma_j\rrbracket})}\bigr)-  {\check \cF}_{W_D}\bigl(\frac{e^{\rho}}{\prod_{j=1}^{d-1}(1-e^{-\llbracket\gamma_j\rrbracket})}
 \bigr).\end{align}
 To give a uniform treatment of the cases $n<m$ and $n\ge m$, we set $d_1=\min(n,m-1)$ so that  \eqref{secondaDunoaux}  and \eqref{secondaDmn} combine to give
\begin{align}\label{secondaDuno}
e^{\rho}\check R ={\check \cF}_{W}\bigl(\frac{e^{\rho}}{\prod_{j=1}^d(1-e^{-\llbracket\gamma_j\rrbracket})}\bigr)-  {\check \cF}_{W_D}\bigl(\frac{e^{\rho}}{\prod_{j=1}^{d_1}(1-e^{-\llbracket\gamma_j\rrbracket})}
 \bigr).\end{align}
 
Set $\PP_d^{+}=\{\ba \in \PP_d\mid a_m>0\}$  and $\PP_d^{0}=\PP_d\backslash \PP_d^{+}$. Clearly,  if $d<m$ then $\PP_d^+=\emptyset$ and $\PP_d^0=\PP_d$. 
Dividing \eqref{secondaDuno} by $\mathcal{D}_0=e^{\rho_0}\prod_{\a\in\Dp_0}(1-e^{-\a})$, we find that, arguing as for type $B(m,n)$,
\begin{align*}
\frac{e^{-\rho_1}}{\prod_{\a\in \Dp_1}(1-e^{-\a})}&=\frac{1}{\mathcal {D}_0}\left(\sum_{\mathbf a\in \PP_d^{+}} {\check \cF}_{W}\bigl( e^{\rho-\sum_{i=1}^d a_i\gamma_i}\bigr)+\sum_{\mathbf a\in  \PP_d^0} {\check \cF}_{W_D}\bigl( e^{\rho-\sum_{i=1}^d a_i\gamma_i}\bigr)\right).
\end{align*}
Since  $\mathcal{D}_0=e^{\rho^{C}}\prod_{\a\in\Dp(C_n)}(1-e^{-\a})e^{\rho^{D}}\prod_{\a\in\Dp(D_m)}(1-e^{-\a})$, we can rewrite the above formula as
\begin{align}\label{formulaD}
&\frac{e^{-\rho_1}}{\prod_{\a\in \Dp_1}(1-e^{-\a})}\\&=\sum_{\mathbf a\in \PP_d^+} \sum\limits_{w\in {\mathcal W_{n-d}}} sgn'(w)\frac{ch F_A(w(-\rho_1+\mu(\ba)+\rho^C)-\rho^C)}{\prod_{\a\in\Dp(C_n)\backslash\Dp(A_{n-1})}(1-e^{-\a})}chF^+_D(\varepsilon(\ba))\notag\\
&+\sum_{\mathbf a\in  \PP_d^0} \sum\limits_{w\in {\mathcal W_{n-d}}} sgn'(w)\frac{ch F_A(w(-\rho_1+\mu(\ba)+\rho^C)-\rho^C)}{\prod_{\a\in\Dp(C_n)\backslash\Dp(A_{n-1})}(1-e^{-\a})}chF_D(\varepsilon(\ba)).\notag
\end{align}
Here we recall that, if $\ba\in  \PP_d$, then
$\mu(\ba)= -\sum_{r=1}^d(a_{d+1-r})\d_{n-d+r}
$, $\varepsilon(\ba)=\sum_{r=1}^d a_r \varepsilon_r$, and $chF_A(\l)$ is given by \eqref{FAL}.
\vskip 10 pt

We now take care of the second summand in \eqref{charpari}. Note that
\begin{align*}
&\frac{e^{-\rho_1}e^{\rho^{C}}\prod_{\a\in\Dp(C_n)}(1-e^{-\a})e^{\rho^{D}}\prod_{\a\in\Dp(C_{m-1})}(1-e^{-\a})}{\prod_{\genfrac{}{}{0pt}{3}{1\le i\le n}{1\le j<m}}(1-e^{-\d_i\pm\e_j})\prod_{1\le i\le n}(1-e^{-2\d_i})}\\
&=\frac{e^{-\rho_1}e^{\rho^{C}}\prod_{\a\in\Dp(D_n)}(1-e^{-\a})e^{\rho^{D}}\prod_{\a\in\Dp(C_{m-1})}(1-e^{-\a})}{\prod_{\genfrac{}{}{0pt}{3}{1\le i\le n}{1\le j<m}}(1-e^{-\d_i\pm\e_j})},
\end{align*}
and that 
$$\rho^C+\rho^D-\rho_1=\sum_{i=1}^n (n-(m-1)-i)\d_i+\sum_{i=1}^{m-1}(m-i)\e_i,
$$
so the formula above is precisely the denominator for the distinguished Borel subalgebra of type $\Dp_{D2}$ for a superalgebra of type $D(n,m-1)$.

We can therefore apply the results of Section \ref{7} and find that 
\begin{align*}
&\frac{e^{-\rho_1}e^{\rho^{C}}\prod_{\a\in\Dp(C_n)}(1-e^{-\a})e^{\rho^{D}}\prod_{\a\in\Dp(C_{m-1})}(1-e^{-\a})}{\prod_{\genfrac{}{}{0pt}{3}{1\le i\le n}{1\le j<m}}(1-e^{-\d_i\pm\e_j})\prod_{1\le i\le n}(1-e^{-2\d_i})}\\
&= {\check \cF}_{W(m-1)}\bigl(\frac{e^{\rho}}{\prod_{i=1}^{d_1}(1-e^{-\llbracket\gamma_i\rrbracket})}\bigr),
\end{align*}
where $W(m-1)=W(A_{n-1}){\mathcal W_{n-d_1}} W(C_{m-1})$. Recall from \eqref{x+},  \eqref{x-}, that  ${\mathcal W_r}^+$  (resp. ${\mathcal W_r}^-$) is  the set of $w\in {\mathcal W_r}$ such that $w=s_{2\d_{i_1}}\dots s_{2\d_{i_k}}$ with $k$ even (resp. odd).

By dividing off $e^{\rho^{C}}\prod_{\a\in\Dp(C_n)}(1-e^{-\a})e^{\rho^{D}}\prod_{\a\in\Dp(C_{m-1}}(1-e^{-\a})$ and expanding the denominator, we find
\begin{align}\label{formulaxt}
&ch_{H^+\times\h_2}(M^{\Dp}(\g_1))(xt)=\\
&\sum_{\ba\in  \PP_d^0}(\sum_{w\in \mathcal W^+_{n-d_1}}sgn(w)\frac{ch F_A(w(-\rho_1+\mu(\ba)+\rho^C)-\rho^C)}{\prod_{\a\in\Dp(C_n)\backslash\Dp(A_{n-1})}(1-e^{-\a})}\times\notag\\
&\times \frac{\sum_{w\in W(C_{m-1})}sgn(w)e^{w(\varepsilon(\ba)+\rho^D)}}{\sum_{w\in W(C_{m-1})}sgn(w)e^{w(\rho^D)}}(t)).\notag
\end{align}

Recall that, if $\pi$ is a finite-dimensional representation of $G_1$, then $\tilde \pi$ is a representation of $\tilde G_1$ whose definition is given in  \eqref{definizionetildaD}. We can now state
 \begin{prop}\label{thetaD}With  notation as in  Theorem \ref{howe}, we have that, if $\tilde \pi\in \Sigma$, then there is $\ba\in  \PP_d$ such that 
 $$
 \pi = F^\pm(\varepsilon(\ba)).
 $$

Furthermore, if $\ba\in \PP_d^+$, then the $H^+\times\h_2$-character of the isotypic  component of $\tilde F^+(\varepsilon(\ba))$ in  $M^{\Dp}(\g_1)$ is
\begin{equation}\label{isotypicDplus}
  \sum\limits_{w\in {\mathcal W_{n-d}}}sgn(w)\frac{ch F_A(w(\rho^C-\rho_1+\mu(\ba))-\rho^C)}{\prod_{\a\in\Dp(C_n)\backslash\Dp(A_{n-1})}(1-e^{-\a})}ch_{H^+}F^+(\varepsilon(\ba)),
\end{equation}
while, for $\ba\in  \PP_d^0$,  the $H^+\times\h_2$-character of the isotypic  component  of $\tilde F^\pm(\varepsilon(\ba))$ is
\begin{equation}\label{isotypicDzero}
  \sum\limits_{w\in \mathcal W^\pm_{n-d}}sgn(w)\frac{ch F_A(w(\rho^C-\rho_1+\mu(\ba))-\rho^C)}{\prod_{\a\in\Dp(C_n)\backslash\Dp(A_{n-1})}(1-e^{-\a})}ch_{H^+}F^\pm(\varepsilon(\ba)),
\end{equation}
 \end{prop}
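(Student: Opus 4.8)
The plan is to deduce Proposition~\ref{thetaD} from formula~\eqref{charpari}, together with the two explicit character computations~\eqref{formulaD} and~\eqref{formulaxt} that were carried out just before the statement. The first step is to combine these: since \eqref{charpari} expresses $ch_{H^+\times\h_2}M^\Dp(\g_1)(x^st)$ as $\tfrac{1+(-1)^s}{2}$ times the quantity computed in \eqref{formulaD} plus $\tfrac{1-(-1)^s}{2}$ times the quantity computed in \eqref{formulaxt}, I would substitute the Kostant character formulas \eqref{a++} and \eqref{azero} into these two pieces and match terms. Concretely, for $\ba\in\PP_d^+$ (so $\varepsilon(\ba)$ has all parts positive), only the first summand of \eqref{charpari} contributes a representation of $H^+$ with highest weight $\varepsilon(\ba)$, and comparing with \eqref{a++} shows that the $H^+\times\h_2$-isotypic component indexed by $\tilde F^+(\varepsilon(\ba))$ has the character claimed in~\eqref{isotypicDplus}. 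For $\ba\in\PP_d^0$ (so $\varepsilon(\ba)$ has last coordinate zero) both summands contribute, and one must see that the ``even'' part of \eqref{formulaD} pairs with $\tfrac{1+(-1)^s}{2}ch F_D(\varepsilon(\ba))$ while the term \eqref{formulaxt} pairs with $\tfrac{1-(-1)^s}{2}$ times the $W(C_{m-1})$-alternating sum appearing in \eqref{azero}; splitting according to the parity of $w\in\mathcal W_{n-d}$ (i.e.\ using $\mathcal W^{\pm}_{n-d}$ and the fact, already used in Section~\ref{Bnm}, that $sgn'(w)=\pm1$ accordingly) then isolates $\tilde F^+(\varepsilon(\ba))$ and $\tilde F^-(\varepsilon(\ba))$ and yields~\eqref{isotypicDzero}.

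The second step is to justify that these are genuine \emph{isotypic} decompositions, i.e.\ that the only $\tilde\pi\in\Sigma$ occurring are the $\tilde F^\pm(\varepsilon(\ba))$. This follows from Theorem~\ref{howe}: since $M^\Dp(\g_1)$ decomposes as $\bigoplus_{\eta\in\Sigma}\eta\otimes\tau(\eta)$ with the $\tau(\eta)$ being irreducible highest weight (hence having linearly independent formal characters once their lowest $H$-eigenvalues are distinguished), the restriction of $M^\Dp(\g_1)$ to $\widetilde G_1^0\simeq SO(2m)$ must be a sum of the $F_D(\l)$ appearing on the right of \eqref{formulaD}, forcing $\l=\varepsilon(\ba)$ or $s_{\e_m}(\varepsilon(\ba))$; and the action of the extra component $x$ is read off from \eqref{actionk}, exactly as in the proof of Proposition~\ref{theta}, where $Z$ (here the order-two group generated by $x$ together with the $\sqrt{\det}$-phase) acts by fixed characters on the even and odd parts of $P(W)$. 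Thus $\pi=F^\pm(\varepsilon(\ba))$ as claimed; for $\ba\in\PP_d^+$ only $F^+$ can occur because $F^-$ is not defined there, and for $\ba\in\PP_d^0$ the parity argument in the previous paragraph assigns the correct sign.

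The third step is bookkeeping: one must check that the coefficient of $ch F_A(\cdot)$ that survives in each isotypic component is indeed the full sum over $\mathcal W_{n-d}$ (for $F^+$ in the $\PP_d^+$ case) or over $\mathcal W^\pm_{n-d}$ (in the $\PP_d^0$ case), and that no cancellation occurs between the two summands of~\eqref{charpari} beyond what is explicitly displayed. Here I would argue exactly as in the derivation of \eqref{hower} and \eqref{intermedia}: the Weyl character formula for $sp(2n,\C)$ converts the $W(C_n)$-part of each term into $ch F_A$ divided by $\prod_{\a\in\Dp(C_n)\setminus\Dp(A_{n-1})}(1-e^{-\a})$, and the sums over distinct cosets $W(A_{n-1})w$, $w\in\mathcal W_{n-d}$, are disjoint, so the terms cannot interfere. (Some care is needed that $d_1=\min(n,m-1)$ in \eqref{formulaxt} matches $d=\min(n,m)$ when $n<m$ and differs by one when $n\ge m$; this is precisely why \eqref{secondaDuno} was written with $d_1$, and the $\PP_d^0=\PP_d$ identification when $d<m$ makes the two formulas consistent.)

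The main obstacle I anticipate is the even-orthogonal bookkeeping in the $\PP_d^0$ case: the term \eqref{formulaxt} involves a $W(C_{m-1})$-alternating sum, not a $W(D_m)$-one, and one has to verify that, after dividing by $e^{\rho^D}\prod_{\a\in\Dp(C_{m-1})}(1-e^{-\a})$, this quantity is exactly the ``difference'' character $ch_{H^+}F^+(\varepsilon(\ba)) - ch_{H^+}F^-(\varepsilon(\ba))$ evaluated at $xt$, up to the sign coming from the parity of $w\in\mathcal W_{n-d_1}$, and that it carries the correct $\h_2$-weight $-\rho_1+\mu(\ba)$-type data so that it lines up with the \emph{same} $\mathcal W_{n-d}$-indexing as the even part. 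In other words, the delicate point is that the two halves of~\eqref{charpari}, which a priori live over two different auxiliary root systems ($D(m,n)$-denominator versus $D(n,m-1)$-denominator), reassemble into a single clean isotypic statement over $H^+$; once one has checked that $\rho^C+\rho^D-\rho_1$ agrees on the nose with the $\Dp_{D2}$-denominator weight for $D(n,m-1)$ (as remarked just before \eqref{formulaxt}), and that the index sets $\mathcal W^{\pm}_{n-d}$ and $\mathcal W^{+}_{n-d_1}$ match under the parity splitting, the rest is a routine comparison of formal characters term by term.
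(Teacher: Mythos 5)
Your outline follows the paper's own proof essentially step for step: the first claim and the $\PP_d^+$ case are read off from \eqref{formulaD} together with Kostant's formula \eqref{a++}, and the $\PP_d^0$ case is obtained by substituting \eqref{formulaD} and \eqref{formulaxt} into \eqref{charpari} and splitting by the parity of $w$. The one step you defer ("once one has checked that the index sets $\mathcal W^{\pm}_{n-d}$ and $\mathcal W^{+}_{n-d_1}$ match under the parity splitting") is the only nontrivial point, and the mechanism is this: for $\ba\in\PP_d^0$ one has $a_m=0$, so the coefficient of $\d_{n-d_1}$ in $-\rho_1+\mu(\ba)+\rho^C$ vanishes and $s_{2\d_{n-d_1}}$ fixes that weight; writing each $w\in\mathcal W^+_{n-d_1}\setminus\mathcal W^+_{n-d}$ as $w's_{2\d_{n-d_1}}$ with $w'\in\mathcal W^-_{n-d}$, the weight is unchanged while the sign flips, which converts the $\mathcal W^+_{n-d_1}$-sum of \eqref{formulaxt} into the difference of the $\mathcal W^+_{n-d}$- and $\mathcal W^-_{n-d}$-sums, exactly matching the $\pm$ in \eqref{azero} and yielding \eqref{isotypicDzero}. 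With that verification supplied, your argument is the paper's.
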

\begin{proof}
It follows from \eqref{formulaD} that $\tilde F^\pm(\sum_{i=1}^m a_i\e_i)$ can occur  in $M^{\Dp}(\g_1)$ only if $a_i=0$ for $i>d$. This proves the first  statement.

It is also clear from \eqref{formulaD} and Kostant's formula \eqref{a++} that, if $\ba\in \PP_d^+$, then the isotypic component of $\tilde F^+(\varepsilon(\ba))$ is given by \eqref{isotypicDplus}.

If $\ba\in  \PP_d^0$, let $M_\ba$ be the sum of the isotypic components of $\tilde F^+(\varepsilon(\ba))$ and  $\tilde F^-(\varepsilon(\ba))$. 
By substituting \eqref{formulaD} and \eqref{formulaxt} in \eqref{charpari}, we find that
\begin{align*}
&ch_{H^+\times \h_2}(M_\ba)(x^st)\\
&=\frac{1+(-1)^s}{2}\sum_{w\in {\mathcal W_{n-d}}}sgn(w)\frac{ch F_A(w(-\rho_1+\mu(\ba)+\rho^C)-\rho^C)}{\prod_{\a\in\Dp(C_n)\backslash\Dp(A_{n-1})}(1-e^{-\a})}
\times ch_{T}(F_D(\varepsilon(\ba))(t)
\\&+\frac{1-(-1)^s}{2}(\sum_{w\in \mathcal W^+_{n-d_1}}sgn(w)\frac{ch F_A(w(-\rho_1+\mu(\ba)+\rho^C)-\rho^C)}{\prod_{\a\in\Dp(C_n)\backslash\Dp(A_{n-1})}(1-e^{-\a})}\\
&\times \frac{\sum_{w\in W(C_{m-1})}sgn(w)e^{w(\varepsilon(\ba)+\rho^D)}}{\sum_{w\in W(C_{m-1})}sgn(w)e^{w(\rho^D)}}(t)).
\end{align*}
Observe now that, since $\ba\in  \PP_d^0$,  then $s_{-2\d_{n-d_1}}$ fixes $-\rho_1+\mu(\ba)+\rho^C$ so, if $w=w's_{-2\d_{n-d_1}}$ with $w'\in \mathcal W^-_{n-d}$ then $sgn(w)=-sgn(w')$ and 
$w(-\rho_1+\mu(\ba)+\rho^C)=w'(-\rho_1+\mu(\ba)+\rho^C)$. Combining this observation with Kostant's character formula \eqref{azero}, we can rewrite the formula above as
\begin{align*}
&ch_{H^+\times \h_2}(M_\ba)(x^st)\\
&=\sum_{w\in \mathcal W^+_{n-d}}sgn(w)\frac{ch F_A(w(-\rho_1+\mu(\ba)+\rho^C)-\rho^C)}{\prod_{\a\in\Dp(C_n)\backslash\Dp(A_{n-1})}(1-e^{-\a})} ch_{H^+}(F^+(\varepsilon(\ba))(x^st)\\
&+\sum_{w\in \mathcal W^-_{n-d}}sgn(w)\frac{ch F_A(w(-\rho_1+\mu(\ba)+\rho^C)-\rho^C)}{\prod_{\a\in\Dp(C_n)\backslash\Dp(A_{n-1})}(1-e^{-\a})} ch_{H^+}(F^-(\varepsilon(\ba))(x^st),
\end{align*}
from which \eqref{isotypicDzero} follows readily.
\end{proof}

In our case the set of roots of $\mathfrak{s}_2^\C$ is $\D(C_n)$. The element $H\in \h$ that corresponds to $-\sum_{i=1}^n\d_i$ has the property that $H_{|V_\C^\pm}=\pm I$. Thus the parabolic subalgebra $\p_2$ defined by $H$ is
$$
\p_2=\h\oplus \sum_{\a\in\D(A_{n-1})}(\g_0)_\a\oplus \n,
$$
where 
$
\n=\sum\limits_{\a\in\Dp(C_{n})\backslash \D(A_{n-1})}(\g_0)_\a
$ is the nilradical.

Recall that, if $\PP$ is as in \eqref{P} and $\ba\in\PP$, then we can define the weight $\nu(\ba)$ as in \eqref{nu}. Using Notation \ref{nota}, we can now state
\begin{cor}[Theta correspondence] \label{corthetapari}With  notation as in  Theorem \ref{howe}, we have
$$
\Sigma=\{\tilde F^+(\varepsilon(\ba))\mid \ba\in \PP_d\}\cup\{\tilde F^-(\varepsilon(\ba))\mid \ba\in \PP\} .
$$
Moreover 
$$
\tau(\tilde F^+(\varepsilon(\ba)))=L^2(-\rho_1+\mu(\ba)),
$$ 
and, if $\ba\in \PP$, then
$$
\tau(\tilde F^-(\varepsilon(\ba)))=L^2(-\rho_1+\nu(\ba)).
$$
\end{cor}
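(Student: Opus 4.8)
The plan is to run the argument of Corollary~\ref{cortheta} almost verbatim, with Proposition~\ref{thetaD} in the role of Proposition~\ref{theta}: all of the representation theory of the disconnected group $O(2m)$ (Kostant's character formulas \eqref{a++}--\eqref{azero} and the auxiliary identity \eqref{formulaxt}) has already been absorbed into the isotypic-component formulas \eqref{isotypicDplus}--\eqref{isotypicDzero}, so what remains is purely combinatorial, namely deciding which $w\in\mathcal W_{n-d}$ actually contribute and which parabolic Verma character is the leading one. Throughout I write $\mathcal W^{reg}_{n-d}$ for the set of $w\in\mathcal W_{n-d}$ such that $w(-\rho_1+\mu(\ba)+\rho^C)$ is regular for $\D(A_{n-1})$, as in \eqref{wreg}, and I use freely that the characters $ch V^2(\{w(-\rho_1+\mu(\ba)+\rho^C)\}-\rho^C)$ attached to distinct cosets $W(A_{n-1})w$, $w\in\mathcal W_{n-d}$, are linearly independent.

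\textbf{Determination of $\Sigma$.} By Proposition~\ref{thetaD} every member of $\Sigma$ is $\tilde F^{\pm}(\varepsilon(\ba))$ for some $\ba\in\PP_d$, so one must decide, for each $\ba$, which signs occur. For the plus type: $1\in\mathcal W^{+}_{n-d}$ and $1\cdot(-\rho_1+\mu(\ba)+\rho^C)-\rho^C=-\rho_1+\mu(\ba)$ is $\Dp(A_{n-1})$-dominant, hence $-\rho_1+\mu(\ba)+\rho^A$ is regular and $ch F_A(-\rho_1+\mu(\ba))\neq 0$; so the $w=1$ summand of \eqref{isotypicDplus} (when $\ba\in\PP_d^{+}$) or of \eqref{isotypicDzero} with the $+$ sign (when $\ba\in\PP_d^{0}$) is nonzero, giving $\tilde F^{+}(\varepsilon(\ba))\in\Sigma$ for all $\ba\in\PP_d$. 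The minus type is defined only when $a_m=0$, i.e.\ only for $\ba\in\PP_d^{0}$, and there I would reuse the two computations of Corollary~\ref{cortheta}. First, $\PP\neq\emptyset$ forces $d=m<n$, and for $\ba\in\PP^{*}_j$ the element $s=s_{2\d_{n-d-m+j}}$ lies in $\mathcal W^{-}_{n-d}$ and, composed with an appropriate permutation in $W(A_{n-1})$, carries $-\rho_1+\mu(\ba)+\rho^C$ to $-\rho_1+\nu(\ba)+\rho^C$, which is $\D(A_{n-1})$-regular; hence $s\in\mathcal W^{reg}_{n-d}\cap\mathcal W^{-}_{n-d}$ and \eqref{isotypicDzero} shows $\tilde F^{-}(\varepsilon(\ba))\in\Sigma$. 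Second, if $\ba\in\PP_d^{0}\setminus\PP$ then $a_r=0$ for $r>m-(n-d)$, and for every $w\neq 1$ in $\mathcal W_{n-d}$ the pairing of $w(-\rho_1+\mu(\ba)+\rho^C)$ with the root $\d_{i_1}-\d_{2(n-d)-i_1+1}$ vanishes, so $w(-\rho_1+\mu(\ba)+\rho^C)$ is $\D(A_{n-1})$-singular; thus $\mathcal W^{reg}_{n-d}=\{1\}\subseteq\mathcal W^{+}_{n-d}$, so $\mathcal W^{-}_{n-d}\cap\mathcal W^{reg}_{n-d}=\emptyset$, and \eqref{isotypicDzero} forces $\tilde F^{-}(\varepsilon(\ba))\notin\Sigma$. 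This yields the stated $\Sigma$.

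\textbf{Computation of $\tau$.} By Theorem~\ref{howe}, $\tau(\tilde F^{+}(\varepsilon(\ba)))=L^2(\l)$ for some $\l$. Expanding each $ch F_A$ in \eqref{isotypicDplus}/\eqref{isotypicDzero} into parabolic Verma characters (exactly the manipulation used in Corollary~\ref{cortheta}) turns the character of the isotypic component into a signed sum of $ch V^2(\{w(-\rho_1+\mu(\ba)+\rho^C)\}-\rho^C)$ over $w\in\mathcal W^{reg}_{n-d}\cap\mathcal W^{+}_{n-d}$ times a fixed $H^{+}$-character; by linear independence it suffices to show the $w=1$ term $V^2(-\rho_1+\mu(\ba))$ has strictly largest $H$-eigenvalue. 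Since $W(A_{n-1})$ fixes $H$, this reduces to $(-\rho_1+\mu(\ba)+\rho^C)(H)>w(-\rho_1+\mu(\ba)+\rho^C)(H)$ for $1\neq w=\prod_r s_{2\d_{i_r}}$, which follows from $w=w^{-1}$ and the fact, computed just as in Corollary~\ref{cortheta}, that $(-\rho_1+\mu(\ba)+\rho^C)(H-w(H))$ is a sum of strictly positive terms. Hence $\tau(\tilde F^{+}(\varepsilon(\ba)))=L^2(-\rho_1+\mu(\ba))$. For $\ba\in\PP^{*}_j\subseteq\PP$ the same argument on $\mathcal W^{reg}_{n-d}\cap\mathcal W^{-}_{n-d}$ isolates the term $w=s_{2\d_{n-d-m+j}}$, i.e.\ $V^2(-\rho_1+\nu(\ba))$ (using $-\rho_1+\nu(\ba)=\{s_{2\d_{n-d-m+j}}(-\rho_1+\mu(\ba)+\rho^C)\}-\rho^C$), since for the remaining $w$ one has $s_{2\d_{n-d-m+j}}(-\rho_1+\mu(\ba)+\rho^C)(H)>w(-\rho_1+\mu(\ba)+\rho^C)(H)$ by the two-case estimate ($i_k<j_0$ and $i_k=j_0$, with $j_0=n-d-m+j$) of Corollary~\ref{cortheta}. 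Therefore $\tau(\tilde F^{-}(\varepsilon(\ba)))=L^2(-\rho_1+\nu(\ba))$.

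\textbf{Where the difficulty lies.} Essentially all the genuinely new input has already been isolated in Proposition~\ref{thetaD}, so this corollary is at bottom a bookkeeping exercise; the one point that needs care is the $\PP_d^{+}$-versus-$\PP_d^{0}$ split together with the fact that $F^{-}(\varepsilon(\ba))$ is defined only when $a_m=0$, which automatically confines the minus part of $\Sigma$ to the regime $d=m<n$ in which $\PP$ is nonempty; one should check that $\PP$ is then to be intersected with $\PP_d^{0}$ (so that $\tilde F^{-}(\varepsilon(\ba))$ really is defined for the indexing $\ba$). Beyond that, one must only recheck, as in Corollary~\ref{cortheta}, the disjointness of the cosets $W(A_{n-1})w$ for distinct $w\in\mathcal W_{n-d}$ and the positivity of the relevant $H$-eigenvalue differences — routine, but needed to legitimately single out the leading parabolic Verma module.
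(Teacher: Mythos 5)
Your proposal is correct and follows exactly the route the paper takes: the paper's own proof of this corollary is a one-line reduction, "Using Proposition \ref{thetaD}, the proof follows exactly as in the proof of Corollary \ref{cortheta}," and your write-up is a faithful (and more explicit) unpacking of precisely that argument, including the $\PP_d^{+}$ versus $\PP_d^{0}$ bookkeeping and the $H$-eigenvalue comparison that isolates the leading parabolic Verma module.
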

\begin{proof}
Using Proposition \ref{thetaD}, the proof follows exactly as in the proof of Corollary \ref{cortheta}.
\end{proof}

As already observed in previous sections,  we have also computed the character of $\tau(\tilde F^\pm(\varepsilon(\ba))$.  We set, as before, $\mathcal W^{reg}_{n-d}$ to be the set of $w\in {\mathcal W_{n-d}}$ such that $w(-\rho_1+\mu(\ba)+\rho^C)$ is regular for $\D(A_{n-1})$ and, for $w\in \mathcal W^{reg}_{n-d}$, $c_w=c_{w(-\rho_1+\mu(\ba)+\rho^C)}$. Then, combining Corollary \ref{corthetapari} with Proposition \ref{thetaD}, we find that, if $\ba\in \PP_d^+$, then 
\begin{equation}\label{charactermuD++}
ch(L^2(-\rho_1+\mu(\ba))=\!\!\!\!\sum_{w\in \mathcal W^{reg}_{n-d}}\!\!\!c_wsgn(w)ch V^2(\{w(-\rho_1+\mu(\ba)+\rho^C)\}-\rho^C),
\end{equation}
while, if $\ba\in \PP^+_0$, then
\begin{equation}\label{charactermuDzero}
ch(L^2(-\rho_1+\mu(\ba))=\!\!\!\!\!\!\sum_{w\in \mathcal W^+_{n-d}\cap  \mathcal W^{reg}_{n-d}}\!\!\!c_wch V^2(\{w(-\rho_1+\mu(\ba)+\rho^C)\}-\rho^C),
\end{equation}
and, if $\ba\in \PP$, then
\begin{equation}\label{charactermuDzeromeno}
ch(L^2(-\rho_1+\nu(\ba))=-\!\!\!\!\!\!\!\!\!\sum_{w\in \mathcal W^-_{n-d}\cap  \mathcal W^{reg}_{n-d}}\!\!\!\!\!\!\!\!\!c_wch V^2(\{w(-\rho_1+\mu(\ba)+\rho^C)\}-\rho^C).
\end{equation}

The argument given in  Corollary \ref{enrighteven} works also in the present case, thus, we obtain

\begin{cor}\label{enrightevenpari}If $\ba\in \PP_d$ and $\l_0=-\rho_1+\mu(\ba)+\rho^C$, then
$$
ch(L^2(-\rho_1+\mu(\ba)))=\sum_{w\in W_{\l_0}^A}(-1)^{\ell_{\l_0}(w)}V^2(\{w(\l_0)\}-\rho^C).
$$
If $\ba\in \PP$ and  $\l_1=-\rho_1+\nu(\ba)+\rho^C$, then
$$
ch(L^2(-\rho_1+\nu(\ba)))=\sum_{w\in W_{\l_1}^A}(-1)^{\ell_{\l_1}(w)}V^2(\{w(\l_1)\}-\rho^C).
$$
\end{cor}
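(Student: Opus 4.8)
The plan is to transcribe the proof of \Cor{enrighteven} into the present situation, now starting from the expansions \eqref{charactermuD++}, \eqref{charactermuDzero} and \eqref{charactermuDzeromeno} in place of \eqref{charactermu} and \eqref{characternu}. Fix $\ba\in\PP_d$ and put $\l_0=-\rho_1+\mu(\ba)+\rho^C$. For the pair $(O(2m),Sp(2n,\R))$ the root system of $\mathfrak s_2^\C$ is again $\D(C_n)$, exactly as in Section~\ref{Bnm}, so Enright's group $W_{\l_0}$ and the subsystem $\D_{\l_0}$ are governed by the same combinatorics: writing the coordinates of $\l_0$ in the basis $\{\d_i\}$, call a positive entry \emph{singular} if its negative also occurs among the coordinates and \emph{regular} otherwise; then $W_{\l_0}$ acts on $\l_0$ by arbitrary permutations together with sign changes of the regular entries, the number of sign changes being forced to be even when $\ba\in\PP_d^0$ and being unrestricted when $\ba\in\PP_d^+$. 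This is precisely the dichotomy recorded in the passage from \eqref{charactermuDzero} (an unsigned sum over $\mathcal W^+_{n-d}\cap\mathcal W^{reg}_{n-d}$) to \eqref{charactermuD++} (a signed sum over all of $\mathcal W^{reg}_{n-d}$).

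For each $x$ in the index set of the relevant formula --- namely $\mathcal W^{reg}_{n-d}$ when $\ba\in\PP_d^+$ and $\mathcal W^+_{n-d}\cap\mathcal W^{reg}_{n-d}$ when $\ba\in\PP_d^0$ --- let $w_x\in W_{\l_0}$ be the permutation that rearranges the regular coordinates of $x(\l_0)$ into decreasing order. Since that index set is a set of coset representatives for $W_{\l_0}(A_{n-1})\backslash W_{\l_0}$, the elements $w_x x$ run through $W_{\l_0}^A$ without repetition. Introducing $w'_x\in W(A_{n-1})$ so that $w'_x w_x x(\l_0)$ is $\Dp(A_{n-1})$-dominant, the formulas \eqref{charactermuD++}--\eqref{charactermuDzero} become
$$ch(L^2(-\rho_1+\mu(\ba)))=\sum_{x}\,sgn(w'_x w_x x)\,V^2(\{w_x x(\l_0)\}-\rho^C),$$
and since $sgn(w_x x)=(-1)^{\ell_{\l_0}(w_x x)}$ by the very definition of $\ell_{\l_0}$, the first assertion follows once $sgn(w'_x)=1$ is established. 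That last point is proved by the identical induction on $n-m$ used in \Cor{enrighteven}: moving the flipped leading entry past the singular coordinates and their negatives uses an even number of adjacent transpositions, and the inductive step is carried out with the same auxiliary vector obtained by inserting an extra coordinate.

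For the statement on $\nu(\ba)$, $\ba\in\PP$, I would reduce to the case just proved exactly as in the corollary following \Cor{enrighteven}: for $\ba\in\PP^*_j$ the element $ws$, with $s=s_{2\d_{n-d-m+j}}$ and $w\in W(A_{n-1})$ realizing the cycle $\s$ of the proof of \Cor{cortheta}, carries $\l_0$ to $\l_1=-\rho_1+\nu(\ba)+\rho^C$ and conjugates $W_{\l_0}$ onto $W_{\l_1}$ and $\Dp_{\l_0}$ onto $\Dp_{\l_1}$, whence $W_{\l_1}^A=wW_{\l_0}^Aw^{-1}$; feeding this into \eqref{charactermuDzeromeno}, tracking the overall sign there, and using $sgn(w_x)=sgn(w_{xs})$ (valid because $s(\l_0)$ is already $\Dp_{\l_0}$-dominant) produces the asserted expansion. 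The one step I expect to require care is the first: verifying that the $\PP_d^+$/$\PP_d^0$ split between \eqref{charactermuD++} and \eqref{charactermuDzero} is exactly reflected by whether $W_{\l_0}$ contains the individual sign changes $s_{2\d_i}$ on regular coordinates, so that in both cases the signed sum over the appropriate subset of $\mathcal W_{n-d}$ coincides term by term with Enright's uniform $(-1)^{\ell_{\l}}$-expansion over $W_{\l}^A$; everything else is a faithful repetition of the arguments of Section~\ref{Bnm}.
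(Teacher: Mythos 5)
Your strategy is the paper's: transcribe Corollary \ref{enrighteven} and its companion, now starting from \eqref{charactermuD++}, \eqref{charactermuDzero}, \eqref{charactermuDzeromeno}, and the parts of your outline dealing with $w_x$, $w'_x$, the sign bookkeeping and the conjugation $W_{\l_1}=wW_{\l_0}w^{-1}$ for the $\nu(\ba)$ case are faithful to that template. However, the one step you defer is not a formality: your blanket assertion that the sign changes in $W_{\l_0}$ are ``forced to be even when $\ba\in\PP_d^0$ and unrestricted when $\ba\in\PP_d^+$'' is exactly what has to be proved, and it cannot be obtained by transplanting the combinatorics of Section \ref{Bnm}. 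There the coordinates of $\l_0$ are half-integers, so the long roots $2\d_i$ never meet Enright's integrality condition (i) and $W_{\l_0}$ is automatically generated by the $s_{\d_i+\d_j}$, hence acts by even sign changes. Here $2\rho_1=2m\sum_i\d_i$ and $\rho^C=\sum_i(n-i+1)\d_i$, so the coordinates $v_i$ of $\l_0$ are integers: for a regular positive coordinate one has $(\l_0,(2\d_i)^\vee)=v_i\in\ZZ^{>0}$, and the orthogonality condition (ii) also appears to hold even when $a_m=0$ (the singular roots produced by the zero coordinate $v_{n-d+1}=0$ are $\pm2\d_{n-d+1}$ and the odd roots through $\d_{n-d+1}$, all orthogonal to $2\d_i$ for $i\neq n-d+1$). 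A literal application of the definition of $W_\l$ recorded in Section \ref{Bnm} therefore places the single sign changes $s_{2\d_i}$ inside $W_{\l_0}$ in the $\PP_d^0$ case as well, producing cosets in $W^A_{\l_0}$ whose contributions $V^2(\{s_{2\d_i}\l_0\}-\rho^C)$ have no counterpart in \eqref{charactermuDzero}, which sums only over $\mathcal W^+_{n-d}\cap\mathcal W^{reg}_{n-d}$; and these terms are not absorbed by even-sign-change cosets, since the corresponding multisets of coordinates differ.

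So the missing piece is a genuine argument reconciling the two index sets in the $\PP_d^0$ case: either a more careful unwinding of Enright's Definition 2.1 showing that the presence of the singular long root $2\d_{n-d+1}$ forces the reduced system $\D_{\l_0}$ to be of type $D$ (rather than $C$) on the regular coordinates, so that the single sign changes really are excluded, or a direct proof that the apparently extra terms vanish or cancel. Until that is supplied, the identification of $\mathcal W^+_{n-d}\cap\mathcal W^{reg}_{n-d}$ with $W^A_{\l_0}$ — and with it the first displayed formula for $\ba\in\PP_d^0$ — is unproved. A secondary, smaller point: the parity induction establishing $sgn(w'_x)=1$ in Corollary \ref{enrighteven} is written for half-integral entries; you should check that the presence of integer entries and of the zero coordinate does not disturb the count of adjacent transpositions in the inductive step.
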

\section{Theta correspondence for the pair $(U(n),U(p,q))$}
In this section we use the combinatorial machinery  developed in the previous sections to derive the Theta correspondence for the  compact dual pair $(U(n),U(p,q))$. This dual pair, according to Proposition \ref{B}, corresponds to  the distinguished sets of positive roots $\D^{(p,q)}_{gl}$ in a superalgebra of type $gl(m,n),\,p+q=m$. 
\par
Let $\g=gl(m,n)$. Its Weyl group is 
$$W_\g=\text{group generated by $s_{\e_1-\e_2},\ldots,s_{\e_{m-1}-\e_m}, s_{\d_1-
\d_2},\ldots,s_{\d_{n-1}-\d_n}$},$$ which we identify with $S_m\times S_n$. 
Recall that the defect of $\g$ is $d=\min(n,m)$. Set 
\begin{align*}
\D(A_{m-1})&=\{\e_i-\e_j\mid 1\leq i\ne j\leq m\},\\
\D(A_{n-1})&=\{\d_k-\d_l\mid 1\leq k\ne l\leq n\},\\
\D(A_{p-1}\times A_{q-1})&=\{\e_k-\e_l\mid 1\le k\ne l\le p\text{ or } p+1\le k\ne l\le m\},\\
\D_c&=\D(A_{p-1}\times A_{q-1})\cup\D(A_{n-1}).\end{align*}

Let $W(A_{n-1})$ be the Weyl group of $\D(A_{n-1})$ and $W(A_{m-1})$ the Weyl group of $\D(A_{m-1})$. 
 Let $W_c$ be the  Weyl group of $\D_c$ (the subscript ``$c$'' stands for compact).
Set $\Dp(A_{n-1})=\Dp_0\cap\D(A_{n-1})$, $\Dp(A_{m-1})=\Dp_0\cap\D(A_{m-1})$, and $\Dp(A_{p-1}\times A_{q-1})=\Dp_0\cap\D(A_{p-1}\times A_{q-1})$. We denote by $\rho^{A_{n-1}}$, $\rho^{A_{m-1}}$, $\rho^{p,q}$ the corresponding $\rho$-vectors.\vskip5pt
Set
$$L=\{i\in\{0,\ldots,p\}| 0\leq  d-i\leq q\}.$$
Set $i_{max}=\min(n,p)$ and $j_{min}=d-i_{max}$.  
For $\s\in S_m$ set
 \begin{align*}
n_1(\sigma)&=|\{\sigma(i)\mid p- i_{max}+1\leq i\leq p+j_{min}\}\cap \{1,\ldots,p\}|,\\
U_i&=\{w\in S_m|\ n_1(\sigma)=i\}.\end{align*}
Then  $S_m=\coprod_{i\in L} U_i.$
\par
With notation as in Section \ref{df}, the order corresponding to  $\D^{(p,q)}_{gl}$ is
$$\e_1>\dots>\e_p>\d_1>\dots>\d_n>\e_{p+1}>\dots>\e_m.$$
Define, for $i\in L$, $\be^{i}_t=\e_{p-t+1}-\d_t$ for $1\le t\le i$ and $\gamma^{i}_t=\d_{n-t+1}-\e_{t+p}$ for $1\le t\le d-i$. 
For $i\in L$, let $X_i$ be  the arc diagram 
$$
X_{i}=\{\stackrel{\frown}{\e_{p-t+1}\d_t}\mid 1\le t\le i\}\cup\{\stackrel{\frown}{\d_{n-t+1}\e_{p+t}}\mid 1\le t\le j\}.
$$
Then
\begin{equation}\label{sij}
S(X_i)=
\{\be^{i}_t\mid 1\le t\le i\}\cup \{\gamma^{i}_{t}\mid 1\le t\le d-i\}.
\end{equation}
 \vskip5pt
 We start by writing formula \eqref{mermaid} corresponding to $S(X_{i_{max}})$, which, for simplicity, we denote from now on by $\underline S$. 
Let $W_1^{i}$ (resp. $W_2^{i}$)  be the 
subgroup of $S_m$ that fixes $1,\dots,p- i$ and $p+d-i+1,\dots,m$ (resp.
$p-i+1,\dots, p+d-i$). 
Set $W_1=W_1^{i_{\max}}$ and $W_2=W_2^{i_{\max}}$.
Then $U_i$ is stable under the right action of $W_1$, $W_2$, and under the left action of $S_p\times S_q$. Choose $V_i\subset U_i$ such that $U_i=V_iW_1$. Let $W_i'$ be the subgroup of $W_1$ stabilizing $V_i$ and set $V'_i=V_i/W_i'$.

 Assume that $n<m$. By the above, using notation of Section \ref{mm},
$$\check{R}e^{\rho}=\check{\cF}_{S_m}\bigl(\cQ(X_{i_{max}})\bigr)=\sum_{i\in L}
\check{\cF}_{U_i}\bigl(\cQ(X_{i_{max}})\bigr)=\sum_{i\in L}\check{\cF}_{V'_i}\bigl(\check{\cF}_{W_1}\bigl(
\cQ(X_{i_{max}})\bigr).$$

 Set $v=\epsilon_{p-i_{max}+1},
w=\e_{p-i_{max}+d}$; let $Y$ be the subdiagram   in
 $X_{i_{max}}$ which corresponds to the interval $[v,w]$; note that $S(Y)=S(X_{i_{max}})$
and each vertex of $Y$ is an end of some arc. Moreover, $W_{[v,w]}=S_n\times  W_1$ and we may choose $W^{\sharp}$ for $Y$ to be either $S_n$ or  $W_1$. One has
\begin{align*}
\check{\cF}_{W_1}\bigl(\cQ(X_{i_{max}})\bigr)=
e^{\rho-\rho_Y}\check{\cF}_{W_1}\bigl(\cQ(Y)\bigr)=
e^{\rho-\rho_Y}\check{\cF}_{S_n}\bigl(\cQ(Y)\bigr),
\end{align*}
hence
\begin{align*}
\check{R}e^{\rho}&=\sum_{i\in L}\check{\cF}_{V'_i}\bigl( e^{\rho-\rho_Y}\check{\cF}_{S_n}\bigl(\cQ(Y)\bigr)\bigr)=\frac{1}{|W_i'|}\bigl(\check{\cF}_{V'_i}\bigl( \check{\cF}_{W_i'S_n}\bigl( e^{\rho-\rho_Y}\check{\cF}_{S_n}\bigl(\cQ(Y)\bigr)\bigr)\\&=\frac{1}{|W_i'|}\bigl(\check{\cF}_{V_i}\bigl( \check{\cF}_{S_n}\bigl(\cQ(X_{i_{max}})\bigr)\bigr).
\end{align*}
Since the elements of $V_i\subset S_m$ commute with the elements of $S_n$ we conclude that
\begin{equation}\label{mermaidupq}
\check{R}e^{\rho}=\frac{1}{|W_i'|}\bigl(\check{\cF}_{S_n}\bigl( \check{\cF}_{V_i}\bigl(\cQ(X_{i_{max}})\bigr)\bigr).
\end{equation}
Note that, if $m\le n$, then $L=\{p\}$, $U_p=S_m=W_1$. Thus \eqref{mermaidupq} holds as well with $V_p$ any subset of $S_m$, being just \eqref{mermaid} for this case.
\vskip5pt
We now make a careful choice of $V_i$ by exploiting the action of $W_1\times W_2$ on $U_i$.  Set $\s_{i}= \prod_{t=1}^{i_{max}-i}s_{\e_{p-i_{max}+t}-\e_{p+j_{min}+t}}$. Note that $\s_i$ is an element of $U_i$.
Let $\mathcal O$ be the orbit of $(S_p\times S_q)\s_{i}$ under the right action of $W_2$ in 
$(S_p\times S_q)\backslash U_{i}$. First we observe that, if $x,y$ are distinct elements of 
$\mathcal O$, then $xW_1\cap yW_1=\emptyset$. Indeed, if for $\s\in W_2$  we have $(S_p\times S_q)\s_{i}\s\ne (S_p\times S_q)\s_{i}$, then 
$(S_p\times S_q)\s_{i}\eta\s\ne (S_p\times S_q)\s_{i}$ for any $\eta\in W_1$. This can be checked as 
follows: assume $\s_{i}\s \s_{i}\not\in S_p\times S_q$, hence there is $i_0\in\{1,\dots,p\}$ such that 
$\s_{i}\s \s_{i}(i_0)>p$. If $i_0\le p- i_{max}$ then, since $\s_{i}(i_0)=i_0$, we have 
$\s(i_0)>p+d-i$, so $\s_{i}\eta\s \s_{i}(i_0)>p$ for any $\eta\in W_1$. 
If $p-i<i_0\le p$ then $\s_{i}\s \s_{i}(i_0)=i_0$ so we can assume $i_0=p-i_{max}+t$ with $t\le i-i_{max}$. In this case $ \s_{i}(i_0)=p+j_{min}+t$ and we must have that 
$\s(p+j_{min}+t)>p+d-i$ for otherwise we would have $\s_{i}\s \s_{i}(i_0)\le p$. 
But in this case $\eta\s \s_{i}(i_0)> p+d-i$ for $\eta\in W_1$ and we are done.

Next we show that the action of $W_1\times W_2$ on  $(S_p\times S_q)\backslash U_{i}$ by right multiplication is transitive. 
For this it is enough to show that for any $w\in S_m\cap U_{i}$, there is $\s\in 
W_1$ and $\tau\in W_2$, $\eta\in (S_p\times S_q)$ such that $\s_{i}=\eta w\s\tau$. Since 
$|\{w(t)\mid p-i_{max}<t\le p+j_{min}\}\cap\{1,\dots,p\}|=i$, we can find $\s\in W_1$ 
such that $w\s(t)\le p$ for $p-i<t\le p$  and $w\s(t)>p$ for 
$p-i_{max}<t\le p-i_{max}+i$ or $p<t\le p+j_{min}$. We can find $\eta\in (S_p\times S_q)$ so that $\eta w \s(t)=t$ for 
$p-i<t\le p+j_{min}$. Moreover we can also assume 
that $\eta w \s(p-i_{max}+t)=p+j_{min}+t$ for $1\le t \le i_{max}-i$. Finally, we can 
find $\tau\in W_2$ such that $\eta w \s\tau(t)= \eta w \s(t)$ for 
$p-i_{max}<t\le p+j_{min}$ ,  $\eta w \s\tau(p+j_{min}+t)= p-i_{max}+t$ for 
$p+j_{min}<t\le p+d-i$ and $\eta w \s\tau(t)=t$ otherwise. This means that 
$\eta w \s\tau= \s_{i}$ as wished. 

We set $\mathcal Y_{m-d}^{i}$ to be a set of coset representatives for $Stab_{i}\backslash W_2$ where $Stab_{i}$ is the stabilizer of the coset $(S_p\times S_q)\s_i$ under the right action of $W_2$ on $(S_p\times S_q)\backslash U_{i}$.

From our discussion it follows that we can choose $V_i=(S_p\times S_q)\s_i\mathcal Y^i_{m-d}$. Since $\mathcal Y^i_{m-d}\subset W_2$, we see that $W'_i$ is the subgroup of $W_1$ stabilizing $(S_p\times S_q)\s_i$, thus $\s_iW'_i\s_i$ is the subgroup of $W_1^i$ stabilizing $S_p\times S_q$. It follows that $\s_iW'_i\s_i=W^i_1\cap(S_p\times S_q)\simeq S_i\times S_{d-i}$. In particular $|W'_i|=i!(d-i)!$.

We can rewrite \eqref{mermaidupq} as
 \begin{align}\label{mermaidconY}
 e^{\rho} \check R&=\sum_{i\in L} \frac{1}{i!(d-i)!}{\check \cF}_{W_c\s_i\mathcal Y^i_{m-d}}\bigl(\cQ(X_{i_{max}})\bigr).
\end{align}
Set $\mathcal W_{m-d}^i=\s_i \mathcal Y_{m-d}^i\s_i$. Since $\mathcal Y_{m-d}^i\subset W_2$, it is clear that $\mathcal W_{m-d}^i\subset W_2^i$.  We can therefore rewrite \eqref{mermaidconY} as 
 \begin{align}\label{ss}
 e^{\rho} \check R&=\sum_{i\in L} {\check \cF}_{W_c\mathcal{W}^i_{m-d}}\bigl( sgn(\s_i)\s_i\cQ(X_{i_{max}})\bigr),
\end{align}
\begin{lemma}Given $i\in L$, set $r_i=i_{max}-i$. Then
\begin{align}\label{03}
 sgn(\s_{i})\s_{i}\bigl(\cQ(X_{i_{max}})\bigr)= 
e^{r_i(\llbracket \be^{i}_i\rrbracket-\llbracket \gamma^{i}_{d-i}\rrbracket)}\cQ(X_{i})
\end{align}
\end{lemma}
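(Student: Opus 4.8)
The identity \eqref{03} compares two rational functions on the Cartan, so the plan is to unwind both sides via the explicit description of $\cQ$ and of the two arc diagrams $X_{i_{max}}$ and $X_i$. Recall that $\cQ(X)=e^{\rho_X+\sum_{\gamma\in S(X)}\rrbracket\gamma\llbracket}\big/\prod_{\beta\in S(X)}(1-e^{-\beta})$, so three pieces must be tracked under the action of $\s_i$: the $\rho$-vector, the correction term $\sum_{\gamma\in S}\rrbracket\gamma\llbracket$, and the product in the denominator. First I would record precisely what $\s_i=\prod_{t=1}^{r_i}s_{\e_{p-i_{max}+t}-\e_{p+j_{min}+t}}$ does: it is a product of $r_i$ disjoint transpositions in $S_m$, so $sgn(\s_i)=(-1)^{r_i}$, and it interchanges the blocks of indices $\{p-i_{max}+1,\dots,p-i_{max}+r_i\}$ and $\{p+j_{min}+1,\dots,p+j_{min}+r_i\}=\{p+d-i_{max}+1,\dots,p+d-i\}$ among the $\e$'s, leaving all $\d$'s and all other $\e$'s fixed. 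The key combinatorial observation, which I would verify directly from \eqref{sij}, is that applying $\s_i$ to the ordered sequence underlying $X_{i_{max}}$ and then reindexing produces exactly the ordered sequence underlying $X_i$, and that $\s_i$ carries the arc set of $X_{i_{max}}$ to the arc set of $X_i$; in particular $\s_i(S(X_{i_{max}}))=S(X_i)$ as sets, with $\s_i(\be^{i_{max}}_t)=\be^i_t$ for $t\le i$ and $\s_i(\gamma^{i_{max}}_t)=\gamma^i_t$ for $t\le d-i_{max}$, while the $r_i$ ``extra'' roots $\gamma^{i_{max}}_{d-i_{max}+1},\dots,\gamma^{i_{max}}_{d-i}$ of the smaller isotropic set $S(X_i)$ are the images of certain of the $\be$'s. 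This bookkeeping identifies the denominators: $\s_i\big(\prod_{\beta\in S(X_{i_{max}})}(1-e^{-\beta})\big)=\prod_{\beta\in S(X_i)}(1-e^{-\beta})$ since $\s_i$ permutes these sets.

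Next I would handle the numerator. For the $\rho$-vectors: $\rho_{X_i}$ and $\rho_{X_{i_{max}}}$ are the half-sums of the respective positive-root systems, and since $\s_i$ is an (even) element of $W_\g$ permuting roots of each parity, $\s_i(\rho_{X_{i_{max}}})=\rho(\s_i\Dp(X_{i_{max}}))$; but $\s_i\Dp(X_{i_{max}})$ differs from $\Dp(X_i)$ only in the odd part by the odd reflections implicit in switching the arcs, so the discrepancy $\s_i(\rho_{X_{i_{max}}})-\rho_{X_i}$ is a sum of odd roots accounted for by \eqref{rhoodref}. More cleanly: since both arc diagrams have the same underlying $\Pi$ up to the relabelling induced by $\s_i$ (note all the relevant simple roots between $\e_{p-i_{max}+1}$ and the end are isotropic or get permuted by $\s_i$ inside $W_\g$), the combination $\s_i(\rho_{X_{i_{max}}}) + \s_i\big(\sum_{\gamma\in S(X_{i_{max}})}\rrbracket\gamma\llbracket\big)$ should be compared term-by-term with $\rho_{X_i}+\sum_{\gamma\in S(X_i)}\rrbracket\gamma\llbracket$, and the residual difference is exactly $r_i(\llbracket\be^i_i\rrbracket - \llbracket\gamma^i_{d-i}\rrbracket)$. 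This is where I expect the bulk of the computation: one writes $\llbracket\be^i_t\rrbracket$ and $\llbracket\gamma^i_t\rrbracket$ as the explicit alternating sums of $\e$'s and $\d$'s lying in the corresponding intervals (using the formulas displayed after \eqref{aaaaa}), applies $\s_i$ which shifts the relevant $\e$-intervals, and collects. The telescoping structure of the $\rrbracket\gamma\llbracket$'s (each is $\llbracket\gamma\rrbracket$ minus the lower bracket) and the fact that $\sn$ is constant $+1$ on the $\be$-part and $-1$ on the $\gamma$-part along $S(X_i)$ will make the sum collapse; I would expect the net effect of the $r_i$ transpositions to contribute precisely $r_i$ copies of the difference of the two ``boundary'' brackets $\llbracket\be^i_i\rrbracket$ and $\llbracket\gamma^i_{d-i}\rrbracket$, since each transposition $s_{\e_{p-i_{max}+t}-\e_{p+j_{min}+t}}$ moves one $\e$ across the block of $\d_1,\dots,\d_n$ and contributes the same fixed vector shift.

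Finally, I would assemble: combining the identifications of the denominators, the $\rho$-shift via \eqref{rhoodref}/\eqref{erho}, the sign $(-1)^{r_i}$ from $sgn(\s_i)$, and the bracket computation, one gets $sgn(\s_i)\,\s_i(\cQ(X_{i_{max}})) = e^{r_i(\llbracket\be^i_i\rrbracket - \llbracket\gamma^i_{d-i}\rrbracket)}\,\cQ(X_i)$, which is \eqref{03}. The main obstacle is the middle step: correctly bookkeeping how $\s_i$ acts on the full collection of bracket vectors $\{\rrbracket\gamma\llbracket : \gamma\in S(X_{i_{max}})\}$ and showing the residual is the clean multiple $r_i(\llbracket\be^i_i\rrbracket-\llbracket\gamma^i_{d-i}\rrbracket)$ rather than a messier linear combination — this requires being careful about which intervals $[v,w]$ defining the brackets actually contain the indices being swapped by $\s_i$, and using the non-intersection of arcs (so that $\rrbracket\gamma\llbracket$ for $\gamma$ outside the reflected interval is $\s_i$-invariant, exactly as exploited in Lemma \ref{lem3}). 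Once that is pinned down, everything else is a direct substitution.
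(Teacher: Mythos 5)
Your strategy (compute the effect of the full permutation $\s_i$ on the numerator and denominator of $\cQ(X_{i_{max}})$ in one go) could in principle work, but it rests on an intermediate claim that is false, and the failure occurs exactly where the content of the lemma lies. You assert that $\s_i(S(X_{i_{max}}))=S(X_i)$ and hence that $\s_i\bigl(\prod_{\beta\in S(X_{i_{max}})}(1-e^{-\beta})\bigr)=\prod_{\beta\in S(X_i)}(1-e^{-\beta})$. This is not so: for $s=i+1,\dots,i_{max}$ one computes $\s_i(\be^{i_{max}}_s)=\s_i(\e_{p-s+1}-\d_s)=\e_{p+u}-\d_s$ with $u=j_{min}+i_{max}-s+1$, and since every $\d$ precedes $\e_{p+1},\dots,\e_m$ in the underlying order, the root of $S(X_i)$ attached to the corresponding arc is $\gamma^i_u=\d_s-\e_{p+u}$, the \emph{negative} of $\s_i(\be^{i_{max}}_s)$. (At the level of unordered arcs $\s_i$ does match the two diagrams, but $S(X)$ consists of oriented differences $v-w$ with $v>w$, and the orientation flips for these $r_i$ arcs.) Using $1-e^{\gamma}=-e^{\gamma}(1-e^{-\gamma})$ the correct identity is
$$\s_i\Bigl(\prod_{\beta\in S(X_{i_{max}})}(1-e^{-\beta})\Bigr)=(-1)^{r_i}\,e^{\sum_{u=j_{min}+1}^{d-i}\gamma^i_u}\prod_{\beta\in S(X_i)}(1-e^{-\beta}),$$
so the sign $(-1)^{r_i}$ here is what $sgn(\s_i)$ must cancel (with your version of the denominator step the assembly would leave a stray $(-1)^{r_i}$), and the exponential factor must be merged into the numerator computation before the residual can come out as $r_i(\llbracket\be^i_i\rrbracket-\llbracket\gamma^i_{d-i}\rrbracket)$. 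Relatedly, the two diagrams have \emph{identical} underlying orders and simple roots, so $\rho_{X_{i_{max}}}=\rho_{X_i}=\rho$; there is no "relabelling", and the quantity you still owe is the explicit value of $\s_i\bigl(\rho+\sum_{\gamma\in S(X_{i_{max}})}\rrbracket\gamma\llbracket\bigr)-\sum_{u=j_{min}+1}^{d-i}\gamma^i_u-\bigl(\rho+\sum_{\gamma\in S(X_i)}\rrbracket\gamma\llbracket\bigr)$, which you assert but do not compute.

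The paper sidesteps this global bookkeeping by going one transposition at a time: it computes $\rho+\sum_{\gamma\in S(X_i)}\rrbracket\gamma\llbracket$ explicitly, checks that the single reflection $s_{\e_{p-i+1}-\e_{p+d-i+1}}$ fixes this entire numerator exponent, and notes that this reflection turns exactly one denominator factor, $1-e^{-\be^i_i}$, into $1-e^{\gamma^{i-1}_{d-i+1}}=-e^{\gamma^{i-1}_{d-i+1}}(1-e^{-\gamma^{i-1}_{d-i+1}})$, yielding the one-step identity \eqref{base}; the equivariance $s_{\e_{p-i+1}-\e_{p+d-i+1}}(\llbracket\be^i_i\rrbracket-\llbracket\gamma^i_{d-i}\rrbracket)=\llbracket\be^{i-1}_{i-1}\rrbracket-\llbracket\gamma^{i-1}_{d-i+1}\rrbracket$ then makes the $r_i$ steps telescope by induction. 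To rescue your direct route you would need to replace the "denominators match" step by the corrected identity above and actually carry out the residual exponent computation; as written, the plan would fail at the assembly stage.
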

\begin{proof}
By an explicit computation, we see that 
\begin{align*}
\rho+\sum_{\gamma\in S(X_i)}\rrbracket \gamma\llbracket&=\sum_{r=1}^{p-i}\frac{m-n-2r+1}{2}\e_r+\sum_{r=p-i+1}^{p}\frac{q-p+i-j-1}{2}\e_r\\&+\sum_{r=p+1}^{p+j}\frac{q-p+i-j+1}{2}\e_r+\sum_{r=p+j+1}^m\frac{m+n-2r+1}{2}\e_r\\
&+\sum_{r=1}^i\frac{q-p+j-i+1}{2}\d_r+\sum_{r=i+1}^n\frac{q-p+j-i-1}{2}\d_r.
\end{align*}

 Observe that $s_{\e_{p-i+1}-\e_{p+d-i+1}}$ fixes $\rho+\sum_{\gamma\in S(X_i)}\rrbracket\gamma\llbracket$, hence we have
\begin{align*}
& s_{\e_{p-i+1}-\e_{p+d-i+1}}
\frac{e^{\rho+\sum_{\gamma\in S(X_
i)}\rrbracket\gamma\llbracket}}
{\prod_{r=1}^i (1-e^{-\e_{p-r+1}+\d_r})\prod_{r=1}^{d-i} (1-e^{\e_{p+r}-\d_{n-r+1}})
}=\\
& 
\frac{e^{\rho+\sum_{\gamma\in S(X_i)}\rrbracket \gamma\llbracket}}
{(1-e^{-\e_{p+d-i+1}+\d_i})\prod_{r=1}^{i-1} (1-e^{-\e_{p-r+1}+\d_r})\prod_{r=1}^{d-i} (1-e^{\e_{p+r}-\d_{n-r+1}})
}=\\
&-
\frac{e^{\rho+\sum_{\gamma\in S(X_{i-1})}\rrbracket \gamma\llbracket+\rrbracket \be^{i}_i\llbracket-\rrbracket \gamma^{i-1}_{d-i+1}\llbracket+\e_{p+d-i+1}-\d_i}}
{\prod_{r=1}^{i-1} (1-e^{-\e_{p-r+1}+\d_{r}})\prod_{r=1}^{d-i+1} (1-e^{\e_{p+r}-\d_{n-r+1}})
}
\end{align*}
Since $\rrbracket \be^{i}_{i}\llbracket=\llbracket \be^{i-1,}_{i-1}\rrbracket$ and $\rrbracket \gamma^{i-1}_{d-i+1}\llbracket+\e_{p+d-i+1}-\d_i=\llbracket \gamma^{i-1}_{d-i+1}\rrbracket$, we obtain that
\begin{align}\label{base}
s_{\e_{p-i+1}-\e_{p+d-i+1}}
 \frac{e^{\rho+\sum_{\gamma\in S(X_i)}\rrbracket \gamma\llbracket}}{\prod\limits_{\gamma\in 
S(X_i)}(1-e^{-\gamma})}=- \frac{e^{\rho+\sum_{\gamma\in S(X_{i-1})}\rrbracket\gamma\llbracket}}{\prod\limits_{\gamma\in 
S(X_{i-1})}(1-e^{-\gamma})}\ e^{\llbracket \be^{i-1}_{i-1}\rrbracket-\llbracket \gamma^{i-1}_{d-i+1}\rrbracket}.
\end{align}
 
Since 
$$
s_{\e_{p-i+1}-\e_{p+d-i+1}}(\llbracket \be^{i}_{i}\rrbracket-\llbracket \gamma^{i}_{d-i}\rrbracket)=\llbracket \be^{i-1}_{i-1}\rrbracket-\llbracket \gamma^{i-1}_{d-i+1}\rrbracket
$$
the lemma is proven by an obvious induction.
\end{proof}

Combining \eqref{ss} and \eqref{03}, we get
 \begin{align}\label{mermaidX}
 e^{\rho} \check R
=
&\sum_{i\in L}\frac{1}{i!(d-i)!}{\check \cF}_{W_c\mathcal W_{m-d}^{i}}\bigl( 
e^{r_i(\llbracket \be^{i}_i\rrbracket-\llbracket \gamma^{i}_{d-i}\rrbracket)}\cQ(X_{i})\bigr).
\end{align}
One last computation is needed:
\begin{lemma}
 \begin{align}
&\frac{1}{i!(d-i)!}{\check \cF}_{W_c\mathcal W_{m-d}^{i}}\bigl( 
e^{r_i(\llbracket \be^{i}_i\rrbracket-\llbracket \gamma^{i}_{d-i}\rrbracket)}\cQ(X_{i})\bigr)
={\check \cF}_{W_c\mathcal W_{m-d}^{i}}\bigl( 
\frac{e^{\rho+r_i(\llbracket \be^{i}_i\rrbracket-\llbracket \gamma^{i}_{d-i}\rrbracket)}}
{\prod_{\gamma\in S(X_i)} (1-e^{-\llbracket\gamma\rrbracket})
}\label{sommaij}\bigr).
\end{align}
\end{lemma}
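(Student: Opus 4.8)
The plan is to reduce \eqref{sommaij}, via the orthogonal splitting of $S(X_i)$ cut out by the arc diagram, to the already-established identification of the $\cQ$-form with the $\cP$-form of the superdenominator of a $gl(k,k)$.

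First I would record the elementary structure of $S(X_i)$. It splits disjointly as $S(X_i)=B\cup C$, with $B=\{\be^i_t=\e_{p-t+1}-\d_t\mid 1\le t\le i\}$ and $C=\{\gamma^i_t=\d_{n-t+1}-\e_{p+t}\mid 1\le t\le d-i\}$; the roots of $B$ lie in the span of the interval $I_B=[\e_{p-i+1},\d_i]$, those of $C$ in the span of $I_C=[\d_{n-d+i+1},\e_{p+d-i}]$, and $I_B\cap I_C=\emptyset$. Hence $B$ and $C$ are mutually orthogonal and incomparable, so $\llbracket\be^i_t\rrbracket=\be^i_1+\cdots+\be^i_t$, $\llbracket\gamma^i_t\rrbracket=\gamma^i_1+\cdots+\gamma^i_t$, and $\rrbracket\be^i_t\llbracket$, $\rrbracket\gamma^i_t\llbracket$ involve only $B$, resp. $C$; moreover $\htt\be^i_t=\htt\gamma^i_t=2t-1$, so $\prod_{\gamma\in S(X_i)}\tfrac{\htt\gamma+1}{2}=i!\,(d-i)!$. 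Since the order underlying every $X_i$ is the one attached to $\D_{gl}^{(p,q)}$ we have $\rho_{X_i}=\rho$, whence $\cP(X_i)=i!\,(d-i)!\,e^{\rho}/\prod_{\gamma\in S(X_i)}(1-e^{-\llbracket\gamma\rrbracket})$, and \eqref{sommaij} becomes
\[
{\check \cF}_{W_c\mathcal W_{m-d}^{i}}\bigl(A\,\cQ(X_i)\bigr)={\check \cF}_{W_c\mathcal W_{m-d}^{i}}\bigl(A\,\cP(X_i)\bigr),\qquad A:=e^{r_i(\llbracket\be^i_i\rrbracket-\llbracket\gamma^i_{d-i}\rrbracket)}.
\]

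Next I would factor this through the Weyl groups of the two $gl$-pieces. Let $Y_B$ (resp. $Y_C$) be the sub-arc-diagram of $X_i$ supported on $I_B$ (resp. $I_C$): it is a nested arc diagram for $gl(i,i)$ (resp. $gl(d-i,d-i)$) with $S(Y_B)=B$, $S(Y_C)=C$, and its Weyl group is $\mathcal G_B:=W_{I_B}$ (resp. $\mathcal G_C:=W_{I_C}$). Unravelling the definitions of $W_c$, of $\mathcal W_{m-d}^{i}=\s_i\mathcal Y_{m-d}^{i}\s_i$ and of $\s_i$, one sees that $\mathcal G_B\times\mathcal G_C$ is a subgroup of $W_c$ acting on vertices disjoint from those moved by $\mathcal W_{m-d}^{i}$, hence commuting with it; in particular $W_c\mathcal W_{m-d}^{i}$ is stable under right multiplication by $\mathcal G_B\times\mathcal G_C$, so by the relation recalled after Notation~\ref{W}, ${\check \cF}_{W_c\mathcal W_{m-d}^{i}}={\check \cF}_{(W_c\mathcal W_{m-d}^{i})/(\mathcal G_B\times\mathcal G_C)}\circ{\check \cF}_{\mathcal G_B\times\mathcal G_C}$, and it is enough to prove ${\check \cF}_{\mathcal G_B\times\mathcal G_C}(A\,\cQ(X_i))={\check \cF}_{\mathcal G_B\times\mathcal G_C}(A\,\cP(X_i))$. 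Now $\llbracket\be^i_i\rrbracket$ is the bracket of the arc spanning $I_B$ and $\llbracket\gamma^i_{d-i}\rrbracket$ that of the arc spanning $I_C$, so (being $W_{I_B}$-, resp. $W_{I_C}$-invariant, and supported on $I_B$, resp. $I_C$) $A$ is $\mathcal G_B\times\mathcal G_C$-invariant and pulls out of the sum. Moreover $\cQ(X_i)=e^{\rho-\rho_{Y_B}-\rho_{Y_C}}\cQ(Y_B)\cQ(Y_C)$ and $\cP(X_i)=e^{\rho-\rho_{Y_B}-\rho_{Y_C}}\cP(Y_B)\cP(Y_C)$, where $(\rho-\rho_{Y_B}-\rho_{Y_C},\a)=0$ for every $\a\in\Pi(Y_B)\cup\Pi(Y_C)$ (exactly as in the proof of \Cor{cor1}), so the common prefactor is $\mathcal G_B\times\mathcal G_C$-invariant; and since $\mathcal G_B$ fixes everything depending only on $I_C$ and conversely, ${\check \cF}_{\mathcal G_B\times\mathcal G_C}$ splits as the product of ${\check \cF}_{\mathcal G_B}$ applied to the $Y_B$-factor and ${\check \cF}_{\mathcal G_C}$ applied to the $Y_C$-factor. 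This reduces everything to ${\check \cF}_{\mathcal G_B}(\cQ(Y_B))={\check \cF}_{\mathcal G_B}(\cP(Y_B))$ and its analogue for $Y_C$.

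Finally I would settle the single-piece identity. For $\g'=gl(k,k)$ (with $k=i$, $Y=Y_B$, $\mathcal G_B=W_{\g'}$, and likewise for $C$), formula~\eqref{formula1} gives ${\check \cF}_{W_{\g'}}(\cP(Y))=C_{\g'}\,e^{\rho_Y}\check R_Y$, while \eqref{mermaid} of \Prop{mm} gives ${\check \cF}_{W^{\#}}(\cQ(Y))=e^{\rho_Y}\check R_Y$; since $e^{\rho_Y}\check R_Y$ is $W_{\g'}$-skew-invariant for $sgn'$, applying ${\check \cF}_{W_{\g'}/W^{\#}}$ yields ${\check \cF}_{W_{\g'}}(\cQ(Y))=|W_{\g'}/W^{\#}|\,e^{\rho_Y}\check R_Y=C_{\g'}\,e^{\rho_Y}\check R_Y$, so the two agree, and running the reductions backwards proves \eqref{sommaij}. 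The main obstacle will be the bookkeeping in the second step — pinning down the vertex supports of $\mathcal G_B$, $\mathcal G_C$ and of $\mathcal W_{m-d}^{i}$ (through $\s_i$ and $\mathcal Y_{m-d}^{i}$) so that $\mathcal G_B\times\mathcal G_C\subseteq W_c$ really commutes with $\mathcal W_{m-d}^{i}$ — together with the mild care needed because $gl(k,k)$ is of type $A(k-1,k-1)$ with $h^{\vee}=0$, so one must use the $W^{\#}$ of \cite[Remark 1.1, b)]{KW} and the convention of \Thm{princ} and \eqref{valoricg} by which $A(d-1,d-1)$ is replaced by $gl(d,d)$. If one prefers to avoid \eqref{formula1}, the single-piece identity can instead be reached by transforming $Y_B$, $Y_C$ into simple arc diagrams by odd and interval reflections (\Lem{esad}) and using that under \Cor{cor1} and \Lem{lem3} neither ${\check \cF}_{\mathcal G_B}(\cP(\cdot))$ nor ${\check \cF}_{\mathcal G_B}(\cQ(\cdot))$ changes, while $\cP$ and $\cQ$ both change sign under odd reflections and coincide on a simple diagram.
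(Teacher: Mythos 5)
Your proposal is correct and follows essentially the same route as the paper: split $X_i$ into the two interval subdiagrams $Y_i^1,Y_i^2$ supported on $[\e_{p-i+1},\d_i]$ and $[\d_{n-d+i+1},\e_{p+d-i}]$, use the $G_1\times G_2$-invariance of $e^{r_i(\llbracket \be^{i}_i\rrbracket-\llbracket \gamma^{i}_{d-i}\rrbracket)}$ to factor the sum through these subgroups, and reduce to the single-piece identity ${\check \cF}_{W_\g}(\cQ(Y))={\check \cF}_{W_\g}(\cP(Y))$ obtained by combining \eqref{formula1} with \eqref{mermaid}, finishing with $\cP(X_i)=i!(d-i)!\,e^{\rho}/\prod_{\gamma\in S(X_i)}(1-e^{-\llbracket\gamma\rrbracket})$. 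The paper's proof is exactly this argument, so nothing further is needed.
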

\begin{proof}
Set $Y_i^1$ the subdiagram of $X_i$ whose support is $[\e_{p-i+1},\d_i]$ and $Y^2_i$ the subdia\-gram having support $[\d_{n-d+i+1},\e_{p+d-i}]$. Let $G_1=W_{[\e_{p-i+1},\d_i]}$ and $G_2=W_{[\d_{n-d+i+1},\e_{p+d-i}]}$. An easy calculation shows that 
$$
\llbracket \be^{i}_i\rrbracket-\llbracket \gamma^{i}_{d-i}\rrbracket=\sum_{h=p-i+1}^{p+d-i}\e_h-\sum_{h=1}^i\d_h-\sum_{h=n-d+i+1}^n\d_h.
$$
In particular $\llbracket \be^{i}_i\rrbracket-\llbracket \gamma^{i}_{d-i}\rrbracket$ is $G_1\times G_2$-invariant. Since $G_1\times G_2\subset W_c$ and commutes with $\mathcal W_{m-d}^i$, we can write
\begin{align*}
&{\check \cF}_{W_c\mathcal W_{m-d}^{i}}\bigl( 
\cQ(X_i)e^{r_i(\llbracket \be^{i}_i\rrbracket-\llbracket \gamma^{i}_{d-i}\rrbracket)}\bigr)
\\&={\check \cF}_{W_c/(G_1\times G_2)\mathcal W_{m-d}^{i}}\bigl( 
e^{\rho-\rho_{Y_i^1}-\rho_{Y_i^2}}e^{r_i(\llbracket \be^{i}_i\rrbracket-\llbracket \gamma^{i}_{d-i}\rrbracket)}{\check \cF}_{G_1}\bigl(\cQ(Y_i^1)\bigr){\check \cF}_{G_2}\bigl(\cQ(Y_i^2)
\bigr)\bigr).
\end{align*}
Combining  \eqref{formula1} with \eqref{mermaid}, we have that, for any $\g$, ${\check \cF}_{W_\g}\bigl(\cQ(X)
\bigr)={\check \cF}_{W_\g}\bigl(\cP(X)
\bigr)$, thus
\begin{align*}
&{\check \cF}_{W_c\mathcal W_{m-d}^{i}}\bigl( 
\cQ(X_i)e^{r_i(\llbracket \be^{i}_i\rrbracket-\llbracket \gamma^{i}_{d-i}\rrbracket)}\bigr)
\\&={\check \cF}_{W_c/(G_1\times G_2)\mathcal W_{m-d}^{i}}\bigl( 
e^{\rho-\rho_{Y_i^1}-\rho_{Y_i^2}}e^{r_i(\llbracket \be^{i}_i\rrbracket-\llbracket \gamma^{i}_{d-i}\rrbracket)}{\check \cF}_{G_1}\bigl(\cP(Y_i^1)\bigr){\check \cF}_{G_2}\bigl(\cP(Y_i^2)
\bigr)\bigr)\\
&={\check \cF}_{W_c\mathcal W_{m-d}^{i}}\bigl( 
\cP(X_i)e^{r_i(\llbracket \be^{i}_i\rrbracket-\llbracket \gamma^{i}_{d-i}\rrbracket)}\bigr).
\end{align*}
The lemma follows from the observation that 
$\cP(X_i)=i!(d-i)!\frac{e^{\rho}}
{\prod_{\gamma\in S(X_i)} (1-e^{-\llbracket\gamma\rrbracket})
}$.
\end{proof} 

Using  \eqref{sommaij}, we see that \eqref{mermaidX} becomes
\begin{align}\label{due}
e^{\rho} \check R
=
\sum_{i\in L} {\check \cF}_{W_c\mathcal W_{m-d}^{i}}\bigl( \frac{e^{\rho}}
{\prod\limits_{\gamma\in
S(X_i)}(1-e^{-\lb\gamma\rb})}e^{r_i(\llbracket \be^{i}_i\rrbracket-\llbracket \gamma^{i}_{d-i}\rrbracket)}\bigr).
\end{align}

Using \eqref{due},  and expanding in geometric series in the domain $\h^+=\{h\in\h\mid \a(h)>0, \a\in\D_1^+\}$, formula
 \eqref{due}   becomes
\begin{equation}\label{parziale}
e^{\rho} \check R
=\sum_{i\in L}\cF_{W_c\mathcal W_{m-d}^{i}}\bigl( \sum_{{\bf a}\in \PP_i,\,{\bf b}\in \PP_{d-i}}
 e^{V({\bf a}-{r_i}^i,{\bf b}+{r_i}^{d-i})}\bigr),
\end{equation}
where 
\begin{align*}
V({\bf a},{\bf b})=&\rho-\sum_{t=1}^ia_{i-t+1}\e_{p-i+t}+\sum_{t=1}^{d-i}b_{t}\e_{p+t}+\sum_{t=1}^ia_{t}\d_t-\sum_{t=1}^{d-i}b_{d-i-t+1}\d_{n-d+i+t}
\end{align*}
and we denote by $r^k$ the partition $(r,r,\dots,r)\in\PP_k$. Note that \eqref{parziale} can be written as
\begin{equation}\label{parziale2}
e^{\rho} \check R
=\sum_{i\in L} \cF_{W_c\mathcal W_{m-d}^{i}}\bigl( \sum_{{\bf a}\in \PP_i,\,{r_i}^{d-i}\subset{\bf b}\in \PP_{d-i}}
 e^{V({\bf a}-r_i^i,{\bf b})}\bigr).
\end{equation}

From the formula above we can already derive an explicit form of the Theta correspondence, but, in order to have the same parametrization of \cite{KV}, some extra work is needed.
For  $\ba\in \PP_i$ and $k\le i$ define
\begin{align*}
&f_k(\ba)=(a_1,\dots, a_k),\\
&l_k(\ba)=(a_{i-k+1},\dots,a_i).
\end{align*} If $\ba\subset r^i$, we set $r^i\backslash \ba=(r-a_i,\dots,r-a_1)$. If $\ba\in \PP_i$ then, given $b\in \PP_{d-i}$ with $a_1^{d-i}\subset {\bf b}$, we let ${\bf b}\cup{\ba} =(b_1,\dots, b_{d-i},a_1,\dots
, a_i )$.\par 
Given $r\in\nat$ and ${\bf a}\in \PP_i$,  let  $k\in\{0,\dots,i\}$ be such that $r^{k}\subset\bf a$.
 If  $(r-a_i)^{d-i}\subset {\bf b}$,  then we set 
$$s_{k}(\ba,{\bf b})=(f_{k}(\ba)-r^{k},{\bf b}\cup(r^{i-k}\backslash l_{i-k}(\ba))).
$$

We let  $\mathcal{W}^{i}_{m-d}(\ba,\mathbf b)$ be the set of $v\in \mathcal{W}^{i}_{m-d}$ such that $v(V(\ba,\mathbf b))$ is regular for $\D(A_{p-1}\times A_{q-1})$. Given $r\in\nat$ and ${\bf a}\in \PP_i$,  let  $k_r(\ba)$ be maximal in the set of  $k\in\{0,\dots,i\}$ such that $r^{k}\subset\bf a$.

\begin{lemma}\label{partitions}
Assume that $m<n$ and fix $i\in L$.
Let  $({\bf a},{\bf b})\in \PP_i\times \PP_{d-i}$ be such that $(r_i-a_i)^{d-i}\subset {\bf b}$ and set $k=k_{r_i}(\ba)$.
\begin{enumerate}
\item If $k\not\in L$, then $v(V({\ba}-{r_i}^i,\bf b))$ is $\D_c$-singular for any $v\in \mathcal W_{m-d}^{i}$.
\item If $k\in L$ then there is a bijection $v\mapsto z$ between $\mathcal{W}^{i}_{m-d}(\ba-{r_i}^i,\mathbf b)$ and $\mathcal{W}^{k}_{n-d}(s_{k}(\ba,{{\bf b}}))$ such that
$$
\cF_{W_c}\bigl( sgn(v)e^{vV({\bf a}-{r_i}^i,{\bf b})}\bigr)=\cF_{W_c}\bigl( sgn(z)e^{zV(s_{k}(\ba,{{\bf b}}))}\bigr).
$$
\end{enumerate}
\end{lemma}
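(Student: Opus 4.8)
\textbf{Plan of proof for Lemma \ref{partitions}.}

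The plan is to analyze directly the combinatorics of the weight $V(\ba-r_i^i,\mathbf b)$ and of the group $\mathcal W_{m-d}^i$. Recall that $\mathcal W_{m-d}^i=\sigma_i\mathcal Y_{m-d}^i\sigma_i$ and that $\mathcal Y_{m-d}^i\subset W_2$, so conjugating by $\sigma_i$ turns a permutation in $\mathcal W_{m-d}^i$ into a permutation moving the indices in the block $[p-i_{max}+1,p+j_{min}]$; explicitly it selects some of the ``$\epsilon$-slots'' originally carrying the entries of $r_i^{i-k}$ (i.e.\ the tail $l_{i-k}(\ba)$ has become $r_i$ there after the shift $\ba\mapsto\ba-r_i^i$ makes the corresponding coordinates equal) and moves them out to the far $\delta$-slots, while correspondingly moving the $\delta$-entries $b_j=r_i-a_i$ in. First I would write out, exactly as in the proof of the previous lemma, the coordinates of $V(\ba-r_i^i,\mathbf b)$ in the basis $\{\epsilon_r,\delta_r\}$. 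The key observation is that the coordinates of $V(\ba-r_i^i,\mathbf b)$ on the $\epsilon$-slots $\epsilon_{p-i+1},\dots,\epsilon_{p+d-i}$ that carry value $0$ (these are exactly the $i-k$ slots where $a_t=r_i$) together with the $\delta$-slots carrying $b_j=r_i-a_i=0$ form a configuration where $\D(A_{p-1}\times A_{q-1})$-regularity forces any nontrivial element of $\mathcal W_{m-d}^i$ either to produce a repeated coordinate (hence singularity) or to act precisely as an element of $\mathcal W_{n-d}^k$ on the shifted weight.

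For part (1): if $k=k_{r_i}(\ba)\notin L$ then either $k>i_{max}$, which is impossible since $k\le i\le i_{max}$ (so in fact $k\notin L$ forces $d-k>q$, i.e.\ $k<j_{min}$-type failure), and in that case there are strictly fewer available $\delta$-slots on the right than the number of zero $\epsilon$-entries that must be moved; any attempt to apply a nontrivial $v$ creates two equal coordinates among the $\epsilon$'s (the ones left behind) or among the $\delta$'s, forcing $v(V(\ba-r_i^i,\mathbf b))$ to lie on a wall of $\D(A_{p-1}\times A_{q-1})$ or $\D(A_{n-1})$. I would make this precise by computing the pairing with the appropriate root $\epsilon_s-\epsilon_{s'}$ or $\delta_s-\delta_{s'}$, exactly as was done at the end of the proof of Corollary \ref{cortheta} (the ``$\mathcal W^{reg}_{n-d}=\{1\}$'' type argument).

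For part (2): when $k\in L$, I would construct the bijection explicitly. An element $v\in\mathcal W_{m-d}^i$ regular on $V(\ba-r_i^i,\mathbf b)$ must move out exactly the $\epsilon$-slots among the $i-k$ zero ones and pull in exactly the corresponding $\delta$-slots; reading the resulting weight, one sees it equals (up to a $W_c$-permutation, which accounts for the $sgn$ bookkeeping) $z(V(s_k(\ba,\mathbf b)))$ for a unique $z\in\mathcal W_{n-d}^k$, and conversely. The equality of the $\cF_{W_c}$-terms then follows because $W_c$-skew-invariance lets us absorb the intervening permutation: $\cF_{W_c}(\mathrm{sgn}(u)\,u(\lambda))=\cF_{W_c}(\lambda)$ for any $u\in W_c$, so it suffices that $v(V(\ba-r_i^i,\mathbf b))$ and $z(V(s_k(\ba,\mathbf b)))$ lie in the same $W_c$-orbit with matching signs, which the explicit description gives. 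I expect the main obstacle to be the careful verification that the shift $s_k$ is exactly the right reparametrization — i.e.\ that splitting $\ba$ into its head $f_k(\ba)$ (genuinely below $r_i$) and tail $l_{i-k}(\ba)$ (all equal to $r_i$), subtracting $r_i^k$ from the head, and gluing $r_i^{i-k}\setminus l_{i-k}(\ba)=(0,\dots,0)$ onto $\mathbf b$ produces precisely the weight $V$ of a $D(m-?,n-?)$-type configuration on which $\mathcal W_{n-d}^k$ acts — and tracking the sign of the connecting permutation through this process. This is a bookkeeping-heavy but mechanical induction, parallel to the computation in \eqref{base}, and once the coordinate formula for $V$ is on the table the rest is forced.
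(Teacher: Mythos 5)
Your overall strategy -- write out the coordinates of $V(\ba-r_i^i,\mathbf b)$, locate the coordinate collisions created by the entries of $\ba$ not covered by $r_i^k$, show that regularity under $\mathcal W^i_{m-d}$ forces these to be relocated to the slots $p+d-i+1,\dots,m$ (which is possible iff $k\in L$), and then absorb the connecting permutation using $W_c$-skew-invariance -- is exactly the mechanism of the paper's proof, which carries it out as an induction on $i-k$: at each step a single explicit $3$-cycle $s$ (with $s(t_0)=p+d-i+1$, $s(p+d-i+1)=p-i+1$, $s(p-i+1)=t_0$, where $t_0=p-i+1-(r_i-a_i)$) converts $V(\ba-r_i^i,\mathbf b)$ into $V(\ba'-r_{i-1}^{i-1},\mathbf b')$ with $\ba'=l_{i-1}(\ba)+1^{i-1}$ and $\mathbf b'=\mathbf b\cup(r_i-a_i)^1$, and the singular case of part (1) is the terminal case $p+d-i=m$, where the collision $(V,\e_{t_0}-\e_{p-i+1})=0$ cannot be repaired because there are no slots to the right of the middle block.

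However, two details in your setup are wrong and would derail the execution. First, you have the head and tail of $\ba$ reversed: $r_i^k\subset\ba$ means $a_1,\dots,a_k\ge r_i$, so it is the \emph{tail} $l_{i-k}(\ba)=(a_{k+1},\dots,a_i)$ whose entries are $<r_i$; after subtracting $r_i^i$ these are the entries whose slots acquire coordinates $\rho_{p-i+\tau}+(r_i-a_{i-\tau+1})$ colliding with the outer-left slots $\e_{t}$, $t\le p-i$ -- not slots ``carrying value $0$''. Second, $\mathcal W^i_{m-d}\subset W_2^i\subset S_m$ permutes only $\e$-coordinates, so nothing is ever ``pulled in from the $\delta$-slots'' by $v$; the $\delta$-parts of $V(\ba-r_i^i,\mathbf b)$ and $V(s_k(\ba,\mathbf b))$ are defined over different index ranges and their matching (up to $W(A_{n-1})\subset W_c$) is precisely the nontrivial content of the identity $s(V(\ba-r_i^i,\mathbf b))=V(\ba'-r_{i-1}^{i-1},\mathbf b')$, which you assert (``reading the resulting weight, one sees it equals\dots'') but do not verify. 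Together with the forcing step -- regularity of $v(V(\ba-r_i^i,\mathbf b))$ implies $v(t_0)>p+d-i$, which is what pins down the bijection $v\mapsto z$ and its uniqueness -- this is the actual mathematical core of the lemma, and your plan leaves it unproved while resting on a mental picture that would make it false as stated. Fixing the head/tail bookkeeping and proving the one-step relocation identity (then inducting on $i-k$ as the paper does) closes the gap.
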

\begin{proof}
We will prove both statements by induction on $i-k$. The case $i=k$ is obvious.

If $k<i$, set $t_0=p-i+1-(r_i-a_i)$. Note that $p-i_{max}+1\le t_0\le p-i$. 

If $p+d-i=m$ then, since $k<i$,  $k\not\in L$. Moreover we have that we can choose $\mathcal W_{m-d}^{i}=\{1\}$. 
An easy calculation shows that
$$
\rho=\sum_{t=1}^p(\frac{m-n+1}{2}-t)\e_t + \sum_{t=p+1}^m(\frac{n+m+1}{2}-t)\e_{t}+\sum_{t=1}^n(\frac{q-p+n+1}{2}-t)\d_t,
$$
so $(V(\ba,{\bf b}),\e_{t_0}-\e_{p-i+1})=0$. It follows that $v(V({\ba}-{r_i}^i,\bf b))$ is $\D_c$-singular for any $v\in \mathcal W_{m-d}^{i}$.

If $p+d-i<m$, let $s\in S_m$ be defined by setting $s(t_0)=p+d-i+1$, $s(p+d-i+1)=p-i+1$, $s(p-i+1)=t_0$, and $s(t)=t$ for $t\ne t_0,p-i+1, p+d-i+1$. Then $s(V(\ba,\mathbf b))=V(\ba'-(r_i+1)^{i-1},\mathbf b')$ where $\ba'=l_{i-1}(\ba)+1^{i-1}$ and $\mathbf b'=\mathbf b \cup (r_i-a_i)^1$. Suppose that $v\in \mathcal W_{m-d}^{i}$ and assume that 
$v(V(\ba-{r_i}^i,\mathbf b))$ is $\D_c$-regular. Then $v(t_0)>p+d-i$, so
$vs^{-1}(p-d+i+1)=s_0>p+d-i$. Hence $w'=s_{\e_{p-d+i+1}-\e_{s_0}}vs^{-1}\in W_2^{i-1}$
and
$$
\cF_{W_c}\bigl( sgn(v)e^{vV({\bf a}-{r_i}^i,{\bf b})}\bigr)= \cF_{W_c}\bigl( sgn(w')e^{w'V(\ba'-(r_i+1)^{i-1},\mathbf b')}\bigr).
$$
This implies that there is a unique $w\in \mathcal W_{m-d}^{i-1}(\ba'-(r_i+1)^{i-1},\mathbf b')$ such that
$$
\cF_{W_c}\bigl( sgn(v)e^{vV({\bf a}-{r_i}^i,{\bf b})}\bigr)= \cF_{W_c}\bigl( sgn(w)e^{wV(\ba'-(r_i+1)^{i-1},\mathbf b')}\bigr).
$$
Clearly the argument above can be reversed to prove that the map $v\mapsto w$ is a bijection. 
 Since $r_i+1=r_{i-1}$, $(r_{i-1}-a'_{i-1})^{d-i+1}\subset(r_{i}-a_i)^{d-i+1}\subset \mathbf b'$, and $k_{r_{i-1}}(\ba')=k_{r_i}(\ba)$, we see that we can apply the induction hypothesis. Thus, since $w(V(\ba'-(r_i+1)^{i-1},\mathbf b'))$ is $\D_c$-regular, we have $k\in L$. Moreover there is a bijection $w\mapsto z$ from ${\mathcal W_{m-d}}^{i-1}(\ba'-(r_i+1)^{i-1},\mathbf b')$ to $\mathcal W_{m-d}^{k}(s_{k}(\ba',\mathbf b'))$ such that
 $$
\cF_{W_c}\bigl( sgn(w)e^{wV({\bf a'}-{r_{i-1}}^{i-1},{\bf b}')}\bigr)= \cF_{W_c}\bigl( sgn(z)e^{zV(s_{k}(\ba',\mathbf b'))}\bigr).
$$ 
Since it is clear that $s_{k}(\ba',\mathbf b')=s_{k}(\ba,\mathbf b)$ we are done by composing the two bijections.
\end{proof}

\begin{prop}\label{GL}
\begin{align*}e^\rho \check R= 
\cF_{W_c\mathcal W_{m-d}^{i_{max}}}\bigl( 
\sum_{{\bf a}\in \PP_{i_{max}},\,{\bf b}\in \PP_{j_{min}}}
 e^{V({\bf a},{\bf b})}\bigr)
 +\sum_{i\in L, i<i_{max}}\cF_{W_c\mathcal W_{m-d}^{i}}\bigl(
\sum_{{\bf a}\in \PP_i,\,{\bf b}\in \PP^*_{d-i}}
 e^{V({\bf a},{\bf b})}\bigr).
\end{align*}
\end{prop}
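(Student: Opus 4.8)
The plan is to start from formula \eqref{parziale2}, which expresses $e^\rho\check R$ as a sum over $i\in L$ of alternating sums $\cF_{W_c\mathcal W_{m-d}^{i}}$ of exponentials $e^{V({\bf a}-r_i^i,{\bf b})}$, where ${\bf a}\in\PP_i$ and ${\bf b}\in\PP_{d-i}$ satisfies $r_i^{d-i}\subset{\bf b}$. The target formula asserts that after collecting terms, the whole sum reduces to just two kinds of contributions: the ``top'' term indexed by $i_{max}$ (ranging over all partitions ${\bf a}\in\PP_{i_{max}}$, ${\bf b}\in\PP_{j_{min}}$, with no constraint), and, for each $i\in L$ with $i<i_{max}$, a contribution ranging over ${\bf a}\in\PP_i$ and ${\bf b}\in\PP^*_{d-i}$ (strict partitions, i.e.\ exactly $d-i$ nonzero parts). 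So the bulk of the argument is a bookkeeping/cancellation argument organized by Lemma \ref{partitions}.

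The key steps, in order, would be as follows. First, fix $i\in L$ and split the inner sum in \eqref{parziale2} according to the value $k=k_{r_i}({\bf a})$, the largest index $k$ with $r_i^k\subset{\bf a}$. For each summand with $(r_i-a_i)^{d-i}\subset{\bf b}$, Lemma \ref{partitions} applies: either $k\notin L$, in which case $v(V({\bf a}-r_i^i,{\bf b}))$ is $\D_c$-singular for every $v\in\mathcal W^i_{m-d}$ and hence $\cF_{W_c\mathcal W^i_{m-d}}$ of that exponential vanishes; or $k\in L$, in which case the Lemma provides a sign-preserving bijection identifying the contribution of $({\bf a}-r_i^i,{\bf b})$ at level $i$ with the contribution of $s_k({\bf a},{\bf b})$ at level $k$. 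The second step is to check that the map $({\bf a},{\bf b},i)\mapsto(s_k({\bf a},{\bf b}),k)$, running over all valid triples with $k\in L$, is a bijection onto the index set appearing on the right-hand side — namely pairs at level $i_{max}$ with ${\bf b}\in\PP_{j_{min}}$ arbitrary, together with pairs at level $i<i_{max}$ with ${\bf b}\in\PP^*_{d-i}$. Concretely, one should verify that $s_k$ strips off exactly the parts of ${\bf a}$ equal to $r_i$ and transplants the complementary parts $r_i^{i-k}\setminus l_{i-k}({\bf a})$ onto ${\bf b}$, and that the resulting ${\bf b}$-component has a nonzero smallest part precisely when $k<i_{max}$ (while for $k=i_{max}$ all constraints disappear, giving the unrestricted sum). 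One also needs the residual subtlety that a summand with $(r_i-a_i)^{d-i}\not\subset{\bf b}$ must itself be $\D_c$-singular or already covered; this is where the constraint $r_i^{d-i}\subset{\bf b}$ in \eqref{parziale2} versus the condition $(r_i-a_i)^{d-i}\subset{\bf b}$ in Lemma \ref{partitions} has to be reconciled by an extra singularity check. Finally, one assembles the pieces: the surviving terms, grouped by their image level, reproduce exactly the two sums in the statement.

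The main obstacle I expect is the combinatorial bijection of step two: making precise that the operations $s_k$ (iterated through the inductive proof of Lemma \ref{partitions}, peeling one part at a time via the transposition $s$) globally reorganize the doubly-indexed family $\{({\bf a},{\bf b})\}_{i\in L}$ into a \emph{disjoint} union whose pieces are exactly $\PP_{i_{max}}\times\PP_{j_{min}}$ and $\coprod_{i<i_{max}}\PP_i\times\PP^*_{d-i}$, with no overcounting and no omission. Injectivity requires that one can recover $i$ and $({\bf a},{\bf b})$ from $(k,s_k({\bf a},{\bf b}))$ — i.e.\ reconstruct how many $r_i$-parts were removed — which is automatic once one notes $r_i$ is determined by $i_{max}$ and $i$, and $i$ is in turn determined by $k$ together with the number of ``extra'' parts that were pushed onto ${\bf b}$. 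The case $m\ge n$ should be handled separately but is easier, since then $L=\{p\}$ and there is only the $i_{max}=p$ term; one remarks that \eqref{parziale2} already has the desired shape. I would also double-check the edge behavior when $r_i=0$ (i.e.\ $i=i_{max}$), where $s_k$ is the identity and the constraint ${\bf b}\in\PP_{j_{min}}$ is vacuous, and when some $a_i=r_i$ exactly on the boundary, to be sure no double counting occurs between the $i_{max}$-sum and the $i<i_{max}$-sums.
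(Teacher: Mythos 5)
Your proposal follows essentially the same route as the paper's own proof: starting from \eqref{parziale2}, the terms with $r_i^i\subset{\bf a}$ stay in place (yielding the unrestricted $i_{max}$-sum and, for $i<i_{max}$, the sub-sum over ${\bf b}$ containing $r_i^{d-i}$), while the remaining terms are transported to the lower level $k=k_{r_i}({\bf a})$ by Lemma \ref{partitions}, and the bijectivity of $s_k$ shows these images fill in exactly the complementary set of ${\bf b}\in\PP^*_{d-k}$ with $(i_{max}-k)^{d-k}\not\subset{\bf b}$, so the two pieces glue to $\PP^*_{d-k}$. Two minor corrections: the reconciliation you worry about is vacuous, since the constraint $r_i^{d-i}\subset{\bf b}$ in \eqref{parziale2} already implies the hypothesis $(r_i-a_i)^{d-i}\subset{\bf b}$ of Lemma \ref{partitions}; and the degenerate case in which $L=\{p\}$ and the statement is just \eqref{parziale2} is $m\le n$, not $m\ge n$.
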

\begin{proof}
The expression   \eqref{parziale2}  for $e^{\rho}\check R$ can be written as
\begin{align*}
&\cF_{W_c\mathcal W_{m-d}^{i_{max}}}\bigl(
\sum_{{\bf a}\in \PP_{i_{max}},\,{\bf b}\in \PP_{j_{min}}}
 e^{V({\bf a},{\bf b})}\bigr)+\sum_{i\in L,i<i_{max}}\cF_{W_c\mathcal W_{m-d}^{i}}\bigl(
\sum_{{\bf a}\in \PP_i,\, {r_i}^{d-i}\subset {\bf b}\in\PP_{d-i}}
 e^{V({\bf a},{\bf b})}\bigr)\\
 &+\sum_{i\in L}\cF_{W_c\mathcal W_{m-d}^{i}}\bigl(
\sum_{{r_i}^i\not\subset{\bf a}\in \PP_i,\,{r_i}^{d-i}\subset{\bf b}\in \PP_{d-i}}
 e^{V({{\bf a}-{r_i}^i},{\bf b})}\bigr).
\end{align*}

Let $III$ denote the third summand.
By Lemma \ref{partitions} we have
$$
III=\sum_{i\in L}\sum_{k\in L,\, k<i}\cF_{W_c\mathcal W_{m-d}^{k}}\bigl(
\sum_{
\substack{{r_i}^k\subset{\bf a}\in \PP_i,{r_i}^{k+1}\not \subset \ba\\
{\bf b}\in \PP_j\, {r_i}^{d-i}\subset {\bf b}}}
 e^{V(s_{k}({{\bf a}},{\bf b}))}\bigr),
$$
which is
$$
\sum_{k\in L, k<i_{max}}\sum_{i\in L,i>k}\cF_{W_c\mathcal W_{m-d}}^{k} \bigl(
\sum_{
\substack{{r_i}^k\subset{\bf a}\in \PP_i,{r_i}^{k+1}\not \subset \ba\\
{\bf b}\in \PP_{d-i}\, {r_i}^{d-i}\subset {\bf b}}}
 e^{V(s_k({{\bf a}},{\bf b}))}\bigr).
$$
We observe that  the map 
$s_k$ is a bijection between the set $$\bigcup_{i\in L,i>k} \{(\ba ,{\bf b})\in\PP_i\times \PP_{d-i}\mid k_{r_i}(\ba)=k,\, {r_i}^{d-i}\subset {\bf b}\}
$$
and the set $$\{(\ba, {\bf b})\in \PP_k\times \PP^*_h\mid(i_{max}-k)^h\not \subset {\bf b}\},$$ hence we can rewrite $III$ as
$$
\sum_{k\in L, k<i_{max}}\cF_{W_c\mathcal W_{m-d}}^{k} 
\bigl(
\sum_{
\substack{{\bf a}\in \PP_k\\
{\bf b}\in \PP^*_{d-k},\, (i_{max}-k)^{d-k}\not\subset {\bf b}}}
 e^{V(({{\bf a}},{\bf b}))}\bigr).
$$
Upon substituting we get the desired statement.
\end{proof}
Arguing as in the previous sections, it follows from Proposition \ref{GL} that 
\begin{align*}
&chM^{\Dp}(\g_1)=\\
&\sum_{{\bf a}\in \PP_{i_{max}},\,{\bf b}\in \PP_{j_{min}}}\sum_{
v\in \mathcal W_{m-d}^{i_{max}}} 
\!\!\!\!sgn(v) \\
&\frac{ch(F_{p,q}(v(-\rho_1+\mu(\ba,{\bf b})+\rho^{A_{m-1}})-\rho^{A_{m-1}}))}
{\prod_{\a\in\Dp(A_{m-1})\setminus\Dp(A_{p-1}\times A_{q-1})}(1-e^{-\a})}
ch\, F_n(\varepsilon(\ba,{\bf b}))\\
&+\sum_{i\in L, i<i_{max}}
\sum_{{\bf a}\in \PP_i,\,{\bf b}\in \PP^*_j}\sum_{v\in \mathcal W_{m-d}^{i}} 
sgn(v) \\&\frac{ch(F_{p,q}(v(-\rho_1+\mu(\ba,{\bf b})+\rho^{A_{m-1}})-\rho^{A_{m-1}}))}
{\prod_{\a\in\Dp(A_{m-1})\setminus\Dp(A_{p-1}\times A_{q-1})}(1-e^{-\a})}
ch\, F_n(\varepsilon(\ba,{\bf b}).
\end{align*}
Notation is as follows: $\Dp=\D_{gl}^{(p,q)}$; if $\ba\in\PP_i,\,{\bf b}\in\PP_{d-i}$, we set 
\begin{align*}
&\mu(\ba,{\bf b})= -\sum_{t=1}^ia_{i-t+1}\e_{p-i+t}+\sum_{t=1}^{d-i}b_{t}\e_{p+t},\\
&\varepsilon(\ba,{\bf b})=\sum_{t=1}^ia_{t}\d_t-\sum_{t=1}^{d-i}b_{d-i-t+1}\d_{n-d+i+t}.\end{align*}
Here $F_n(\varepsilon(\ba,{\bf b}))$ is the finite-dimensional irreducible $u(n)$-module with highest weight $\varepsilon(\ba,{\bf b})$. Finally, 
if $\l\in span(\e_i)$, set 
\begin{equation*}
ch F_{p,q}(\l)= \sum_{w\in S_p\times S_q}sgn(w)\frac{e^{w(\l+\rho^{p,q})-\rho^{p,q}}}{\Pi_{\a\in\Dp(A_{p-1}\times A_{q-1})}(1-e^{-\a})}.
\end{equation*}
Note that 
$$V(\ba, {\bf b})=\rho+\ve(\ba, {\bf b}) +\mu(\ba, {\bf b}).$$
\par
By \cite{Adams}, the inverse image $\tilde G_1$ of $G_1=U(n)$ in $\tilde K$ is isomorphic to the cover defined by the character $\det^{\frac{m}{2}}$. In particular, $\tilde G_1$  is connected, hence the set  $\Sigma$, occurring in Theorem \ref{howe}, is a subset of 
the set of finite-dimensional irreducible representations of its Lie algebra $u(n)$.
 \vskip5pt
 Set $\h_1=\h\cap u(n),\,\h_2=\h\cap u(p,q)$. 
\begin{cor}[Theta correspondence] With  notation as in  Theorem \ref{howe}, we have
$$
\Sigma=\bigcup_{\substack{
i\in L\\\ba\in\PP_i, {\bf b}\in \PP_{d-i}}}F_n(-(\rho_1)_{|\h_1}+\varepsilon(\ba,{\bf b})).$$
Moreover 
$$
\tau(F_n(\varepsilon(\ba, {\bf b})))=L^2(-(\rho_1)_{|\h_2}+\mu(\ba,  {\bf b})).
$$
\end{cor}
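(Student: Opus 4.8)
The plan is to mimic the arguments used for Corollaries \ref{cortheta} and \ref{corthetaD2}: combine Howe duality (Theorem \ref{howe}) with the explicit character identity of Proposition \ref{GL}. Since by \cite{Adams} $\tilde G_1$ is the connected $\det^{m/2}$-cover of $U(n)$, the set $\Sigma$ consists of honest irreducible finite--dimensional $u(n)$-modules, each pinned down by its $\h_1$-highest weight. Using $M^{\Dp}(\g_1)\cong S(\g_1^-)\otimes\C_{-\rho_1}$ and expanding the right--hand side of Proposition \ref{GL} in geometric series on $\h^+$, one reads off that the $u(n)$-types that can occur are precisely the $F_n(-(\rho_1)_{|\h_1}+\varepsilon(\ba,{\bf b}))$ with $i\in L$, $\ba\in\PP_i$, ${\bf b}\in\PP_{d-i}$; thus $\Sigma$ is contained in the asserted union, and it remains to show that nothing drops out and to read off $\tau$.

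First I would show that every candidate really occurs. For $i=i_{max}$ with ${\bf b}\in\PP_{j_{min}}$, and for $i<i_{max}$ with ${\bf b}\in\PP^*_{d-i}$, take $v=1\in\mathcal W_{m-d}^{i}$: a short computation in the $\e$-coordinates, analogous to the one in Corollary \ref{cortheta}, shows that $-\rho_1+\mu(\ba,{\bf b})+\rho^{A_{m-1}}$ is dominant, hence regular, for $\D(A_{p-1}\times A_{q-1})$, so the coefficient of $ch\,F_n(\varepsilon(\ba,{\bf b}))$ in Proposition \ref{GL} is a nonzero multiple of $ch\,V^2(-(\rho_1)_{|\h_2}+\mu(\ba,{\bf b}))$ and the corresponding isotypic component of $M^{\Dp}(\g_1)$ is nonzero. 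For ${\bf b}\in\PP_{d-i}$ of length strictly less than $d-i$ (the case $i<i_{max}$), Lemma \ref{partitions}, which was exactly the device used to pass to the normal form of Proposition \ref{GL}, identifies the contribution with one coming from a pair $(i',\ba',{\bf b}')$ with a longer partition; hence the union over all admissible $(i,\ba,{\bf b})$ produces precisely the types in $\Sigma$, overlaps included. This gives the description of $\Sigma$.

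Next I would identify $\tau$. Fix $\eta=F_n(-(\rho_1)_{|\h_1}+\varepsilon(\ba,{\bf b}))\in\Sigma$. By Theorem \ref{howe} its isotypic component is $L^2(\l)\otimes\eta$ for a unique $\l$, and by Proposition \ref{GL} its $\h_2$-character equals $\sum_{v}sgn(v)\,c_v\,ch\,V^2(\{v(-\rho_1+\mu(\ba,{\bf b})+\rho^{A_{m-1}})\}-\rho^{A_{m-1}})$, the sum ranging over those $v\in\mathcal W_{m-d}^{i}$ making the argument $\D(A_{p-1}\times A_{q-1})$-regular. Distinct cosets $W(A_{p-1}\times A_{q-1})v$ give inequivalent $\p_2$-parabolic Verma modules, so these $ch\,V^2$ are linearly independent; combining this with $ch\,L^2(\l)=\sum_{\mu}c_{\l,\mu}ch\,V^2(\mu)$, the sum over $\mu$ with $\l-\mu\in\ZZ_{\ge0}\Dp(A_{m-1})$, forces $\l$ to be the summand of maximal eigenvalue for the grading element $H\in\h_2$ defining $\p_2$. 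So the proof reduces to showing that $v=1$ is strictly maximal, i.e. that for every regular $v\neq 1$ in $\mathcal W_{m-d}^{i}$
$$(-\rho_1+\mu(\ba,{\bf b}))(H)>\bigl(\{v(-\rho_1+\mu(\ba,{\bf b})+\rho^{A_{m-1}})\}-\rho^{A_{m-1}}\bigr)(H).$$
Since $H$ is fixed by $W(A_{p-1}\times A_{q-1})$, this reduces, exactly as in Corollary \ref{cortheta}, to $(-\rho_1+\mu(\ba,{\bf b})+\rho^{A_{m-1}})\bigl(H-v^{-1}(H)\bigr)>0$, a finite positivity check; granting it, $\l=-(\rho_1)_{|\h_2}+\mu(\ba,{\bf b})$, which is the asserted value of $\tau$.

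The hard part will be precisely this last positivity statement: because $\mathcal W_{m-d}^{i}=\s_i\mathcal Y_{m-d}^{i}\s_i$ is only a $\s_i$-conjugate of a standard parabolic subgroup of $W_2$, the vector $H-v^{-1}(H)$ is not visibly a nonnegative combination of simple coroots, and one must unwind the conjugation by $\s_i$ together with the reindexing provided by Lemma \ref{partitions} in order to exhibit the sign — much as in the bookkeeping for $v=s_{2\d_{n-d-m+j}}$ in the proof of Corollary \ref{cortheta}. Everything else is the same two--arm combinatorics in $\ba$ and ${\bf b}$ already carried out in deriving Proposition \ref{GL}.
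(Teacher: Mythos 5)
Your proposal follows exactly the route the paper takes: the paper's proof of this corollary is a one-line reference to the argument of Corollary \ref{corthetaD2}, namely reading off $\Sigma$ from Proposition \ref{GL} by checking that $v=1$ yields a $\D(A_{p-1}\times A_{q-1})$-regular dominant weight, and pinning down $\tau$ via linear independence of the parabolic Verma characters together with strict maximality of the $H$-eigenvalue at $v=1$. The positivity check you flag as the hard part is indeed the only place where the conjugation by $\s_i$ enters, and it is handled exactly as in the earlier corollaries; your outline is correct and matches the paper.
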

\begin{proof} Similar to the proof of Corollary \ref{corthetaD2}.\end{proof}

Let us finally display the character formula.
We let  $\overline{\mathcal{W}}^{i}_{m-d}$ be the set of $v\in \mathcal{W}^{i}_{m-d}$ such that $v(-(\rho_1)_{|\h_2}+\mu(\ba,  {\bf b})+\rho^{A_{m-1}})$ is regular for $\D(A_{p-1}\times A_{q-1})$.
For $v\in\overline{\mathcal{W}}^{i}_{m-d}$,
let  $$c_v=c_{v(-(\rho_1)_{|\h_2}+\mu(\ba,{\bf b})+\rho^{A_{n-1}})}.$$ From Proposition \ref{GL} we get that, if $\ba\in \PP_{i_{max}}$, ${\bf b}\in \PP_{j_{min}}$ then
\begin{align*}
&ch(L^2(-(\rho_1)_{|\h_2}+\mu(\ba,  {\bf b})))=\\&\sum_{v\in\overline{\mathcal{W}}^{i_{max}}_{m-d}}\!\!\!c_v sgn(v) ch V^2(\{v(-\rho_1+\mu(\ba,{\mathbf b})+\rho^{A_{m-1}})\}-\rho^{A_{m-1}}),
\end{align*} 
while, if $i\in L$ with $i<i_{max}$
and $\ba\in \PP_i$, ${\bf b}\in\PP^*_j$, then 
\begin{align*}
&ch(L^2(-(\rho_1)_{|\h_2}+\mu(\ba,  {\bf b})))=\\&\sum_{v\in\overline{\mathcal{W}}^{i}_{m-d}}\!\!\!c_v sgn(v) ch V^2(\{v(-\rho_1+\mu(\ba,\mathbf b)+\rho^{A_{m-1}})\}-\rho^{A_{m-1}}).
\end{align*}
However we can check that if  $\ba\in \PP^*_k$, ${\bf b}\in \PP^*_{d-i}$ with $k\le i$, then, $v\in  \overline{\mathcal{W}}^{i}_{m-d}$ implies that $W_cv=W_cw$ with $w\in W_2^{(i,d-k)}$, where $W_2 ^{(i,d-k)}=\{\s\in S_m\mid \s(\e_a)=\e_a,p-i+1\leq a\leq p+d-k\}$.
It follows that the character formula can be written in a more uniform and symmetric way as 
follows: fix $\ba\in \PP^*_k$, ${\bf b}\in \PP^*_h$ with $k\le p$, $h\le q$, and $h+k\le d$. Choose a set $\mathcal{W}^{(d-h,d-k)}_{m-d}$ of coset representatives for $W_c\backslash W_cW_2 ^{(d-h,d-k)}$. Then
\begin{align*}
&ch(L^2(-(\rho_1)_{|\h_2}+\mu(\ba,  {\bf b})))=\\&\sum_{v\in\overline{\mathcal{W}}^{(d-h,d-k)}_{m-d}}\!\!\!c_v sgn(v) ch V^2(\{v(-(\rho_1)_{|\h_2}+\mu(\ba,{\bf b})+\rho^{A_{m-1}})\}-\rho^{A_{m-1}}).
\end{align*}

Once the character formula is written in this form,  we can apply to it the argument given in  Corollary \ref{enrighteven}, thus, with notation as in  Section \ref{Bnm}, we obtain

\begin{cor} If $k\le p$, $h\le q$, $k+h\le d$, $\ba\in \PP^*_k$, $\mathbf b\in \PP^*_h$,  and $\l_0=-(\rho_1)_{|\h_2}+\mu(\ba,\mathbf b)+\rho^{A_{m-1}}$, then
$$
ch(L^2(\l_0))=\sum_{w\in W_{\l_0}^A}(-1)^{\ell_{\l_0}(w)}V^2(\{w(\l_0)\}-\rho^{A_{m-1}}).
$$
\end{cor}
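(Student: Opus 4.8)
The plan is to reduce the corollary to the character formula already derived for the Theta lift, namely the formula preceding it in this section, and then apply verbatim the combinatorial argument of \Cor{enrighteven}. First I would recall the explicit shape of $\l_0=-(\rho_1)_{|\h_2}+\mu(\ba,\mathbf b)+\rho^{A_{m-1}}$ in the basis $\{\e_i\}$: its coordinates on the blocks $\e_1,\dots,\e_p$ and $\e_{p+1},\dots,\e_m$ are, up to the shift by $\rho^{A_{m-1}}$, a decreasing sequence of half-integers (or integers) with a prescribed set of "singular" positive entries (those $a$ for which $-a$ also occurs) and "regular" ones. The group $W_{\l_0}$ in the sense of Enright's \cite[Definition 2.1]{En} acts on $\l_0$ exactly as in \Cor{enrighteven}: trivially if there is at most one regular entry, by a sign-and-position swap of two regular entries if there are exactly two, and by an even number of sign changes of regular entries followed by a permutation if there are more; all singular and zero entries are fixed. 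This is the key structural input that lets one identify $W^A_{\l_0}$ concretely.

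Next I would match $W^A_{\l_0}$ with the index set $\overline{\mathcal{W}}^{(d-h,d-k)}_{m-d}$ of the last character formula displayed before the corollary. The elements of $\mathcal{W}^{(d-h,d-k)}_{m-d}$ act on $\l_0$ by flipping signs of regular entries; composing such a $v$ with the permutation $w_v\in W(A_{m-1})$ that re-sorts the result into $\Dp(A_{p-1}\times A_{q-1})$-dominant order produces precisely the minimal-length coset representatives for $W_{\l_0}(A_{p-1}\times A_{q-1})\backslash W_{\l_0}$, so $W^A_{\l_0}=\{w_v v\mid v\in\overline{\mathcal{W}}^{(d-h,d-k)}_{m-d}\}$. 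Under this identification the summand $V^2(\{v(\l_0)\}-\rho^{A_{m-1}})$ in the derived formula is exactly $V^2(\{w_vv(\l_0)\}-\rho^{A_{m-1}})$, and it remains only to check the sign: I must show $sgn(w'_v w_v v)=(-1)^{\ell_{\l_0}(w_v v)}$, where $w'_v\in W(A_{m-1})$ is the extra element making $w_vv(\l_0)$ globally $\Dp(A_{m-1})$-dominant. Since $c_v=sgn$ of that extra re-sorting step and $\ell_{\l_0}$ counts simple $\Dp_{\l_0}$-reflections, the equality $sgn(w'_v)=1$ is what has to be verified.

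That sign computation is where the real work lies, and it is exactly the induction carried out in the proof of \Cor{enrighteven}: one inducts on the number of $\e$-coordinates lying strictly above the largest singular value, tracking how many simple transpositions are needed to move a newly sign-flipped entry $-v_1$ past the block of singular entries and their negatives; the count comes out even ($2s$ transpositions) because the singular entries pair up with their negatives. The only thing I need to check is that the blockwise structure for $gl(m,n)$ (two $\e$-blocks of sizes $p$ and $q$, with the $\d$-block untouched by $W_{\l_0}$) does not disturb this bookkeeping — and it does not, since $W_{\l_0}\subset W(A_{p-1}\times A_{q-1})$ acts separately on the two $\e$-blocks and the argument applies to each block independently. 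Hence the main obstacle is purely notational: organizing the two-block version of the parity induction of \Cor{enrighteven} and confirming that the roots $\e_i-\e_j$ crossing the $p/q$ divide never enter $\D_{\l_0}$, so no cross-block transpositions arise. Once that is in place, \Cor{enrightevenD}'s template gives the result immediately.
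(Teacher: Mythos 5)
Your overall strategy is the paper's: the corollary is proved by rewriting the character formula in the uniform form displayed just before it and then running the parity induction of \Cor{enrighteven} to identify $\overline{\mathcal{W}}^{(d-h,d-k)}_{m-d}$ (composed with the re-sorting permutations) with $W^A_{\l_0}$ and to check that the extra sign $sgn(w'_v)$ is $+1$. That reduction and that sign check are indeed where the content lies.

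However, your structural description of $W_{\l_0}$ is wrong for $gl(m,n)$, and the error is not merely notational. You assert that $W_{\l_0}\subset W(A_{p-1}\times A_{q-1})$, that the elements of $\mathcal{W}^{(d-h,d-k)}_{m-d}$ act by ``flipping signs of regular entries,'' and that the roots $\e_i-\e_j$ crossing the $p/q$ divide ``never enter $\D_{\l_0}$, so no cross-block transpositions arise.'' All three claims transplant the type $C$ picture of \Cor{enrighteven} (where the relevant reflections are in the long roots $2\d_i$, i.e.\ sign changes) onto type $A$, where there are no sign changes at all. In Enright's setup for $u(p,q)$ the group $W_{\l_0}$ is generated by reflections in exactly those roots $\e_i-\e_j$ with $i\le p<j$ that satisfy his conditions (i) and (ii); these are the ``noncompact'' roots crossing the divide, and they act by \emph{swapping} a coordinate of the first $\e$-block with one of the second. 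If your claim were true, $W^A_{\l_0}$ would be trivial and the right-hand side of the corollary would collapse to the single term $V^2(\l_0-\rho^{A_{m-1}})$, contradicting the multi-term sum over $\overline{\mathcal{W}}^{(d-h,d-k)}_{m-d}$ in the formula you start from. The correct adaptation is: call an entry of the $p$-block singular if its value (after the $\rho^{A_{m-1}}$-shift) also occurs in the $q$-block and regular otherwise; $\overline{\mathcal{W}}^{(d-h,d-k)}_{m-d}$ then acts by transposing regular entries across the divide, $W^A_{\l_0}$ consists of these transpositions followed by the blockwise re-sorting, and the parity induction of \Cor{enrighteven} must be rerun for this swap action (counting the simple transpositions needed to move a swapped entry past the block of singular entries and their partners, which again pair up to give an even count). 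As written, your verification plan would stall at the step where you try to confirm a false statement about $\D_{\l_0}$.
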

\vskip5pt
\section{On the Kac-Wakimoto conjecture}\label{44}
\vskip5pt As a  final  application of our denominator formulas we verify   Kac-Wakimoto conjecture in a remarkable special case. Let us recall briefly this conjecture. 
Let $V$ be a finite dimensional irreducible highest weight module with highest weight $\L$. Recall that the {\it atypicality} of $\L$, denoted by $atp(\L)$ is the maximal 
number of linearly independent mutually orthogonal isotropic roots which are orthogonal to $\L$. The atypicality $atp(V)$ is defined as the atypicality of $\L+\rho$. The $\rho$-shift makes this definition independent of the chosen set of positive roots. Also recall that the supercharacter of $V$ is defined as $sch V=\sum_{\l\in\h^*}\sdim(V_\l)e^\l$.  In \cite{KW}Ê the following conjecture is stated. 
\vskip5pt
\par\noindent{\bf Conjecture.} {\it There exists  $b\in\mathbb{Q}$ such that 
\begin{equation}\label{KWfor}
\check Re^{\rho}sch V=b\check\cF_{W}\bigl(\frac{e^{\rho+\Lambda}}
{\prod_{i=1}^{atp(V)} (1-e^{-\beta_i})}\bigr), 
\end{equation}
where $\Lambda$ is the highest weight of $V$ and the set of simple roots 
contains mutually orthogonal isotropic roots
$\beta_1,\ldots,\beta_{atp(V)}$ satisfying $(\rho+\Lambda,\beta_i)=0$. }
\vskip5pt
We will refer to the latter condition as the KW condition. We prove this conjecture when $V$ is the natural representation of Lie superalgebras of type 
$gl(m,n),$  $D(m,n), D(n,m)$ with $m\geq n$ and $B(m,n), B(n,m)$ with $m>n$. Recall  that, in these cases, we have
$atp(V)=\min(m-1,n)$. Also note that  $atp(V)=m$ for $B(m,m)$, and in this case KW condition does not hold
for any choice of the set of simple roots, hence  we exclude $B(m,m)$ from our consideration.\par 
We fix the standard triangular decomposition of $\fg_0$ and consider the sets of simple roots compatible
with this triangular decomposition. We embed our root system into the lattice spanned by
 $\{\e_i\}_{i=1}^m\cup\{\delta\}_{i=1}^n$ (with $m\geq n$);
 for example, for $D(n,m), m>n$, $2\e_i$ is an even root.
 We choose the inner product $(\e_i,\e_j)=\delta_{ij}=-(\delta_i,\delta_j)$.

\subsection{Formula for a special root system}
\label{dist}
Consider  a
set of simple roots $\Pi$  
which contains the set
$$\{\e_1-\e_2,\ldots,\e_{m-1}-\e_m,
\e_m-\delta_1,\delta_1-\delta_{2},\ldots,\delta_{n-1}-\delta_n\}.$$
Then the corresponding total order on $\{\e_i\}_{i=1}^m\cup\{\delta\}_{i=1}^n$ 
is given by $\e_1>\e_2>\ldots>\e_m>\delta_1\ldots>\delta_n$.
For $i=1,\ldots, n$ set
$$\gamma_i:=\e_{m+1-i}-\delta_i.$$
Consider the natural representation $V=V(\e_1)$.

Set
$$\Gamma:=\left\{\begin{array}{ll} \{\gamma_i\}_{i=1,\ldots,n-1} &\text{ if } m=n,\\
     \{\gamma_i\}_{i=1,\ldots,n} &\text{ if } m>n.
     \end{array}\right. $$

Note that $\Gamma$ is a maximal set of isotropic pairwise orthogonal roots which are
orthogonal to $\rho+\e_1$. We have excluded $B(m,m)$ since in this case
the maximal set is $\{\gamma_i\}_{i=1}^{m-1}\cup\{\e_1+\delta_m\}$
and $\e_1+\delta_m$ is ``bad'' in the sense that this root does  not
lie in a set of simple roots obtained from $\Pi$ by a series of odd reflections. Recall that 
$\llbracket\gamma_j\rrbracket=\sum_{i=1}^j \gamma_i.$

We are going to show that there is a constant $j_V$  such that 
\begin{equation}\label{ChV}
\check Re^{\rho}sch V=j_V\check{\cF}_W\bigl(\frac{e^{\rho+\e_1}}
{\prod_{\gamma\in\Gamma} (1-e^{-\llbracket\gamma\rrbracket})}\bigr)
.
\end{equation}

Write $\check R=R_0/R_1,\,R_i=\prod_{\a\in\Dp_i}(1-e^{-\a})$.
\subsubsection{}
As $\fg_0$-module, $V$ is the sum of two simple modules
with highest weights $\delta_1$ and $\e_1$ respectively.
By the Weyl character formula one has
\begin{equation}\label{ChV1}
R_0e^{\rho_0}sch V= \cF_W(e^{\rho_0+\e_1}-e^{\rho_0+\delta_1}).
\end{equation}

\subsubsection{}Let $\g'$ be the simple subalgebra of $\g$ whose set  of roots $\D'$ is the set of  roots in $\D$ that are orthogonal to $\e_1$. We have that $\fg'$ is of type $gl(m-1,n), B(m-1,n), D(m-1,n), B(n,m-1), D(n,m-1)$ respectively
for $\fg$ of type $gl(m,n)$, $B(m,n)$, $D(m,n)$, $B(n,m)$, $D(n,m)$. Observe that, if $\Sigma$ is a set of simple roots for $\g$ such that $(\e_1, \a)\ge 0$ for any $\a\in\Sigma$, then a set of simple roots for $\g'$ is given by $\Sigma'=\Sigma\backslash \{\a\in\Sigma\mid (\a,\e_1)>0\}$. In particular the set $\Pi':=\Pi\setminus\{\e_1-\e_2\}$
is a set of simple roots
for $\g'$. We introduce $\rho', \rho_0', R_0', R_1', \check R', W'$ for this root system
in the standard way.
Recall  the denominator identity for $\fg'$ in the form
$$\check R'e^{\rho'}=c \check \cF_{W'}\bigl(\frac{e^{\rho'}}{\prod_{\gamma\in\Gamma} (1-e^{-\llbracket\gamma\rrbracket})}\bigr),$$
with $c=\frac{atp(V)!}{C_{\g'}}$, which can be rewritten as
$$R'_0e^{\rho'_0}= c\cF_{W'}\bigl(\frac{e^{\rho'_0}R_1'}{\prod_{\gamma\in\Gamma} (1-e^{-\llbracket\gamma\rrbracket})}\bigr),$$
or
$$ \cF_{W'}(e^{\rho'_0})=c \cF_{W'}\bigl(\frac{e^{\rho'_0}  R_1'}{\prod_{\gamma\in\Gamma} (1-e^{-\llbracket\gamma\rrbracket})}\bigr).$$

\subsubsection{}
We now prove
\begin{equation}\label{XX}
\cF_W(e^{\rho_0+\e_1}-e^{\rho_0+\delta_1})=j_V\cF_W\bigl(\frac{e^{\rho_0+\epsilon_1}R_1}
{\prod_{\gamma\in\Gamma} (1-e^{-\llbracket\gamma\rrbracket})}\bigr),
\end{equation}
where $j_V=\frac{1}{c}$ except for the case $D(m,m)$ with $\delta_i\in\Delta$ and in the latter case $j_V=\frac{1}{2c}$.
Note that, by~(\ref{ChV1}),  formula \eqref{ChV}  is equivalent to  formula \eqref{XX}.

Set
$$A=\cF_W\bigl(\frac{e^{\rho_0+\epsilon_1}R_1}{\prod_{\gamma\in\Gamma} (1-e^{-\llbracket\gamma\rrbracket})}\bigr).$$

Since $\frac{R_1}{R_1'}$ and $e^{\rho_0-\rho_0'}$ are $W'$-invariant, one has

\begin{align*}
A&=\cF_{W/W'}\bigl(\frac{R_1}{R_1'}e^{\epsilon_1+\rho_0-\rho'_0}\cdot \cF_{W'}\bigl(\frac{R_1'e^{\rho'_0}}
{\prod_{\gamma\in\Gamma}(1-e^{-\llbracket\gamma\rrbracket})}\bigr)\bigr)\\&=\cF_{W/W'}\bigl(\frac{R_1}{R_1'}
e^{\e_1+\rho_0-\rho'_0}\cdot \frac{1}{c}\cF_{W'}(e^{\rho_0'})\bigr)\\
&=\frac{1}{c}
\cF_W \bigl(e^{\rho_0+\e_1}\prod_{\beta\in\Delta_1^+\setminus\Delta'} (1-e^{-\beta})\bigr)
\end{align*}
so
$$A=\frac{1}{c}\sum_{J\subset \Delta_1^+\setminus\Delta'} (-1)^{|J|}\cF_W(e^{\lambda_J}),\ \ \ \text{ where }
\lambda_J=
\rho_0+\e_1-\sum_{\beta\in J}\beta.$$

Now  formula~(\ref{XX}) is equivalent to the formula
\begin{equation}\label{X}
A=a\cF_W(e^{\lambda_{\emptyset}}-e^{\lambda_{\{\e_1-\delta_1\}}}),
\end{equation}
where $a=1$ except for the case $D(m,m)$ with $\delta_i\in\Delta$ and $a=2$ in the latter case.

Recall that $\cF_W(e^{\lambda})=0$ if $\lambda$ is not regular. Let us find $J$ such that $\lambda_J$ is regular.
Write $\Delta_0=\Delta_{\e}\coprod\Delta_{\delta}$ with $\Delta_{\e}$ in the span
of $\{\e_i\}_{i=1}^m$ and
$\rho_0=\rho_{\e}+\rho_{\delta}$, where $\rho_{\e}$ (resp., $\rho_{\delta}$) 
is the standard ``$\rho$''
for $\Delta_{\e}$ (resp., for $\Delta_{\delta}$).
For any $\beta\in \Delta_1^+\setminus\Delta'$ one has
$(\beta,\e_1)=1,\ (\beta,\e_i)=0$ for $i>1$. Thus $\lambda_J=\rho_{\e}+(1-|J|)\e_1+\nu_J$,
where $\nu_J$ lies in the span of $\{\delta_j\}_{j=1}^n$. The regularity
of $\lambda_J$ is equivalent to the regularity of $\rho_{\e}+(1-|J|)\e_1$
and of $\nu_J$.

For $gl_m$ the element $\rho_{\e}+(1-|J|)\e_1$ is regular
if and only if $|J|=0,1$ or $|J|>m$, which is impossible
since $|\Delta_1^+\setminus\Delta'|=n\leq m$. For $|J|=1$ one has $\nu_J=\rho_{\delta}+\delta_i$, which
is regular only for $i=1$. This gives~(\ref{X}).

For $B(m,n), m>n$ the root system $\Delta_{\e}$ (resp., $\Delta_{\delta}$) is of type $B_m$ (resp., $C_n$).
The element $\rho_{\e}+(1-|J|)\e_1$ is regular if and only if $|J|=0,1$ or $|J|\geq 2m$, which is
impossible since $|\Delta_1^+\setminus\Delta'|=2n<2m$.
For $|J|=1$ one has $\nu_J=\rho_{\delta}\pm\delta_i$, which
is regular only for $\nu_J=\rho_{\delta}+\delta_1$; this gives~(\ref{X}).

For $B(n,m), m>n$ the root system $\Delta_{\e}$ (resp., $\Delta_{\delta}$) is of type $C_m$ (resp., $B_n$).
The element $\rho_{\e}+(1-|J|)\e_1$ is regular if and only if $|J|=0,1$ or $|J|\geq 2m+1$, which
is impossible because $|\Delta_1^+\setminus\Delta'|=2n+1<2m+1$.
Consider the case  $|J|=1$. Then $\nu_J=\rho_{\delta}$ if $J=\{\e_1\}$ or $\nu_J=\rho_{\delta}\pm\delta_i$
for $J=\{\e_1\mp\delta_i\}$.
Therefore $\lambda_J$ is regular for $J=\{\e_1\},\{\e_1-\delta_1\},\{\e_1+\delta_n\}$.
One has $\lambda_{\{\e_1\}}=s_{\delta_n}(\lambda_{\{\e_1+\delta_n\}})$
so $\cF_W(e^{\lambda_{\{\e_1\}}}+e^{\lambda_{\{\e_1+\delta_n\}}})=0$. This establishes~(\ref{X}).

For $D(n,m)$ the root system $\Delta_{\e}$ (resp., $\Delta_{\delta}$) is of type $C_m$ (resp., $D_n$).
The element $\rho_{\e}+(1-|J|)\e_1$ is regular if and only if $|J|=0,1$ or $|J|\geq 2m+1$, which is
impossible since $|\Delta_1^+\setminus\Delta'|=2n<2m+1$.
For  $|J|=1$ one has $\nu_J=\rho_{\delta}\pm\delta_i$ and this element is regular only
if $\nu_J=\rho_{\delta}+\delta_1$; this gives~(\ref{X}).

For $D(m,n), m>n$ the root system $\Delta_{\e}$ (resp., $\Delta_{\delta}$) is of type $D_m$ (resp., $C_n$).
The element $\rho_{\e}+(1-|J|)\e_1$ is regular if and only if $|J|=0,1$ or $|J|\geq 2m-1$, which is
impossible since $|\Delta_1^+\setminus\Delta'|=2n<2m-1$. For $|J|=1$ one has
$\nu_J=\rho_{\delta}\pm\delta_i$, which
is regular only for $\nu_J=\rho_{\delta}+\delta_1$; this gives~(\ref{X}).

Consider the case  $D(m,m)$ with $\Delta_{\e}$ (resp., $\Delta_{\delta}$)  of type $D_m$ (resp., $C_n$).
The element $\rho_{\e}+(1-|J|)\e_1$ is regular if and only if $|J|=0,1$ or $|J|\geq 2m-1$.
As above, for $|J|=1$ the element $\nu_J$ is regular only if
$J=\{\e_1-\delta_1\},\ \nu_J=\rho_{\delta}+\delta_1$.
If $|J|=2m$, then $J=\Delta_1^+\setminus\Delta'=\{\e_1\pm\delta_i\}_{i=1}^m$
so $\lambda_J=\rho_0+(1-2m)\e_1=s_{\e_1}s_{\e_m}(\rho_0+\e_1)$.
If $|J|=2m-1$, then $J=(\Delta_1^+\setminus\Delta')\setminus\{\beta\}$, where $\beta=\e_1\pm\delta_i$
and so $\nu_J=\rho_{\delta}\mp\delta_i$ which is regular only if $\nu_J=\rho_{\delta}+\delta_1$ and
$\lambda_J=\rho_0+(2-2m)\e_1+\delta_1=s_{\e_1}s_{\e_m}(\rho_0+\delta_1)$.
Since $s_{\e_1}s_{\e_m}\in W$ we obtain
$$A=2\cF_W(e^{\rho_0+\e_1}-e^{\rho_0+\delta_1}).$$
This establishes~\eqref{X} for this case.
Having established \eqref{X} in all cases, we have proven \eqref{ChV}.

\subsubsection{}
Let us deduce from~(\ref{ChV}) Kac-Wakimoto formula \eqref{KWfor} for the natural representation. If $\tilde \Pi$ is a set of simple roots for $\g$ and $V(\L)$ is an irreducible finite dimensional $\g$-module of  highest weight $\L$, recall that we say that the pair $(\tilde\Pi,V(\L))$ satisfies the KW condition if there are $\{\beta_1,\ldots,\beta_{atp(V)}\}\subseteq\tilde{\Pi}$ with $(\be_i,\be_j)=0$ for all $i,j$ and $(\L+\rho_{\tilde \Pi},\be_i)=0$ for all $i$.

Assume that $(\tilde \Pi, V)$ satisfies the KW condition, where, as above, $V$ is the natural representation of $\g$. Since KW condition is obviously invariant under the action of the Weyl group we can consider the total order on $\{\e_i\}_{i=1}^m\cup\{\delta_i\}_{i=1}^n$ corresponding to $\tilde \Pi$. We can also assume that $\e_1>\e_2>\ldots>\e_m$ and $\delta_1>\delta_2>\ldots >\delta_n$
with $\e_m,\delta_n\geq 0$ for $\fg\not=gl(m,n)$
(this means  that $\tilde{\Pi}$ induces the standard triangular decomposition on $\fg_{0}$).

Let $\Lambda$ be the highest weight of the standard representation. Clearly, $\Lambda$ is the
maximal element in $\{\e_i\}_{i=1}^m\cup\{\delta_i\}_{i=1}^n$ so $\Lambda\in\{\e_1,\delta_1\}$.
For $gl(m,m), D(m,m)$  we may (and will) assume that
$\e_1>\delta_1$ (since we can switch $\{\e_i\}$ and $\{\delta_i\}$ in subsection~\ref{dist}).
Let us show that $\Lambda=\e_1$ for $m>n$. Indeed, for $m>n$ one has $atp(V)=n$ and
so $(\beta_i,\delta_1)\not=0$
for some $i$. If $\Lambda=\delta_1$, then
$(\Lambda+\rho,\beta_i)=(\delta_1,\beta_i)\not=0$ which contradicts to the definition of $\beta_i$.
Hence $\Lambda=\e_1>\delta_1$.

Let $\alpha\in \tilde{\Pi}$ be a root satisfying $(\alpha,\e_1)=1$.
Notice that $(\rho,\alpha)\geq 0$ so $(\rho+\e_1,\alpha)\not=0$ and thus
$\alpha\not\in\{\beta_i\}_{i=1}^{atp(V)}$. The set $\tilde{\Pi}'=\tilde{\Pi}\setminus\{\alpha\}$
is a set of simple roots for $\fg'$. Since $\text{def}\,\g'=atp(V)$,  the set $\{\stackrel{\frown}{e_i d_i}\mid \be_i=e_i-d_i\}$ is an arc diagram $\tilde{X}$ for $\tilde{\Pi}'$.
Similarly, let $X$ be the arc diagram for  $\Pi'$ having the elements of $\Gamma$ as arcs.
Denominator identity for $\fg'$ gives, letting $\tilde \rho =\rho_{\tilde \Pi}$ and $\tilde \rho' =\rho_{\tilde \Pi'}$,
\begin{equation}\label{e?}atp(V)!\cdot\check \cF_{W'}\bigl(\frac{e^{\rho'}}{\prod_{\gamma\in\Gamma}  (1-e^{-\llbracket\gamma\rrbracket})}\bigr)= 
\check\cF_{W'}\bigl(\frac{e^{\tilde{\rho}'}}{\prod_{i=1}^{atp(V)}(1-e^{-\beta_i})}\bigr).\end{equation}
Let us show that $\rho-\rho'=\tilde{\rho}-\tilde{\rho}'$.
Indeed, observe that
 $\rho-\rho',\tilde{\rho}-\tilde{\rho}'$ are orthogonal to $\Delta'$.
Since $\e_1$ is maximal the inequality 
$(\alpha,\e_1)>0$ for $\alpha\in\Delta$ forces $\alpha\in\Delta^+\cap\tilde{\Delta}^+$.
Therefore
\begin{align*}2(\rho-\rho',\e_1)&=\sum_{\alpha\in\Delta^+_0\setminus\Delta'}(\alpha,\e_1)-
\sum_{\alpha\in\Delta^+_1\setminus\Delta'}(\alpha,\e_1)=\sum_{\alpha\in\tilde{\Delta}^+_0\setminus\Delta'}
(\alpha,\e_1)-
\sum_{\alpha\in\tilde{\Delta}^+_1\setminus\Delta'}(\alpha,\e_1)\\&=
2(\tilde{\rho}-\tilde{\rho}',\e_1).\end{align*}
Set $\xi=(\rho-\rho')-(\tilde{\rho}-\tilde{\rho}')$. We conclude that $\xi$ is orthogonal to $\Delta'$ and $\e_1$. 
 For $\fg\not=gl(m,n)$ this means that $\rho-\rho'=\tilde{\rho}-\tilde{\rho}'$.
 
 For $gl(m,n)$ we obtain  that $\xi$ is proportional to
 $\sum_{i=2}^m\e_i-\sum_{j=1}^n\delta_j$. The roots $\pm(\e_1-\e_2)$ are the only roots in $\Delta\setminus\Delta'$ which are not orthogonal to $\e_2$. Since $\Delta_0^+=\tilde{\Delta}_0^+$,
 one has 
 $$2(\tilde{\rho}-\tilde{\rho}',\e_2)=(\e_1-\e_2,\e_2)=2(\rho-\rho',\e_2).$$
 Hence $(\xi,\e_2)=0$ so $\xi=0$ as required.

Substituting \eqref{e?} in \eqref{ChV}, we obtain,
\begin{align*}
\check Re^{\rho}sch V&=j_V\check \cF_W\bigl(\frac{e^{\rho+\e_1}}{\prod_{\gamma\in\Gamma} (1-e^{-\llbracket\gamma\rrbracket})}\bigr)=j_V\cF_{W/W'}\bigl(e^{\rho-\rho'+\e_1}
\cF_{W'}\bigl(\frac{e^{\rho'}}{\prod_{\gamma\in\Gamma} (1-e^{-\llbracket\gamma\rrbracket})}\bigr)\bigr)\\
&=\frac{j_V}{atp(V)!} \cF_{W/W'}\bigl(e^{\rho-\rho'+\e_1}\cF_{W'}\bigl(\frac{e^{\tilde{\rho}'}}{\prod_{i=1}^{atp(V)}(1-e^{-\beta_i})}\bigr)\bigr)= \\&=\frac{j_V}{atp(V)!} 
\cF_{W}\bigl(\frac{e^{\tilde{\rho}+\e_1}}{\prod_{i=1}^{atp(V)}(1-e^{-\beta_i})}\bigr).
\end{align*}
This proves ~\eqref{KWfor} with $b=\frac{j_V}{atp(V)!}$. Using \eqref{valoricg}, one can check that $b$ has an expression depending only on $atp(V)$.

\vskip10pt
\centerline{\bf Acknowledgments}
\vskip5pt
We would like to thank C. De Concini and M. Vergne for useful discussions and E. Vinberg for correspondence. \vskip10pt

\footnotesize{

\noindent{\bf M.G.}: Department of Mathematics,
Faculty of Mathematics and Computer Science.
The Weizmann Institute of Science.
Rehovot 76100 Israel;\\
{\tt maria.gorelik@weizmann.ac.il}

\noindent{\bf V.K.}: Department of Mathematics, Rm 2-178, MIT, 77 
Mass. Ave, Cambridge, MA 02139;\\
{\tt kac@math.mit.edu}

\noindent{\bf P.MF.}: Politecnico di Milano, Polo regionale di Como, 
Via Valleggio 11, 22100 Como,
Italy;\\ {\tt pierluigi.moseneder@polimi.it}

\noindent{\bf P.P.}: Dipartimento di Matematica, Sapienza Universit\`a di Roma, P.le A. Moro 2,
00185, Roma , Italy;\\ {\tt papi@mat.uniroma1.it} }

\end{document}